\documentclass[12pt]{amsart}

\usepackage{multicol}

\usepackage{amssymb}
\usepackage{fancyhdr}
\usepackage{amsmath}
\usepackage{amsfonts}
\usepackage{mathrsfs}
\usepackage[all]{xy} 
\usepackage{color}


\newcommand{\disk}{\ensuremath{\mathbb{D}} } 
\newcommand{\sphere}{\overline{\Bbb{C}}} 
\newcommand{\riem}{\Sigma}  
\renewcommand{\Bbb}[1]{\ensuremath{\mathbb{#1}}}

\newcommand{\qs}{\operatorname{QS}}

\newcommand{\h}{\mathbb{H}}
\newcommand{\R}{\mathbb{R}}


\theoremstyle{plain}
        \newtheorem{theorem}{Theorem}[section]
        \newtheorem{lemma}[theorem]{Lemma}
        
        \newtheorem{corollary}[theorem]{Corollary}

\theoremstyle{definition}
        \newtheorem{definition}[theorem]{Definition}

\theoremstyle{remark}
    \newtheorem{remark}[theorem]{Remark}

\numberwithin{equation}{section} 
\numberwithin{figure}{section} 


\usepackage{fullpage}
\title
[Analysis on quasidisks]{Analysis on quasidisks \\ a unified approach through transmission and jump problems }

\dedicatory{\emph{Dedicated to our friend Ian Graham}}

\author[E. Schippers]{Eric Schippers}
\author[W. Staubach]{Wolfgang Staubach}

\address{\newline
       Eric Schippers \newline
       Machray Hall, Dept. of Mathematics,
   University of Manitoba, \newline Winnipeg, MB
   Canada R3T 2N2}
       \email{eric.schippers@umanitoba.ca}

\address{\newline
       Wolfgang Staubach \newline
       Department of  Mathematics, Uppsala University, \newline
       S-751 06 Uppsala, Sweden}
       \email{wulf@math.uu.se}

\keywords{Conformally non-tangential boundary value, Bergman space, Dirichlet space, Faber operator, Grunsky operator, Jump problem, Schiffer operator, Transmission,  Quasisymmetry, Quasicircles}

\subjclass[2010]{Primary: {}, Secondary: {}}

\begin{document}

\vspace*{-1cm}
\maketitle
\vspace*{-0.9cm}
 \begin{abstract} 
  We give an exposition of results from a crossroad between geometric function theory, harmonic analysis, boundary value problems and approximation theory, which characterize quasicircles. We  will specifically  expose the interplay between the jump decomposition, singular integral operators and approximation by Faber series. Our unified point of view is made possible by the the concept of transmission.

 \end{abstract} 
\vspace*{-0.3cm}
\tableofcontents

\begin{section}{Introduction}

A quasiconformal map in the plane is a homeomorphism between planar domains which maps small circles to small ellipses of bounded eccentricity. A quasicircle is by definition the image of the circle $\mathbb{S}^1$ under a quasiconformal map and a quasidisk is the interior of a quasicircle. In geometric function theory quasicircles play a fundamental role in the description of the universal Teichm\"uller space. They also play an important role in complex dynamical systems. The reader is referred to the book by F. Gehring and K. Hag \cite{Gehring} for a nice introduction to various ramifications of this topic. \\  

It is a familiar fact in the field that quasicircles have an unusually large number of   characterizations which are not obviously equivalent, and indeed are qualitatively quite different.  See e.g. \cite[Chapters 8,9]{Gehring} for some of the classical and also some less well-known ones.  It is somewhat astonishing that these continue to be found.   In this paper, we will focus on the relatively recent ones, due to A. \c{C}avu\c{s} \cite{Cavus}, Y. Y. Napalkov and R. S. Yulmukhametov \cite{Nap_Yulm}, Y. Shen \cite{ShenFaber}, and the authors \cite{Schippers_Staubach_JMAA,Schippers_Staubach_CAOT}.  Indeed, our purpose here is to highlight a characterization based on an interplay between geometric function theory, harmonic analysis, boundary value problems and approximation theory. This point of view was investigated by the authors in a series of papers, and in these works, it emerged that the key to a unified approach is the method of transmission of harmonic functions (or forms).\\

The goal of this paper is to give an essentially self-contained and unified exposition of this circle of ideas and the method of transmission, not least because of its potential applications outside geometric function theory. In doing so we have also refined and improved many of our theorems in previous papers.\\

To define the notion of transmission, let $\Gamma$ be a Jordan curve separating the Riemann sphere $\sphere$ into two components $\Omega_1$ and $\Omega_2$.  Given a harmonic function $h$ on $\Omega_1$ which extends continuously to $\Gamma$, there is a harmonic function on $\Omega_2$ with the same continuous extension on $\Gamma$.  We call the new function the transmission of $h$.  
 We generalize the concept of transmission to Dirichlet bounded harmonic functions.  For such harmonic functions, the transmission exists and is bounded with respect to the Dirichlet semi-norm if and only if the curve $\Gamma$ is a quasicircle.\\

 
Returning to the problem of characterization of quasicircles, it came to light that in the setting of Dirichlet bounded harmonic functions, a number of perfect equivalences arise, which make a unified treatment of a number of topics possible. To begin with, given a Jordan curve $\Gamma$ as above, the following three statements are equivalent.
 \begin{enumerate}
     \item $\Gamma$ is a quasicircle.\\
     \item There is a bounded transmission from the Dirichlet space of harmonic functions on  $\Omega_1$ to the Dirichlet space of harmonic functions on $\Omega_2$, which agrees with transmission of continuous functions. \\
     \item The linear operator taking the boundary values of a Dirichlet bounded harmonic function to its Plemelj-Sokhotski jump decomposition is a bounded isomorphism.  
 \end{enumerate}
 These results are due to the authors \cite{Schippers_Staubach_JMAA,Schippers_Staubach_CAOT}.\\

 The first three equivalent statements also are closely related to approximability by Faber series, the Faber and Grunsky operators, and the Schiffer operator.  We thus have the following further equivalent statements.  Attributions in brackets refers to the first proofs of the equivalence with (1), unless clarified below.\\      
 \begin{enumerate}  \setcounter{enumi}{3}
     \item  The Faber operator corresponding to $\Omega_2$ is an isomorphism (authors \cite{Schippers_Staubach_CAOT}).\\   
     \item  The sequential Faber operator is an isomorphism (\c{C}avu\c{s}  \cite{Cavus}, Shen \cite{ShenFaber}).\\
     \item  Every element of the holomorphic Dirichlet space of $\Omega_2$ is uniquely approximable by a Faber series (\c{C}avu\c{s} \cite{Cavus}, Shen \cite{ShenFaber}).\\  
     \item  The Schiffer operator is an isomorphism (Napalkov and Yulmukhametov \cite{Nap_Yulm}).  \\ 
\end{enumerate}      
 The implications (1) $\Rightarrow$ (5) and (1) $\Rightarrow$ (6) are due to \c{C}avu\c{s}, and later independently by Shen, while the reverse implications are due to Shen.  
 For the special case of rectifiable Jordan curves, the equivalence of (1) and (4) is due to H. Y. Wei, M. L. Wang, and Y. Hu  \cite{WeiWangHu}.    
 In this paper, we give  proofs of the equivalence of (4) - (7) with (1) which rely on the transmission result (2).  The proofs given here that (4)-(7) imply (1) are new.

 Finally, all of these results are closely connected to the classical result that $\Gamma$ is a quasicircle if and only if
\begin{enumerate} \setcounter{enumi}{7}
     \item  The norm of the Grunsky operator is strictly less than one.
 \end{enumerate}
 The implication (1) $\Rightarrow$ (8) is due to R. K\"uhnau \cite{Kuehnau_Verzerrungsatze} and (8) $\Rightarrow$ (1) is due to C. Pommerenke \cite{Pommerenkebook}.  
 In the literature, all proofs of the implication (k) $\Rightarrow$ (1) for $k=4,\ldots,7$ (including those due to the authors) rely on the result (8) $\Rightarrow$ (1). However the proofs given in this paper do not.\\  
 
An important issue in connection to transmission is that  some notion of boundary values is necessary in order to define the transmission in a sensible way. To this end we also include an exposition of a conformally invariant notion of non-tangential boundary value, which we call conformally non-tangential (CNT for short).  This was developed by the authors for Jordan curves in Riemann surfaces \cite{Schippers_Staubach_transmission}.  The existence of such boundary values for the Dirichlet space of a simply connected domain is an automatic consequence of a well-known result of A. Beurling.  On the other hand, it is not true in general that the boundary values of a harmonic function in one connected component of the complement of $\Gamma$ are boundary values of a harmonic function in the other component.  Even potential-theoretically negligible sets are not obviously the same: for example, sets of harmonic measure zero with respect to one side are not necessarily harmonic measure zero with respect to the other, even for quasicircles. At any rate, we give a general framework for the application of the CNT boundary values to sewing and transmission.  Aside from the bounded transmission theorem mentioned above, the most important of these results are:
\begin{enumerate}
    \item [(i)] for quasicircles, the potential-theoretically negligible sets on the boundary of $\Omega_1$ are also negligible for $\Omega_2$;\\
    \item [(ii)] the operator (what we call the bounce operator) taking a Dirichlet-bounded harmonic function on a doubly-connected region in $\Omega_1$, one of whose boundaries is $\Gamma$, to the harmonic function on $\Omega_1$ with the same boundary values, is bounded for any Jordan curve;\\
    \item [(iii)] limiting integrals taken over level curves of Green's function are the same for any two Dirichlet bounded harmonic function in a collar near $\Gamma$ which have the same CNT boundary values (the anchor lemma). 
\end{enumerate}
The precise statements are given in Theorems \ref{th:null_both_sides_quasicircle}, \ref{th:bounce_bounded}, and Theorem \ref{le:anchor_lemma} respectively. \\ 
 
To conclude, we strive in this paper to show the clarifying power of the transmission theorem for understanding approximation by Faber series, the Grunsky operator, the Plemelj-Sokhotski jump theorem, and Schiffer operators.
 The results should have many applications in the investigation of the behaviour of function spaces, boundary value problems, and related operators under sewing.  The results here are 
 also the basis for a scattering theory of harmonic functions and one-forms for general Riemann surfaces \cite{Schippers_Staubach_monograph}.\\



The paper is organized as follows.  In Section \ref{se:boundary_values}, we state necessary definitions and results regarding conformally non-tangential boundary values of Dirichlet bounded functions.  After preliminaries on the Dirichlet and Bergman space, and quasisymmetric mappings in Sections \ref{se:Dirichlet_Bergman_preliminaries} and \ref{se:quasi_preliminaries}, we define certain potential-theoretically negligible sets on a Jordan curve $\Gamma$ with respect to the enclosed domain in Section \ref{se:null_sets}, which we call null sets, and derive their basic properties.   A particularly crucial fact is that, in the case that the Jordan curve is a quasicircle, sets that are null with respect to one of the regions enclosed by $\Gamma$ are also null with respect to the other.  In fact, for quasicircles not containing $\infty$,  null sets in $\Gamma$ are precisely Borel sets of capacity zero.   After reviewing some basic results on boundary values of the Dirichlet space of the disk in Section \ref{se:boundary_values_disk}, we give the definition of CNT boundary values in Section \ref{se:CNT_boundaryvalues} and basic properties.\\  
  
  Section \ref{se:transmission} contains the first of the main results, namely (1) $\Leftrightarrow$ (2): a bounded transmission exists on Dirichlet space if and only if $\Gamma$ is a quasicircle.
  Section \ref{se:Nag_Sullivan} reviews some known theorems which characterize quasisymmetries in terms of their action on the homogeneous Sobolev space $H^{1/2}$, and a reformulation in terms of CNT boundary values up to null sets. This refinement is necessary because sets of harmonic measure zero on a quasicircle with respect to one side of a curve - which are the images of sets of Lebesgue measure zero on the circle under a conformal map -  need not be of harmonic measure zero with respect to the other side of the curve. Thus, null sets are necessary.  Section \ref{se:transmission_sub} contains the transmission result.  In Section \ref{se:bounce_anchor_density}, we establish several useful results regarding boundary values and integrals. We prove that the so-called bounce operator described in the introduction is bounded.  We also prove the ``anchor lemma'', which shows that certain limiting integrals taken over curves approaching the non-rectifiable Jordan curve depend only on  the CNT boundary values.  Finally, we give a few useful dense subsets of Dirichlet spaces on simply- and multiply-connected domains.  These ultimately rely on density of polynomials.\\  
  
  Section \ref{se:Schiffer_Cauchy} contains the main results on Plemelj-Sokhotski jump isomorphism and Schiffer isomorphism, that is $(1) \Leftrightarrow (3) \Leftrightarrow (7).$  Section \ref{se:Schiffer} defines the Schiffer operator and proves basic analytic results, M\"obius invariance, and an identity of Schiffer.  Section \ref{se:jump_isomorphisms} defines a Cauchy integral operator adapted to non-rectifiable curves using limits of integrals over curves approaching the boundary. We show that for quasicircles the value of this operator is the same for curves approaching $\Gamma$ over either side, in a certain sense involving transmission.  We also prove basic identities relating the Cauchy integral operator to the Schiffer operators, and the M\"obius invariance of the operator.  Section \ref{se:jump_isomorphisms} contains the main results which show that the Plemelj-Sokhotski jump decomposition exists when $\Gamma$ is a  quasicircle, and in a certain sense this decomposition is an isomorphism if and only if $\Gamma$ is a quasicircle.  We also give a new proof of Napalkov and Yulmukhametov's result that the Schiffer operator is an isomorphism if and only if $\Gamma$ is a quasicircle.\\
  
  In Section \ref{se:Faber_Grunsky} we prove that the Faber operator is an isomorphism 
  if and only if $\Gamma$ is a quasicircle, as well as the existence and uniqueness of Faber series; that is,  (1) $\Leftrightarrow$ (4) $\Leftrightarrow$ (5) $\Leftrightarrow$ (6).    We also give a brief review of the equivalence with strict Grunsky inequalities.  

  Finally, Section \ref{se:notes} contains notes on the literature, as well as some fine points which could not be put in the main text without interrupting the flow of the paper.  Although the notes are fairly extensive for a paper of this size, we make no claims to completeness, and merely indicate the tip of the literary iceberg. 
  

\end{section}
\begin{section}{Function spaces and boundary values} \label{se:boundary_values}
\begin{subsection}{Dirichlet and Bergman spaces}  \label{se:Dirichlet_Bergman_preliminaries}


 We denote the complex plane by $\mathbb{C}$ and the Riemann sphere by $\sphere$.  We define $\disk^+ = \{ z\in \mathbb{C} \,:\, |z|< 1 \}$ and also $\disk^- = \{ z \in \mathbb{C}  \,: \, |z | > 1 \} \cup \{ \infty \}$.  The circle $\mathrm{bd}(\disk) = \{ z \in \mathbb{C} \,:\, |z|=1 \}$ is denoted $\mathbb{S}^1$.  In this paper, a conformal map is always assumed to be one-to-one (not just locally one-to-one). That is, a conformal map is a biholomorphism onto its image.  
 
 The Riemann sphere $\sphere$ is endowed with the standard complex structure given by the charts 
 \begin{align*} 
  \psi_0:\mathbb{C} &  \rightarrow \mathbb{C} \\
  \psi_0(z) & = z
 \end{align*}
 \begin{align*} 
  \psi_\infty:\sphere \backslash \{0\} &  \rightarrow \mathbb{C} \\
  \psi_\infty(z) & = 1/z \ \ z \neq \infty \\
  \psi_\infty(\infty) & = 0,
 \end{align*} 
 and holomorphicity or harmonicity is defined with respect to these charts.
 That is,
 let $\Omega$ be an open connected set in $\sphere$.  A function $h$ is holomorphic on $\Omega$ if (1) it is holomorphic on $\sphere \backslash \{ \infty \}$ and (2) if $\infty \in \Omega$ then $g(z)=f(1/z)$ is holomorphic in a neighbourhood of $0$. Anti-holomorphic and harmonic functions on $\Omega$ are defined similarly.\\ 
 
 We will also consider smooth one-forms on subsets of $\sphere$, where these are defined in the usual way in terms of the Riemann surface structure of $\sphere$.  Any one-form $\alpha$ is given in local coordinates by $h_1(z)\, dz + h_2(z)\, d\bar{z}$ for smooth functions $h_1(z)$ and $h_2(z)$.   A one-form $\alpha$ on $\Omega$ is said to be holomorphic if it can be expressed in local coordinates $z$ as $h(z) \,dz$ where $h(z)$ is holomorphic.  That is, $\alpha = a(z)\,dz$ on $\Omega \backslash \{\infty \}$, and if $\infty \in \Omega$, then $b(w) = -a(1/w)/w^2$ is holomorphic in an open set containing $0$ (so that in a chart at $\infty$, we may write $\alpha = b(w)\, dw$).  A one-form is anti-holomorphic if it is the complex conjugate of a holomorphic one-form.  
 
 We also define the $\ast$-operator as follows.  If $\alpha = h_1(z)\, dz + h_2(z) \,d\bar{z}$ in local coordinates we define 
 \[  \ast \alpha = \ast (h_1(z)\, dz + h_2(z) \,d\bar{z} ) = -i h_1 \,dz + i h_2 \,d\bar z. \]
 It is easily checked that this is well-defined with respect to the change of coordinates $z = \psi_0 \circ \psi^{-1}_\infty(w) = 1/w$.

  Define
  \begin{equation}\label{defn:length of a form}
   \| \alpha \|_{\Omega}^2 =  \frac{1}{2 \pi} \iint_\Omega \alpha \wedge \ast \overline{ \alpha}   
  \end{equation}
  which might of course diverge.  Since any smooth one-form $\alpha$ on $\Omega$ can be written (uniquely) in $z$ coordinates as 
  \begin{equation} \label{eq:coordinates_alpha} \alpha = h_1(z)\, dz +h_2(z)\, d\bar{z} 
  \end{equation}
  for smooth functions $h_1$ and $h_2$, then if  in (\ref{eq:coordinates_alpha}) $z$ is the parameter in $\mathbb{C}$ (that is, in $\psi_0$ coordinates), then \eqref{defn:length of a form} can be written as 
  \begin{equation} \label{eq:removable_point_temp1}
    \| \alpha \|^2_\Omega = \frac{1}{\pi} \iint_{\Omega \backslash \{\infty \}} 
    \left( |h_1(z)|^2 + |h_2(z)|^2 \right)\,dA,
  \end{equation}
  where $dA = (d\bar{z} \wedge dz)/2i$ is the Euclidean area element in $\mathbb{C}$.
  This is justified as follows:  when $\infty \in \Omega$, if $\| \alpha\|^2_\Omega <\infty$ then it is easily verified that there is an $R$ such that 
  \[  \iint_{|z| >R }  \left(|h_1(z)|^2 + |h_2(z)|^2 \right) \,dA < \infty. \] 
  Thus the point at $\infty$ can be removed from the domain of integration without changing the convergence properties or value of the integral.
  
\begin{definition}\label{defn:harmonic one-form}
A smooth one-form $\alpha$ is said to be harmonic if $d \alpha = 0$ and $d \ast \alpha =0$; equivalently, for any point $p \in \Omega$, $\alpha = dh$ for some harmonic function $h$ on some open neighbourhood of $p$.  Note that if $\infty \in \Omega$, this restricts the behaviour of $\alpha$ at $\infty$ since $h(1/z)$ must be harmonic at $0$.
\end{definition}  
   We then define the space of $L^2$ harmonic one-forms $\mathcal{A}_{\text{harm}}(\Omega)$ to consist of those harmonic one-forms $\alpha$ on $\Omega$ such that $\| \alpha \|_{\Omega} < \infty$.  This is a Hilbert space with inner product
  \begin{equation}\label{defn:scalar product of forms}
   (\alpha,\beta) = \frac{1}{2 \pi} \iint_{\Omega} \alpha \wedge \ast \overline{\beta},   
  \end{equation} 
 which is also consistent with \eqref{defn:length of a form}. 
  The Bergman space of one-forms is 
  \[ \mathcal{A}(\Omega) = \{ \alpha \in \mathcal{A}_{\text{harm}}(\Omega) \,:\, \alpha \text{ is holomorphic}\},  \]
  and for $\alpha = h_1(z)\, dz,$ ${{\beta= h_2(z)\, d{z}}} \in \mathcal{A}(\Omega)$
  we have 
  \[ (\alpha,\beta) = \frac{1}{\pi} \iint_{\Omega \backslash \{ \infty \}} 
   h_1(z)\, \overline{h_2(z)} \,dA. \]
  The anti-holomorphic Bergman space $\overline{\mathcal{A}(\Omega)}$ consists of complex conjugates of elements of $\mathcal{A}(\Omega)$.   
  
  Observe that $\mathcal{A}(\Omega)$ and $\overline{\mathcal{A}(\Omega)}$ are orthogonal with respect to the inner product.   We then obtain the decomposition 
  \[   \mathcal{A}_{\text{harm}}(\Omega) = \mathcal{A}(\Omega) \oplus \overline{\mathcal{A}(\Omega)}     \]
  which induce the projection operators
  \begin{align}\label{defn:projections in bergman}
    \mathbf{P}(\Omega) :\mathcal{A}_{\text{harm}}(\Omega) & \rightarrow \mathcal{A}(\Omega)\nonumber \\
    \overline{\mathbf{P}(\Omega)} :\mathcal{A}_{\text{harm}}(\Omega) & \rightarrow \overline{\mathcal{A}(\Omega)} 
  \end{align}
  \begin{definition}\label{defn:Dirichlet spaces}
  For an open connected set $\Omega$ and a smooth function $h:\Omega \rightarrow \mathbb{C}$ we define the Dirichlet energy of $h$ by
 \begin{equation}\label{defn:dirichlet norm of form}
  D_\Omega(h) = \| dh \|^2_{\Omega}.   
 \end{equation}  
  The harmonic Dirichlet space  $\mathcal{D}_{\text{harm}}(\Omega)$ consists of those harmonic functions $h$ on $\Omega$ such that $D_\Omega(h) < \infty$.  
  If $z$ is the coordinate in $\mathbb{C}$ then \eqref{defn:dirichlet norm of form} can be written as
  \begin{equation}
   D_\Omega(h) = \frac{1}{\pi} \iint_{\Omega \backslash \{ \infty \}} 
   \left( \left| \frac{\partial h}{\partial z}\right|^2 +  \left| \frac{\partial 
   h}{\partial \bar{z}} \right|^2 \right) \,dA.    
  \end{equation} 
  The holomorphic Dirichlet space $\mathcal{D}(\Omega)$ is the set of holomorphic functions in $\mathcal{D}_{\text{harm}}(\Omega)$, and the anti-holomorphic Dirichlet space $\overline{\mathcal{D}(\Omega)}$ is given by the set of complex conjugates of elements of $\mathcal{D}(\Omega)$. 
  The Dirichlet energy on $\mathcal{D}(\Omega)$ restricts to 
  \[   D_\Omega(h) = \frac{1}{\pi}  \iint_{\Omega \backslash \{\infty\}} |h'(z)|^2\, dA   \]
  and similarly for $\overline{\mathcal{D}(\Omega)}$.  
  Observe that $\mathcal{D}_{\text{harm}}(\Omega)$ does not decompose into a sum of elements of $\mathcal{D}(\Omega)$ and $\overline{\mathcal{D}(\Omega)}$ unless $\Omega$ is simply connected (and even in that case, the decomposition is not unique because constants belong to both spaces). 
  \end{definition}

  The Dirichlet energy is not a norm, since $D(c)=0$ for constants $c$.  It becomes a norm if we restrict to normalized functions $h(p)=0$ for some $p \in \Omega$. 
  When such a normalization is imposed, we use the notations $\mathcal{D}_p(\Omega)$, $\mathcal{D}_{\text{harm}}(\Omega)_p$, and use $\Vert \cdot \Vert_{\mathcal{D}_p(\Omega)}$ and so on, for the corresponding norms. 
  
  We use the following notation for projection.   Fix a simply-connected domain $\Omega$ in $\sphere$.  Fix $p \in \Omega$. Define the decompositions 
 \begin{align*}
    \mathcal{D}_{\text{harm}}(\Omega) & = \overline{\mathcal{D}(\Omega)}  \oplus \mathcal{D}_p(\Omega) \\ 
    & = \overline{\mathcal{D}_{p}(\Omega)} \oplus \mathcal{D}(\Omega).
 \end{align*}
  {
   We then have projections 
   \begin{align} \label{eq:holomorphic_normalization_projection}
    \overline{\mathbf{P}^{h}_p(\Omega)}: \mathcal{D}_{\text{harm}}(\Omega) & \rightarrow \overline{\mathcal{D}(\Omega)} \nonumber \\
    {\mathbf{P}^{h}_p(\Omega)}: \mathcal{D}_{\text{harm}}(\Omega) & \rightarrow {\mathcal{D}_p(\Omega)}
   \end{align}
    induced by the first decomposition, and projections 
    \begin{align} \label{eq:antiholomorphic_normalization_projection}
     \overline{\mathbf{P}^{a}_p(\Omega)} : \mathcal{D}_{\text{harm}}(\Omega) 
     & \rightarrow \overline{\mathcal{D}_p(\Omega)} \nonumber \\
     {\mathbf{P}^{a}_p(\Omega)} : \mathcal{D}_{\text{harm}}(\Omega) & \rightarrow \mathcal{D}(\Omega)
    \end{align}
   induced by the second. Note that all four projections depend on the location of $p$.  The superscripts ``$h$'' and ``$a$'' stand for ``holomorphic'' and ``anti-holomorphic'' normalizations. }  
 \\
 
 Finally, for a domain $G$ in the plane, with boundary $\Gamma$, we also define the Sobolev spaces $H^1(G)$ and $H^{1/2}(\Gamma)$:
 \begin{definition}
 $H^1(G)$ consists of functions in $L^2(G)$ such that 
  \begin{equation}\label{sobolev norm}
      \Vert h\Vert_{H^1(G)}:=\left(D_{G}(h)+\Vert h\Vert^2_{L^2(G)}\right) ^{1/2}<\infty.
  \end{equation}
  Moreover, if $\Gamma$ is regular enough then one can also take the restriction (trace) of an $H^1(G)$-function to $\Gamma$, which yields a function $h|_{\Gamma}\in H^{1/2}(\Gamma)$ where $H^{1/2}(\Gamma)$ is the space of functions in $L^2(\Gamma)$ for which
  
    \begin{equation}\label{sobolev half norm}
      \Vert f\Vert_{H^{1/2}(\Gamma)}:=\left( \int_{\Gamma} \int_{\Gamma} \frac{|f(z)-f(\zeta)|^2}{|z-\zeta|^2}\,|dz|\, |d \zeta|+\Vert f\Vert^2_{L^2(\Gamma)}\right)^{1/2}<\infty,
  \end{equation}
 \end{definition}

see Chapter 4 in \cite{Taylor} for all the details regarding Sobolev spaces. 
  \end{subsection}

\begin{subsection}{Quasisymmetries and quasiconformal maps} \label{se:quasi_preliminaries} \ 
 
 In this section we review definitions and results about quasisymmetries and quasiconformal maps.

 \begin{definition} \label{de:quasiconformal}
  Let $A$ and $B$ be open connected subsets of the complex plane.  An orientation-preserving homeomorphism $\Phi:A \rightarrow B$ is a $k$-{\it quasiconformal mapping} if\\ 
  \begin{enumerate}
      \item for every rectangle $[a,b] \times [c,d] \subset A$, $\Phi(x,\cdot)$ is absolutely continuous on $[c,d]$ for almost every $x \in [a,b]$;\\
      \item for every rectangle $[a,b] \times [c,d] \subset A$, $\Phi(\cdot,y)$ is absolutely continuous on $[a,b]$ for almost every $y \in [c,d]$;\\
      \item there is a $k \in (0,1)$ such that $| \Phi_{\bar{z}} | \leq k | \Phi_{z} |$ almost everywhere in $A$. 
  \end{enumerate}
  \vspace{0.3cm}
  We say that a map is quasiconformal if it is $k$-quasiconformal for some $k \in (0,1)$.
 \end{definition}

 Define 
 \begin{align*} \iota:\mathbb{C} \backslash \{0\} & \rightarrow \mathbb{C} \backslash \{0\}. \\
  z & \mapsto 1/z.
 \end{align*}
 Let $A$ and $B$ be open connected subsets of $\sphere$.  We say that a homeomorphism $\Phi:A \rightarrow B$ is a $k$-quasiconformal mapping if 
 \[  \Phi, \ \ \ \iota \circ \Phi, \ \ \ \Phi \circ \iota, \ \ \ \text{and} \ \ \ \iota \circ \Phi \circ \iota      \]
 are all $k$-quasiconformal on their maximal domains of definition; as above, we say that $\Phi$ is quasiconformal if it is $k$-quasiconformal for some $k$.  If $A,B \subsetneq \sphere$ then $\Phi$ is quasiconformal if,  given M\"obius transformations $S$ and $T$ such that $S(A) \subset \mathbb{C}$ and $T(B) \subset \mathbb{C}$, $T \circ \Phi \circ S^{-1}$ is quasiconformal from $S(A)$ to $T(B)$.

 Similarly, for open connected sets $A,B \subset \sphere$ we say that a map $f:A \rightarrow B$ is conformal if
 \[  f, \ \ \ \iota \circ f, \ \ \ f \circ \iota, \ \ \ \text{and} \ \ \ \iota \circ f \circ \iota      \]
 are all conformal on their maximal domains of definition.  Conformal maps are $0$-quasiconformal onto their image, and it can be shown that $0$-quasiconformal maps are conformal  (see e.g. \cite{Ahlfors_quasiconformal}).  Furthermore, if $\Phi:A \rightarrow B$ is quasiconformal and $g:A' \rightarrow A$ is conformal then $\Phi \circ g:A' \rightarrow B$ is quasiconformal, and if $f:B \rightarrow B'$ is conformal, then $f \circ \Phi$ is quasiconformal.  If $C$ is any open connected subset of $A$, then the restriction of $\Phi$ to $C$ is a quasiconformal map onto $\Phi(C)$. 
\begin{remark} \label{re:extension_to_infinity}
 Any quasiconformal map $\Phi:\mathbb{C} \rightarrow \mathbb{C}$ extends to a quasiconformal map from $\sphere$ to $\sphere$, which takes $\infty$ to $\infty$ \cite[Theorem I.8.1]{Lehto_Virtanen}.
\end{remark} 
\begin{definition}\label{de:quasisymmetriccircle}
 An orientation-preserving homeomorphism $h$ of $\mathbb{S}^1$ is called a \emph{quasisymmetric mapping}, iff there is a constant $k>0$, such that for every $\alpha$, and every $\beta$ not equal to a multiple of $2\pi$, the inequality
 \[  \frac{1}{k} \leq \left| \frac{h(e^{i(\alpha+\beta)})-h(e^{i\alpha})}{h(e^{i\alpha})-h(e^{i(\alpha-\beta)})} \right|
    \leq k \]
 holds. 
\end{definition}
Let $\qs(\mathbb{S}^1)$ denote the set of quasisymmetric maps from $\mathbb{S}^1$ to $\mathbb{S}^1$.  $\qs(\mathbb{S}^1)$ consists precisely of boundary values of quasiconformal maps, as the following two theorems show. 
\begin{theorem} \label{th:qs_are_qc_boundary}
 Let $\Phi: \mathbb{\disk}^+ \rightarrow \disk^+$ be a quasiconformal map. Then $\Phi$ has a continuous extension to $\mathbb{S}^1 \cup \disk^+$, and the restriction of this extension to $\mathbb{S}^1$ is a quasisymmetry.  Conversely, if $\phi:\mathbb{S}^1 \rightarrow \mathbb{S}^1$ is a quasisymmetry, then $\phi$ is the restriction to $\mathbb{S}^1$ of the continuous extension of a quasiconformal map $\Phi: \disk^+ \rightarrow \disk^+$.  In the above, one may replace $\disk^+$ everywhere by $\disk^-$ and the result still holds.
\end{theorem}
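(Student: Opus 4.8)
This is the classical Beurling--Ahlfors theorem, and the plan is to run the two standard arguments on the upper half plane $\h$ and then transport them to the disk. The first step is to conjugate everything by a Cayley map $C:\disk^+\to\h$ and, where needed, by the inversion $\iota(z)=1/z$ of Remark~\ref{re:extension_to_infinity}, which is conformal, interchanges $\disk^+$ and $\disk^-$, and restricts to the reflection $e^{i\theta}\mapsto e^{-i\theta}$ of $\mathbb{S}^1$. Since $C$ and $\iota$ are conformal, conjugation by either preserves quasiconformality and its constant; the quasisymmetry condition of Definition~\ref{de:quasisymmetriccircle} is invariant under this circle reflection because its bounding interval $[1/k,k]$ is symmetric; and $\qs(\mathbb{S}^1)$ corresponds under $C$ to the homeomorphisms of $\widehat{\R}$ which, after post-composition with a real M\"obius map fixing $\infty$, satisfy the Beurling--Ahlfors bound $1/\rho\le (h(x+t)-h(x))/(h(x)-h(x-t))\le\rho$ for all $x\in\R$ and $t>0$. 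With these reductions the $\disk^-$ statements follow from the $\disk^+$ statements, so it suffices to treat $\disk^+$.

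For the forward implication, we would first show that a quasiconformal self-map $\Phi$ of $\disk^+$ extends to a homeomorphism of $\overline{\disk^+}$, using modulus-of-annulus (length--area) estimates to prove that the cluster set of $\Phi$ at each boundary point is a singleton and that no boundary arc is collapsed. Next, transporting to $\h$ and, after a real M\"obius post-composition, assuming the induced self-map $F$ of $\h$ fixes $\infty$ with boundary map $h:\R\to\R$, we fix $x\in\R$ and $t>0$ and consider the quadrilateral $(\h;x-t,x,x+t,\infty)$: the affine map $s\mapsto (s-x)/t$ shows its conformal modulus equals a universal constant $m_0$, hence the modulus of $(\h;h(x-t),h(x),h(x+t),\infty)$ lies in $[m_0/K,\,Km_0]$ with $K=(1+k)/(1-k)$. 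Since the conformal modulus of $(\h;a,b,c,\infty)$ for $a<b<c$ is a continuous strictly monotone bijection of $(c-b)/(b-a)\in(0,\infty)$ onto $(0,\infty)$, this two-sided bound forces a two-sided bound on $(h(x+t)-h(x))/(h(x)-h(x-t))$ depending only on $K$; un-normalizing recovers the quasisymmetry of $\phi=\Phi|_{\mathbb{S}^1}$.

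For the converse, given a quasisymmetric $h:\R\to\R$ with constant $\rho$ (in particular $h$ is a continuous increasing bijection of $\R$), we would introduce the Beurling--Ahlfors extension
\[ F(x+iy)=u(x,y)+i\,v(x,y),\quad u(x,y)=\frac{1}{2y}\int_{x-y}^{x+y}h(s)\,ds,\quad v(x,y)=\frac{1}{2y}\int_0^y\bigl(h(x+t)-h(x-t)\bigr)\,dt, \]
and verify in turn that: $F$ extends continuously to $\overline{\h}$ with $F|_\R=h$; $v>0$ for $y>0$ and the averages defining $u$ increase strictly in $x$, so $F$ is a homeomorphism of $\overline{\h}$ onto itself; $F\in C^1(\h)$ with everywhere positive Jacobian and with $u_x,u_y,v_x,v_y$ given explicitly in terms of differences and one-sided averages of $h$; and the Beltrami coefficient satisfies $\|F_{\bar z}/F_z\|_\infty\le k(\rho)<1$. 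Transporting $F$ back by $C^{-1}$ (and un-normalizing) then produces a quasiconformal self-map of $\disk^+$ with boundary values $\phi$.

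The main obstacle is the last displayed bound. Everything else is bookkeeping: in the forward direction the only delicate point is the continuous homeomorphic boundary extension of the quasiconformal self-map (ruling out degenerate cluster sets and collapsed arcs), which is standard via modulus estimates; and in the converse, establishing $\|F_{\bar z}/F_z\|_\infty\le k(\rho)<1$ for the Beurling--Ahlfors extension requires a finite but intricate sequence of inequalities, obtained by setting $\alpha=h(x+y)-h(x)$, $\beta=h(x)-h(x-y)$ together with the two ``defects'' comparing $h(x)$ to its averages over $[x-y,x]$ and $[x,x+y]$, showing via the quasisymmetry bound that all four quantities are comparable to $\alpha+\beta$ with constants depending only on $\rho$, and then estimating $|F_{\bar z}|/|F_z|$ away from $1$. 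As all of this is classical, we would present the main steps and refer to \cite{Lehto_Virtanen} for the routine verifications.
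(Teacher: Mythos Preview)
Your proposal is correct and follows the same reduction as the paper: transfer to the upper half plane via a Cayley map and invoke the Beurling--Ahlfors theorem. The paper's own proof is in fact a one-line citation of \cite{Ahlfors-Beurling} after this reduction, so your sketch goes considerably further in detail than is required here; the modulus-of-quadrilateral argument and the explicit Beurling--Ahlfors extension you outline are precisely the classical ingredients behind that citation.
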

\begin{proof}
 { By reduction of the problem to the upper-half plane using the conformal equivalence of the unit disk and the former, this is just Ahlfors-Beurling's result in \cite{Ahlfors-Beurling}.}
\end{proof}

By a Jordan curve $\Gamma$ in $\sphere$, we mean the image of $\mathbb{S}^1$ under a continuous map into $\sphere$ which is a homeomorphism onto its image.  Equivalently, it is the image of a Jordan curve in the plane under a M\"obius transformation. 
\begin{definition}
 A Jordan curve $\Gamma$ in $\sphere$ is a {\it quasicircle} if and only if it is the image of $\mathbb{S}^1$ under a quasiconformal map $\Phi:\sphere \rightarrow \sphere$.  We say that a Jordan domain is a quasidisk if its boundary is a quasicircle.  
\end{definition}

Quasidisks have the following important property  \cite[Corollary 2.1.5]{Gehring}.
\begin{theorem} 
 Let $\Omega$ be a quasidisk.  If $f:\disk^{\pm} \rightarrow \Omega$ is a biholomorphism, then $f$ extends to a quasiconformal map of $\sphere$.  
\end{theorem}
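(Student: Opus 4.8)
The plan is to construct the extension by hand: compare $f$ with a quasiconformal model of the quasidisk $\Omega$, extract a quasisymmetric boundary correspondence via the Ahlfors--Beurling theorem (Theorem~\ref{th:qs_are_qc_boundary}), and transplant that correspondence to the outside disk. By interchanging the roles of $\disk^+$ and $\disk^-$ we may assume $f\colon \disk^+\to\Omega$. Since $\Omega$ is a quasidisk, fix a quasiconformal $\Psi\colon \sphere\to\sphere$ with $\Psi(\mathbb S^1)=\Gamma:=\partial\Omega$. As $\Psi$ carries the two components of $\sphere\setminus\mathbb S^1$ onto the two components of $\sphere\setminus\Gamma$, after possibly replacing $\Psi$ by $\Psi\circ\iota$ (again quasiconformal, and mapping $\mathbb S^1$ to itself) we may assume $\Psi(\disk^+)=\Omega$; write $\Omega^{*}:=\Psi(\disk^-)=\sphere\setminus\overline{\Omega}$.

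First I would note that $\phi:=(\Psi|_{\disk^+})^{-1}\circ f$ is a quasiconformal self-map of $\disk^+$, being a conformal map followed by a quasiconformal one. By Theorem~\ref{th:qs_are_qc_boundary}, $\phi$ extends continuously to $\overline{\disk^+}$ and its restriction $h:=\phi|_{\mathbb S^1}$ is a quasisymmetry of $\mathbb S^1$; by the $\disk^-$ version of the same theorem, $h$ is also the boundary restriction of a quasiconformal self-map $H$ of $\disk^-$. Since $\Psi$ is continuous on all of $\sphere$, passing to the boundary in the identity $f=\Psi\circ\phi$ shows that $f$ extends continuously to $\overline{\disk^+}$ with $f|_{\mathbb S^1}=\Psi|_{\mathbb S^1}\circ h$, a homeomorphism of $\mathbb S^1$ onto $\Gamma$ (so in particular $f$ itself extends to a homeomorphism $\overline{\disk^+}\to\overline{\Omega}$, recovering Carath\'eodory's theorem in this case). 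I then define $F\colon \sphere\to\sphere$ by $F=f$ on $\overline{\disk^+}$ and $F=\Psi|_{\disk^-}\circ H$ on $\disk^-$. For $\zeta\in\mathbb S^1$ we have $(\Psi\circ H)(\zeta)=\Psi(h(\zeta))=f(\zeta)$, so the two definitions agree on $\mathbb S^1$; hence $F$ is well defined and continuous, and since it maps $\disk^+,\mathbb S^1,\disk^-$ bijectively onto $\Omega,\Gamma,\Omega^{*}$, it is a continuous bijection of $\sphere$, thus a homeomorphism. By construction $F$ is conformal on $\disk^+$ and quasiconformal on $\disk^-$, with maximal dilatation bounded in terms of that of $\Psi$ and $H$ alone.

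The one step that is not purely formal, and which I expect to be the main obstacle, is to promote this to genuine quasiconformality of $F$ on all of $\sphere$: one needs the removability of the smooth curve $\mathbb S^1$ for quasiconformal homeomorphisms, i.e.\ that a homeomorphism of $\sphere$ which is $K$-quasiconformal off $\mathbb S^1$ is $K$-quasiconformal. This is classical and may simply be quoted (e.g.\ from \cite{Lehto_Virtanen}); alternatively one can verify it directly against Definition~\ref{de:quasiconformal}: $\mathbb S^1$ has zero area, so the Beltrami inequality $|F_{\bar z}|\le k|F_z|$ holds a.e.\ automatically, while the $ACL$ property across $\mathbb S^1$ follows from continuity of $F$ together with its absolute continuity on almost every horizontal and vertical segment on each side of $\mathbb S^1$ (a function continuous on an interval and absolutely continuous on the two subintervals meeting at an interior point is absolutely continuous on the whole interval). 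Once $F$ is known to be quasiconformal, the conclusion follows since $F|_{\disk^+}=f$.
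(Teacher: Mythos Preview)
Your proof is correct. The paper does not actually give its own proof of this theorem; it simply cites \cite[Corollary 2.1.5]{Gehring} and moves on. Your argument is the standard one and is executed cleanly: pull back along the defining quasiconformal map $\Psi$ to obtain a quasiconformal self-map $\phi$ of $\disk^+$, use Theorem~\ref{th:qs_are_qc_boundary} once to extract the quasisymmetric boundary map $h$ and a second time to extend $h$ to a quasiconformal self-map $H$ of $\disk^-$, glue $f$ to $\Psi\circ H$ along $\mathbb{S}^1$, and finally invoke removability of the analytic circle. Your handling of the removability step---checking the dilatation bound a.e.\ (trivial since $\mathbb{S}^1$ has zero area) and patching the ACL property across the at most two intersection points of a generic line with $\mathbb{S}^1$ via continuity of $F$---is accurate and is indeed the elementary verification one finds in \cite{Lehto_Virtanen}. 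One minor remark on phrasing: when you write that $\Psi\circ\iota$ is ``mapping $\mathbb{S}^1$ to itself,'' you presumably mean that $\iota$ preserves $\mathbb{S}^1$ so that $\Psi\circ\iota$ still sends $\mathbb{S}^1$ to $\Gamma$; the intent is clear but the sentence could be read the wrong way.
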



One of the main tools in this paper is the conformal welding theorem.
\begin{theorem}[Conformal welding theorem] \label{th:conformal_welding}
 For any quasisymmetry $\phi:\mathbb{S}^1 \rightarrow \mathbb{S}^1$, there are conformal maps $f:\disk^+ \rightarrow \mathbb{C}$ and $g:\disk^- \rightarrow \sphere$, with the following properties.
 \begin{enumerate}
     \item $f$ and $g$ are quasiconformally extendible to $\sphere$ $($so that, in particular, $\Omega^+ = f(\disk^+)$ and $\Omega^-=g(\disk^-)$ are quasidisks$)$;
     \item $\mathrm{bd} (f(\disk^+)) = \mathrm{bd} (g(\disk^-))$, where $\mathrm{bd}$ denotes the boundary; and
     \item $\phi = (\left. g \circ f^{-1} \right)|_{\mathbb{S}^1}$.
 \end{enumerate}
 If we specify the normalization $f(0)=0$, $g(\infty)=\infty$, and $g'(\infty)=1$, then $f$ and $g$ are uniquely determined.  
\end{theorem}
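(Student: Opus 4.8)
The plan is to establish the conformal welding theorem by combining the Ahlfors--Beurling extension (Theorem~\ref{th:qs_are_qc_boundary}) with the measurable Riemann mapping theorem, and then to pin down uniqueness using the normalization. First I would take the given quasisymmetry $\phi:\mathbb{S}^1\to\mathbb{S}^1$ and, by Theorem~\ref{th:qs_are_qc_boundary}, extend it to a quasiconformal self-map $\Phi$ of $\disk^-$ (viewing $\phi$ as a boundary value of a quasiconformal map of the disk, then applying $\iota$ or working directly on $\disk^-$). Define a Beltrami coefficient $\mu$ on all of $\sphere$ by setting $\mu = \Phi_{\bar z}/\Phi_z$ on $\disk^-$ and $\mu = 0$ on $\disk^+\cup\mathbb{S}^1$; since $\Phi$ is $k$-quasiconformal, $\|\mu\|_\infty \le k < 1$, so $\mu$ is an admissible Beltrami coefficient.

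By the measurable Riemann mapping theorem there is a quasiconformal homeomorphism $F:\sphere\to\sphere$ solving the Beltrami equation $F_{\bar z} = \mu F_z$; it is unique up to post-composition with a M\"obius transformation, so I may normalize $F(0)=0$, $F(\infty)=\infty$, $F'(\infty)=1$ (using the three real-parameter freedom of M\"obius maps fixing $0$ and $\infty$ — actually $F(0)=0,F(\infty)=\infty$ leaves a one-complex-parameter scaling, fixed by $F'(\infty)=1$). Now set $f = F|_{\disk^+}$ and $g = F\circ\Phi^{-1}|_{\disk^-}$. On $\disk^+$, $F$ has $F_{\bar z}=0$, hence $f$ is conformal; on $\disk^-$, the composition $F\circ\Phi^{-1}$ has Beltrami coefficient zero by the chain rule for Beltrami coefficients (since $F$ and $\Phi$ share the same $\mu$ there), hence $g$ is conformal. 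Both $f$ and $g$ extend to quasiconformal maps of $\sphere$: $f$ is the restriction of the quasiconformal map $F$, and $g$ extends because $g = F\circ\Phi^{-1}$ is a composition of the quasiconformal $F$ with the quasiconformal $\Phi^{-1}$, both defined on appropriate domains, and one checks the extension across $\mathbb{S}^1$ is quasiconformal. This gives property~(1), and hence $\Omega^+$, $\Omega^-$ are quasidisks.

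For property~(2), both $f(\disk^+) = F(\disk^+)$ and $g(\disk^-) = F(\Phi^{-1}(\disk^-)) = F(\disk^-)$, and since $F$ is a homeomorphism of $\sphere$ mapping $\mathbb{S}^1$ to the Jordan curve $F(\mathbb{S}^1)$, the two regions share the common boundary $F(\mathbb{S}^1)$. For property~(3), on $\mathbb{S}^1$ we have $g\circ f^{-1} = F\circ\Phi^{-1}\circ F^{-1} = F\circ\Phi^{-1}\circ F^{-1}$; restricted to $F(\mathbb{S}^1)$ this is the identity composed appropriately, but restricted to $\mathbb{S}^1$ the boundary correspondence gives $(g\circ f^{-1})|_{\mathbb{S}^1} = \phi$ because $F|_{\mathbb{S}^1} = \mathrm{id}$ (as $\mu = 0$ near $\mathbb{S}^1$ — more carefully, one tracks that the boundary identification induced by $F$ on $\mathbb{S}^1$ is trivial from the $\disk^+$ side and equals $\Phi|_{\mathbb{S}^1}=\phi$ from the $\disk^-$ side). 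Finally, for uniqueness under the stated normalization: if $(f_1,g_1)$ and $(f_2,g_2)$ are two such triples, then $f_2\circ f_1^{-1}$ and $g_2\circ g_1^{-1}$ agree on $\mathbb{S}^1$ (both equal $f_2\circ f_1^{-1}$ there via the welding relation), so they glue to a conformal automorphism of $\sphere$, i.e.\ a M\"obius transformation; the normalizations $f(0)=0$, $g(\infty)=\infty$, $g'(\infty)=1$ force this M\"obius map to be the identity.

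The main obstacle I expect is the careful verification that $g = F\circ\Phi^{-1}$ extends quasiconformally \emph{across} $\mathbb{S}^1$ and that the boundary identification is exactly $\phi$ and not, say, $\phi^{-1}$ or a conjugate — this requires attention to the orientation conventions and the precise sense in which $\Phi$ extends $\phi$, and one must check that the conformality of $g$ on $\disk^-$ really does follow from the Beltrami coefficients cancelling (the chain rule $\mu_{F\circ\Phi^{-1}} = $ [pullback of $\mu_F - \mu_{\Phi^{-1}}$] applied correctly). A secondary technical point is invoking removability of $\mathbb{S}^1$ for the quasiconformal gluing, which is standard for quasicircles but should be cited. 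Everything else is bookkeeping built on the measurable Riemann mapping theorem and Theorem~\ref{th:qs_are_qc_boundary}.
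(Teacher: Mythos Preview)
The paper states this theorem without proof, treating it as a classical result; your outline via the Ahlfors--Beurling extension combined with the measurable Riemann mapping theorem is the standard argument (see, e.g., Lehto, \emph{Univalent functions and Teichm\"uller spaces}).

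That said, your verification of property~(3) contains a real error: the assertion $F|_{\mathbb{S}^1} = \mathrm{id}$ is false. Having $\mu = 0$ on $\disk^+$ makes $F$ conformal there, but says nothing about $F$ fixing $\mathbb{S}^1$ pointwise---indeed $F(\mathbb{S}^1) = \Gamma$ is the new quasicircle, generically not $\mathbb{S}^1$ at all. The correct computation is immediate and needs no such claim: using the continuous boundary extensions, $f|_{\mathbb{S}^1} = F|_{\mathbb{S}^1}$ and $g|_{\mathbb{S}^1} = (F\circ\Phi^{-1})|_{\mathbb{S}^1} = F|_{\mathbb{S}^1}\circ\phi^{-1}$, hence $g^{-1}\circ f = \phi$ on $\mathbb{S}^1$ (whether this or its inverse matches the stated convention is precisely the orientation bookkeeping you already flagged, and depends on which of $\phi$ or $\phi^{-1}$ you chose to extend to $\disk^-$). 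A smaller point: normalizing $F$ so that $F'(\infty)=1$ does not by itself give $g'(\infty)=1$, since $g'(\infty) = F'(\infty)\cdot(\Phi^{-1})'(\infty)$; you must either arrange $\Phi'(\infty)=1$ in the Ahlfors--Beurling step, or renormalize $F$ after $g$ is defined.
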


The normalization above can be replaced with any three normalizations in the interior of the domains of $f$ or $g$ if desired.

\end{subsection}
\begin{subsection}{Null sets} \label{se:null_sets}  In this section, we define null sets, which are potential-theoretically negligible sets on the boundary of a Jordan domain.  That is, in specifying a  harmonic function of bounded Dirichlet energy on a Jordan domain by its boundary values, changes to (or non-existence of) the boundary values on null sets have no effect.  We will see that in the special case that the Jordan domain is bounded by a quasicircle, null sets are those sets of logarithmic capacity zero.   

We first recall the definition of logarithmic capacity  \cite{Ahlfors_conformalinvariants,Ransford_book}; we follow \cite{Ransford_book}. 
\begin{definition}\label{defn:logarithmic capacity}
Let $\mu$ be a finite Borel measure in $\mathbb{C}$ with compact support.  The potential of $\mu$ is the function 
 \[  p_\mu(z) = \iint_{\mathbb{C}} \log{|z-w|} d\mu(w).   \]
 The energy of $\mu$ is then defined to be 
 \[  I(\mu) = \iint p_\mu(z) d\mu(z).    \]
 The equilibrium measure of a compact set $K$ is the measure $\nu$ such that 
 \[  I(\nu) = \sup_{\mu \in \mathcal{P}(K)} I(\mu)  \]
 where $\mathcal{P}(K)$ is the set of Borel probability measures on $K$.  
 Every compact set posseses an equilibrium measure \cite[Theorem 3.3.2]{Ransford_book}. Now the logarithmic capacity of a set $E \subseteq \mathbb{C}$ is defined as
 \[ c(E) = \sup_{  \substack{\mu \in \mathcal{P}(K) \\ K \subseteq E \ {\text{compact}}} } e^{I(\mu)}.  \]
\end{definition}
 
 For compact sets $K$ we have 
 \[  c(K) = e^{I(\nu)} \]
 where $\nu$ is the equilibrium measure of $K$.\\
 
We say a property holds \emph{quasieverywhere} if it holds except possibly on a set of logarithmic capacity zero. 
\begin{remark}
There are sets
$E$ which have Lebesgue measure zero but positive logarithmic capacity. However, if a
property holds quasi-everywhere, it holds almost everywhere. 
\end{remark} 
In what follows we will often drop the word ``logarithmic'' and use simply the word ``capacity''. 
 

 
 
 
 The outer logarithmic capacity  of a set $E \subseteq \mathbb{C}$ \cite{El-Fallah_etal_primer} is defined as 
 \[  c^*(E) = \inf_{\substack{ E \subseteq U \subseteq \mathbb{C} \\ U \ \text{open}}} c(U).    \]
 By Choquet's theorem \cite{Choquet,Ransford_book}, for any bounded Borel set $E$, $c(E) = c^*(E)$.   

\begin{lemma} \label{le:Borel_replace} Every bounded set of outer logarithmic capacity zero in $\mathbb{\mathbb{C}}$ is contained in a Borel set of logarithmic capacity zero.  
\end{lemma}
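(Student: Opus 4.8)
The plan is to apply the classical $G_\delta$-hull construction. Let $E \subseteq \mathbb{C}$ be bounded with $c^*(E) = 0$, and fix $R > 0$ with $E \subseteq \{ |z| < R \}$.

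The first step is to extract a sequence of open hulls of small capacity. By the definition of the outer logarithmic capacity, $c^*(E) = \inf\{ c(U) : U \subseteq \mathbb{C} \text{ open},\ E \subseteq U \} = 0$, so for each $n \in \mathbb{N}$ there is an open set $V_n \supseteq E$ with $c(V_n) < 1/n$. Replacing $V_n$ by $U_n := V_n \cap \{ |z| < R \}$ produces a bounded open set with $E \subseteq U_n$, and by monotonicity of capacity --- which is immediate from Definition~\ref{defn:logarithmic capacity}, since enlarging a set only enlarges the family of compact subsets over which one takes the supremum defining $c$ --- we have $c(U_n) \le c(V_n) < 1/n$.

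The second step is to intersect: set $B := \bigcap_{n \ge 1} U_n$. Then $B$ is a $G_\delta$ set, hence Borel, and $E \subseteq B$ because $E \subseteq U_n$ for every $n$. Applying monotonicity once more gives $c(B) \le c(U_n) < 1/n$ for all $n$, whence $c(B) = 0$, and $B$ is the desired Borel set.

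I do not expect a genuine obstacle here: the entire argument rests on monotonicity of capacity under inclusion (transparent from the definition) together with the fact that a countable intersection of open sets is Borel. Note that Choquet's theorem is \emph{not} needed --- the inner (supremum-over-compact-subsets) definition of $c$ already yields $c(B) = 0$ directly --- so the only care required is to keep the approximating open sets bounded, which is harmless since $E$ is bounded.
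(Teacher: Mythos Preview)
Your proof is correct and follows essentially the same $G_\delta$-hull construction as the paper: choose open supersets $U_n$ with $c(U_n)<1/n$, intersect them, and use monotonicity of capacity. The only cosmetic differences are that the paper makes the $U_n$ nested (which is unnecessary, as you observe) while you intersect with a large disk to keep them bounded (also unnecessary); neither affects the argument.
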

 \begin{proof}
  Let $F$ be a set of outer capacity zero.  Thus, there are open sets $U_n$, $n \in \mathbb{N}$, containing $F$ such that $c(U_n) < 1/n$.  We can choose these sets such that $U_{n+1} \subseteq U_n$ for all $n$, by replacing $U_n$ with $U_n' = \cap_{k=1}^n U_k$ if necessary, and observing that by \cite[Theorem 5.1.2(a)]{Ransford_book} $c(U_n') \leq c(U_n) <1/n$ since $U_n' \subseteq U_n$.  
  
  The set $V = \cap_{n=1}^\infty U_n$ is a Borel set containing $F$.  Since $V \subseteq U_n$ for all $n \in \mathbb{N}$, again applying \cite[Theorem 5.1.2(a)]{Ransford_book} we see that $c(V) < 1/n$ for all $n \in \mathbb{C}$, so $c(V) =0$. 
 \end{proof}
 
 Quasiconformal maps preserve compact sets of logarithmic capacity zero.
 We are grateful to Malik Younsi for suggesting the following lemma and its proof.
 \begin{lemma}
  Let $K \subseteq \mathbb{C}$ be compact.  Let $U$ be an open set containing $K$ and let $f:U \rightarrow V$ be a homeomorphism onto the open set $V \subset \mathbb{C}$, which is H\"older continuous of exponent $\alpha >0$.   
  If $K$ has capacity zero, then $f(K)$ also has capacity zero.    
 \end{lemma}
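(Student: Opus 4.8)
The plan is to use the energy characterization of capacity that is built into Definition \ref{defn:logarithmic capacity}: a set $E$ has $c(E)=0$ if and only if $I(\mu)=-\infty$ for every Borel probability measure $\mu$ supported on a compact subset of $E$. So it suffices to fix a compact $L\subseteq f(K)$ and an arbitrary $\nu\in\mathcal{P}(L)$ and to prove that $I(\nu)=-\infty$. The key idea is that a H\"older bound on $f$ becomes, after taking logarithms, a \emph{lower} bound on $\log|f^{-1}(\zeta)-f^{-1}(\eta)|$, which lets us compare the energy of $\nu$ with the energy of its pullback under $f^{-1}$, to which the hypothesis $c(K)=0$ applies.

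Concretely, let $C$ be a constant such that $|f(z)-f(w)|\le C|z-w|^{\alpha}$ for all $z,w\in K$ (the H\"older hypothesis on $U\supseteq K$ provides such a constant on the compact set $K$). Substituting $z=f^{-1}(\zeta)$ and $w=f^{-1}(\eta)$ gives $|f^{-1}(\zeta)-f^{-1}(\eta)|\ge (|\zeta-\eta|/C)^{1/\alpha}$, hence the pointwise estimate
\[
\log\bigl|f^{-1}(\zeta)-f^{-1}(\eta)\bigr|\ \ge\ \frac{1}{\alpha}\log|\zeta-\eta|-\frac{1}{\alpha}\log C \qquad (\zeta,\eta\in f(K)).
\]
Let $K'=f^{-1}(L)$, which is a compact subset of $K$ (since $f^{-1}$ is continuous and $L\subseteq f(K)$), and let $\mu$ be the pushforward of $\nu$ under $f^{-1}$, so that $\mu\in\mathcal{P}(K')$. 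Since $c(K)=0$ and $K'\subseteq K$ is compact, Definition \ref{defn:logarithmic capacity} forces $I(\mu)=-\infty$. On the other hand, the change-of-variables formula for pushforward measures gives
\[
I(\mu)=\iint_{L\times L}\log\bigl|f^{-1}(\zeta)-f^{-1}(\eta)\bigr|\,d\nu(\zeta)\,d\nu(\eta),
\]
and integrating the displayed inequality against $\nu\times\nu$ yields $-\infty=I(\mu)\ge \frac{1}{\alpha}I(\nu)-\frac{1}{\alpha}\log C$. Hence $I(\nu)=-\infty$, and since $L$ and $\nu$ were arbitrary, $c(f(K))=0$.

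The individual steps are short; the one that needs a moment's attention — and which I would single out as the main (if modest) technical point — is ensuring that the energy integrals appearing above are well-defined rather than of the indeterminate form $\infty-\infty$, and that the pointwise inequality genuinely passes to the integrals. This is settled by the observation that $\log|z-w|$ is bounded above on any compact set, so each of the integrals in question lies in $[-\infty,+\infty)$ and the usual monotonicity of the integral applies. The remaining ingredients — the change of variables under the pushforward and the existence of a uniform H\"older constant on the compact set $K$ — are routine.
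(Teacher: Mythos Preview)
Your proof is correct and follows essentially the same approach as the paper's: take a probability measure on $f(K)$, push it forward to $K$ under $f^{-1}$, use $c(K)=0$ to conclude the pushed measure has energy $-\infty$, and then use the H\"older bound to compare the two energies. The only differences are cosmetic (your $\mu$ and $\nu$ are swapped relative to the paper, and you phrase the H\"older estimate as a lower bound on $\log|f^{-1}(\zeta)-f^{-1}(\eta)|$ rather than splitting off $\alpha I(\nu)$ from $I(\mu)$); your extra care about compact subsets $L\subseteq f(K)$ and about avoiding $\infty-\infty$ is welcome but not strictly needed since $f(K)$ is itself compact.
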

 \begin{proof}
  Let $\mu$ be a probability measure with support in $f(K)$.  If we define the Borel probability measure 
  $\nu = f^*(\mu)$ by $\nu(A) = \mu(f(A))$ then 
  \begin{align*}
    I(\mu) & = \iint_{V} \iint_{V} \log{|z-w|} \, d\mu(z) \, d\mu(w) = 
    \iint_{U} \iint_{U} \log{|f(z)-f(w)|} \, d\nu(z) \, d\nu(w) \\
    & = \iint_{U} \iint_{U} \log{\frac{|f(z)-f(w)|}{|z-w|^\alpha}} \, d\nu(z) \, d\nu(w) 
      + \alpha I(\nu) \\
  \end{align*}
  Since the capacity of $K$ is zero, $I(\nu) = -\infty$.  Moreover the H\"older-continuity of $f$ means that $|f(z)-f(w)| \leq M |z-w|^\alpha$, which therefore yields $I(\mu) = -\infty$. Now since $\mu$ was arbitrary, $f(K)$ has capacity zero.  
 \end{proof}
 From this, it follows that 
 \begin{lemma}  \label{le:Holder_preserves_capzero}
  Let $E \subseteq \mathbb{C}$ be a bounded Borel set.  Let $f:\mathbb{C} \rightarrow \mathbb{C}$ be a homeomorphism which is H\"older continuous of exponent $\alpha >0$.  If $E$ has capacity zero, then $f(E)$ has capacity zero.  
 \end{lemma}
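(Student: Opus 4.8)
The plan is to reduce the statement to the preceding lemma, which already handles \emph{compact} sets, by exploiting the fact that logarithmic capacity is by definition a supremum over compact subsets. No covering of $E$ by compacts is needed; instead one tests $f(E)$ against its own compact subsets and pulls them back through $f^{-1}$.

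First I would record the (strictly optional, but clarifying) topological observations about $f(E)$: since $f$ is a homeomorphism of $\mathbb{C}$, it carries Borel sets to Borel sets, so $f(E)$ is Borel; and since $E$ is bounded, $\overline{E}$ is compact, hence $f(\overline{E})$ is compact and $f(E)\subseteq f(\overline{E})$ is bounded. These facts keep us inside the class of sets covered by Choquet's theorem, but they are not needed for the conclusion $c(f(E))=0$ itself, which makes sense for an arbitrary subset of $\mathbb{C}$.

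The core of the argument is then short. Fix an arbitrary compact set $L\subseteq f(E)$ and put $K:=f^{-1}(L)$; since $f^{-1}$ is continuous, $K$ is a compact subset of $E$. From Definition \ref{defn:logarithmic capacity}, $c(E)=0$ forces $c(K)=0$, because for this fixed compact $K\subseteq E$ one has $0\le c(K)=\sup_{\mu\in\mathcal{P}(K)}e^{I(\mu)}\le c(E)=0$. Now apply the preceding lemma with $U=V=\mathbb{C}$, the given H\"older homeomorphism $f$, and the compact set $K$: this yields that $f(K)=L$ has capacity zero. Since $L$ was an arbitrary compact subset of $f(E)$, the definition of capacity gives $c(f(E))=\sup_{L\subseteq f(E)\ \mathrm{compact}}\ \sup_{\mu\in\mathcal{P}(L)}e^{I(\mu)}=0$.

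I do not expect a genuine obstacle; the one point that requires care is to \emph{avoid} appealing to $\sigma$-compactness of $E$, since a bounded Borel set need not be a countable union of compact sets. The compact-subset formulation of capacity sidesteps this completely. An alternative route via outer capacity — using Choquet's theorem to get open sets $U_n\supseteq E$ with $c(U_n)\to 0$ and Lemma \ref{le:Borel_replace} to replace $E$ by a Borel hull — runs into exactly this difficulty, because the open sets $U_n$ are not compact and the preceding lemma does not apply to them directly; so the pullback-of-compacta argument is the cleaner choice.
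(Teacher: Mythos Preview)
Your proof is correct and follows essentially the same route as the paper's: both reduce to the previous lemma via the inner regularity of capacity, pulling back an arbitrary compact subset of $f(E)$ through the homeomorphism $f^{-1}$ to obtain a compact subset of $E$ with capacity zero. The only cosmetic difference is that the paper phrases the argument as a proof by contradiction (assuming some compact $K\subseteq f(E)$ has $c(K)>0$ and deriving $c(E)>0$), whereas you give the direct version.
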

 \begin{proof}
   By \cite[Theorem 5.1.2(b)]{Ransford_book},
  \begin{equation} \label{eq:inner_capacity_temp}
   c(f(E)) = \sup_{\substack{K \subseteq f(E) \\ K \  \text{compact}}} c(K)
  \end{equation}
  (indeed, this follows directly from the definition of capacity).   
  Thus, if $f(E)$ does not have capacity zero, there is a compact set $K \subseteq f(E)$ such that $c(K) >0$.  Since $f$ is a homeomorphism, $f^{-1}(K)$ is a compact subset of $E$, so by the previous lemma $c(f^{-1}(K)) >0$.  Applying (\ref{eq:inner_capacity_temp}) again with $E$ in place of $f(E)$ we see that $c(E) > 0$, a contradiction.  
 \end{proof}
 
 In particular, quasiconformal maps preserve bounded Borel sets of capacity zero, since they are uniformly H\"older on every compact subset (\cite{Lehto_Virtanen} p71).
 
 \begin{corollary} \label{co:quasisymmetry_nullpreserve} Let $\phi:\mathbb{S}^1 \rightarrow \mathbb{S}^1$ be a quasisymmetry.  Then $I \subseteq \mathbb{S}^1$ is a Borel set of logarithmic capacity zero if and only if $\phi(I)$ is a Borel set of logarithmic capacity zero. 
 \end{corollary}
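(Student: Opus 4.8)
The plan is to produce from $\phi$ a Hölder continuous homeomorphism between plane domains near $\mathbb{S}^1$ and then invoke the Hölder--capacity lemmas proved above. First I would extend $\phi$ to a quasiconformal map of the sphere: by Theorem~\ref{th:qs_are_qc_boundary}, $\phi$ is the boundary restriction of a quasiconformal self-map $\Phi^{+}$ of $\disk^{+}$; reflecting in $\mathbb{S}^1$ produces a quasiconformal self-map of $\disk^{-}$ with the same boundary values, and since $\mathbb{S}^1$ is quasiconformally removable the two halves glue to a quasiconformal map $\Phi:\sphere\to\sphere$ which preserves $\mathbb{S}^1$ and $\disk^{\pm}$ and satisfies $\Phi|_{\mathbb{S}^1}=\phi$. (Alternatively $\Phi$ can be built from the conformal welding theorem, Theorem~\ref{th:conformal_welding}.) Since $\Phi(\mathbb{S}^1)=\mathbb{S}^1$, the point $\Phi^{-1}(\infty)$ does not lie on $\mathbb{S}^1$, so one may fix a bounded open annular neighbourhood $U$ of $\mathbb{S}^1$ whose closure is compact in $\mathbb{C}\setminus\{\Phi^{-1}(\infty)\}$. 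Then $f:=\Phi|_{U}\colon U\to\Phi(U)$ is a homeomorphism between open subsets of $\mathbb{C}$, and by the local Hölder continuity of quasiconformal maps (\cite{Lehto_Virtanen}, p.~71), applied on a slightly larger annulus and restricted back, $f$ is Hölder continuous of some positive exponent.

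Now $f$ is exactly the kind of map covered by the lemma immediately preceding Lemma~\ref{le:Holder_preserves_capzero}: it carries every compact subset of $U$ of capacity zero to a set of capacity zero, and $\phi=f|_{\mathbb{S}^1}$ with $\phi(\mathbb{S}^1)=\mathbb{S}^1\subset U$. Let $I\subseteq\mathbb{S}^1$ be Borel with $c(I)=0$; I claim $c(\phi(I))=0$. If not, then by \cite[Theorem~5.1.2(b)]{Ransford_book} there is a compact $K\subseteq\phi(I)$ with $c(K)>0$. Its preimage $\phi^{-1}(K)$ is a compact subset of $I$, so $c(\phi^{-1}(K))\le c(I)=0$ by monotonicity \cite[Theorem~5.1.2(a)]{Ransford_book}; applying the lemma above to $f$ and $\phi^{-1}(K)$ yields $c(K)=c\bigl(\Phi(\phi^{-1}(K))\bigr)=0$, a contradiction. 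Hence $c(\phi(I))=0$. The reverse implication follows by applying this to $\phi^{-1}$, which is again a quasisymmetry (being the boundary restriction of the quasiconformal map $\Phi^{-1}$, cf.\ Theorem~\ref{th:qs_are_qc_boundary}). Finally, $\phi$ and $\phi^{-1}$ are homeomorphisms of $\mathbb{S}^1$, so $I$ is Borel if and only if $\phi(I)$ is.

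There is no deep obstacle here: all the substance lives in the preceding lemmas, and the corollary is essentially an assembly. The two places that demand a little care are the bookkeeping that turns $\Phi$ into a bona fide Hölder homeomorphism between subsets of $\mathbb{C}$ in a neighbourhood of $\mathbb{S}^1$ (handling the point at $\infty$, and the gluing of the two halves across $\mathbb{S}^1$), and the reduction --- via \cite[Theorem~5.1.2]{Ransford_book} (ultimately Choquet's theorem) --- of the capacity of the Borel set $\phi(I)$ to the capacities of its compact subsets.
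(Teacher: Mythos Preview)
Your proof is correct and follows essentially the same approach as the paper's: extend $\phi$ to a quasiconformal self-map via reflection in $\mathbb{S}^1$, use local H\"older continuity of quasiconformal maps, and invoke the preceding capacity lemmas. The only differences are cosmetic: the paper stays in $\mathbb{C}$ throughout (the reflection $\Phi(z)=1/\overline{\Psi(1/\bar z)}$ already gives a quasiconformal self-map of $\mathbb{C}$, so no bookkeeping with $\infty$ is needed), and the paper cites Lemma~\ref{le:Holder_preserves_capzero} directly for the Borel case rather than re-deriving it from the compact-set lemma as you do.
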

 \begin{proof} 
  By the Beurling-Ahlfors extension theorem (Theorem \ref{th:qs_are_qc_boundary}), $\phi$ has a quasiconformal extension $\Psi:\disk \rightarrow \disk$.  In fact, this extends to a quasiconformal map of the plane via 
  \begin{equation*}
   \Phi(z)  = \left\{  \begin{array}{cc}  \Psi(z) & z \in \text{cl} \,\disk \\ 1/\overline{\Psi(1/\bar{z})} & z \in \mathbb{C} \backslash \text{cl} \, \disk.   \end{array}   \right.
  \end{equation*}
  Since a quasiconformal map is uniformly H\"older-continuous on every compact subset, the claim follows from Lemma \ref{le:Holder_preserves_capzero}.
 \end{proof}
 
 We now define null sets.  Note that in the sphere, the boundary of a domain is taken with respect to the sphere topology.  So it might include $\infty$.   
 \begin{definition} Let $\Omega$ be a Jordan domain in $\sphere$ with boundary $\Gamma$.  Let $I \subset \Gamma$.  We say that $I$ is null with respect to $\Omega$ if $I$ is a Borel set, and there is a biholomorphism $f:\disk^+ \rightarrow \Omega$, such that $f^{-1}(I)$ has logarithmic capacity zero.  
 \end{definition}
 The meaning of $f^{-1}(I)$ requires an application of Carath\'eodory's theorem, which says that since $\Omega$ is a Jordan domain, any biholomorphism $f$ has a continuous extension which takes $\mathbb{S}^1$ homeomorphically to $\Gamma$.  This is true even if $\Gamma$ contains the point at $\infty$, as can be seen by composing $f$ by a M\"obius transformation taking $\Gamma$ onto a bounded curve and applying Carath\'eodory's theorem there, and then using the fact that $T$ is a homeomorphism of the sphere.  Thus $f^{-1}(I)$ is defined using the extension of $f$.  Note that $I$ is a Borel set if and only if $f^{-1}(I)$ is Borel.
 
 If there is one biholomorphism $f$ such that $f^{-1}(I)$ has capacity zero, then $g^{-1}(I)$ has capacity zero for all biholomorphisms $g:\disk^+ \rightarrow \Omega$.  This is because the M\"obius transformation $T = g^{-1} \circ f$ preserves Borel sets of capacity zero in $\mathbb{S}^1$, for example by Corollary \ref{co:quasisymmetry_nullpreserve}.  
 Also, it is easily seen that one may replace $\disk^+$ with $\disk^-$ in the above definition.
 
 If $\Gamma$ is a Jordan curve, bordering domains $\Omega_1$ and $\Omega_2$, then $I$ might be null with respect to $\Omega_1$ but not with respect to $\Omega_2$, or vice versa. However, for quasicircles, the concept of null set is independent of the choice of ``side'' of the curve.  This is a key fact.
 \begin{theorem}  \label{th:null_both_sides_quasicircle}
  Let $\Gamma$ be a quasicircle in $\sphere$, and let $\Omega_1$ and $\Omega_2$ be the connected components of $\sphere \backslash \Gamma$.  Then $I\subset \Gamma$ is null with respect to $\Omega_1$ if and only if it is null with respect to $\Omega_2$.
 \end{theorem}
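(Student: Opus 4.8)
The plan is to transport the problem from $\Gamma$ to $\mathbb{S}^1$ by conformal maps and then invoke Corollary~\ref{co:quasisymmetry_nullpreserve}. Since $\Omega_1$ and $\Omega_2$ are Jordan domains (and neither is all of $\sphere$), I would choose biholomorphisms $f:\disk^+\to\Omega_1$ and $g:\disk^-\to\Omega_2$; by Carath\'eodory's theorem each extends to a homeomorphism of the closure carrying $\mathbb{S}^1$ onto $\Gamma$. Because $\Gamma$ is a quasicircle, $\Omega_1$ and $\Omega_2$ are quasidisks, so by the quasidisk extension theorem recalled above, $f$ and $g$ extend to quasiconformal self-maps $\hat f,\hat g$ of $\sphere$. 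The extension $\hat f$ carries $\disk^-$ onto $\sphere\setminus\overline{\Omega_1}=\Omega_2$, and $\hat g$ carries $\disk^+$ onto $\sphere\setminus\overline{\Omega_2}=\Omega_1$; hence $\hat g^{-1}\circ\hat f$ is a quasiconformal self-map of $\sphere$ mapping $\disk^+$ onto $\disk^+$, $\disk^-$ onto $\disk^-$, and $\mathbb{S}^1$ onto $\mathbb{S}^1$. Its restriction to $\disk^+$ is then a quasiconformal self-map of the disk, so by Theorem~\ref{th:qs_are_qc_boundary} the boundary map $\phi:=(\hat g^{-1}\circ\hat f)|_{\mathbb{S}^1}$ is a quasisymmetry of $\mathbb{S}^1$ (this $\phi$ is precisely the conformal welding of $\Gamma$).

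Granting this, the remainder is bookkeeping. Let $I\subset\Gamma$ be a Borel set. By the definition of null set and the remarks following it, $I$ is null with respect to $\Omega_1$ exactly when $f^{-1}(I)\subset\mathbb{S}^1$ has logarithmic capacity zero, and $I$ is null with respect to $\Omega_2$ exactly when $g^{-1}(I)\subset\mathbb{S}^1$ has logarithmic capacity zero (here one uses that $\disk^-$ is permissible in place of $\disk^+$ in the definition, and that these preimages are Borel since $I$ is). On $\mathbb{S}^1$ we have $\hat g^{-1}\circ\hat f=\phi$, so $g^{-1}(I)=\phi(f^{-1}(I))$. Since $\phi$ is a quasisymmetry, Corollary~\ref{co:quasisymmetry_nullpreserve} asserts that a Borel subset of $\mathbb{S}^1$ has capacity zero if and only if its $\phi$-image does; applied to $f^{-1}(I)$, this gives that $f^{-1}(I)$ has capacity zero if and only if $g^{-1}(I)$ has capacity zero, which is the desired equivalence.

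I expect the only genuine difficulty to lie in the first paragraph, namely the verification that $\hat g^{-1}\circ\hat f$ preserves $\mathbb{S}^1$ together with each of $\disk^+$ and $\disk^-$ — equivalently, that the induced boundary homeomorphism is orientation-preserving — so that Theorem~\ref{th:qs_are_qc_boundary} genuinely applies and yields an honest quasisymmetry rather than an orientation-reversing homeomorphism (to which Corollary~\ref{co:quasisymmetry_nullpreserve} would not directly apply). This is exactly the point of choosing $f$ on $\disk^+$ for $\Omega_1$ but $g$ on $\disk^-$ for $\Omega_2$: it forces $\hat g^{-1}\circ\hat f$ to fix the inside and the outside of $\mathbb{S}^1$. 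Everything else — the Carath\'eodory boundary extension, the set-theoretic identity $g^{-1}(I)=\phi(f^{-1}(I))$ on $\mathbb{S}^1$, and the invariance of Borel sets of capacity zero — is routine, and the essential analytic input is the quasiconformal extendibility of $f$ and $g$ supplied by the quasidisk hypothesis.
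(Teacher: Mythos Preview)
Your proof is correct and follows essentially the same route as the paper's: pull the question back to $\mathbb{S}^1$ via Riemann maps, use the quasiconformal extendibility of these maps (available because $\Gamma$ is a quasicircle) to conclude that the welding homeomorphism $\phi = g^{-1}\circ f|_{\mathbb{S}^1}$ is a quasisymmetry, and then apply Corollary~\ref{co:quasisymmetry_nullpreserve}. The paper first applies a M\"obius transformation to make $\Gamma$ bounded before choosing the Riemann maps, whereas you work directly in $\sphere$; both are fine, and your verification that $\hat g^{-1}\circ\hat f$ preserves each side of $\mathbb{S}^1$ (so that Theorem~\ref{th:qs_are_qc_boundary} applies) is exactly the point that justifies the step.
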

 \begin{proof}  Choose a M\"obius transformation $T$ so that $T(\Gamma)$ is a quasicircle in $\mathbb{C}$ (and in particular bounded).  Clearly $T(I)$ is null in $T(\Gamma)$ with respect to $T(\Omega_i)$ if and only if it is null in $\Gamma$ with respect to $\Omega_i$, for $i=1,2$.  Thus it suffices to prove the claim for a quasicircle $\Gamma$ in $\mathbb{C}$.  
 
 Let $\Omega^+$ and $\Omega^-$ be the bounded and unbounded components of the complement of $\Gamma$ respectively.   Let $f_\pm:\disk^\pm \rightarrow \Omega^\pm$ be conformal maps.  These have quasiconformal extensions to $\mathbb{C}$.   Thus $\phi = f_-^{-1} \circ f_+$ has a quasiconformal extension to $\mathbb{C}$, and in particular is a quasisymmetry.  
 
 By definition $I$ is null with respect to $\Omega^+$ if and only if $f_+^{-1}(I)$ is a Borel set of logarithmic capacity zero in $\mathbb{S}^1$.  By Corollary \ref{co:quasisymmetry_nullpreserve}, this holds if and only if $f_-^{-1}(I) = \phi (f_+^{-1}(I))$ is a Borel set of logarithmic capacity zero in $\mathbb{S}^1$, that is if and only if $I$ is null with respect to $\Omega^-$.
 \end{proof}
 \begin{remark} \label{re:quasicircle_capacity_zero}
  The proof can be modified to show that if $\Omega$ is a Jordan domain bounded by a quasicircle, and $I \subseteq \Gamma$ is null with respect to $\Omega$, then there is a M\"obius transformation $T$ such that $T(I)$ is a bounded Borel set of capacity zero (in fact, for any M\"obius transformation such that $T(I)$ is bounded, it is a set of capacity zero).
 \end{remark}

\end{subsection}
\begin{subsection}{Dirichlet space of the disk and boundary values}  \label{se:boundary_values_disk}  \ 

If we write $f(z)=\sum_{n=0}^{\infty} a_n z^n$
as a power series and setting $z =re^{i\theta}$, one can use polar coordinates to see that
\begin{equation}
  D_{\disk^+}(f)= \sum_{n=0}^{\infty} n |a_n|^2 .
\end{equation}

Another important fact about the Dirichlet space is that if $f\in \mathcal{D}(\disk^+)$ then $f$ has radial boundary values, i.e. for almost every $z\in \mathbb{S}^1$, the limit $\lim_{r\to 1^-} f(rz)=: \tilde{f}(z)$ exists, see e.g. \cite{El-Fallah_etal_primer}. Moreover a result of J. Douglas \cite{Douglas}, one has that

\begin{equation}\label{Douglas formula}
  D_{\disk^+}(f)= \int_{0}^{2\pi} \int_{0}^{2\pi} \frac{|\tilde{f}(z)-\tilde{f}(\zeta)|^2}{|z-\zeta|^2}\,|dz|\, |d \zeta|.
\end{equation}
Now for $\zeta\in\mathbb{S}^1$, let 
\begin{equation}\label{defn:dirichlet potential}
  K(\zeta)= \frac{1}{|1-\zeta|^{1/2}},
\end{equation}
and define the convolution of two functions $f,\, g$ defined on the unit circle via
\begin{equation}
  (f\ast g)(z):=\int_{0}^{2\pi} f(z\overline{\zeta}) \, g(\zeta) \, |d \zeta|.
\end{equation}
If $z\in \disk^+$ and $\zeta\in\mathbb{S}^1$ then 
\begin{equation}
 P_z(\zeta) = \frac{1-|z|^2}{
 |z-\zeta|^2},
 \end{equation}
 denotes the Poisson kernel of the disk, and we set

\begin{equation}
P(u)(z)= \int_{0}^{2\pi} P_z(\zeta) u(\zeta)\, |d\zeta|.
 \end{equation}


Regarding boundary values of harmonic functions with bounded Dirichlet energy, we will use the following two results:
\begin{theorem}\label{thm: radial lim for dirichlet}
Let $f$ be a harmonic function in $\disk^+$ and $D(f)<\infty.$ Then
$\tilde{f}(z) := \lim_{r\to 1^-} f(rz)$
quasieverywhere. 
\end{theorem}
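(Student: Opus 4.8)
The plan is to reduce the statement to the holomorphic Dirichlet space and then invoke Beurling's theorem. Since $\disk^+$ is simply connected, a complex harmonic function $f$ on $\disk^+$ can be written $f = g + \overline{h}$ with $g,h$ holomorphic on $\disk^+$: indeed $\partial_z f$ is holomorphic, since $\partial_{\bar z}\partial_z f = \tfrac14\Delta f = 0$, so on $\disk^+$ it admits a holomorphic primitive $g$, and then $\partial_z(f-g)=0$ forces $f-g = \overline{h}$ with $h$ holomorphic. From $dg = \partial_z f\,dz$, $d\overline{h}=\partial_{\bar z}f\,d\bar z$, and the area formula for $D_{\disk^+}$ we get $D_{\disk^+}(f)=D_{\disk^+}(g)+D_{\disk^+}(h)$; hence $g,h\in\mathcal D(\disk^+)$ whenever $D_{\disk^+}(f)<\infty$. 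At every $\zeta\in\mathbb S^1$ where both $g$ and $h$ have radial limits, so does $f$, and since a countable union of sets of logarithmic capacity zero has logarithmic capacity zero, it suffices to prove the theorem for holomorphic $f\in\mathcal D(\disk^+)$; this is Beurling's theorem \cite{El-Fallah_etal_primer}.

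To give the argument in full I would reprove Beurling's theorem as follows. For $0<\rho<1$ put $f_\rho(z)=f(\rho z)$, which extends continuously to $\overline{\disk^+}$, so $\lim_{r\to1^-}f_\rho(r\zeta)=f_\rho(\zeta)$ for \emph{every} $\zeta\in\mathbb S^1$. Writing $f=\sum_{n\ge0}a_nz^n$ and $g_\rho=f-f_\rho$ we have $g_\rho(0)=0$ and $D_{\disk^+}(g_\rho)=\sum_{n\ge1}n|a_n|^2|1-\rho^n|^2\to0$ as $\rho\to1^-$ by dominated convergence. Set $\omega_f(\zeta)=\limsup_{r,r'\to1^-}|f(r\zeta)-f(r'\zeta)|$, so that the radial limit of $f$ exists at $\zeta$ exactly when $\omega_f(\zeta)=0$; in terms of the radial maximal function $Mu(\zeta):=\sup_{0<r<1}|u(r\zeta)|$, since $f_\rho$ has a radial limit everywhere we get $\omega_f(\zeta)=\omega_{g_\rho}(\zeta)\le 2\,Mg_\rho(\zeta)$, whence
\[
 \{\zeta\in\mathbb S^1 : \omega_f(\zeta)>2t\}\subseteq\{\zeta\in\mathbb S^1 : Mg_\rho(\zeta)>t\}\qquad(t>0,\ 0<\rho<1).
\]

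The key ingredient, and the genuine obstacle, is the capacitary weak-type inequality: there is an absolute constant $C$ such that for every harmonic $u$ on $\disk^+$ with $u(0)=0$ and every $t>0$,
\[
 \kappa\big(\{\zeta\in\mathbb S^1:Mu(\zeta)>t\}\big)\le\frac{C\,D_{\disk^+}(u)}{t^2},
\]
where $\kappa$ denotes the Sobolev capacity attached to the Dirichlet trace space $H^{1/2}(\mathbb S^1)$, which classically vanishes on precisely the sets of logarithmic capacity zero. Granting this, the above inclusion yields $\kappa(\{\omega_f>2t\})\le C\,D_{\disk^+}(g_\rho)/t^2\to0$ as $\rho\to1^-$, so $\{\omega_f>2t\}$ has logarithmic capacity zero for every $t>0$; since $\{\zeta:\omega_f(\zeta)>0\}=\bigcup_{k\ge1}\{\zeta:\omega_f(\zeta)>1/k\}$ and logarithmic capacity is countably subadditive on null sets, the radial limit exists quasieverywhere. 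I would establish the weak-type inequality from the duality between the Dirichlet space and logarithmic energy --- a Borel set $E\subseteq\mathbb S^1$ has logarithmic capacity zero iff it supports no nonzero finite measure of finite logarithmic energy, and logarithmic energy is comparable to the squared $H^{-1/2}(\mathbb S^1)$ norm dual to the Dirichlet trace norm --- together with the pointwise bound $Mu\lesssim M_{\mathrm{HL}}\tilde u$ for Poisson extensions; failing that, one simply cites \cite{El-Fallah_etal_primer}.
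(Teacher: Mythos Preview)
Your argument is correct; the paper does not actually give a proof here but simply cites Beurling's classical result \cite{Beurling}, so your first paragraph already matches (and in fact exceeds) what the paper does. Your reduction of the complex harmonic case to the holomorphic Dirichlet space via $f=g+\overline{h}$ and the orthogonal splitting $D_{\disk^+}(f)=D_{\disk^+}(g)+D_{\disk^+}(h)$ is clean and correct, and the final appeal to Beurling is exactly the paper's move.

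The second paragraph, where you sketch a proof of Beurling's theorem through the capacitary weak-type inequality for the radial maximal function, is a bonus not present in the paper. The outline is the standard one and is sound as a strategy; as you yourself flag, the real content is the weak-type bound $\kappa(\{Mu>t\})\le C\,D_{\disk^+}(u)/t^2$, whose full justification you defer to \cite{El-Fallah_etal_primer}. That is fine for an expository setting, but be aware that this step is where the genuine work lies, so the sketch is not self-contained in the strict sense. For the purposes of this paper, your first paragraph alone would suffice and is already more informative than the paper's one-line citation.
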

\begin{proof}
 This is a classical result due to Beurling, see \cite{Beurling}.
\end{proof}
\begin{theorem}\label{thm: fund of harmonic dirichlet}
 Let $f = P(K\ast\varphi)$ for some $\varphi\in L^2(\mathbb{S}^1)$. For fixed $\theta \in [0,2\pi)$, consider the following four limits:
\begin{enumerate}
    \item $\lim_{r\to 1^-} f(rz)$ $($the radial limit of $f$$)$
    \item $\lim_{N\to \infty}\sum_{n=-N}^{N} \widehat{(K\ast \varphi)}(n) e^{in\theta}$ $($the limit of the partial sums of the Fourier series for $K\ast \varphi$$)$
\item $\lim_{h\to 0^+}\frac{1}{2h} \int_{\theta-h}^{\theta+h} (K\ast \varphi)(e^{it})\, dt$ $($the boundary trace of  $f$$)$
\end{enumerate}
\end{theorem}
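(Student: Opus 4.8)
\emph{Proof proposal.} The plan is to show that, for quasi-every $\theta$, all of the listed limits exist and share the common value $\tilde f(e^{i\theta})$, that is, the boundary trace of $f$. Everything rests on one structural observation: the Fourier coefficients of $K$ satisfy $|\widehat K(n)|\asymp(1+|n|)^{-1/2}$, so that $g:=K\ast\varphi$ obeys
\[
  \sum_{n\in\mathbb{Z}}(1+|n|)\,|\widehat g(n)|^{2}\ \asymp\ \sum_{n\in\mathbb{Z}}|\widehat\varphi(n)|^{2}\ <\ \infty;
\]
hence $g\in H^{1/2}(\mathbb{S}^1)$ and $f=P(g)\in\mathcal{D}_{\text{harm}}(\disk^{+})$ with $D_{\disk^{+}}(f)\lesssim\|\varphi\|_{L^{2}(\mathbb{S}^1)}^{2}$. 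In particular the radial limit (1) exists quasieverywhere by Theorem~\ref{thm: radial lim for dirichlet} (Beurling), and whenever it exists it is the natural candidate for the common value. It therefore suffices to prove that, off a set of capacity zero, each remaining limit exists and equals the radial one.

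The heart of the argument is a family of \emph{capacitary weak-type maximal inequalities}. With
\[
  \mathcal R^{*}f(\theta)=\sup_{0<r<1}|f(re^{i\theta})|,\qquad
  \mathcal S^{*}g(\theta)=\sup_{N\ge 0}\Bigl|\sum_{n=-N}^{N}\widehat g(n)\,e^{in\theta}\Bigr|,\qquad
  \mathcal A^{*}g(\theta)=\sup_{0<h<\pi}\frac{1}{2h}\Bigl|\int_{\theta-h}^{\theta+h}g(e^{it})\,dt\Bigr|,
\]
and with $\mathcal R^{**},\mathcal S^{**},\mathcal A^{**}$ the associated oscillation operators ($\limsup$ minus $\liminf$ as $r\to1$, $N\to\infty$, $h\to0$ respectively), I would prove that there is a constant $C$ such that, for every $g\in H^{1/2}(\mathbb{S}^1)$ and $\lambda>0$,
\[
  c^{*}\!\bigl(\{\mathcal R^{*}f>\lambda\}\bigr)+c^{*}\!\bigl(\{\mathcal S^{*}g>\lambda\}\bigr)+c^{*}\!\bigl(\{\mathcal A^{*}g>\lambda\}\bigr)\ \le\ \frac{C}{\lambda^{2}}\,\|g\|_{H^{1/2}(\mathbb{S}^1)}^{2}.
\]
These are the analogues, with logarithmic capacity in place of Lebesgue measure, of the radial-maximal, maximal-partial-sum, and Hardy--Littlewood bounds, and the half-derivative of smoothness built into $H^{1/2}(\mathbb{S}^1)$ is exactly what makes such capacitary refinements available: the basic input is that a sub-level set $\{|u|>\lambda\}$ of a function $u\in H^{1/2}(\mathbb{S}^1)$ has outer logarithmic capacity $\lesssim\|u\|_{H^{1/2}}^{2}/\lambda^{2}$, a standard potential-theoretic estimate (which can also be read off from the identification of $H^{1/2}(\mathbb{S}^1)$ with the potential space $K\ast L^{2}$ and of logarithmic capacity with the Bessel capacity of order $1/2$ in $L^{2}$). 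One also records the trivial fact that $\mathcal R^{**},\mathcal S^{**},\mathcal A^{**}$ vanish on trigonometric polynomials.

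Granting these estimates, the theorem follows by density. Given $\varepsilon>0$, choose a trigonometric polynomial $q$ with $\|g-q\|_{H^{1/2}}<\varepsilon$ (trigonometric polynomials are dense in $H^{1/2}(\mathbb{S}^1)$). On $q$ every limit on the list exists everywhere and equals $q(e^{i\theta})$, so each of $\mathcal R^{**},\mathcal S^{**},\mathcal A^{**}$ applied to $g$ coincides with the same operator applied to $g-q$; by the weak-type inequality the set where any of them exceeds $\sqrt\varepsilon$ has outer capacity $\le C\varepsilon$, and the same bound controls the pairwise differences of the radial limit and the limits in (2)--(3) off a set of outer capacity $\le C\varepsilon$. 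Taking $\varepsilon=1/k$ and letting $k\to\infty$, using countable subadditivity of outer capacity together with Lemma~\ref{le:Borel_replace}, one obtains a Borel set of capacity zero off which all the limits exist and agree, their common value being $\tilde f(e^{i\theta})$ by comparison with (1).

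The main obstacle is the proof of the capacitary weak-type maximal inequalities, most of the difficulty being concentrated in the one for the symmetric partial-sum operator $\mathcal S^{*}$: the bound for $\mathcal R^{*}$ is essentially Beurling's, and the one for $\mathcal A^{*}$ is a capacitary Hardy--Littlewood inequality, but the partial-sum bound is delicate. The natural route is a Littlewood--Paley splitting $g=\sum_{k}\Delta_{k}g$ into dyadic frequency blocks, for which $\sum_{k}2^{k}\|\Delta_{k}g\|_{L^{2}}^{2}\asymp\|g\|_{H^{1/2}}^{2}$, followed by a capacitary estimate for the maximal partial sum inside a single block plus a summation argument; care is needed because the classical almost-everywhere convergence theorems are too weak, ``quasieverywhere'' being strictly stronger than ``almost everywhere''. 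The only other point needing attention is that the three boundary-recovery procedures must produce \emph{one and the same} quasicontinuous representative of $g$, not three representatives agreeing merely off a Lebesgue-null set; this is forced by the \emph{joint} weak-type inequality together with agreement on the dense class of trigonometric polynomials.
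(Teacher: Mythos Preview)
You have proved the wrong statement. Read the theorem again: it fixes a single $\theta\in[0,2\pi)$ and asserts a \emph{pointwise} equivalence --- if any one of the three limits exists and is finite at that $\theta$, then all three exist there and coincide. Your argument instead produces a capacity-zero exceptional set $E$ and shows that for $\theta\notin E$ all limits exist and agree. That is a weaker conclusion: it says nothing about a point $\theta_0$ at which, say, the radial limit happens to exist but which might lie in $E$. The density-plus-maximal-inequality mechanism you use is inherently incapable of yielding such a pointwise implication; it only controls behaviour off small sets.

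The paper's route is entirely different and is the natural one for a pointwise statement: it cites classical Tauberian results. For $(1)\Leftrightarrow(2)$, Abel's theorem gives one direction (convergence of the symmetric partial sums implies convergence of the Abel means to the same value), and a Tauberian theorem of Landau gives the converse --- the point being that the Fourier coefficients $\widehat{K\ast\varphi}(n)$ decay like $|n|^{-1/2}$ times an $\ell^2$ sequence, which supplies the Tauberian side condition needed to pass from Abel summability back to ordinary convergence at the individual point $\theta$. For $(2)\Leftrightarrow(3)$ the paper invokes Beurling's pointwise equivalence between convergence of the Fourier partial sums and the symmetric averages for functions of this potential type. Neither step involves capacity at all; the exceptional-set statements in the paper (Theorems~\ref{thm: radial lim for dirichlet} and~\ref{th:boundaryvalues_exist_disk}) are separate results, and the present theorem is the tool that lets one transfer existence from one mode of convergence to another \emph{at the same point}.
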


If one of them exists and is finite, they all do and they are equal. The equivalence of (1) and (2) is Abel's theorem and a result of E. Landau \cite{Landau} p. 65-- 66. The equivalence of (2) and (3) is from Beurling in \cite{Beurling}.

The boundary behaviour of elements of Dirichlet space is better than this result indicates in two ways.  Firstly, the limit exists not just rdially but non-tangentially.  Secondly, the limit exists not just almost everywhere, but up to a set of outer capacity zero.

 We now define non-tangential limit. 
  A non-tangential wedge in $\disk$ with vertex at $p \in \mathbb{S}^1$ is a set of the form
 \begin{equation}\label{nontangential wedge}
     W(p,M)  = \{ z \in \disk : |p-z| < M(1-|z|)   \}
 \end{equation}  
 for $M \in (1,\infty)$.  
 \begin{definition}\label{defn:CNT limit}
 We say that a function $h:\disk \rightarrow \mathbb{C}$ has a non-tangential limit of $\zeta$ at $p$ in $\mathbb{S}^1$ if 
 \[ \lim_{\substack{ z \rightarrow p \\ z \in W(p,M) }} h(z) = \zeta  \]
 for all $M \in (1,\infty)$.  
 \end{definition}
 Equivalently, in the above definition one may replace non-tangential wedges with Stolz angles
 \[  \Delta(p,\alpha,\rho) = \{ z : |\text{arg}(1-\bar{p} z) |< \alpha \ \text{and} \ |z-p| < \rho   \} \]
 where $\alpha \in (0,\pi/2)$ and $\rho \in (0,2 \cos{\alpha})$. \\

The following theorem of Beurling \cite[Theorem 3.2.1]{El-Fallah_etal_primer} improves our understanding of the boundary behaviour, as promised.
\begin{theorem}  \label{th:boundaryvalues_exist_disk}
 Let $h \in \mathcal{D}_{\mathrm{harm}}(\disk)$.  Then there is a set $I \subseteq \mathbb{S}^1$ of outer logarithmic capacity zero such that the non-tangential limit of $h$ exists on $\mathbb{S}^1 \backslash I$.  
\end{theorem}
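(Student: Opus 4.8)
The plan is to reduce to the holomorphic case, isolate a single capacitary maximal-function estimate as the analytic engine, and then deduce the theorem by a Borel--Cantelli argument applied to polynomial approximants; this essentially reconstructs Beurling's original argument. First, since $\disk$ is simply connected, any $h \in \mathcal{D}_{\mathrm{harm}}(\disk)$ splits as $h = f + \overline{g}$ with $f, g \in \mathcal{D}(\disk)$ holomorphic and $D_{\disk}(h) = D_{\disk}(f) + D_{\disk}(g)$; as a finite union of sets of outer capacity zero again has outer capacity zero, it suffices to treat a holomorphic $f(z) = \sum_{n \geq 0} a_n z^n$ with $\sum_{n \geq 1} n |a_n|^2 < \infty$. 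Since $\sum_{n\geq 1}|a_n|^2 \leq \sum_{n\geq 1} n|a_n|^2 < \infty$, the polynomial partial sums $S_N$ converge to $f$ in the Dirichlet seminorm, so I would pass to a rapidly convergent subsequence with $D_{\disk}(f - S_{N_j}) \leq 4^{-j}$ and write $f = \sum_{j \geq 0} P_j$, with $P_0 = S_{N_1}$ and $P_j = S_{N_{j+1}} - S_{N_j}$ for $j \geq 1$. Each $P_j$ is a polynomial --- hence continuous on $\cdisk$, with an unrestricted boundary value $P_j(\zeta)$ at every $\zeta \in \mathbb{S}^1$ --- and $D_{\disk}(P_j)^{1/2} \leq 2^{-j+1}$ for $j \geq 1$.

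The heart of the argument is a weak-type bound for the non-tangential maximal function $N_M u(\zeta) = \sup_{z \in W(\zeta,M)} |u(z)|$: for each $M \in (1,\infty)$ there should exist $C(M)$ such that, for all $u \in \mathcal{D}(\disk)$ with $u(0) = 0$ and all $\lambda > 0$,
\[
 \operatorname{cap}\big( \{ \zeta \in \mathbb{S}^1 : N_M u(\zeta) > \lambda \} \big) \leq \frac{C(M)}{\lambda^2}\, D_{\disk}(u),
\]
where $\operatorname{cap}$ is the capacity naturally attached to the Dirichlet space, a countably subadditive Choquet capacity whose null Borel subsets of $\mathbb{S}^1$ are exactly the sets of logarithmic capacity zero. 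I would prove this by representing the boundary trace $\tilde u \in H^{1/2}(\mathbb{S}^1)$ as a Riesz potential of some $\psi \in L^2(\mathbb{S}^1)$ with $\|\psi\|_{L^2} \lesssim D_{\disk}(u)^{1/2}$ --- which is precisely the representation $u = P(K \ast \varphi)$ underlying Theorem~\ref{thm: fund of harmonic dirichlet} --- observing that the Poisson extension and its non-tangential maximal function are dominated pointwise by the Riesz potential of $|\psi|$, and then invoking the classical capacitary weak-type inequality for Riesz potentials: wherever such a potential exceeds $\lambda$, the function obtained by dividing it by $\lambda$ is an admissible competitor in the variational definition of $\operatorname{cap}$, with squared Dirichlet norm $\lesssim \lambda^{-2}\|\psi\|_{L^2}^2$.

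Granting the estimate, I would fix $M$, set $\lambda_j = 2^{-j/2}$, and let $E_j = \{ \zeta \in \mathbb{S}^1 : N_M P_j(\zeta) > \lambda_j \}$, which is open because the maximal function of a continuous function is lower semicontinuous; then $\operatorname{cap}(E_j) \leq C(M) \lambda_j^{-2} D_{\disk}(P_j) \lesssim 2^{-j}$, so $\sum_j \operatorname{cap}(E_j) < \infty$. If $\zeta \notin F_m := \bigcup_{j \geq m} E_j$ then $\sup_{W(\zeta,M)} |P_j| \leq N_M P_j(\zeta) \leq 2^{-j/2}$ for every $j \geq m$, so $\sum_j P_j$ converges uniformly on the wedge $W(\zeta,M)$; since each $S_{N_j}$ is continuous on $\cdisk$ and the numbers $S_{N_j}(\zeta)$ form a Cauchy sequence (consecutive differences are bounded by $\sup_{W(\zeta,M)} |S_{N_k} - S_{N_j}| \leq \sum_{i \geq j} 2^{-i/2}$), an $\varepsilon/3$ argument yields a non-tangential limit of $f$ at $\zeta$. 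Thus the set where the non-tangential limit of $f$ fails lies in the Borel set $\bigcap_m F_m$, and countable subadditivity gives $\operatorname{cap}\big(\bigcap_m F_m\big) \leq \operatorname{cap}(F_m) \leq \sum_{j \geq m} \operatorname{cap}(E_j) \to 0$ as $m \to \infty$; hence this set has $\operatorname{cap}$ zero, so outer logarithmic capacity zero, and we may take it as the exceptional set $I$. (Alternatively, one may start from the quasi-everywhere existence of radial limits in Theorem~\ref{thm: radial lim for dirichlet} and use the maximal estimate only to pass from ``radially'' to ``non-tangentially''.) The main obstacle is the capacitary weak-type maximal inequality itself, together with the care needed to work with the correct --- subadditive --- capacity and to identify its null sets on $\mathbb{S}^1$ with those of logarithmic capacity; the rest is routine.
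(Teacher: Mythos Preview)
The paper does not supply a proof of this theorem; it simply records it as a classical result of Beurling and points to \cite[Theorem 3.2.1]{El-Fallah_etal_primer}. Your proposal is a faithful reconstruction of the standard argument found there: reduce to the holomorphic case, approximate by polynomials in the Dirichlet seminorm, prove a capacitary weak-type inequality for the non-tangential maximal function, and conclude via a Borel--Cantelli argument using countable subadditivity of the capacity. You have correctly identified the capacitary maximal inequality as the analytic core, and your sketch of its proof via the potential representation $u = P(K\ast\varphi)$ (which is exactly the device behind the paper's Theorem~\ref{thm: fund of harmonic dirichlet}) is on target. The only point to be careful about, which you flag yourself, is making precise the equivalence between the variational Dirichlet-space capacity and logarithmic capacity on Borel subsets of $\mathbb{S}^1$; this is standard but not entirely trivial.
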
  
\begin{remark} \label{re:Borel_replace_circle} By Lemma \ref{le:Borel_replace}, we may take $I$ to be a Borel set of capacity zero.   
\end{remark}

 Since a wedge at $p$ contains a radial segment terminating at $p \in \mathbb{S}^1$, it is immediate that if the non-tangential limit exists, then the radial limit exists and equals the non-tangential limit.
Using Theorems \ref{thm: radial lim for dirichlet} and \ref{thm: fund of harmonic dirichlet} one then has:

\begin{theorem} Let $h \in \mathcal{D}_{\mathrm{harm}}(\disk)$.  Let $H$ be the non-tangential boundary values of $h$. The Fourier series of $H$ converges, except possibly on a set of outer logarithmic capacity zero, to $H$.  
\end{theorem}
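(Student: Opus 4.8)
\emph{Proof sketch.}
The plan is to reduce to the disk results already quoted by first representing an arbitrary $h\in\mathcal{D}_{\mathrm{harm}}(\disk)$ in the form $P(K\ast\varphi)$ with $\varphi\in L^2(\mathbb{S}^1)$. Writing $h(re^{i\theta})=\sum_{n}c_n r^{|n|}e^{in\theta}$, finiteness of $D_{\disk^+}(h)$ gives $\sum_n(1+|n|)|c_n|^2<\infty$; in particular $\sup_{r<1}\|h(re^{i\cdot})\|_{L^2(\mathbb{S}^1)}<\infty$, so $h=P(u)$ for the $L^2$ function $u$ with $\widehat u(n)=c_n$. One then checks that $u=K\ast\varphi$ for some $\varphi\in L^2(\mathbb{S}^1)$: this rests on the two-sided estimate $\widehat K(n)\asymp(1+|n|)^{-1/2}$ together with $\widehat K(n)\neq 0$ for all $n$ (seen, e.g., by writing $K=(1-e^{i\theta})^{-1/4}(1-e^{-i\theta})^{-1/4}$ and convolving the two nonnegative Fourier-coefficient sequences), since then $\varphi$ with $\widehat\varphi(n)=\widehat u(n)/(2\pi\widehat K(n))$ obeys $\|\varphi\|_{L^2}^2\asymp\sum_n(1+|n|)|c_n|^2<\infty$ and $K\ast\varphi=u$. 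Hence $h=P(K\ast\varphi)$; equivalently, the boundary-value space of $\mathcal{D}_{\mathrm{harm}}(\disk)$ is precisely $H^{1/2}(\mathbb{S}^1)=K\ast L^2(\mathbb{S}^1)$.

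With $h=P(K\ast\varphi)$ the rest is assembly. By Theorem~\ref{thm: radial lim for dirichlet} the radial limit $\lim_{r\to1^-}h(re^{i\theta})$ exists quasieverywhere, and by Theorem~\ref{thm: fund of harmonic dirichlet}, at every $e^{i\theta}$ where this limit exists and is finite the symmetric partial sums $\sum_{n=-N}^{N}\widehat{(K\ast \varphi)}(n)e^{in\theta}$ converge to it; moreover, again by Theorem~\ref{thm: fund of harmonic dirichlet} (equivalence of limits (1) and (3)) together with the Lebesgue differentiation theorem, this common limit equals $(K\ast\varphi)(e^{i\theta})$ for almost every $\theta$. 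On the other hand, Theorem~\ref{th:boundaryvalues_exist_disk} furnishes a set $I\subseteq\mathbb{S}^1$ of outer logarithmic capacity zero outside of which the non-tangential limit of $h$ exists; this non-tangential limit is, by definition, $H$, and since every non-tangential wedge at a boundary point contains a radial segment terminating there, for each $e^{i\theta}\in\mathbb{S}^1\setminus I$ the radial limit of $h$ at $e^{i\theta}$ exists and equals $H(e^{i\theta})$.

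Combining these facts, first $H$ agrees almost everywhere on $\mathbb{S}^1$ with $K\ast\varphi\in L^2(\mathbb{S}^1)$, so $\widehat H(n)=\widehat{(K\ast \varphi)}(n)$ for all $n$ and the partial sums of the Fourier series of $H$ coincide, as functions on $\mathbb{S}^1$, with those of $K\ast\varphi$; this a.e.\ identity is used only to make sense of the Fourier coefficients of $H$. Second, for every $e^{i\theta}\in\mathbb{S}^1\setminus I$ those partial sums converge to the radial limit of $h$ at $e^{i\theta}$, which is $H(e^{i\theta})$. Since $I$ has outer logarithmic capacity zero, the Fourier series of $H$ converges to $H$ off a set of outer logarithmic capacity zero, which is the assertion.

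The only substantial point is the representation step $h=P(K\ast\varphi)$; once it is in place the argument is purely a bookkeeping of exceptional sets, the non-tangential exceptional set $I$ of Theorem~\ref{th:boundaryvalues_exist_disk} (of outer capacity zero) absorbing the a priori smaller radial one from Theorem~\ref{thm: radial lim for dirichlet}. The quantitative heart of the representation is the estimate $\widehat K(n)\asymp(1+|n|)^{-1/2}$.
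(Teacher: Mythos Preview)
Your argument is correct and follows the same line as the paper, which simply cites Theorems~\ref{thm: radial lim for dirichlet} and~\ref{thm: fund of harmonic dirichlet} without writing out any details. What you add is precisely the step the paper leaves implicit: that every $h\in\mathcal{D}_{\mathrm{harm}}(\disk)$ can actually be written as $P(K\ast\varphi)$ for some $\varphi\in L^2(\mathbb{S}^1)$, which is needed before Theorem~\ref{thm: fund of harmonic dirichlet} applies. Your Fourier-side justification of this (via $\widehat K(n)\asymp(1+|n|)^{-1/2}$ and $\widehat K(n)>0$) is the standard one; the factorization $K=|g|^2$ with $g(z)=(1-z)^{-1/4}$ having positive Taylor coefficients makes the nonvanishing claim clean. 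The remainder of your argument---using Theorem~\ref{th:boundaryvalues_exist_disk} to control the exceptional set and the observation that non-tangential convergence implies radial convergence---is exactly the bookkeeping one needs, and your care in identifying the Fourier coefficients of $H$ with those of $K\ast\varphi$ via the a.e.\ agreement is appropriate.
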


{Finally, we have the following. 
\begin{theorem} \label{th:extension_unique}
 Let $h_1, h_2 \in \mathcal{D}_{\mathrm{harm}}(\disk)$.  If the non-tangential limits of $h_1$ and $h_2$ are equal except on a Borel set of capacity zero, then $h_1 = h_2$. 
\end{theorem}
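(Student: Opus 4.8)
The plan is to reduce to a single function and show that its $L^2$ boundary trace vanishes almost everywhere. Put $h = h_1 - h_2 \in \mathcal{D}_{\mathrm{harm}}(\disk)$. Since limits add, the hypothesis says precisely that the non-tangential limit of $h$ exists and equals $0$ on $\mathbb{S}^1 \setminus I$ for some Borel set $I \subseteq \mathbb{S}^1$ of logarithmic capacity zero; by linearity of the Poisson extension it suffices to prove $h \equiv 0$.

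First I would realize $h$ as a Poisson integral. Expanding $h(z) = \sum_{n \ge 0} a_n z^n + \sum_{n \ge 1} b_n \bar{z}^n$, the identity $D_{\disk^+}(h) = \sum_{n\ge 1} n\bigl(|a_n|^2 + |b_n|^2\bigr) < \infty$ forces $\sum_{n \ge 0}|a_n|^2 + \sum_{n\ge 1}|b_n|^2 < \infty$, so that
\[
  u(e^{i\theta}) := \sum_{n\ge 0} a_n e^{in\theta} + \sum_{n\ge 1} b_n e^{-in\theta}
\]
defines an element of $L^2(\mathbb{S}^1)$ (in fact of $H^{1/2}(\mathbb{S}^1)$). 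Since the partial sums of the Fourier series of $u$ converge to $u$ in $L^2(\mathbb{S}^1)$, the Poisson extension is $L^2$-continuous, and the Poisson extension of $e^{in\theta}$ is a fixed multiple of $z^n$ for $n\ge 0$ and of $\bar z^{|n|}$ for $n<0$, we conclude that $h$ is, up to the normalization of the Poisson kernel, the Poisson integral of $u$. (Alternatively one may appeal directly to the description of $\mathcal{D}_{\mathrm{harm}}(\disk)$ as the set of $P(K\ast\varphi)$ with $\varphi \in L^2(\mathbb{S}^1)$ which underlies Theorem~\ref{thm: fund of harmonic dirichlet}.)

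Next I would identify the two notions of boundary value. By Fatou's theorem the non-tangential limit of the Poisson integral of $u$ exists and equals $u(\zeta)$ for almost every $\zeta \in \mathbb{S}^1$; alternatively, at every Lebesgue point of $u$ the symmetric means appearing in item~(3) of Theorem~\ref{thm: fund of harmonic dirichlet} converge to $u(\zeta)$, and that theorem forces them to coincide with the radial, hence the non-tangential, limit wherever the latter exists. On the other hand the exceptional set $I$ has logarithmic capacity zero, hence Lebesgue measure zero. Comparing the two full-measure sets, we get $u = 0$ almost everywhere, so that $h$ is the Poisson integral of the zero function, i.e.\ $h \equiv 0$, and therefore $h_1 = h_2$.

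The step I expect to require the most care is the passage from ``the non-tangential limit vanishes quasi-everywhere'' to ``the $L^2$ boundary trace vanishes almost everywhere.'' It rests on two ingredients: that sets of logarithmic capacity zero are Lebesgue-null (which upgrades Beurling's quasi-everywhere statement, Theorem~\ref{th:boundaryvalues_exist_disk}, to an almost-everywhere one), and that a harmonic Dirichlet function is reconstructed from its $L^2$ boundary trace as a Poisson integral whose non-tangential boundary limit returns that trace almost everywhere. Granting both, the conclusion is immediate, since the Poisson integral of the zero function is zero.
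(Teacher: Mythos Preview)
Your proof is correct. The paper's own argument is terser and takes a slightly different final step: after reducing to a single $h$ with non-tangential (hence radial) boundary values vanishing off a capacity-zero set, it invokes the Douglas formula \eqref{Douglas formula} to conclude that $D_{\disk^+}(h)=0$, so $h$ is constant, and that constant must be zero. Your route via the Poisson representation and Fatou's theorem reaches the same endpoint along a parallel path---identifying $h$ explicitly as the Poisson integral of its $L^2$ boundary trace $u$, and then showing $u=0$ almost everywhere---and is arguably a bit more self-contained, since the Douglas formula is stated in the paper only for holomorphic Dirichlet functions, whereas you never leave the harmonic setting. Both arguments rest on exactly the two ingredients you isolated: sets of logarithmic capacity zero are Lebesgue-null, and the non-tangential limit agrees almost everywhere with the $L^2$ boundary trace.
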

To see this, it is enough to see that if the non-tangential limit of $h\in \mathcal{D}_{\mathrm{harm}}(\disk) $ is zero, then $h$ is zero. This follows essentially from the equality of the radial and non-tangential limits and \eqref{Douglas formula}. }

\end{subsection}
\begin{subsection}{Conformally non-tangential boundary values} \label{se:CNT_boundaryvalues} \ 
 
 We now extend the notion of non-tangential limits to arbitrary Jordan domains.  This extension is an immediate consequence of the Riemann mapping theorem, and is uniquely determined by the requirement that the definition be conformally invariant.  Although this extension is by itself trivial, substantial results arise when one considers boundary values from two sides of the curve, as we will see in Section \ref{se:transmission}.

 \begin{definition} \label{de:CNT}   
  Let $\Omega$ be a Jordan domain in $\sphere$ with boundary $\Gamma$. Let $h:\Omega \rightarrow \mathbb{C}$ be a function. 
  We say that the conformally non-tangential (CNT) limit of $h$ is $\zeta$ at $p \in \Gamma$ if, for a biholomorphism $f:\disk^+ \rightarrow \Omega$, the non-tangential limit of $h \circ f$ is $\zeta$ at $f^{-1}(p)$.   
 \end{definition}
 The existence of the limit does not depend on the choice of biholomorphism, as the following lemma shows.
 \begin{lemma} \label{le:automorphism_invariance_NT}
  Let $h: \disk \rightarrow \mathbb{C}$, and let $T:\disk \rightarrow \disk$ be a disk automorphism.  Then $h$ has a non-tangential limit at $p \in \partial \disk$ if and only if $h \circ T$ has a non-tangential limit at $T^{-1}(p)$, and these are equal.  
 \end{lemma}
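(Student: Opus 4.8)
The plan is to reduce the statement to a single geometric fact — that a disk automorphism carries non-tangential wedges near a boundary point into non-tangential wedges near the image point — and then to obtain the converse implication for free by applying the forward implication to $T^{-1}$.

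The geometric input I would record first is the following. Since $T$ is a M\"obius transformation with $T(\disk)=\disk$, it is conformal (biholomorphic onto its image) on all of $\sphere$ except its single pole, which lies outside $\cdisk$; thus $T$ is conformal with non-vanishing derivative on an open neighbourhood of $\cdisk$, and, being a homeomorphism of $\cdisk$, it satisfies $T(\mathbb{S}^1)=\mathbb{S}^1$. Fix $q\in\mathbb{S}^1$ and put $p=T(q)$. The claim I would prove is that for every $M>1$ there are $M'>1$ and $\delta>0$ with
\[ T\bigl(W(q,M)\cap B(q,\delta)\bigr)\subseteq W(p,M'). \]
To get this I would use two comparisons valid for $z\in\disk$ near $q$: first $|T(z)-p|\asymp|z-q|$, because $T$ and $T^{-1}$ are Lipschitz near $q$ resp. $p$ (conformal maps with bounded, non-vanishing derivative); and second $1-|T(z)|\asymp 1-|z|$, which follows from $1-|w|=\mathrm{dist}(w,\mathbb{S}^1)$ for $w\in\disk$ together with the facts that $T$ maps the part of $\mathbb{S}^1$ near $q$ onto the part near $p$ and the $\disk$-side to the $\disk$-side, so that the Lipschitz bounds for $T$ and $T^{-1}$ distort distance to $\mathbb{S}^1$ by bounded factors. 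Combining them, for $z\in W(q,M)$ close enough to $q$,
\[ |T(z)-p|\le C_1|z-q|\le C_1 M\,(1-|z|)\le C_1 C_2 M\,(1-|T(z)|), \]
so $T(z)\in W(p,M')$ with $M'=C_1C_2M$.

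With this in hand the two implications are routine. For the forward direction, assume $h$ has non-tangential limit $\zeta$ at $p$; fix $M>1$ and take $M',\delta$ as above. Given $\varepsilon>0$ I would pick $\eta>0$ with $|h(w)-\zeta|<\varepsilon$ for $w\in W(p,M')$, $|w-p|<\eta$, and then, by continuity of $T$ at $q$, pick $\delta'\in(0,\delta)$ with $|z-q|<\delta'\Rightarrow|T(z)-p|<\eta$; for $z\in W(q,M)$ with $|z-q|<\delta'$ one then has $T(z)\in W(p,M')\cap B(p,\eta)$, hence $|h(T(z))-\zeta|<\varepsilon$, and since $M$ is arbitrary $h\circ T$ has non-tangential limit $\zeta$ at $q=T^{-1}(p)$. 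For the converse I would simply run the forward implication with $T$ replaced by the disk automorphism $T^{-1}$, $p$ replaced by $q$, and $h$ replaced by $h\circ T$: if $h\circ T$ has non-tangential limit $\zeta$ at $q$, then $(h\circ T)\circ T^{-1}=h$ has non-tangential limit $\zeta$ at $T(q)=p$. This yields the equivalence with equal limiting values.

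The only real content is the geometric wedge claim, and inside it the comparison $1-|T(z)|\asymp 1-|z|$, which is exactly where the hypothesis that $T$ preserves $\disk$ (hence $\mathbb{S}^1$) enters, rather than mere holomorphy near $q$; so that is the step I would treat most carefully. An alternative for that step is to argue through Stolz angles and conformal angle-preservation — the radius at $q$ maps to a curve meeting $\mathbb{S}^1$ orthogonally at $p$, so a Stolz angle of aperture $\alpha$ at $q$ lands, near its vertex, in a Stolz angle of slightly larger aperture at $p$ — but the Lipschitz estimate above is shorter and self-contained.
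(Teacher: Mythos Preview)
Your proof is correct and follows essentially the same route as the paper's: both reduce the lemma to the geometric fact that a disk automorphism carries non-tangential approach regions near a boundary point into non-tangential approach regions near the image point, and then deduce the converse by applying the same fact to $T^{-1}$. The paper phrases this in terms of Stolz angles and simply declares the containment ``easily verified'', whereas you work with the equivalent wedges $W(p,M)$ and supply the verification via the two Lipschitz comparisons $|T(z)-p|\asymp|z-q|$ and $1-|T(z)|\asymp 1-|z|$; your closing remark about the Stolz-angle alternative is in fact precisely the paper's stated approach.
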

 \begin{proof}
  The claim follows from the easily verified fact that every Stolz angle at $p$ is contained in the image under $T$ of a Stolz angle at $T^{-1}(p)$, and every Stolz angle at $T^{-1}(p)$ is contained in the image under $T^{-1}$ of a Stolz angle at $p$.   
 \end{proof}
 If $h \circ f$ has non-tangential limit $\zeta$ at $f^{-1}(p)$ for a biholomorphism $f:\disk^+ \rightarrow \Omega$, then $h \circ g$ has non-tangential limit $\zeta$ at $g^{-1}(p)$ for any biholomorphism $g:\disk^+ \rightarrow \Omega$, by applying the lemma above to $T = g^{-1} \circ f$.  
 
 \begin{remark} \label{re:CNT_conformally_invariant}
  This notion of CNT limit is conformally invariant, in the following sense.  If $\Omega_1$ and $\Omega_2$ are Jordan domains and $f:\Omega_1 \rightarrow \Omega_2$ is a biholomorphism, then the CNT boundary values of $h:\Omega_2 \rightarrow \mathbb{C}$ exists and equals $\zeta$ at $p \in \partial \Omega_2$ if and only if the CNT limit of $h \circ f$ exists and equals $\zeta$ at $f^{-1}(p) \in \Omega_1$.  The only role that the regularity of the boundary curves plays in the definition, is that we use Carath\'eodory's theorem implicitly to uniquely associate points on the boundary of $\partial \Omega_1$ with points on $\partial \Omega_2$. Therefore the boundary is required to be a Jordan curve.  
  However, even this condition can be removed, by replacing the boundary of the domain in $\sphere$ with the ideal boundary \cite{Schippers_Staubach_monograph}.
 \end{remark}
 \begin{remark}
   An obviously equivalent definition is as follows. The CNT limit of $h:\Omega \rightarrow \mathbb{C}$ is $\zeta$ at $p \in \partial \Omega$ if, given a conformal map $f:\disk^+ \rightarrow \Omega$, defining $V(p,M)  = f(W(f^{-1}(p),M))$, one has that
  \[ \lim_{\substack{ z \rightarrow p \\ z \in V(p,M) }} h(z) = \zeta.  \]
  Note that, treating the ideal boundary of $\Omega$ as a border of $\Omega$ \cite{AhlforsSario} (which can be done since $\Omega$ is biholomorphic to a disk), the angle of the wedge $V(p,M)$ has a sensible geometric meaning.  That is, let $\phi$ be a border chart taking a neighbourhood $U$ of $p$ in $\Omega$ to a half-disk which takes a segment of the ideal boundary containing $p$ to a segment of the real axis.  In this neighbourhood, $\phi(V(p,M) \cap U)$ is a wedge in the ordinary sense.  The boundary of $\phi(V(p,M) \cap U)$ meets the real axis at two angles which are independent of the choice of chart.  
 \end{remark}

 Using CNT limits, we can formulate a conformally invariant version of Beurling's theorem on non-tangential limits.  
 \begin{theorem}
  Let $\Omega$ be a Jordan domain with boundary $\Gamma$.  For $h \in \mathcal{D}_{\mathrm{harm}}(\Omega)$, the \emph{CNT} boundary values of $h$ exist at every point in $\Gamma$ except possibly on a null set $I \subset \Gamma$ with respect to $\Omega$.  If $h_1$ and $h_2$ are \emph{CNT} boundary values of some element of $H_1$ and $H_2$ in $\mathcal{D}_{\mathrm{harm}}(\Omega)$ respectively, and $h_1 = h_2$ except possibly on a null set, then $H_1 = H_2$.      
 \end{theorem}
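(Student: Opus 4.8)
The plan is to transport everything to the disk by a Riemann map and invoke Beurling's theorem (Theorem~\ref{th:boundaryvalues_exist_disk}) for existence and Theorem~\ref{th:extension_unique} for uniqueness. Fix a biholomorphism $f:\disk^+\to\Omega$; by Carath\'eodory's theorem it extends to a homeomorphism $\mathbb{S}^1\to\Gamma$, still denoted $f$. The first thing I would record is that the Dirichlet energy is a conformal invariant: since the wedge product and the $\ast$-operator are natural with respect to biholomorphisms, one has $D_\Omega(h)=\|dh\|_\Omega^2=\|d(h\circ f)\|_{\disk^+}^2=D_{\disk^+}(h\circ f)$, so $h\circ f\in\mathcal{D}_{\mathrm{harm}}(\disk^+)$ whenever $h\in\mathcal{D}_{\mathrm{harm}}(\Omega)$. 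Also, by Definition~\ref{de:CNT}, $h$ has CNT limit $\zeta$ at $p\in\Gamma$ precisely when $h\circ f$ has non-tangential limit $\zeta$ at $f^{-1}(p)$.

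\emph{Existence.} Apply Theorem~\ref{th:boundaryvalues_exist_disk} together with Remark~\ref{re:Borel_replace_circle} (via Lemma~\ref{le:Borel_replace}) to $h\circ f$: there is a Borel set $I'\subseteq\mathbb{S}^1$ of logarithmic capacity zero off which $h\circ f$ has non-tangential limits. Hence $h$ has a CNT limit at every point of $\Gamma\setminus f(I')$. It remains to check that $I:=f(I')$ is null with respect to $\Omega$. Since $f$ restricts to a homeomorphism of $\mathbb{S}^1$ onto $\Gamma$, the image $f(I')$ is a Borel subset of $\Gamma$, and $f^{-1}(f(I'))=I'$ has capacity zero; by definition this makes $I$ null with respect to $\Omega$. (Independence of the chosen biholomorphism was already established in Section~\ref{se:null_sets} following the definition of null set.)

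\emph{Uniqueness.} Set $H=H_1-H_2\in\mathcal{D}_{\mathrm{harm}}(\Omega)$. By the existence part, $H$ has CNT boundary values off a null set $N_0$; wherever $h_1$ and $h_2$ both exist — that is, off a null set $N_1$ — the CNT value of $H$ equals $h_1-h_2$, which vanishes off the null set $N$ on which $h_1=h_2$. Thus the CNT boundary values of $H$ exist and equal $0$ off the set $N_0\cup N_1\cup N$, which is null because $f^{-1}(N_0\cup N_1\cup N)$ is a finite union of Borel sets of capacity zero and hence itself Borel of capacity zero. Consequently the non-tangential boundary values of $H\circ f\in\mathcal{D}_{\mathrm{harm}}(\disk^+)$ exist and equal $0$ except on a Borel set of capacity zero. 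Applying Theorem~\ref{th:extension_unique} with the second function taken to be $0$ yields $H\circ f=0$, whence $H=0$ and $H_1=H_2$.

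\emph{Main obstacle.} There is no genuine analytic difficulty here: all the hard work is in Beurling's theorems, which are already available. The only points requiring care are purely set-theoretic and topological bookkeeping — that the Carath\'eodory extension of $f$ carries Borel sets to Borel sets (so that $f(I')$ is a legitimate null set), that null sets are stable under finite unions, and the compatibility of the notion of ``null'' under a change of biholomorphism. All of these were essentially handled in Section~\ref{se:null_sets}, so assembling the argument is routine.
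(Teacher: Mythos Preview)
Your proof is correct and follows exactly the same approach as the paper: the paper's proof is a single sentence citing Theorem~\ref{th:boundaryvalues_exist_disk}, Theorem~\ref{th:extension_unique}, Lemma~\ref{le:Borel_replace}, and the conformal invariance of CNT limits (Remark~\ref{re:CNT_conformally_invariant}), and you have simply unpacked those references carefully, including the bookkeeping about Borel sets and finite unions of null sets.
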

 This follows directly from Theorem \ref{th:boundaryvalues_exist_disk}, {Theorem \ref{th:extension_unique},} Lemma \ref{le:Borel_replace}, and the conformal invariance of CNT limits (Remark \ref{re:CNT_conformally_invariant}).    \\ 
 
 We now define a particular class of boundary values.  Let $\Omega$ be a Jordan domain in $\sphere$ with boundary $\Gamma$.  We say that two functions  $h_1$ and $h_2$ on $\Gamma$ are equivalent if $h_1 = h_2$ except possibly on a null set $I$ with respect to $\Omega$.  Denote the set of such functions up to equivalence by $\mathcal{B}(\Gamma,\Omega)$.  We say that $h_1 = h_2$ if they are equivalent.  
 
 \begin{definition}
  The Osborn space of $\Gamma$ with respect to $\Omega$,  denoted $\mathcal{H}(\Gamma,\Omega)$, is the set of functions $h \in \mathcal{B}(\Gamma,\Omega)$ which arise as boundary values of elements of $\mathcal{D}_{\text{harm}}(\Omega)$.  
 \end{definition}
 
 We then define the trace operator
 \[  \mathbf{b}_{\Omega,\Gamma}:\mathcal{D}_{\text{harm}}(\Omega) \rightarrow \mathcal{H}(\Gamma,\Omega)  \]
 and the extension operator 
 \[ \mathbf{e}_{\Gamma,\Omega}:\mathcal{H}(\Gamma,\Omega) \rightarrow \mathcal{D}_{\text{harm}}(\Omega)  \]
 accordingly.

In the case that $\Gamma = \mathbb{S}^1$ and $\Omega = \disk^+$, these maps have simple expressions in terms of the Fourier series: 

\[  {\bf{b}}_{\disk^+,\mathbb{S}^1}\,\Big(\sum_{n=0}^\infty a_n z^n + \sum_{n=1}^\infty a_{-n} \bar{z}^n\Big) =  \sum_{n=-\infty}^\infty a_n e^{in \theta},  \]
and
\[  {\bf{e}}_{\mathbb{S}^1,\disk^+}\left( \sum_{n=-\infty}^\infty a_n e^{in \theta} \right)  = 
    \sum_{n=0}^\infty a_n z^n + \sum_{n=1}^\infty a_{-n} \bar{z}^n  . \]

 Similar expressions can be obtained for $\disk^-$.\\  
 
 By the Jordan curve theorem, a Jordan curve divides $\sphere$ into two connected components $\Omega_1$ and $\Omega_2$.  This leads to the following important question: \\ 
 
 {\bf{Question:}}
 for which Jordan curves $\Gamma$ is $\mathcal{H}(\Gamma,\Omega_1) = \mathcal{H}(\Gamma,\Omega_2)$? \\ 
 
 That is the topic of the next section.
 
\end{subsection}

\end{section}
\begin{section}{Transmission of harmonic functions in quasicircles}  \label{se:transmission}
\begin{subsection}{Vodop'yanov-Nag-Sullivan theorem}  \label{se:Nag_Sullivan}

{

First we recall a result due to K. Vodop'yanov \cite{Vodopyanov} regarding the boundedness of composition operators on fractional Sobolev spaces which will be useful in proving a characterization results for quasisymmetric homeomorphims of $\mathbb{S}^1$. However the original result is formulated for Sobolev spaces on the real line. To this end, one defines the homogeneous Sobolev $($or Besov$)$ space $\dot{H}^{1/2}(\R)$ as the closure of $\mathcal{C}^\infty_c(\R)$ $($smooth compactly supported functions$)$ in the seminorm 
\begin{equation}\label{equiv hhalf norms}
\Vert f\Vert_{\dot{H}^{1/2}(\R)}=\left( \int_\R\int_\R\frac{|f(x)-f(y)|^2}{|x-y|^2}\, dx\, dy\right)^{\frac{1}{2}}.
\end{equation}
  \begin{theorem}\label{Vodopyanov}
   The composition map $\mathbf{C}_\phi (h):= h\circ \phi$ is bounded from $\dot{H}^{\frac{1}{2}}(\R)$ to
$\dot{H}^{\frac{1}{2}}(\R),$ if and only if $\phi$ is a quasisymmetric homeomorphism of $\R$ to $\R$.
  \end{theorem}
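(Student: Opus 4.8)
The plan is to prove the two implications separately, working directly on $\R$ and the upper half-plane $\h$; the disk statements of the earlier sections transfer under a conformal identification of the disk with $\h$. Throughout I would use the half-plane analogue of the Douglas formula \eqref{Douglas formula}: writing $P[f]$ for the Poisson extension to $\h$ of $f\in\dot{H}^{1/2}(\R)$, one has $\|f\|^2_{\dot{H}^{1/2}(\R)}=c_0\,D_\h(P[f])$ for a universal constant $c_0>0$, where $D_\h$ is the Dirichlet energy \eqref{defn:dirichlet norm of form}. Combining this with the Dirichlet principle (the harmonic extension minimizes $D_\h$ among all finite-energy extensions of a given boundary function) gives the variational identity $\|g\|^2_{\dot{H}^{1/2}(\R)}=c_0\inf\{D_\h(w):w|_\R=g\}$, which is the engine of both directions.

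\emph{Sufficiency.} Assume $\phi$ is quasisymmetric. By the Beurling--Ahlfors theorem in its half-plane form (Theorem \ref{th:qs_are_qc_boundary}, \cite{Ahlfors-Beurling}), $\phi$ is the boundary restriction of a $K$-quasiconformal homeomorphism $\Phi$ of $\overline{\h}$. I would bound $\mathbf{C}_\phi$ first on the dense subspace $\mathcal{C}^\infty_c(\R)$: for such $f$, set $u=P[f]$ and $v=u\circ\Phi$, so $v$ is continuous up to $\R$ with $v|_\R=f\circ\phi$; the pointwise bound $\|D\Phi\|^2_{\mathrm{op}}\le K\,|\det D\Phi|$ valid a.e.\ for $K$-quasiconformal maps, together with the change-of-variables formula, gives $D_\h(v)\le K\,D_\h(u)$. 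Since $v$ is a finite-energy extension of $f\circ\phi$, the variational identity yields $\|f\circ\phi\|^2_{\dot{H}^{1/2}}=c_0D_\h(P[f\circ\phi])\le c_0D_\h(v)\le c_0KD_\h(u)=K\|f\|^2_{\dot{H}^{1/2}}$, so $\|\mathbf{C}_\phi\|\le\sqrt K$ on $\mathcal{C}^\infty_c(\R)$. Since $\phi$, being quasisymmetric, carries Borel sets of capacity zero to Borel sets of capacity zero (the real-line analogue of Corollary \ref{co:quasisymmetry_nullpreserve}), $h\circ\phi$ is well defined quasi-everywhere for every $h\in\dot{H}^{1/2}(\R)$, and a routine density argument identifies the bounded extension of $\mathbf{C}_\phi$ with $h\mapsto h\circ\phi$.

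\emph{Necessity.} Now suppose $\mathbf{C}_\phi$ is bounded with norm $M$ (here $\phi$ is, in accordance with Definition \ref{de:quasisymmetriccircle}, an orientation-preserving homeomorphism of $\R$). For disjoint closed arcs $I,J$ of $\R\cup\{\infty\}$ define the condenser capacity $\mathrm{cap}_\h(I,J)=\inf\{D_\h(w):w|_I=0,\ w|_J=1\}$; by the variational identity this equals $c_0^{-1}\inf\{\|g\|^2_{\dot{H}^{1/2}}:g|_I=0,\ g|_J=1\}$. If $g$ nearly realizes the latter infimum for $(\phi(I),\phi(J))$, then $g\circ\phi$ is admissible for $(I,J)$ with $\|g\circ\phi\|\le M\|g\|$; letting the approximation tighten,
\[
 \mathrm{cap}_\h(I,J)\ \le\ M^{2}\,\mathrm{cap}_\h(\phi(I),\phi(J)) \qquad (\star)
\]
for all disjoint closed $I,J$. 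Suppose, for contradiction, that $\phi$ is not quasisymmetric. Replacing $\phi$ by $x\mapsto -\phi(-x)$ if necessary --- which preserves $(\star)$ and the conclusion, reflection being an isometry of $\dot{H}^{1/2}(\R)$, and turns ``ratios tending to $0$'' into ``ratios tending to $\infty$'' --- I may assume there are $x_n\in\R$ and $t_n>0$ with $R_n:=\bigl(\phi(x_n+t_n)-\phi(x_n)\bigr)/\bigl(\phi(x_n)-\phi(x_n-t_n)\bigr)\to\infty$. Put $I_n=[x_n-t_n,x_n]$, $J_n=[x_n+t_n,x_n+2t_n]$, and $a_n=\phi(x_n-t_n)<b_n=\phi(x_n)<c_n=\phi(x_n+t_n)$. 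Up to an affine change of variable $(I_n,J_n)$ is independent of $n$, so $\mathrm{cap}_\h(I_n,J_n)=\kappa_0$ for a fixed $\kappa_0>0$. On the other hand $\phi(I_n)=[a_n,b_n]$ and $\phi(J_n)\subseteq[c_n,\infty)$; normalizing $[a_n,b_n]$ affinely to $[0,1]$ sends $[c_n,\infty)$ to $[1+R_n,\infty)$, so by monotonicity and conformal invariance of $\mathrm{cap}_\h$ we get $\mathrm{cap}_\h(\phi(I_n),\phi(J_n))\le\mathrm{cap}_\h([0,1],[1+R_n,\infty))\to 0$ as $R_n\to\infty$. This contradicts $\kappa_0\le M^2\mathrm{cap}_\h(\phi(I_n),\phi(J_n))$, and so $\phi$ must be quasisymmetric.

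\emph{Where the difficulty lies.} Sufficiency is soft: only the quasiconformal distortion of the Dirichlet integral by a bounded factor and the Dirichlet principle are used. The substance is in necessity, where the point is that boundedness of $\mathbf{C}_\phi$ directly yields only the \emph{one-sided} estimate $(\star)$, yet this already suffices because the degeneration of a single condenser under $\phi$ forces a contradiction; one must also be slightly careful about the point at infinity and about the quasi-everywhere definedness of $h\circ\phi$, which is exactly where capacity-preservation of quasisymmetries is needed. (An alternative to the degeneration argument is to feed $(\star)$ the conjugate quadrilateral as well, obtaining a two-sided distortion bound for quadrilateral moduli and hence quasisymmetry via the classical modulus characterization; the degeneration argument above is simply more self-contained.)
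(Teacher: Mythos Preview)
Your proof is correct. The paper itself does not give a proof of this theorem---it simply cites \cite{Vodopyanov}, Theorem 2.2, and later (in the proof of Theorem \ref{th:quasisymmetry_CNT_charac}) reproduces the sufficiency direction in the same way you do: Beurling--Ahlfors extension, quasi-invariance of the Dirichlet energy under quasiconformal composition, and the Dirichlet principle. So for sufficiency you and the paper agree.

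For necessity, your argument is genuinely different from anything in the paper (which just cites Vodop'yanov, Nag--Sullivan, and Bourdaud). Your route via condenser capacity is self-contained and elegant: boundedness of $\mathbf{C}_\phi$ immediately yields the one-sided distortion estimate $(\star)$ for condenser capacities, and you then exhibit a single family of condensers whose image capacities degenerate if $\phi$ fails to be quasisymmetric, forcing a contradiction. This is more elementary than appealing to the general change-of-variable machinery in \cite{Vodopyanov} or \cite{Bourdaud}, and it makes the geometric content transparent: the quasisymmetry condition is exactly what prevents condenser degeneration under $\phi$. The paper's citation-based approach has the advantage of plugging into a broader theory of composition operators on Besov spaces; your approach has the advantage of being direct, requiring nothing beyond conformal invariance and monotonicity of moduli, and of isolating precisely the one-sided inequality that does all the work.
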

 See \cite{Vodopyanov} Theorem 2.2.
 \begin{remark}
As is customary in Sobolev space theory, the constructions of compositions, traces and so on, are done using dense  subsets of Sobolev spaces, e.g. the set of smooth compactly supported functions, where for example the composition $\mathbf{C}_\phi (h)$ is well-defined (i.e for $h\in \mathcal{C}_c^{\infty}(\R)).$ Thereafter one seeks boundedness estimates with bounds that are independent of $h$ and extends the results by density to the desired Sobolev space.   
 \end{remark}
As a side-note, using the existence of the solution of the Dirichlet's problem and quasi-isometric extensions of quasisymmetries, the authors of the current exposition showed in \cite{SchippersStaubach_Proc} that $\mathbf{C}_\phi$ is bounded on $H^{\frac{1}{2}}(\mathbb{S}^1)$. To see this first we extend the quasisymmetry $\phi$ to a quasi-isometry $\Phi$ on $\mathbb{D},$ which is possible thanks to a result of Z. Ibragimov, see \cite[Theorem 3.1 (5)]{Ibra}, and the conformal equivalence of the half plane and the disk.
Next we let $F$ to be the harmonic extension of $f\in H^{\frac{1}{2}}(\mathbb{S}^1)$ which according to Proposition 1.7 on page 360 in \cite{Taylor} belongs to $H^1(\disk)$ and satisfies the estimate $\Vert F\Vert_{H^1(\disk)}\leq C \Vert f\Vert_{H^{\frac{1}{2}}(\mathbb{S}^1)}$.\\ Now since the boundary value of $\mathbf{C}_\Phi (F)$ is $\mathbf{C}_\phi (f)$, by the continuity of the restriction to the
boundary (see \cite{Taylor} Proposition 4.5), one has that $\|\mathbf{C}_ \phi (f) \|_{{H}^{\frac{1}{2}}(\mathbb{S}^1)} \leq C \| \mathbf{C}_\Phi(F) \|_{H^1(\mathbb{D})}.$  But it is well-known that $\Vert\mathbf{C}_\Phi (F) \Vert_{H^1(\mathbb{D})}\lesssim \Vert
F\Vert_{H^1(\mathbb{D})},$   for every quasi-isometric homeomorphism $\Phi$, see e.g. \cite[Theorem $4.4'$]{GR}  and its Corollary 1. Thus
\[  \|\mathbf{C}_ \phi (f) \|_{{H}^{\frac{1}{2}}(\mathbb{S}^1)} \lesssim \| \mathbf{C}_\Phi (F) \|_{H^1(\mathbb{D})} \lesssim
     \| F \|_{H^1(\mathbb{D})} \lesssim \| f \|_{H^{1/2}(\mathbb{S}^1)},  \]
which proves the claim.\\

In \cite{NS} S. Nag and D. Sullivan showed that quasisymmetries of $\mathbb{S}^1$ are characterized by the fact that they are bounded maps of the Sobolev space $H^{1/2}(\mathbb{S}^1)/\R$ and in doing so reproved Theorem \ref{Vodopyanov}. In what follows we give a presentation of their result adding also some more references for the sake of completeness.\\ 
  \begin{theorem}  \label{th:quasisymmetry_CNT_charac}
  Let $\phi:\mathbb{S}^1 \rightarrow \mathbb{S}^1$ be a homeomorphism.  Then the following are equivalent.  
  \begin{enumerate}
   \item $\phi$ is a quasisymmetry;
   \item $\phi$ has the following three properties:
    \begin{enumerate}
     \item $\phi$ takes Borel sets of capacity zero to Borel sets of capacity zero;
     \item for every $h \in \mathcal{H}(\mathbb{S}^1)$, $\mathbf{C}_\phi (h) \in \mathcal{H}(\mathbb{S}^1)$; 
     \item the map $h \mapsto h \circ \phi$ obtained in $($b$)$ is bounded in the sense that 
      there is a $C$ such that 
      \begin{equation}\label{energy inequality} 
      D_{\disk^+}( {\mathbf{e}}_{\mathbb{S}^1,\disk^+} \,( h \circ \phi )) \leq C D_{\disk^+}( {\mathbf{e}}_{\mathbb{S}^1,\disk^+} \, h ).
      \end{equation}
    \end{enumerate}
    
  \end{enumerate}
 \end{theorem}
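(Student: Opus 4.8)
The plan is to reduce the whole statement to the boundedness of the composition operator $\mathbf{C}_\phi$ on the homogeneous fractional Sobolev space, for which Vodop'yanov's Theorem \ref{Vodopyanov} is available (after transfer from $\mathbb{R}$ to $\mathbb{S}^1$), while carefully tracking the passage between quasieverywhere and Lebesgue-almost-everywhere equivalence on the circle. The first step is to set up the dictionary between $\mathcal{H}(\mathbb{S}^1)$ and $H^{1/2}(\mathbb{S}^1)$: for $h\in\mathcal{D}_{\mathrm{harm}}(\disk^+)$ with boundary Fourier series $\sum_n a_n e^{in\theta}$ one has $D_{\disk^+}(h)=\sum_n |n|\,|a_n|^2$, which by the Douglas formula \eqref{Douglas formula} (extended from the holomorphic to the harmonic case mode-by-mode) equals, up to an absolute constant, the homogeneous $H^{1/2}$-seminorm of the boundary values; and since $\sum_{|n|\ge1}|a_n|^2\le \sum_n|n|\,|a_n|^2$, every such boundary function is in $L^2(\mathbb{S}^1)$. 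Thus $\mathbf{e}_{\mathbb{S}^1,\disk^+}$ is, up to constants, an isometry of $\mathcal{H}(\mathbb{S}^1)/\mathbb{R}$ onto $\mathcal{D}_{\mathrm{harm}}(\disk^+)/\mathbb{R}$, and the inequality \eqref{energy inequality} in (c) is exactly $\|h\circ\phi\|_{\dot H^{1/2}(\mathbb{S}^1)}\le C'\|h\|_{\dot H^{1/2}(\mathbb{S}^1)}$. Because capacity zero implies Lebesgue measure zero, and by Theorem \ref{th:boundaryvalues_exist_disk} together with Lemma \ref{le:Borel_replace} the boundary values exist quasieverywhere, each $\mathcal{H}(\mathbb{S}^1)$-class determines a well-defined $H^{1/2}$-class.

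For the implication (1) $\Rightarrow$ (2): property (a) is precisely Corollary \ref{co:quasisymmetry_nullpreserve}, applied to both $\phi$ and $\phi^{-1}$ (both quasisymmetries). For (b) and (c) I would extend $\phi$ to a quasi-isometry $\Phi$ of $\disk^+$, exactly as in the discussion preceding this theorem (Ibragimov's extension plus the conformal equivalence of disk and half-plane, or alternatively a quasiconformal extension via Theorem \ref{th:qs_are_qc_boundary}). Given $h\in\mathcal{H}(\mathbb{S}^1)$, set $H=\mathbf{e}_{\mathbb{S}^1,\disk^+}(h)\in H^1(\disk^+)$; then $H\circ\Phi\in H^1(\disk^+)$ with $D_{\disk^+}(H\circ\Phi)\le K\,D_{\disk^+}(H)$ (quasiconformal near-invariance of Dirichlet energy), and its trace on $\mathbb{S}^1$ is $h\circ\phi$. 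Letting $\tilde H$ be the harmonic function with the same boundary values, the Dirichlet principle gives $D_{\disk^+}(\tilde H)\le D_{\disk^+}(H\circ\Phi)\le K\,D_{\disk^+}(H)$, so $\tilde H\in\mathcal{D}_{\mathrm{harm}}(\disk^+)$ and $\mathbf{b}_{\disk^+,\mathbb{S}^1}(\tilde H)=h\circ\phi$ off a null set; this is (b), and rereading the estimate through $\mathbf{e}_{\mathbb{S}^1,\disk^+}$ gives (c).

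For the implication (2) $\Rightarrow$ (1): properties (b) and (c), via the dictionary above, say precisely that $\mathbf{C}_\phi$ maps $H^{1/2}(\mathbb{S}^1)$ (equivalently $\dot H^{1/2}(\mathbb{S}^1)$ modulo constants) boundedly into itself, while property (a) is what makes $\mathbf{C}_\phi(h)$ a well-defined class modulo null sets when $h$ is defined only quasieverywhere, and what pins the quasieverywhere composition to the $H^{1/2}$-class rather than to some other representative. I would then transfer the problem from $\mathbb{S}^1$ to $\mathbb{R}$ by a Cayley transform (after a rotation sending the distinguished point to $\infty$), producing a homeomorphism $\psi$ of $\mathbb{R}$ for which $\mathbf{C}_\psi$ is bounded on $\dot H^{1/2}(\mathbb{R})$; Theorem \ref{Vodopyanov} then forces $\psi$, hence $\phi$, to be a quasisymmetry.

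The step I expect to be the main obstacle is the null-set bookkeeping in the reduction — specifically, verifying that $h\circ\phi$, which is defined a priori only quasieverywhere, really is the quasieverywhere boundary-value class of the harmonic Dirichlet function built from $H\circ\Phi$, and not merely equal to it Lebesgue-almost-everywhere; this rests on the fact that the quasi-isometric (quasiconformal) extension $\Phi$ transports non-tangential limits of Sobolev-class functions correctly off sets of capacity zero, which is exactly where property (a) (equivalently Corollary \ref{co:quasisymmetry_nullpreserve}) does its real work, and it is also the place where the usual statements of the Vodop'yanov and Nag--Sullivan theorems, phrased only up to Lebesgue-measure zero, need to be upgraded. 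By contrast, the Douglas/Fourier identity, the $H^1$-composition estimate, the Dirichlet-principle argument, and the Cayley-transform transfer are all routine.
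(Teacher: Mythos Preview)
Your proposal is correct and follows essentially the same route as the paper: Corollary~\ref{co:quasisymmetry_nullpreserve} for 2(a), quasiconformal extension plus quasi-invariance of Dirichlet energy and the Dirichlet principle for 2(b)--(c), and the Douglas formula together with Vodop'yanov's theorem (after Cayley transfer) for the converse. The only cosmetic difference is that you obtain (b) and (c) in one stroke via the Dirichlet-principle minimization, whereas the paper routes (b) separately through Vodop'yanov and proves the minimization step for (c) by an explicit Sobolev divergence-theorem computation; your packaging is arguably tidier, and your flagging of the null-set bookkeeping as the delicate point is on target.
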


\begin{proof} That (1) implies 2(a) is Corollary \ref{co:quasisymmetry_nullpreserve}.\\

That (1) implies 2(b) are equivalent can be shown by transferring the problem to the real line.
As a consequence of a much more general result for divergence-type elliptic operators due to A. Barton and S. Mayboroda \cite[Theorem 7.18]{bartmay}, if $\h$ denotes the upper half-plane, then there exists a solution to the Dirichlet's problem 

\begin{equation}
\begin{cases}
\Delta u=0\,\,\, \mathrm{on} \,\,\, \h,\\
u|_{\partial\h} =f \in \dot{H}^{1/2}(\R),
\end{cases}
\end{equation}
which is unique $($up to additive constants$)$  and the estimate
\begin{equation}\label{estim for sol to dir}
\| u \|_{\dot{H}^1(\h)} \leq C \| f \|_{\dot{H}^{\frac{1}{2}}(\R)},
\end{equation}
holds.\\
Now the fact that for every $h \in \mathcal{H}(\mathbb{R})$, the composition $\mathbf{C}_\phi h\in \mathcal{H}(\mathbb{R})$, is then a consequence of Theorem \ref{Vodopyanov}. \\

That (1) implies 2(c) can be shown as follows.  Let $H \in \mathcal{D}_{\text{harm}}(\disk^+)$ be the function whose CNT boundary values equal $h$ quasieverywhere.  Let $\Phi:\disk^+ \rightarrow \disk^+$ be a quasiconformal map whose boundary values equal $\phi$ (which exists by the aforementioned Beurling-Ahlfors extension theorem). By quasi-invariance of Dirichlet energy (see e.g. \cite{Ahlfors_quasiinv}) we have
  \[  D_{\disk^+}( \mathbf{C}_{\Phi}H  ) \leq C' 
    D_{\disk^+}( H ) = C' D_{\disk^+}( \mathbf{e}_{\mathbb{S}^1,\disk^+} h )    \]
  where $C'$ is of course independent of $H$. 
Let $F:=\mathbf{C}_{\Phi}H - \mathbf{e}_{\mathbb{S}^1,\disk^+} \, (\mathbf{C}_{\phi}h )\in H^1( \disk^+)$. Then using  $F|_{\mathbb{S}^1} =0,$ the harmonicity of $\mathbf{e}_{\mathbb{S}^1,\disk^+} \, (\mathbf{C}_{\phi}h)$ and the Sobolev space divergence theorem (see e.g. Theorem 4.3.1 page 133 in \cite{EvansGariepy}) one can show that
  $$\int_{\disk^+} \partial (\mathbf{e}_{\mathbb{S}^1,\disk^+} \, (\mathbf{C}_{\phi}h)) \, \overline{\partial F}\, dA=0.$$ This yields that
  \begin{align}
  D_{\disk^+}( \mathbf{e}_{\mathbb{S}^1,\disk^+} \, (\mathbf{C}_{\phi}h))& \leq    D_{\disk^+}( \mathbf{e}_{\mathbb{S}^1,\disk^+} \, (\mathbf{C}_{\phi}h)) +   D_{\disk^+}(F) \nonumber \\ & =  D_{\disk^+}( \mathbf{e}_{\mathbb{S}^1,\disk^+} \, (\mathbf{C}_{\phi}h)) +2 \int_{\disk^+} \partial( \mathbf{e}_{\mathbb{S}^1,\disk^+} \, (\mathbf{C}_{\phi}h )) \, \overline{\partial F}\, dA +D_{\disk^+}(F) \\ & =  D_{\disk^+}(\mathbf{C}_{\Phi}H). \nonumber
  \end{align} 
  Finally if \eqref{energy inequality} is valid for any homeomorphism $\phi$, then transference of Douglas's result to the real line in equation \eqref{Douglas formula} yields that $\Vert \mathbf{C}_{\phi}u\Vert_{\dot{H}^{1/2}(\R)}\leq C \Vert u\Vert_{\dot{H}^{1/2}(\R)}$, which by part 2(b) yields that $\phi$ is a quasisymmetric homeomorphism of the real line.
  This completes the proof.
\end{proof}

In the remainder of the paper, we will say that an operator between Dirichlet spaces is bounded with respect to Dirichlet energy if it satisfies an estimate of the form given by equation (\ref{energy inequality}).

Conditions (2)(a) and (2)(b) of Theorem are not easy to verify, but of course the direction (2) $\rightarrow$ (1) can be stated in the following way.

\begin{theorem}  \label{th:experimental_NSV}
  Let $\phi:\mathbb{S}^1 \rightarrow \mathbb{S}^1$ be a homeomorphism.  Assume that there is a dense set $\mathscr{L} \subseteq \dot{H}^{1/2}(\mathbb{S}^1)$ such that $\mathscr{L} \subseteq \mathcal{C}(\mathbb{S}^1)$ and 
  there is an $M$ such that $\| \mathbf{C}_{\phi}h \|_{\dot{H}^{1/2}(\mathbb{S}^1)} \leq M \| h \|_{\dot{H}^{1/2}(\mathbb{S}^1)}$.  Then $\phi$ is a quasisymmetry.
 \end{theorem}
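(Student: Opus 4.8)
The plan is to reduce Theorem \ref{th:experimental_NSV} to the already-established implication (2)$\Rightarrow$(1) of Theorem \ref{th:quasisymmetry_CNT_charac}, or rather to the chain of reasoning in that proof, using the hypothesis only on a dense subset $\mathscr{L}$ and then extending by density. The key point is that in the proof of Theorem \ref{th:quasisymmetry_CNT_charac}, the implication that a boundedness estimate like \eqref{energy inequality} forces $\phi$ to be quasisymmetric went through the transference of Douglas's formula \eqref{Douglas formula} to the real line together with Theorem \ref{Vodopyanov} (Vodop'yanov). So here I want to show that the estimate $\| \mathbf{C}_\phi h\|_{\dot H^{1/2}(\mathbb{S}^1)} \le M \| h\|_{\dot H^{1/2}(\mathbb{S}^1)}$ on the dense continuous set $\mathscr{L}$ already suffices to conclude $\mathbf{C}_\phi$ is bounded on all of $\dot H^{1/2}(\mathbb{S}^1)$, whereupon transferring to $\dot H^{1/2}(\mathbb{R})$ via the conformal identification (Cayley transform) and invoking Theorem \ref{Vodopyanov} yields that $\phi$ is a quasisymmetry.

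First I would fix the function-theoretic bookkeeping: $\dot H^{1/2}(\mathbb{S}^1)$ is, via \eqref{Douglas formula}, the space of boundary values of $\mathcal{D}_{\mathrm{harm}}(\disk^+)$ modulo constants, with seminorm equal to the Dirichlet energy of the harmonic extension; elements of $\mathscr{L}$ are genuine continuous functions on $\mathbb{S}^1$, so $h\circ \phi = \mathbf{C}_\phi h$ is again a well-defined continuous function, and its harmonic (Poisson) extension lies in $\mathcal{D}_{\mathrm{harm}}(\disk^+)$ precisely because of the assumed finiteness of $\| \mathbf{C}_\phi h\|_{\dot H^{1/2}(\mathbb{S}^1)}$. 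Thus on $\mathscr{L}$ the operator $\mathbf{C}_\phi$ is well-defined into $\dot H^{1/2}(\mathbb{S}^1)$ and bounded by $M$. The second step is the density/closability argument: given arbitrary $h\in \dot H^{1/2}(\mathbb{S}^1)$, take $h_n\in\mathscr{L}$ with $h_n\to h$ in seminorm; then $(\mathbf{C}_\phi h_n)$ is Cauchy in $\dot H^{1/2}(\mathbb{S}^1)$, hence converges to some $g$, and I must check $g$ is independent of the approximating sequence and deserves to be called $\mathbf{C}_\phi h$. This requires a little care because $\phi$ is only assumed to be a homeomorphism: one uses that seminorm convergence plus convergence in $L^2$ (which follows, say, after normalizing the constants, since $\dot H^{1/2}$ embeds into $L^2$ on the circle once the mean is fixed) implies convergence quasi-everywhere along a subsequence for the harmonic extensions by Beurling's theorem (Theorem \ref{thm: radial lim for dirichlet}), and then the homeomorphism $\phi$ transports the relevant null sets — here I would note that a priori we only know $\phi$ preserves Lebesgue-null sets or sets of capacity zero after we know it is quasisymmetric, which is circular, so instead I would rely on continuity: for $h_n\to h$ uniformly one can choose the approximants (polynomials are dense in $\dot H^{1/2}\cap \mathcal{C}$, and $\mathscr{L}\supseteq$ polynomials up to seminorm limits can be arranged, or simply re-approximate) so that $\mathbf{C}_\phi h_n\to \mathbf{C}_\phi h$ uniformly, killing the ambiguity.

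The third and final step is the transference to the line and the application of Vodop'yanov's theorem: identifying $\mathbb{S}^1$ with $\mathbb{R}\cup\{\infty\}$ by a Möbius map and pushing $\phi$ to a homeomorphism $\tilde\phi$ of $\mathbb{R}$, the boundedness of $\mathbf{C}_\phi$ on $\dot H^{1/2}(\mathbb{S}^1)$ transfers (the seminorm \eqref{equiv hhalf norms} being conformally invariant, exactly as in the passage between \eqref{Douglas formula} and \eqref{equiv hhalf norms} used earlier in the paper) to boundedness of $\mathbf{C}_{\tilde\phi}$ on $\dot H^{1/2}(\mathbb{R})$, and Theorem \ref{Vodopyanov} then gives that $\tilde\phi$, hence $\phi$, is a quasisymmetry. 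I expect the main obstacle to be the density/extension step, specifically establishing that the a priori partially defined operator $\mathbf{C}_\phi$ on $\mathscr{L}$ extends unambiguously and that the extension really is composition with $\phi$ — the subtlety being that for a mere homeomorphism $\phi$ one cannot yet invoke the null-set preservation (Corollary \ref{co:quasisymmetry_nullpreserve}) that would make everything routine. The cleanest route around this is to observe that since $\mathscr{L}$ is dense in $\dot H^{1/2}(\mathbb{S}^1)$ and contained in $\mathcal{C}(\mathbb{S}^1)$, and since $\dot H^{1/2}$-seminorm convergence can be upgraded to include $L^2$-convergence after fixing constants, the bounded extension exists by abstract functional analysis, and one only needs to identify it with $\mathbf{C}_\phi$ on the dense set where both are defined — which is immediate — so that the hypotheses of Theorem \ref{Vodopyanov} (which itself is proved by density from $\mathcal{C}^\infty_c$) are met.
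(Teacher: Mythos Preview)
Your approach is essentially the paper's: the paper gives no self-contained argument but simply cites Vodop'yanov, Nag--Sullivan \cite[Corollary~3.2]{NS}, and Bourdaud \cite[Theorem~1.3]{Bourdaud}, and your proposal is precisely the reduction to those results. The closability/identification issue you correctly flag --- that one cannot yet invoke null-set preservation to show the abstract bounded extension agrees with $\mathbf{C}_\phi$ on general $\dot H^{1/2}$ functions --- is exactly what makes this a theorem rather than a triviality, and it is resolved not by constructing the extension but by noting (as you hint in your last sentence) that the cited proofs deduce quasisymmetry by testing the bound on specific concrete continuous functions (e.g.\ piecewise-linear tent functions in \cite{NS}), which can be approximated in $\dot H^{1/2}$ by elements of $\mathscr{L}$ with the estimate passing to the limit for those particular test functions; no global identification of the extension with $\mathbf{C}_\phi$ is needed.
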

\begin{proof}
 This is also a result whose proof is embedded in the proof of Theorem \ref{Vodopyanov}. See also Corollary 3.2 in \cite{NS} and Theorem 1.3 in \cite{Bourdaud}.
\end{proof}}

\end{subsection}
\begin{subsection}{Transmission (Overfare)} \ \label{se:transmission_sub}

 We are now able to prove the transmission theorem in the simplest case.
 \begin{theorem} \label{th:transmission}
  Let $\Gamma$ be a Jordan curve in $\sphere$, and let $\Omega_1$ and $\Omega_2$ be the components of the complement.  The statements \emph{(1), (2)}, and \emph{(3)} below are equivalent.
  \begin{enumerate}
   \item $\Gamma$ is a quasicircle.
   \item 
    \begin{enumerate}
     \item If $I \subseteq \Gamma$ is null with respect to $\Omega_1$ then it is null with respect to $\Omega_2$, 
     \item $\mathcal{H}(\Gamma,\Omega_1) \subseteq \mathcal{H}(\Gamma,\Omega_2)$, and 
     \item the map $\mathbf{e}_{\Gamma,\Omega_2} \mathbf{b}_{\Gamma,\Omega_1}:\mathcal{D}_{\mathrm{harm}}(\Omega_1) 
      \rightarrow \mathcal{D}_{\mathrm{harm}}(\Omega_2)$ is bounded {\it with respect to Dirichlet energy}. 
    \end{enumerate}
   \item 
    \begin{enumerate}
     \item If $I \subseteq \Gamma$ is null with respect to $\Omega_2$ then it is null with respect to $\Omega_1$, 
     \item $\mathcal{H}(\Gamma,\Omega_2) \subseteq \mathcal{H}(\Gamma,\Omega_1)$, and 
     \item the map $\mathbf{e}_{\Gamma,\Omega_1} \mathbf{b}_{\Gamma,\Omega_2}:\mathcal{D}_{\mathrm{harm}}(\Omega_2) 
      \rightarrow \mathcal{D}_{\mathrm{harm}}(\Omega_1)$ is bounded with respect to Dirichlet energy. 
    \end{enumerate}
  \end{enumerate}
 \end{theorem}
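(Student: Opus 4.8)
The plan is to establish the cycle of implications $(1) \Rightarrow (2)$, $(1) \Rightarrow (3)$, and then $(2) \Rightarrow (1)$ (the implication $(3) \Rightarrow (1)$ being symmetric). The symmetry between $\Omega_1$ and $\Omega_2$ for a quasicircle means that $(1) \Rightarrow (2)$ and $(1) \Rightarrow (3)$ are really the same argument with the roles of the two domains interchanged, so the real content is in a single direction $(1) \Rightarrow (2)$ together with the converse $(2) \Rightarrow (1)$.

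First I would reduce to a normalized configuration. By composing with a M\"obius transformation $T$, I may assume $\Gamma$ is a bounded quasicircle in $\mathbb{C}$; write $\Omega^+$ for the bounded component and $\Omega^-$ for the unbounded one (containing $\infty$). Since M\"obius transformations are biholomorphisms of $\sphere$, they preserve the Dirichlet energy of harmonic functions, preserve null sets (this was essentially noted in the discussion of null sets via Corollary \ref{co:quasisymmetry_nullpreserve}), and intertwine the trace/extension operators; so nothing is lost. Part 2(a) is then exactly Theorem \ref{th:null_both_sides_quasicircle}. For 2(b) and 2(c), fix biholomorphisms $f_+:\disk^+ \to \Omega^+$ and $f_-:\disk^- \to \Omega^-$; since $\Gamma$ is a quasicircle these extend quasiconformally to $\sphere$, and $\phi := f_-^{-1} \circ f_+|_{\mathbb{S}^1}$ is a quasisymmetry by the conformal welding setup (Theorem \ref{th:conformal_welding} / Theorem \ref{th:qs_are_qc_boundary}). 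The key diagram to exploit is that, on the level of boundary values, the composition $\mathbf{e}_{\Gamma,\Omega^-}\,\mathbf{b}_{\Gamma,\Omega^+}$ is conjugated by $f_+$ and $f_-$ to the operator $\mathbf{e}_{\mathbb{S}^1,\disk^-} \,\mathbf{C}_{\phi}\, \mathbf{b}_{\disk^+,\mathbb{S}^1}$ on the disk, because CNT boundary values are conformally invariant (Remark \ref{re:CNT_conformally_invariant}) and the boundary identification of $\Gamma$ from the $\Omega^+$-side versus the $\Omega^-$-side differs precisely by $\phi$. Concretely: if $h \in \mathcal{D}_{\mathrm{harm}}(\Omega^+)$ has CNT boundary values $H$ on $\Gamma$ (defined off a null set), then $h \circ f_+ \in \mathcal{D}_{\mathrm{harm}}(\disk^+)$ has non-tangential boundary values $H \circ f_+$ off a capacity-zero set; the issue of whether $H$ is also boundary values of something on $\Omega^-$ becomes whether $H \circ f_- = (H\circ f_+)\circ \phi^{-1}$ extends into $\disk^-$, i.e. whether $\mathbf{C}_{\phi^{-1}}$ maps $\mathcal{H}(\mathbb{S}^1)$ to itself with a Dirichlet-energy bound. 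This is exactly statement 2(b) and 2(c) of Theorem \ref{th:quasisymmetry_CNT_charac} applied to the quasisymmetry $\phi^{-1}$ (which is again a quasisymmetry), once we observe that a set being null with respect to $\Omega^+$ pulls back under $f_+$ to a capacity-zero set in $\mathbb{S}^1$, which by Corollary \ref{co:quasisymmetry_nullpreserve} maps under $\phi$ to a capacity-zero set, which pushes forward under $f_-$ to a set null with respect to $\Omega^-$ — so the ``quasieverywhere'' hypotheses transfer correctly and the composition of boundary-value operators is legitimate on equivalence classes.

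For the converse $(2) \Rightarrow (1)$: I assume 2(a), 2(b), 2(c) hold and want to conclude $\Gamma$ is a quasicircle. Again reduce to $\Gamma$ bounded in $\mathbb{C}$, with conformal maps $f_\pm$ onto the two sides (these exist by Riemann mapping and extend continuously to $\mathbb{S}^1$ by Carath\'eodory since $\Gamma$ is Jordan), and set $\phi = f_-^{-1}\circ f_+|_{\mathbb{S}^1}$, a homeomorphism of $\mathbb{S}^1$. By the welding-type uniqueness, showing $\phi$ is a quasisymmetry shows $\Gamma$ is a quasicircle (a homeomorphism of $\mathbb{S}^1$ is a quasisymmetry iff it is the welding of a quasicircle; more directly, $\phi$ quasisymmetric gives, via Beurling-Ahlfors extension, a quasiconformal self-map of $\sphere$ carrying a round circle to $\Gamma$). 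Now unwind the hypotheses through the same conjugation: 2(c) says $\mathbf{e}_{\Gamma,\Omega^-}\mathbf{b}_{\Gamma,\Omega^+}$ is Dirichlet-bounded, which conjugates to $\mathbf{e}_{\mathbb{S}^1,\disk^-}\mathbf{C}_{\phi^{-1}}\mathbf{b}_{\disk^+,\mathbb{S}^1}$ being Dirichlet-bounded, i.e. by Douglas's formula \eqref{Douglas formula} (transferred to $\mathbb{S}^1$) that $\|\mathbf{C}_{\phi^{-1}} u\|_{\dot H^{1/2}(\mathbb{S}^1)} \lesssim \|u\|_{\dot H^{1/2}(\mathbb{S}^1)}$ for $u$ in a dense subset (the boundary values of smooth, or polynomial, functions — which are dense in $\mathcal{H}(\mathbb{S}^1)$ and continuous, so the composition is unambiguous). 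That is precisely the hypothesis of Theorem \ref{th:experimental_NSV}, which then yields that $\phi^{-1}$, hence $\phi$, is a quasisymmetry. Conditions 2(a) and 2(b) are used en route only to guarantee that the operators in play are well-defined on equivalence classes of boundary functions, so that the conjugation identities and the density argument make sense; they do not themselves need to be ``verified'' on the circle side.

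The main obstacle I anticipate is the careful bookkeeping around null sets and conformal invariance: making precise that the operator $\mathbf{e}_{\Gamma,\Omega_2}\mathbf{b}_{\Gamma,\Omega_1}$ genuinely conjugates to $\mathbf{e}_{\mathbb{S}^1,\disk^-}\mathbf{C}_{\phi^{-1}}\mathbf{b}_{\disk^+,\mathbb{S}^1}$, including that the pullback/pushforward of null sets matches the capacity-zero sets where $\mathbf{C}_{\phi}$ behaves well, so that equality ``off a null set'' on $\Gamma$ corresponds exactly to equality ``quasieverywhere'' on $\mathbb{S}^1$ after composing with $\phi$. Once that dictionary is set up cleanly, both directions are immediate consequences of Theorems \ref{th:quasisymmetry_CNT_charac} and \ref{th:experimental_NSV} together with the conformal invariance of CNT limits and the fact (from the welding theorem) that quasicircles in $\mathbb{C}$ correspond under $f_-^{-1}\circ f_+$ precisely to quasisymmetries of $\mathbb{S}^1$. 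A secondary, more routine point is justifying the density/continuity needed to invoke Theorem \ref{th:experimental_NSV}: one wants a dense subset of $\dot H^{1/2}(\mathbb{S}^1)$ consisting of continuous functions on which $\mathbf{C}_\phi$ is literally pointwise composition — polynomials in $z,\bar z$ restricted to $\mathbb{S}^1$ serve, their density in $\mathcal{H}(\mathbb{S}^1)$ following from $D_{\disk^+}(f) = \sum n|a_n|^2$ and the corresponding expansion on $\disk^-$.
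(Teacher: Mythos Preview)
Your proposal is correct and follows essentially the same strategy as the paper: reduce by a M\"obius transformation to a bounded curve, introduce Riemann maps $f_\pm$, form the welding homeomorphism $\phi$, and transfer the question to the Vodop'yanov--Nag--Sullivan characterization of quasisymmetries on $\mathbb{S}^1$. The only notable difference is in the converse $(2)\Rightarrow(1)$: the paper verifies all three conditions 2(a)--(c) of Theorem~\ref{th:quasisymmetry_CNT_charac} for $\phi$ directly from the hypotheses 2(a)--(c) of the present theorem, whereas you shortcut to Theorem~\ref{th:experimental_NSV} using polynomials as a dense class of continuous boundary data. Your route is valid (and is in fact the mechanism behind Theorem~\ref{th:experimental_transmission}, which appears just after this theorem), but note that 2(a) and 2(b) are not merely bookkeeping conveniences---they are what make the inclusion $\mathcal{H}(\Gamma,\Omega_1)\hookrightarrow\mathcal{H}(\Gamma,\Omega_2)$ and hence the conjugation diagram meaningful on equivalence classes, so that for a continuous boundary datum $u$ the Dirichlet-bounded transmission produced by 2(b) really is the Poisson extension of $u$ to $\Omega^-$. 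Once that identification is justified (via uniqueness as in Theorem~\ref{th:extension_unique}), your invocation of Theorem~\ref{th:experimental_NSV} goes through.
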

 \begin{proof}
  We show that (2) implies (1).  
The truth of either (1) or (2) is unaffected by applying a global M\"obius transformation, so we can assume that $\Gamma$ is bounded.  Let $\Omega^\pm$ be the connected components of the complement in $\sphere$; assume for definiteness that $\Omega_1 = \Omega^+$ (this can be arranged by composing by $1/z$).  
  
  Now let $f_\pm:\disk^\pm \rightarrow \Omega^\pm$ be conformal maps. By Carath\'eodory's theorem, $f_\pm$ each extend to homeomorphisms from $\mathbb{S}^1$ to $\Gamma$; denote the extensions also by $f_\pm$.  The function $\phi= \left. f_+^{-1} \circ f_- \right|_{\mathbb{S}^1}:\mathbb{S}^1 \rightarrow \mathbb{S}^1$ is thus a homeomorphism.    We will show that $\phi$ is a quasisymmetry. Once this is shown, it follows from the conformal welding theorem that $\Gamma$ is a quasicircle.  
  
  To do this, we show that $\phi$ has properties 2(a), 2(b), and 2(c) of Theorem \ref{th:quasisymmetry_CNT_charac}.  Let $I$ be a Borel set of capacity zero in $\mathbb{S}^1$.  Then $f_-(I)$ is by definition null with respect to $\Omega^+$.  So by 2(a) of the present theorem, $f_-(I)$ is null with respect to $\Omega^-$.  By definition $\phi(I) = f_+^{-1}(f_-(I))$ is a Borel set of capacity zero in $\mathbb{S}^1$.  This shows that $\phi$ has the property 2(a) of Theorem \ref{th:quasisymmetry_CNT_charac}.  
  
  Given $h \in \mathcal{H}(\mathbb{S}^1)$, there is an 
  $H \in \mathcal{D}_{\text{harm}}(\disk^+)$ with CNT boundary values equal to $h$ except possibly on a null set $I$.  Also, $H \circ f_+^{-1} \in \mathcal{D}_{\text{harm}}(\Omega^+)$.  By definition, $H \circ f_+^{-1}$ has CNT boundary values except on the null set $f_+(I)$.  By assumption 2(b) of the present theorem, there is a function
  \[  u = \mathbf{e}_{\Gamma,\Omega^-} \mathbf{b}_{\Gamma,\Omega^+} (H \circ f_+^{-1}) 
   \in \mathcal{D}_{\text{harm}}(\Omega^-)   \] 
  whose CNT boundary values agree with those of $H \circ f_+^{-1}$ except on a null set $K$ containing $f_+(I)$.  Set $I' = f_+^{-1}(K)$, which is a  null set containing $I$.  
  
  By definition, $u \circ f_- \in \mathcal{D}_{\text{harm}}(\disk^-)$ has CNT boundary values except on the null set $f_-^{-1}(K) = \phi^{-1}(I')$, which contains $\phi^{-1}(I)$.  These CNT boundary values agree with $h \circ f_+^{-1} \circ f_- = h \circ \phi$ except on $\phi^{-1}(I')$.  Thus the function $u \circ f_-(1/\bar{z})$ has CNT boundary values equal to $h$ except on $\phi^{-1}(I')$.  That is, 
  \[  u \circ f_-(1/\bar{z}) = \mathbf{e}_{\mathbb{S}^1,\mathbb{D}^+} ( h\circ \phi),  \]
  which shows that $\mathbf{C}_{\phi}h \in \mathcal{H}(\mathbb{S}^1)$.  Since $h$ is arbitrary, this shows that property 2(b) of Theorem \ref{th:quasisymmetry_CNT_charac} holds.
  
  To show that 2(c) of Theorem \ref{th:quasisymmetry_CNT_charac} holds, by 2(c) of the present theorem and conformal invariance of the Dirichlet norm, 
  there is a constant $C>0$ such that $D_{\Omega^+} (\mathbf{e}_{\Gamma,{\Omega^+}} \mathbf{b}_{\Gamma,\Omega^-} v ) \leq C D_{\Omega^{-}}(v)$ for all $v \in \mathcal{D}_{\text{harm}}(\Omega^-)$. Then for arbitrary $h \in \mathcal{H}(\mathbb{S}^1)$, using the notation above we have
  \begin{align*}
   D_{\disk^+} (\mathbf{e}_{\mathbb{S}^1,\disk^+} (h \circ \phi)) & = D_{\disk^+} (u \circ f_-(1/\bar{z})) = D_{\disk^-}(u \circ f_-)   \\
   & =  D_{\Omega^-} (u)  \leq C D_{\Omega^+}(H \circ f_+^{-1}) \\
   & = C D_{\disk^+}(H) = C   D_{\disk^+} (\mathbf{e}_{\mathbb{S}^1,\disk^+} h).
  \end{align*}
  
  Thus $\phi$ is a quasisymmetry, completing the proof that (2) implies (1). 
  It is easy to see that this proof, with minor changes, shows that (3) implies (1).  
  
  So we need only show that (1) implies (2).  Again, we can assume that $\Gamma$ is bounded and denote the bounded and unbounded components of the complement by $\Omega^+$ and $\Omega^-$ respectively.  Let $f_\pm:\disk^\pm \rightarrow \Omega^\pm$ be conformal maps, which have quasiconformal extensions to $\mathbb{C}$.  Thus $\phi = f_+^{-1} \circ f_-$ is a quasisymmetry of $\mathbb{S}^1$, and properties 2(a)--2(c) of Theorem \ref{th:quasisymmetry_CNT_charac} hold.    
  
  Given a Borel set $I \subseteq \Gamma$ which is null with respect to $\Omega^+$, by definition $f_+^{-1}(I)$ is a Borel set of capacity zero in $\mathbb{S}^1$.  Thus since $\phi^{-1}$ is a quasisymmetry, by Theorem \ref{th:quasisymmetry_CNT_charac} $\phi^{-1}(f_+^{-1}(I)) = f_-^{-1}(I)$ is a Borel set of capacity zero.  Thus by definition $I$ is null with respect to $\Omega^-$.  This shows that 2(a) of the present theorem holds.\\  
  
  Denoting $R(z)=1/\bar{z}$, a proof similar to that given above for the reverse implication shows that 2(b) of the present theorem holds with the extension to $\Omega^-$ given by   
  \[   \mathbf{e}_{\Gamma,\Omega^-} \mathbf{b}_{\Gamma,\Omega^+} H = [\mathbf{e}_{\mathbb{S}^1,\disk^+} (\mathbf{b}_{\mathbb{S}^1,\disk^+} (H \circ f_+) \circ \phi)] \circ  R \circ f_-^{-1},      \]
 where $H \in \mathcal{D}_{\text{harm}}(\Omega^+).$\\
 To show that 2(c) of the present theorem holds, let $C$ be the constant in Theorem \ref{th:quasisymmetry_CNT_charac} part 2(c). Then we have 
  \begin{align*}
   D_{\Omega^-} (\mathbf{e}_{\Gamma,\Omega^-} \mathbf{b}_{\Gamma,\Omega^+} H) & =  D_{\disk^+} (\mathbf{e}_{\mathbb{S}^1,\disk^+} (\mathbf{b}_{\mathbb{S}^1,\disk^+} (H \circ f_+) \circ \phi)) \\
  & \leq C D_{\disk^+}(H \circ f_+)  = C D_{\Omega^+} (H)
  \end{align*}
  which completes the proof.
 \end{proof}
 
 Again, conditions (2/3)(a) and (2/3)(b) are difficult to verify in practice. So we give a more practical version of the (2/3) $\rightarrow$ (1) version of this theorem.  
 
 First, we observe that harmonic functions which extend continuously to the boundary have a transmission.  That is, let $\Gamma$ be a Jordan 
 curve separating $\sphere$ into two components $\Omega_1$ and $\Omega_2$, and denote the set of functions continuous on the closure of $\Omega_j$ by $\mathcal{C}(\text{cl} \Omega_j)$ and the set of functions in $\mathcal{C}(\text{cl} \Omega_j)$ which are additionally harmonic in $\Omega_j$ by $\mathcal{C}_{\text{harm}}(\Omega_j)$.   
 Then by the existence and uniqueness of solutions to the Dirichlet problem, given any $h_1 \in \mathcal{C}_{\text{harm}}(\Omega_1)$ there is an $h_2 \in \mathcal{C}_{\text{harm}}(\Omega_2)$ whose boundary values agree with those of $h_1$ everywhere. We thus have the well-defined maps
 \begin{align*}
  \hat{\mathbf{O}}_{\Omega_1,\Omega_2} : \mathcal{C}_{\text{harm}}(\Omega_1) & \rightarrow 
  \mathcal{C}_{\text{harm}}(\Omega_2) \\
  \hat{\mathbf{O}}_{\Omega_2,\Omega_1} : \mathcal{C}_{\text{harm}}(\Omega_2) & \rightarrow 
  \mathcal{C}_{\text{harm}}(\Omega_1).
 \end{align*}
 It follows immediately from the definition of CNT boundary values that if $h$ extends continuously to a boundary point $p \in \Gamma$ then the CNT boundary value exists and equals its CNT limit.  This motivates the definition of a transmission operator $ {\mathbf{O}}_{\Omega_1,\Omega_2}$  by restricting $\hat{\mathbf{O}}_{\Omega_1,\Omega_2}$ to ${\mathcal{D}_{\text{harm}(\Omega_1)} \cap \mathcal{C}_{\text{harm}(\Omega_1)}},$ which we shall define momentarily. Before doing that we gather our observations in the following theorem:\\

 \begin{theorem} \label{th:experimental_transmission}
  Let $\Gamma$ be a Jordan curve in $\sphere$, and let $\Omega_1$ and $\Omega_2$ be the connected components of the complement of $\Gamma$. If there is a dense set $\mathscr{L} \subseteq \mathcal{D}_{\mathrm{harm}}(\Omega_2)$, such that $\mathscr{L} \subset \mathcal{C}(\mathrm{cl}\, \Omega_2)$, and the continuous transmission is bounded with respect to Dirichlet energy on $\mathscr{L}$, then $\Gamma$ is a quasicircle.
 \end{theorem}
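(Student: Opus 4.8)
The plan is to transfer the hypothesis to the unit circle and apply Theorem \ref{th:experimental_NSV}, paralleling the proof that (2) implies (1) in Theorem \ref{th:transmission}. First I would put the configuration in normal form. Since the Dirichlet energy is conformally invariant, since any M\"obius transformation $T$ is a homeomorphism of $\sphere$ — hence preserves continuity up to the boundary, density of subsets, and the property of being the continuous transmission — and since the hypothesis and conclusion are insensitive to such a $T$, I may choose $T$ taking some point of $\Omega_1$ to $\infty$ and relabel, so that $\Gamma$ becomes a bounded Jordan curve in $\mathbb{C}$ whose bounded complementary component is $\Omega_2 = \Omega^+$ and whose unbounded one is $\Omega_1 = \Omega^-$. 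This puts $\hat{\mathbf{O}}_{\Omega_2,\Omega_1}$ in exactly the position occupied by the transmission operator in the proof of Theorem \ref{th:transmission}.

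Next I would introduce the welding data. Fix conformal maps $f_\pm : \disk^\pm \to \Omega^\pm$; by Carath\'eodory's theorem they extend to homeomorphisms $\mathbb{S}^1 \to \Gamma$, and $\phi := \left. f_+^{-1} \circ f_- \right|_{\mathbb{S}^1}$ is a homeomorphism of $\mathbb{S}^1$. The goal is to show that $\phi$ is a quasisymmetry; once this is done, the conformal welding theorem (Theorem \ref{th:conformal_welding}) and its uniqueness statement force $\Gamma$ — which realizes the welding of $\phi$ via $f_\pm$ — to be a quasicircle, exactly as in the proof of Theorem \ref{th:transmission}. To invoke Theorem \ref{th:experimental_NSV} I would take the dense set to be $\mathscr{L}' := \{\, h|_\Gamma \circ f_+|_{\mathbb{S}^1} : h \in \mathscr{L} \,\}$. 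Every element of $\mathscr{L}'$ is continuous on $\mathbb{S}^1$ because $h \in \mathcal{C}(\mathrm{cl}\,\Omega^+)$ and $f_+$ is continuous on $\mathrm{cl}\,\disk^+$. For density, note that $h \mapsto h \circ f_+$ is an isometry of $\mathcal{D}_{\text{harm}}(\Omega^+)$ onto $\mathcal{D}_{\text{harm}}(\disk^+)$ for the Dirichlet seminorm, and that composing with the boundary-value operator $\mathbf{b}_{\mathbb{S}^1,\disk^+}$ and using Douglas's identity \eqref{Douglas formula} (which, applied separately to the holomorphic and antiholomorphic parts, also holds for harmonic functions) realizes this as a seminorm isometry onto $\dot{H}^{1/2}(\mathbb{S}^1)$ modulo constants; since $\mathscr{L}$ is dense in $\mathcal{D}_{\text{harm}}(\Omega^+)$, the set $\mathscr{L}'$ is dense in $\dot{H}^{1/2}(\mathbb{S}^1)$.

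The core step is the norm estimate for $\mathbf{C}_\phi$ on $\mathscr{L}'$. Given $h \in \mathscr{L}$, put $g = h|_\Gamma \circ f_+ \in \mathscr{L}'$. Since $h \in \mathcal{D}_{\text{harm}}(\Omega^+) \cap \mathcal{C}_{\text{harm}}(\Omega^+)$, the continuous transmission $u := \hat{\mathbf{O}}_{\Omega^+,\Omega^-}(h)$ is defined, lies in $\mathcal{C}_{\text{harm}}(\Omega^-)$, has the same continuous boundary values on $\Gamma$ as $h$, and — by the hypothesis — belongs to $\mathcal{D}_{\text{harm}}(\Omega^-)$ with $D_{\Omega^-}(u) \le C\, D_{\Omega^+}(h)$. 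Transferring by $f_-$, the function $u \circ f_-$ lies in $\mathcal{D}_{\text{harm}}(\disk^-) \cap \mathcal{C}(\mathrm{cl}\,\disk^-)$ and has boundary values
\[ (u|_\Gamma) \circ f_-|_{\mathbb{S}^1} = \left( h|_\Gamma \circ f_+|_{\mathbb{S}^1} \right) \circ \left( f_+^{-1}\circ f_- \right)|_{\mathbb{S}^1} = g \circ \phi = \mathbf{C}_\phi g . \]
Applying the $\disk^-$ form of Douglas's identity — obtained from \eqref{Douglas formula} via the anticonformal reflection $z \mapsto 1/\bar{z}$, which preserves the Dirichlet energy and fixes $\mathbb{S}^1$ pointwise — together with conformal invariance of $D$, yields
\[ \| \mathbf{C}_\phi g \|_{\dot{H}^{1/2}(\mathbb{S}^1)}^2 = D_{\disk^-}(u\circ f_-) = D_{\Omega^-}(u) \le C\, D_{\Omega^+}(h) = C\, D_{\disk^+}(h\circ f_+) = C\, \| g \|_{\dot{H}^{1/2}(\mathbb{S}^1)}^2 . \]
Thus $\mathbf{C}_\phi$ is bounded by $\sqrt{C}$ on the dense set $\mathscr{L}' \subseteq \mathcal{C}(\mathbb{S}^1) \cap \dot{H}^{1/2}(\mathbb{S}^1)$, so Theorem \ref{th:experimental_NSV} applies and $\phi$ is a quasisymmetry; the conformal welding theorem then completes the proof.

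I expect the only genuinely delicate point to be the density of $\mathscr{L}'$ in $\dot{H}^{1/2}(\mathbb{S}^1)$, i.e.\ the fact that the boundary-value operator identifies $\mathcal{D}_{\text{harm}}(\disk^+)$ (modulo constants) isometrically with $\dot{H}^{1/2}(\mathbb{S}^1)$; one also has to verify carefully that ``continuous transmission bounded with respect to Dirichlet energy on $\mathscr{L}$'' really does force $\hat{\mathbf{O}}_{\Omega^+,\Omega^-}(h) \in \mathcal{D}_{\text{harm}}(\Omega^-)$, so that this function can be transported to $\disk^-$ and fed into Douglas's formula. Everything else is routine transfer along $f_\pm$ plus the citations of Theorems \ref{th:experimental_NSV} and \ref{th:conformal_welding}.
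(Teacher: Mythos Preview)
Your proof is correct and follows essentially the same route as the paper's: transfer the data to $\mathbb{S}^1$ via Riemann maps, verify that the pulled-back dense set is dense in $\dot{H}^{1/2}(\mathbb{S}^1)$ and consists of continuous functions, check that $\mathbf{C}_\phi$ is bounded there by unwinding the continuous transmission through the conformal maps, and then invoke Theorem \ref{th:experimental_NSV}. The paper does exactly this, writing the chain compactly as $\hat{\mathbf{C}}_\phi h = \mathbf{b}_{\disk^+,\mathbb{S}^1}\, \mathbf{C}_f\, \hat{\mathbf{O}}_{\Omega_2,\Omega_1}\, \mathbf{C}_{g^{-1}}\, \mathbf{e}_{\mathbb{S}^1,\disk^-} h$ and observing $\hat{\mathbf{C}}_\phi = \mathbf{C}_\phi$ on the dense set; your M\"obius normalization and explicit appeal to Douglas's formula are harmless elaborations of the same argument.
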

 Here, by dense set, we mean that for any $h \in \mathcal{D}_{\text{harm}}(\Omega_2)$, for all $\epsilon >0$ there is an element $u \in \mathscr{L}$ such that $D_{\Omega_2}(u-h) <\epsilon$.
 \begin{proof}
  Let $f:\disk^+ \rightarrow \Omega_1$ and $g:\disk^- \rightarrow \Omega_2$ be biholomorphisms.  Then by Carath\'eodory's theorem $\phi:= g^{-1} \circ f$ is a well-defined homeomorphism of $\mathbb{S}^1$.
  
  Given a $\mathscr{L}$ satisfying the hypotheses, observe that $\mathbf{C}_g \mathscr{L}$ is dense in $\mathcal{D}_{\text{harm}}(\Omega_2)$ by conformal invariance of Dirichlet energy, and by Carath\'eodory's theorem $\mathbf{C}_g \mathscr{L} \subset \mathcal{C}(\mathbb{S}^1)$.  Also $\mathbf{b}_{\disk^-,\mathbb{S}^1}\mathbf{C}_g \mathscr{L}$ is dense. Now for $h \in \mathbf{b}_{\disk^-,\mathbb{S}^1}\mathbf{C}_g \mathscr{L}$, define
  \[ \hat{\mathbf{C}}_\phi h = \mathbf{b}_{\disk^+,\mathbb{S}^1} \mathbf{C}_f \hat{\mathbf{O}}_{\Omega_2,\Omega_1} \mathbf{C}_{g^{-1}} \mathbf{e}_{\mathbb{S}^1,\disk^-} h   \]
  and note that 
  \[  \hat{\mathbf{C}}_\phi h = \mathbf{C}_\phi h.       \]
  By conformal invariance of the Dirichlet spaces and the hypothesis, this is a bounded operator on $\dot{H}^{1/2}(\mathbb{S}^1)$.  Thus applying Theorem \ref{th:experimental_NSV} we see that $\phi$ is a quasisymmetry which in turn yields that  $\Gamma$ is a quasicircle.  
 \end{proof}

 Theorem \ref{th:transmission} shows that if $\Gamma$ is a quasicircle and $\Omega_1$, $\Omega_2$ are the connected components of the complement, then 
 $\mathcal{H}(\Gamma,\Omega_1) = \mathcal{H}(\Gamma,\Omega_2)$.  We thus define 
 \[  \mathcal{H}(\Gamma) =  \mathcal{H}(\Gamma,\Omega_1) = \mathcal{H}(\Gamma,\Omega_2) \]
 in this special case.
 Now we are ready to define the transmission operators. 
 \begin{definition}
  We have well-defined maps 
 \begin{align*}
  \mathbf{O}_{\Omega_1,\Omega_2} & = \mathbf{e}_{\Gamma,\Omega_2} \mathbf{b}_{\Gamma,\Omega_1} : \mathcal{D}_{\text{harm}}(\Omega_1) \rightarrow \mathcal{D}_{\text{harm}}(\Omega_2) \\  
  \mathbf{O}_{\Omega_2,\Omega_1} & = \mathbf{e}_{\Gamma,\Omega_1} \mathbf{b}_{\Gamma,\Omega_2} : \mathcal{D}_{\text{harm}}(\Omega_2) \rightarrow \mathcal{D}_{\text{harm}}(\Omega_1) 
 \end{align*}
 which are bounded with respect to Dirichlet energy. 
 \end{definition}
 We will also use the simplified notation  
 \[   \mathbf{O}_{1,2} =  \mathbf{O}_{\Omega_1,\Omega_2}, \ \ \  \mathbf{O}_{2,1} =  \mathbf{O}_{\Omega_2,\Omega_1},  \]
 wherever it can be done without ambiguity.
 
 \begin{remark}
The symbol ``$\mathbf{O}$'' stands for old english ``oferferian'' meaning ``to transmit'', which could be rendered as ``overfare'' in modern english.
 \end{remark}
  The overfare operators are inverses of each other by definition:
 \begin{align*}
   \text{Id}_{\mathcal{D}_{\text{harm}}(\Omega_1)} & = \mathbf{O}_{1,2}\mathbf{O}_{2,1}   \\
   \text{Id}_{\mathcal{D}_{\text{harm}}(\Omega_2)} & = \mathbf{O}_{2,1}\mathbf{O}_{1,2}
 \end{align*}
 where $\text{Id}$ stands of course for the identity on the space indicated by the subscript.
 
 The overfare operators have a simple form in the case that $\Gamma = \mathbb{S}^1$: 
 \[  [\mathbf{O}_{\disk^+,\disk^-} h^+](z)  = h^+(1/\bar{z}), \ \ \ 
   [\mathbf{O}_{\disk^-,\disk^+} h^-](z)  = h^-(1/\bar{z})\]
 for $h^\pm \in \mathcal{D}_{\text{harm}}(\disk^\pm)$.
 
 We also observe that there is a transmission on Bergman space.  Namely, if $\Gamma$ is a quasicircle we define
 \[ \mathbf{O}'_{1,2} : \mathcal{A}_{\text{harm}}(\Omega_1) \rightarrow 
  \mathcal{A}_{\text{harm}}(\Omega_2) \]
  to be the unique operator satisfying 
  \begin{equation} \label{eq:Oprime_definition}
   \mathbf{O}'_{1,2} d = d \mathbf{O}_{1,2} 
  \end{equation}
  and similarly for $\mathbf{O}'_{2,1}$.  It is easily checked that this is well-defined using the fact that the transmission of a constant is (the same) constant.
 Similarly, for arbitrary Jordan curves $\Gamma$, continuous transmission induces the transmission on harmonic one-forms
 \[  \hat{\mathbf{O}}'_{1,2}:d \mathcal{C}_{\text{harm}}(\Omega_1) \rightarrow
   d \mathcal{C}_{\text{harm}}(\Omega_2).  \]
   
 The formulation of CNT boundary values and limits was entirely conformally invariant.  However, in the context of transmission, the existence of the overfare depends on the relative geometry of the domain $\Omega$ and the sphere.  That is, it depends on the regularity of the boundary.  It is remarkable that complete symmetry between the boundary value problems for the inside and outside domains occurs precisely for quasicircles. To the authors, this is an indication of the principle that Teichm\"uller theory can be seen as a scattering theory for harmonic one-forms \cite{Schippers_Staubach_monograph}.
 
 Finally, we record the following result. 
 \begin{corollary} Let $\Gamma$ be a Jordan curve in $\sphere$ and $\Omega_1$ and $\Omega_2$ be the connected components, and assume that $f:\disk^+ \rightarrow \Omega_1$  and $g:\disk^- \rightarrow \Omega_2$ are biholomorphisms. If there is a dense set $\mathscr{L} \subseteq \mathcal{D}_{\mathrm{harm}}(\Omega_2)$ such that $\mathscr{L} \subset \mathcal{C}(\mathrm{cl}\, \Omega_2)$, on which  $\mathbf{C}_f \hat{\mathbf{O}}_{2,1}$ is bounded, then $\Gamma$ is a quasicircle.  Conversely, if $\Gamma$ is a quasicircle, then $\mathbf{C}_f \mathbf{O}_{2,1}$ is bounded.
 \end{corollary}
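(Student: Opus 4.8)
The plan is to read the corollary off directly from Theorem \ref{th:experimental_transmission} and Theorem \ref{th:transmission}, transporting the Dirichlet-energy estimates through the biholomorphisms $f$ and $g$ by conformal invariance of the Dirichlet energy; essentially no new argument is needed beyond matching hypotheses.

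For the forward implication, I would first observe that every element of $\mathcal{D}_{\mathrm{harm}}(\Omega_2)$ is harmonic, so the assumption $\mathscr{L}\subset\mathcal{C}(\mathrm{cl}\,\Omega_2)$ forces $\mathscr{L}\subseteq\mathcal{D}_{\mathrm{harm}}(\Omega_2)\cap\mathcal{C}_{\mathrm{harm}}(\Omega_2)$; in particular the continuous transmission $\hat{\mathbf{O}}_{2,1}=\hat{\mathbf{O}}_{\Omega_2,\Omega_1}$ is defined on all of $\mathscr{L}$, and the hypothesis implicitly includes that $\hat{\mathbf{O}}_{2,1}h$ has finite Dirichlet energy for $h\in\mathscr{L}$. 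Since $\mathbf{C}_f$ is precomposition with the biholomorphism $f:\disk^+\to\Omega_1$, conformal invariance of Dirichlet energy gives
\[
 D_{\disk^+}\bigl(\mathbf{C}_f\hat{\mathbf{O}}_{2,1}h\bigr)=D_{\Omega_1}\bigl(\hat{\mathbf{O}}_{2,1}h\bigr)
 \qquad (h\in\mathscr{L}),
\]
so that the assumed boundedness of $\mathbf{C}_f\hat{\mathbf{O}}_{2,1}$ on $\mathscr{L}$ with respect to Dirichlet energy (an estimate of the form \eqref{energy inequality}) is equivalent to the existence of a constant $C$ with $D_{\Omega_1}(\hat{\mathbf{O}}_{2,1}h)\le C\,D_{\Omega_2}(h)$ for all $h\in\mathscr{L}$. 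In other words, the continuous transmission from $\Omega_2$ to $\Omega_1$ is bounded with respect to Dirichlet energy on a dense subset $\mathscr{L}\subseteq\mathcal{D}_{\mathrm{harm}}(\Omega_2)$ consisting of functions continuous on $\mathrm{cl}\,\Omega_2$ --- precisely the hypothesis of Theorem \ref{th:experimental_transmission}, with the same labelling of the two components. That theorem then gives that $\Gamma$ is a quasicircle.

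For the converse, suppose $\Gamma$ is a quasicircle. Then part (3)(c) of Theorem \ref{th:transmission} --- equivalently, the definition of the overfare operators immediately following it --- gives that $\mathbf{O}_{2,1}=\mathbf{O}_{\Omega_2,\Omega_1}:\mathcal{D}_{\mathrm{harm}}(\Omega_2)\to\mathcal{D}_{\mathrm{harm}}(\Omega_1)$ is bounded with respect to Dirichlet energy, i.e. $D_{\Omega_1}(\mathbf{O}_{2,1}h)\le C\,D_{\Omega_2}(h)$ for all $h$. Invoking conformal invariance of the Dirichlet energy once more, $D_{\disk^+}(\mathbf{C}_f\mathbf{O}_{2,1}h)=D_{\Omega_1}(\mathbf{O}_{2,1}h)\le C\,D_{\Omega_2}(h)$, which is the asserted boundedness of $\mathbf{C}_f\mathbf{O}_{2,1}$.

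I do not expect a genuine obstacle; the only points deserving attention are that $\hat{\mathbf{O}}_{2,1}$ is really defined on all of $\mathscr{L}$ (guaranteed by $\mathscr{L}\subseteq\mathcal{D}_{\mathrm{harm}}(\Omega_2)\cap\mathcal{C}_{\mathrm{harm}}(\Omega_2)$), that the density of $\mathscr{L}$ must be read in $\mathcal{D}_{\mathrm{harm}}(\Omega_2)$ so that Theorem \ref{th:experimental_transmission} applies verbatim, and --- if one prefers to interpret ``bounded'' in a norm other than the Dirichlet energy --- that on the spaces involved the $\dot H^{1/2}$ seminorm and the Dirichlet energy coincide through Douglas's formula \eqref{Douglas formula}, making the formulations interchangeable.
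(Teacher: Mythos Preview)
Your proof is correct and follows essentially the same route as the paper. For the converse your argument is identical to the paper's (bounded $\mathbf{O}_{2,1}$ from Theorem \ref{th:transmission}, composed with the isometry $\mathbf{C}_f$); for the forward direction you strip off $\mathbf{C}_f$ by conformal invariance and invoke Theorem \ref{th:experimental_transmission} directly, whereas the paper unfolds that theorem's proof in place---pushing $\mathscr{L}$ through $g$ to $\disk^-$, recognizing $\mathbf{O}_{\disk^+,\disk^-}\mathbf{C}_f\hat{\mathbf{O}}_{2,1}\mathbf{C}_{g^{-1}}$ as $\mathbf{C}_{g^{-1}\circ f}$ on the circle, and then appealing to Theorem \ref{th:experimental_NSV}. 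The content is the same; your version is simply one abstraction level higher and makes no explicit use of $g$.
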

 \begin{proof}
 If $\Gamma$ is a quasicircle, then $\mathbf{O}_{2,1}$ is bounded by Theorem \ref{th:transmission}, and $\mathbf{C}_f$ is an isometry. 
  
  Conversely, assume that there is a dense subset $\mathscr{L}$ with the stated properties.  Then $\mathbf{C}_{g}\mathscr{L}$ is dense in $\mathcal{D}_{\text{harm}}(\disk^-)$ since $\mathbf{C}_g$ preserves the Dirichlet energy, and furthermore $\mathbf{C}_g \mathscr{L}\subset\mathcal{C}(\mathrm{cl}\, \disk^-)$.  By assumption $\mathbf{O}_{\disk^+,\disk^-} \mathbf{C}_f \hat{\mathbf{O}}_{2,1} \mathbf{C}_{g^{-1}}$ is bounded on $\mathcal{D}_{\text{harm}}(\disk^-)$.  Hence $\mathbf{C}_{g^{-1} \circ f}$ is bounded on $\dot{H}^{1/2}(\mathbb{S}^1)$ and therefore by Theorem \ref{th:experimental_NSV} $\phi = g^{-1} \circ f$ is a quasisymmetry.  Thus $\Gamma$ is a quasicircle.  
 \end{proof}
\end{subsection}
\begin{subsection}{The bounce operator and density theorems} \ \label{se:bounce_anchor_density} 

\begin{definition}
 Let $\Gamma$ be a Jordan curve bounding a Jordan domain $\Omega$ in $\sphere$. A collar neighbourhood  of $\Gamma$ in $\Omega$ is a set of the form 
 \[  A_{p,r} = f(\mathbb{A}_r)   \]
 where $\mathbb{A}_r = \{z \in \mathbb{C} : r < |z| <1 \}$ and $f:\disk^+ \rightarrow \Omega$ is a biholomorphism such that $f(0)=p$.  
\end{definition}
In \cite{Schippers_Staubach_transmission,Schippers_Staubach_Plemelj} we used the term collar neighbourhood referred for more general domains, but this special case suffices for our purposes here.

We will show that functions in the Dirichlet space of a collar neighbourhood of $\Gamma$ have CNT boundary values.  To prove this, we need a lemma.
\begin{lemma}  \label{le:harmonic_function_decomposition}
 Let 
 \begin{align*}
     A & = \{ z \in \mathbb{C} : r < |z| <R \} \\
     B_1 & = \{z \in \mathbb{C} : |z| <R \}  \ \ \text{and} \\
     B_2 & = \{ z \in \mathbb{C} :  r <|z| \} \cup \{ \infty \}.
 \end{align*}  
 For any $h \in \mathcal{D}_{\mathrm{harm}}(A)$, there is a constant $c \in \mathbb{C}$ and functions $h_i \in \mathcal{D}_{\mathrm{harm}}(B_i)$ for $i=1,2$, such that 
 \[  h = h_1 + h_2 + c \log{(|z|/R)}  \]
 for all $z \in A$.  If $h$ is real, it is possible to choose $h_1$, $h_2$, and $c$ real.  
\end{lemma}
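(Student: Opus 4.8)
\emph{Proposed approach.} The natural tool is separation of variables: I will expand $h$ in a Fourier series in the angular variable on the annulus $A$. Writing $z = \rho e^{i\theta}$ with $r < \rho < R$, set $h_n(\rho) = \tfrac{1}{2\pi}\int_0^{2\pi} h(\rho e^{i\theta})\, e^{-in\theta}\, d\theta$, so that $h(\rho e^{i\theta}) = \sum_{n \in \mathbb{Z}} h_n(\rho)\, e^{in\theta}$; since $h$ is harmonic (hence smooth) on $A$, this series and its term-by-term derivatives converge locally uniformly on $A$. Writing the Laplacian in polar coordinates and integrating against $e^{-in\theta}$ shows each $h_n$ satisfies the Euler equation $\rho^2 h_n'' + \rho h_n' - n^2 h_n = 0$, so $h_n(\rho) = A_n \rho^{|n|} + B_n \rho^{-|n|}$ for $n \neq 0$ and $h_0(\rho) = A_0 + B_0 \log\rho$ for constants $A_n, B_n$.

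\emph{Construction of the pieces.} I then take $c := B_0$ and define
\[ h_1 := A_0 + B_0 \log R + \sum_{n \neq 0} A_n \rho^{|n|} e^{in\theta}, \qquad h_2 := \sum_{n \neq 0} B_n \rho^{-|n|} e^{in\theta}, \]
so that $h = h_1 + h_2 + c\log(|z|/R)$ on $A$. The terms of $h_1$ are $z^n$ for $n > 0$ and $\bar z^{|n|}$ for $n < 0$, so (once the relevant power series are seen to have radius of convergence $\geq R$) $h_1$ is a constant plus a holomorphic plus an antiholomorphic function on $B_1 = \{|z| < R\}$, hence harmonic there. In the coordinate $w = 1/z$ the terms of $h_2$ become $w^m$ and $\bar w^m$ with $m > 0$; since no constant or $\log$ term appears, $h_2$ extends harmonically across $w = 0$, i.e. is harmonic on $B_2$.

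\emph{The Dirichlet bound.} The analytic heart is the finiteness of $D_{B_1}(h_1)$ and $D_{B_2}(h_2)$. Using the polar identity $|h_z|^2 + |h_{\bar z}|^2 = \tfrac12\bigl(|\partial_\rho h|^2 + \rho^{-2}|\partial_\theta h|^2\bigr)$ and Parseval in $\theta$, a direct computation in which the cross terms in $A_n \bar B_n$ cancel gives
\[ D_A(h) = |B_0|^2 \log(R/r) + \sum_{n \neq 0} |n|\bigl(|A_n|^2 (R^{2|n|} - r^{2|n|}) + |B_n|^2 (r^{-2|n|} - R^{-2|n|})\bigr), \]
with every summand nonnegative. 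Finiteness of $D_A(h)$ therefore forces $\sum_{n \neq 0} |n| |A_n|^2 (R^{2|n|} - r^{2|n|}) < \infty$ and $\sum_{n \neq 0} |n| |B_n|^2 (r^{-2|n|} - R^{-2|n|}) < \infty$. Since $R^{2|n|} - r^{2|n|} = R^{2|n|}(1 - (r/R)^{2|n|}) \geq (1 - (r/R)^2) R^{2|n|}$ and, symmetrically, $r^{-2|n|} - R^{-2|n|} \geq (1 - (r/R)^2) r^{-2|n|}$ for $|n| \geq 1$, these upgrade to $\sum_{n \neq 0} |n| |A_n|^2 R^{2|n|} < \infty$ and $\sum_{n \neq 0} |n| |B_n|^2 r^{-2|n|} < \infty$. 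By the disk formula $D_{\disk^+}(\sum a_n z^n) = \sum n |a_n|^2$ applied after rescaling $B_1$, and $B_2$ in its $w$-chart, to $\disk^+$, these two sums are exactly $D_{B_1}(h_1)$ and $D_{B_2}(h_2)$; hence $h_1 \in \mathcal{D}_{\mathrm{harm}}(B_1)$ and $h_2 \in \mathcal{D}_{\mathrm{harm}}(B_2)$ (this also justifies the radii of convergence claimed above). Finally, if $h$ is real then $h_{-n} = \overline{h_n}$, forcing $A_0, B_0 \in \mathbb{R}$ and $A_{-n} = \overline{A_n}$, $B_{-n} = \overline{B_n}$, so that $h_1$, $h_2$, and $c = B_0$ are real.

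\emph{Main obstacle.} The only genuinely delicate point is the justification that the Fourier expansion of $h$, together with its first-order derivatives, converges well enough on $A$ to legitimize both reading off $h_1$, $h_2$, $c$ term by term and the Parseval computation of $D_A(h)$; this is a standard fact about harmonic functions on annuli (e.g. via mean-square convergence on each circle combined with the explicit radial form of $h_n$), and once it is in place the remaining steps are elementary.
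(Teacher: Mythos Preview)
Your argument is correct and complete; the ``main obstacle'' you flag is genuinely routine, since $h$ is smooth on $A$ and the Parseval step in the $D_A(h)$ computation involves only nonnegative terms after the cross-term cancellation, so Fubini applies without subtlety.

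Your route, however, is quite different from the paper's. The paper first subtracts the period $c = \tfrac{1}{2\pi}\int_\gamma \ast dh$ so that $H = h - c\log|z|$ has a single-valued harmonic conjugate, forms the holomorphic $F = H + iG$ on $A$, and then splits $F = F_1 - F_2$ via limiting Cauchy integrals (the usual Laurent decomposition); taking real parts gives $h_1, h_2$. The Dirichlet bound is obtained \emph{softly}: on a sub-annulus $A'$ near the outer boundary one writes $h_1 = h - h_2 - c\log|z|$, and since $h_2$ is holomorphic on a neighbourhood of the closure of $A'$ each right-hand term lies in $\mathcal{D}_{\mathrm{harm}}(A')$; as $h_1$ is already smooth on the inner disk this forces $h_1 \in \mathcal{D}_{\mathrm{harm}}(B_1)$. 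Your Fourier/Parseval computation instead yields the explicit identity $D_A(h) = |c|^2\log(R/r) + \sum_{n\neq 0}|n|\bigl(|A_n|^2(R^{2|n|}-r^{2|n|}) + |B_n|^2(r^{-2|n|}-R^{-2|n|})\bigr)$, from which you extract the sharp quantitative bound $D_{B_1}(h_1) + D_{B_2}(h_2) \leq (1-(r/R)^2)^{-1} D_A(h)$. The paper's approach is more geometric and, as they remark, transports directly to arbitrary doubly-connected domains (and Riemann surfaces); yours is more elementary and gives an explicit constant, at the cost of being tied to the round annulus.
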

\begin{proof}
 We prove the claim for $h$ real; the general case follows by separating $h$ into real and imaginary parts.  
 
 Choose $s \in (r,R)$ and let $\gamma$ be the curve $|z|=s$ traced once counterclockwise.  Set 
 \[  c= \frac{1}{2 \pi} \int_{\gamma} \ast dh.     \]
 Since 
 \[  \int_{\gamma} \ast d \log{(|z|)} = 2 \pi    \]
 we then have that 
 \[  \int_{\gamma} \ast d ( h - c \log{(|z|)}) = 0.     \] 
 Set $H = h - c \log{|z|}$. Since $\ast dH$ is exact, $H$ has a single-valued harmonic anti-derivative $G$ in $A$, which is the harmonic conjugate of $H$.  Thus $F = H+ i G$ is a holomorphic function in $A$.   Now define   
 \[  F_1(z) = \lim_{s \nearrow R} \, \frac{1}{2\pi i}  \int_{\gamma} \frac{F(\zeta)}
 {{\zeta - z}}\, d\zeta, \ \ \ z \in B_1; \]
 and define $F_2$ by 
 \[  F_2(z) = \lim_{s \searrow r} \, \frac{1}{2\pi i}  \int_{\gamma} \frac{F(\zeta)}
 {{\zeta - z}}\, d\zeta, \ \ \ z \in B_2 \backslash \{\infty \}   \]
 and $F_2(\infty)=0$.  Observe that $F_1$ is holomorphic on $B_1$ and $F_2$ is holomorphic on $B_2$.  Furthermore for $z \in A$ clearly $F(z) = F_1(z) - F_2(z)$.  
 Now setting $h_1 = \text{Re}(F_1)$ and $h_2 = \text{Re}(F_2)$ we obtain the desired decomposition, where $h_1$, $h_2$, and $c$ are real.  It remains to show that $h_i \in \mathcal{D}_{\text{harm}}(B_i)$ for $i=1,2$.  
 
 To show that $h_1 \in \mathcal{D}_{\text{harm}}(B_1)$, it is enough to show that there is an annulus $A' = \{ z \in \mathbb{C} : r' < |z| < R \}$ for $r' \in (r,R)$ such that $h_1$ is in $\mathcal{D}_{\text{harm}}(A')$, since $h_1$ is holomorphic on an open neighbourhood of the closure of $|z| < r'$. 
 
 Given any such $r' \in (r,R)$, the closure of $A'$ is in $B_2$, and thus the restriction of $h_2$ to $A'$ is in $\mathcal{D}_{\text{harm}}(A')$. Furthermore, the restriction of $h$ to $A'$ is in $\mathcal{D}_{\text{harm}}(A')$, and a direct computation shows that $\log{(|z|)}$ is in $\mathcal{D}_{\text{harm}}(A)$, and in particular in $\mathcal{D}_{\text{harm}}(A')$.  Since $h_1 = h - h_2 - c \log{(|z|)}$, this proves that $h_1 \in \mathcal{D}_{\text{harm}}(A')$ and hence in $\mathcal{D}_{\text{harm}}(B_1)$. 
 
 The same argument shows that $h_2 \in \mathcal{D}_{\text{harm}}(B_2)$.
\end{proof}
\begin{remark}
 It is easy to adapt this argument to any doubly-connected domain bordered by non-intersecting Jordan curves, even on Riemann surfaces \cite{Schippers_Staubach_transmission}. It can be shown that the decomposition is unique, up to the additive  constant which can be transferred between $h_1$ and $h_2$.  
\end{remark}

\begin{theorem} \label{th:bounce_exists}
 Let $\Gamma$ be a Jordan curve bounding a Jordan domain $\Omega$ in $\sphere$. Let $A$ be a collar neighbourhood of $\Gamma$ in $\Omega$.  If $h \in \mathcal{D}_{\mathrm{harm}}(A)$ then $h$ has \emph{CNT} boundary values except possibly on a null set with respect to $\Omega$.  Furthermore, there is an $H \in \mathcal{D}_{\mathrm{harm}}(\Omega)$ whose \emph{CNT} boundary values agree with those of $h$ except possibly on a null set.
\end{theorem}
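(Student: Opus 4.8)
The plan is to reduce to the model situation of the disk via the defining biholomorphism of the collar, apply the decomposition Lemma \ref{le:harmonic_function_decomposition}, and then treat the three resulting pieces separately: an honest Dirichlet-bounded harmonic function on $\disk^+$, a harmonic function which extends past the circle, and a logarithmic term which vanishes on the boundary.

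First I would write $A = f(\mathbb{A}_r)$ with $f:\disk^+ \to \Omega$ a biholomorphism, $f(0)=p$, $\mathbb{A}_r = \{ r<|z|<1\}$, and set $\tilde h = h\circ f \in \mathcal{D}_{\mathrm{harm}}(\mathbb{A}_r)$. By the conformal invariance of CNT limits (Remark \ref{re:CNT_conformally_invariant}) and the very definition of ``null with respect to $\Omega$'' (which is phrased through $f^{-1}$), it suffices to prove both assertions for $\tilde h$ on $\mathbb{A}_r \subset \disk^+$, with $\Gamma$ replaced by $\mathbb{S}^1$. Apply Lemma \ref{le:harmonic_function_decomposition} with $R=1$: there are $c\in\mathbb{C}$, $h_1 \in \mathcal{D}_{\mathrm{harm}}(\disk^+)$, and $h_2 \in \mathcal{D}_{\mathrm{harm}}(\{|z|>r\}\cup\{\infty\})$ with $\tilde h = h_1 + h_2 + c\log|z|$ on $\mathbb{A}_r$. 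Now analyse the three terms on $\mathbb{S}^1$: the term $c\log|z|$ extends real-analytically across $\mathbb{S}^1$ with boundary value $0$, so its CNT boundary value is $0$ everywhere; the term $h_2$ is harmonic on an open neighbourhood of $\mathbb{S}^1$ in $\sphere$ (since $\{|z|>r\}\cup\{\infty\}$ contains one), hence smooth up to $\mathbb{S}^1$, so it has CNT (= ordinary) boundary values everywhere and its trace $h_2|_{\mathbb{S}^1}$ lies in $H^{1/2}(\mathbb{S}^1)$; and $h_1$ has non-tangential boundary values off a set of outer logarithmic capacity zero by Theorem \ref{th:boundaryvalues_exist_disk}, which by Lemma \ref{le:Borel_replace} may be taken to be a Borel set of capacity zero, i.e.\ a null set with respect to $\disk^+$. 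Summing, $\tilde h$ has CNT boundary values on $\mathbb{S}^1$ off a null set, which gives the first assertion after transporting back by $f$.

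For the second assertion, set $\tilde h_2 = \mathbf{e}_{\mathbb{S}^1,\disk^+}(h_2|_{\mathbb{S}^1}) \in \mathcal{D}_{\mathrm{harm}}(\disk^+)$, the Dirichlet-bounded harmonic (Poisson) extension into $\disk^+$ of the continuous, $H^{1/2}$ trace of $h_2$; since that trace is continuous, $\tilde h_2$ has CNT boundary values equal to $h_2|_{\mathbb{S}^1}$ at every point of $\mathbb{S}^1$. Then $g := h_1 + \tilde h_2 \in \mathcal{D}_{\mathrm{harm}}(\disk^+)$, and at every point of $\mathbb{S}^1$ where the CNT boundary value of $h_1$ exists — that is, off a null set — the CNT boundary value of $g$ equals that of $h_1$ plus $h_2|_{\mathbb{S}^1}$, which is precisely the CNT boundary value of $\tilde h = h_1 + h_2 + c\log|z|$. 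Putting $H = g\circ f^{-1} \in \mathcal{D}_{\mathrm{harm}}(\Omega)$ and invoking conformal invariance once more finishes the proof.

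The only real obstacle is the ``outer'' piece $h_2$: one must be sure that in the decomposition it is genuinely harmonic on a full neighbourhood (in $\sphere$) of the model circle — so that it has honest boundary values and an $H^{1/2}$ trace that can be re-extended into $\disk^+$ with finite Dirichlet energy — and that the logarithmic term contributes nothing on the boundary. Both are immediate from the shape of the domains $B_1,B_2$ in Lemma \ref{le:harmonic_function_decomposition}; everything else is bookkeeping with null sets, which is automatic since ``null with respect to $\Omega$'' is already defined through the biholomorphism $f$.
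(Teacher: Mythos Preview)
Your proof is correct and follows essentially the same route as the paper: reduce to $\disk^+$ and $\mathbb{A}_r$ by conformal invariance, decompose via Lemma~\ref{le:harmonic_function_decomposition}, and handle the three pieces separately. The only difference is in how you produce the Dirichlet-bounded extension of $h_2$ to $\disk^+$: you invoke the Poisson/$H^{1/2}$ extension $\mathbf{e}_{\mathbb{S}^1,\disk^+}(h_2|_{\mathbb{S}^1})$, whereas the paper simply sets $u(z)=h_2(1/\bar z)$, which is harmonic on $\{|z|<1/r\}\supset\disk^+$ and has finite Dirichlet energy by anti-conformal invariance; by uniqueness of harmonic extension these coincide, but the reflection avoids the Sobolev machinery entirely.
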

\begin{proof}
 By definition of collar neighbourhood, for some $p \in \Omega$ and $r \in (0,1)$, $A = A_{p,r} = f(\mathbb{A}_r)$ where $\mathbb{A}_r = \{z  \in \mathbb{C} : r < |z|<1 \}$.  
 By conformal invariance of Dirichlet spaces and CNT boundary values, it suffices to show this for $\Gamma = \mathbb{S}^1$, $\Omega = \disk^+$, and $A = \mathbb{A}_r$.  
 
 Let $h \in \mathcal{D}_{\text{harm}}(\mathbb{A}_r)$.  By Lemma \ref{le:harmonic_function_decomposition}, $h= h_1 + h_2 + c \log{|z|}$ for 
 some functions $h_i \in \mathcal{D}_{\text{harm}}(B_i)$, $i=1,2$ where $B_1 = \disk^+$ and $B_2 = \{ z: |z|>r \}\cup \{ \infty \}$.  Now $c\log{|z|}$ extends continuously to $0$ on $\mathbb{S}^1$, and thus the non-tangential boundary values exist and are zero everywhere on $\mathbb{S}^1$.  Since $h_1 \in \mathcal{D}(\disk^+)$, it has non-tangential boundary values except possibly on a null set by a direct application of Beurling's theorem \ref{th:boundaryvalues_exist_disk}.  Now $h_2$ is continuous on an annular neighbourhood of $\mathbb{S}^1$ and thus the non-tangential boundary values exist with respect to $\disk^+$ everywhere. Thus the non-tangential boundary values of $h$ exist except possibly on a null set.
 
 Furthermore, $u(z) = h_2(1/\bar{z}) \in \mathcal{D}_{\text{harm}}(\disk^+)$ is continuous on an open neighbourhood of $\disk^+$, and its non-tangential boundary values exist everywhere with respect to $\disk^+$ and equal those of $h_2$ with respect to $\disk^+$. Thus the function 
 $H= h_1 + u$ is in $\mathcal{D}_{\text{harm}}(\disk^+)$ has non-tangential boundary values equal to $h$ except possibly on a null set.
\end{proof}
\begin{remark}
 The proof actually shows a slightly stronger statement: there is an $H \in \mathcal{D}_{\mathrm{harm}}(\Omega)$ whose CNT boundary values exist and equal those of $h$, precisely where those of $h$ exist.
\end{remark}

\begin{definition}
 Since the function $H \in \mathcal{D}_{\text{harm}}(\Omega)$ is uniquely determined by its CNT boundary values on $\Gamma$, Theorem \ref{th:bounce_exists} induces a well-defined operator
\[  \mathbf{G}_{A,\Omega}:\mathcal{D}_{\text{harm}}(A) \rightarrow 
 \mathcal{D}_{\text{harm}}(\Omega)  \]
for any collar neighbourhood $A$ of the boundary $\Gamma$ of a Jordan domain. We call this the ``bounce'' operator.
\end{definition}

It follows immediately from the conformal invariance of the Dirichlet space and CNT limits that the bounce operator is conformally invariant.  That is, if $f:\Omega' \rightarrow \Omega$ is a biholomorphism and $A'$ is the domain such that $f(A')= A$, then 
\begin{equation} \label{eq:bounce_conformally_invariant}
 \mathbf{G}_{A',\Omega'} \left( h \circ f \right) = \left( \mathbf{G}_{A,\Omega}\, h \right) \circ f.
\end{equation}
{We shall also need a result about the agreement of Sobolev and Osborn spaces.
\begin{theorem}\label{th:oseborn-sobolev}
 Given a function $f\in H^{1/2}(\mathbb{S}^1)$ there exits a unique harmonic function $F\in \mathcal{D}_{\mathrm{harm}}(\disk)$ whose \emph{CNT} boundary values agree almost everywhere with values of $f$ on $\mathbb{S}^1$.  
\end{theorem}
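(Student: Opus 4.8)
The plan is to take $F = P(f)$, the Poisson extension of $f$ to $\disk$, to check that it lies in $\mathcal{D}_{\mathrm{harm}}(\disk)$, and then to identify its boundary values, bearing in mind that for $\Omega = \disk$ the CNT limit of Definition \ref{de:CNT} is just the ordinary non-tangential limit (take the identity chart, or use Lemma \ref{le:automorphism_invariance_NT}).

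\emph{Existence.} Since $f \in H^{1/2}(\mathbb{S}^1)$, the trace theory recalled in Section \ref{se:Nag_Sullivan} (see \cite[Proposition~1.7]{Taylor}) gives $F \in H^1(\disk)$, whence $D_{\disk}(F) \le \Vert F \Vert_{H^1(\disk)}^2 < \infty$ and $F \in \mathcal{D}_{\mathrm{harm}}(\disk)$. Equivalently, writing $f = \sum_n c_n e^{in\theta}$, membership in $H^{1/2}(\mathbb{S}^1)$ is equivalent to $\sum_n (1+|n|)\,|c_n|^2 < \infty$, and then $F = \sum_{n \ge 0} c_n z^n + \sum_{n \ge 1} c_{-n}\bar z^n$ has $D_{\disk}(F) = \sum_n |n|\,|c_n|^2 < \infty$. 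By Beurling's theorem (Theorem \ref{th:boundaryvalues_exist_disk} together with Remark \ref{re:Borel_replace_circle}) the non-tangential limit of $F$ exists off a Borel set of outer capacity zero, hence off a set of Lebesgue measure zero; and since $F$ is the Poisson integral of $f \in L^2(\mathbb{S}^1) \subseteq L^1(\mathbb{S}^1)$, Theorem \ref{thm: fund of harmonic dirichlet} -- or, classically, Fatou's theorem on non-tangential convergence of Poisson integrals -- identifies that limit with $f$ at a.e.\ point. Thus the CNT boundary values of $F$ agree with $f$ almost everywhere.

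\emph{Uniqueness.} The one delicate point is that one should \emph{not} invoke Theorem \ref{th:extension_unique} directly: it requires agreement off a set of \emph{capacity} zero, whereas the data here provide only agreement off a set of \emph{measure} zero, and on $\mathbb{S}^1$ these are genuinely different. So I would argue by hand. If $F_1, F_2 \in \mathcal{D}_{\mathrm{harm}}(\disk)$ both have CNT boundary values equal to $f$ a.e., put $G = F_1 - F_2$ and expand $G = \sum_{n \ge 0} b_n z^n + \sum_{n \ge 1} b_{-n}\bar z^n$. Finiteness of $D_{\disk}(G) = \sum_n |n|\,|b_n|^2$ forces $\sum_n |b_n|^2 < \infty$, so $g := \sum_n b_n e^{in\theta}$ lies in $L^2(\mathbb{S}^1)$ and $G = P(g)$. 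By hypothesis the non-tangential limit of $G$ is $0$ a.e., while by Fatou's theorem (or Theorem \ref{thm: fund of harmonic dirichlet}) it equals $g$ a.e.; hence $g = 0$, every $b_n$ vanishes, and $G \equiv 0$.

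The main obstacle is precisely this uniqueness step -- specifically, the need to replace the capacity-zero uniqueness of Theorem \ref{th:extension_unique} by the argument that every element of $\mathcal{D}_{\mathrm{harm}}(\disk)$ is the Poisson extension of its $L^2$ boundary trace, used together with Fatou's a.e.\ boundary convergence. The remaining ingredients -- the Sobolev trace estimate, the identity $D_{\disk}(F) = \sum_n |n|\,|c_n|^2$, and the coincidence of CNT and non-tangential limits on $\disk$ -- are routine.
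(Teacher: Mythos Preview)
Your proof is correct and follows the same line as the paper: construct $F$ as the Poisson/Dirichlet extension of $f$ and use that $F\in H^1(\disk)$ (hence $F\in\mathcal{D}_{\mathrm{harm}}(\disk)$), then identify the boundary values. The paper's argument is essentially a one-line citation to the well-posedness of the Dirichlet problem in $H^1$ with $H^{1/2}$ data (Proposition~4.5 in \cite{Taylor}), invoking both existence and uniqueness at once.

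Where your write-up differs is in the uniqueness step, and this is a genuine refinement. The paper's terse appeal to Sobolev uniqueness leaves implicit two points: that any $F\in\mathcal{D}_{\mathrm{harm}}(\disk)$ automatically lies in $H^1(\disk)$, and that agreement of CNT boundary values with $f$ a.e.\ forces agreement of Sobolev traces. You sidestep both by arguing directly with Fourier coefficients and Fatou's theorem, and you correctly flag that Theorem~\ref{th:extension_unique} is not applicable here since the hypothesis only gives agreement off a measure-zero set, not a capacity-zero set. Your argument that $\sum_n |n|\,|b_n|^2<\infty$ implies $\sum_n |b_n|^2<\infty$ (trivially for $n=0$, and $|b_n|^2\le |n|\,|b_n|^2$ for $|n|\ge1$), so that $G=P(g)$ for some $g\in L^2(\mathbb{S}^1)$, is exactly what is needed and is cleaner than unpacking the Sobolev machinery.
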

\begin{proof}
By the existence and uniqueness of the solution to the Dirichlet problem (see e.g. Proposition 4.5 on page 334 in \cite{Taylor}), $f$ has a unique harmonic extension $F\in H^1(\disk)$, and the CNT boundary values of $F$ are equal to $f$ almost everywhere.
\end{proof}
Using this we can prove an energy inequality for the bounce operator.}
\begin{theorem} \label{th:bounce_bounded} Let $\Omega$ be a Jordan domain in $\sphere$ bounded by a Jordan curve $\Gamma$.  
 For any collar neighbourhood ${A}$ of $\Gamma$ in $\Omega$, $\mathbf{G}_{A,\Omega}$ is bounded with respect to the Dirichlet energy.  That is, there is a constant $C$ such that 
 \[    D_{\Omega}(\mathbf{G}_{A,\Omega}\, h) \leq C D_{{{A}}}(h) \]
 for all $h \in \mathcal{D}_{\mathrm{harm}}({A})$.
\end{theorem}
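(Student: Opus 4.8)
The plan is to reduce to the model situation and then invoke the Sobolev--Osborn identification of Theorem \ref{th:oseborn-sobolev}. By the conformal invariance \eqref{eq:bounce_conformally_invariant} of the bounce operator and of the Dirichlet energy it suffices to treat $\Omega = \disk^+$, $\Gamma = \mathbb{S}^1$ and $A = \mathbb{A}_r = \{ r < |z| < 1 \}$, and to produce a constant $C = C(r)$ with $D_{\disk^+}(\mathbf{G}_{\mathbb{A}_r,\disk^+} h) \le C\, D_{\mathbb{A}_r}(h)$ for all $h \in \mathcal{D}_{\mathrm{harm}}(\mathbb{A}_r)$. Since the CNT boundary values of a constant are that same constant, $\mathbf{G}$ sends constants to constants, and constants contribute nothing to either Dirichlet energy; so I would first replace $h$ by $h$ minus its mean over the bounded annulus $\mathbb{A}_r$. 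For such a normalized $h$ the Poincar\'e--Wirtinger inequality on the bounded Lipschitz domain $\mathbb{A}_r$ gives $\| h \|_{L^2(\mathbb{A}_r)} \le C_r\, D_{\mathbb{A}_r}(h)^{1/2}$, hence $h \in H^1(\mathbb{A}_r)$ with $\| h \|_{H^1(\mathbb{A}_r)} \le C_r'\, D_{\mathbb{A}_r}(h)^{1/2}$.

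Next I would move $h$ onto the whole disk by a cutoff. Fix radii $r < \rho_0 < \rho_1 < 1$ and a smooth $\chi \colon \disk^+ \to [0,1]$ vanishing on $\{ |z| \le \rho_0 \}$ and identically $1$ on $\{ |z| \ge \rho_1 \}$. Extended by zero, $\chi h \in H^1(\disk^+)$ and the Leibniz rule gives $\| \chi h \|_{H^1(\disk^+)} \le C_\chi \| h \|_{H^1(\mathbb{A}_r)} \le C''\, D_{\mathbb{A}_r}(h)^{1/2}$; the only term involving $\nabla \chi$ is controlled by $\| h \|_{L^2(\mathbb{A}_r)}$, which is precisely why the normalization was needed. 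As $\chi \equiv 1$ near $\mathbb{S}^1$, the Sobolev trace of $\chi h$ on $\mathbb{S}^1$ equals the trace $h|_{\mathbb{S}^1} \in H^{1/2}(\mathbb{S}^1)$, and the trace theorem (\cite{Taylor}, Ch.~4) yields $\| h|_{\mathbb{S}^1} \|_{H^{1/2}(\mathbb{S}^1)} \le C \| \chi h \|_{H^1(\disk^+)} \le C'''\, D_{\mathbb{A}_r}(h)^{1/2}$. By Theorem \ref{th:oseborn-sobolev} together with the quantitative solvability estimate for the Dirichlet problem on the disk (\cite[Proposition 1.7]{Taylor}), the harmonic extension $H \in H^1(\disk^+)$ of $h|_{\mathbb{S}^1}$ satisfies $D_{\disk^+}(H) \le \| H \|_{H^1(\disk^+)}^2 \le C \| h|_{\mathbb{S}^1} \|_{H^{1/2}(\mathbb{S}^1)}^2 \le C\, D_{\mathbb{A}_r}(h)$.

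It then remains to identify $H$ with $\mathbf{G}_{\mathbb{A}_r,\disk^+} h$. Both lie in $\mathcal{D}_{\mathrm{harm}}(\disk^+)$: for $H$ because it is harmonic with $D_{\disk^+}(H) \le \| H \|_{H^1(\disk^+)}^2 < \infty$, and for $\mathbf{G}h$ by its definition. By Theorem \ref{th:bounce_exists} the CNT boundary values of $\mathbf{G}h$ agree with those of $h$ off a null set; the CNT values of $H$ agree with the Sobolev trace $h|_{\mathbb{S}^1}$ almost everywhere; and the Sobolev trace of $h$ agrees almost everywhere with its CNT boundary values. Hence $H$ and $\mathbf{G}h$ are two elements of $\mathcal{D}_{\mathrm{harm}}(\disk^+)$ whose CNT boundary values coincide almost everywhere, so Theorem \ref{th:extension_unique} --- whose proof, via the Douglas formula \eqref{Douglas formula}, in fact only requires agreement almost everywhere --- forces $H = \mathbf{G}h$. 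Together with the previous paragraph this proves the theorem.

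I expect the main difficulty to be the boundary-value bookkeeping in the last step: one must reconcile the CNT limit (defining $\mathbf{G}$), the Sobolev trace (needed to invoke Theorem \ref{th:oseborn-sobolev}), and the radial limit, carefully enough to conclude $H = \mathbf{G}h$ rather than merely that there exists \emph{some} harmonic extension of the correct energy. A secondary nuisance is that $\mathcal{D}_{\mathrm{harm}}$ is only seminormed, so one cannot embed it into $H^1(\mathbb{A}_r)$ directly; the mean-zero reduction and the Poincar\'e inequality are there exactly to absorb the $L^2$ term created by the cutoff. I note in passing that a fully elementary alternative is available: expanding $h$ in its Laurent series on $\mathbb{A}_r$, computing $D_{\mathbb{A}_r}(h)$ and $D_{\disk^+}(\mathbf{G}h)$ coefficient by coefficient, and comparing term by term gives the theorem with the explicit constant $C = 2(1+r^2)/(1-r^2)$.
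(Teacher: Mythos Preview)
Your proof is correct and follows essentially the same Sobolev-trace/Poincar\'e--Wirtinger strategy as the paper. The only cosmetic differences are that the paper applies the trace theorem directly on the annulus $\mathbb{A}_r$ (with $\mathbb{S}^1$ as one boundary component) rather than extending via a cutoff to $\disk^+$, and it subtracts the mean $h_{\mathbb{A}_r}$ at the end rather than the beginning; the identification $H=\mathbf{G}h$ is handled in the paper by the same appeal to Theorem~\ref{th:oseborn-sobolev}, though you spell out the CNT/Sobolev-trace reconciliation more carefully. Your closing remark about the explicit Laurent-series computation is a genuine alternative not in the paper and is worth keeping as a sanity check.
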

\begin{proof}
 By conformal invariance of the Dirichlet semi-norm and CNT limits, it suffices to 
 prove this for ${A} = \mathbb{A}_r = \{ z: r< |z| < 1\}$ and $\Omega = \disk^+$. 
  
  Let $h \in \mathcal{D}_{\text{harm}}({A})$. Then by Proposition 1.25.2 in \cite{Medkova}, $h$ is in $H^1({A})$.
  By Theorem \ref{th:oseborn-sobolev}, $\mathbf{G}_{A,\disk^+} h$ is the unique Sobolev extension of the Sobolev trace of $h$ in $H^{1/2}(\mathbb{S}^1)$. Furthermore by the result on the unique Sobolev extension, see e.g. Proposition 4.5 on page 334 in \cite{Taylor} and the fact that $\mathbb{S}^1 \subsetneq \partial {A}$, yields that
  \[  \| \left. h \right|_{\Gamma} \|_{H^{1/2}(\mathbb{S}^1)} \leq  \| \left. h \right|_{\partial \Omega} \|_{H^{1/2}(\partial {A})}  \leq C_1 \| h \|_{H^1({A})}. \]
  
  Also, by the existence of the unique solution to the Dirichlet problem with boundary data in Sobolev spaces (see e.g. Proposition 1.7 on page 360 in \cite{Taylor}), the harmonic Sobolev extension $H$ of $\left. h \right|_{\mathbb{S}^1}$ satisfies 
  \[  \| H \|_{{H}^1(\disk^+)} \leq C_2 \| \left. h \right|_{\mathbb{S}^1} \|_{H^{1/2}(\mathbb{S}^1)} . \]
  This together with the estimate for $\Vert h|_\Gamma \Vert_{H^{1/2}(\mathbb{S}^1)}$ above yields that 
  \begin{equation}\label{Sobolev estimate for bounce}
  \Vert  \mathbf{G}_{A,\disk^+}h \Vert _{H^1(\disk^+)} \leq  \Vert h\Vert _{H^1({A})}.
  \end{equation}
  Now if
  one applies \eqref{Sobolev estimate for bounce} to the harmonic function $h-h_{{A}}$ where $h_{{A}}$ is the average of $h$ given by $\frac{1}{|{A}|} \int_{{A}} h$, then one has that
$$\Vert \mathbf{G}_{A,\disk^+}h-\mathbf{G}_{A,\disk^+}h_{{A}}\Vert_{H^1(\disk^+)} \leq C \Vert h-h_{{A}}\Vert_{H^1({A})}. $$
Moreover we know that
$$D_{\disk^+}(\mathbf{G}_{A,\disk^+}h)^{1/2}= D_{\disk^+}(\mathbf{G}_{A,\disk^+}h-\mathbf{G}_{A,\disk^+}h_{{A}})^{1/2} \leq \Vert \mathbf{G}_{A,\disk^+}h-\mathbf{G}_{A,\disk^+}h_{{A}}\Vert_{H^1(\disk^+)}$$ and that $$\Vert h-h_{{A}}\Vert_{H^1({A})}= D_{{A}}(h-h_{{A}})^{1/2}+ \Vert h-h_{{A}}\Vert_{L^2({A})}\leq D_{{A}}(h)^{1/2}+ D_{{A}}(h)^{1/2}= 2D_{{A}}(h)^{1/2},$$ where the inequality $ \Vert h-h_{{A}}\Vert_{L^2(\Omega)}\leq C D_{{A}}(h)^{1/2}$ is the well-known Poincar\'e-Wirtinger inequality. Thus
$D_{\disk^+}(\mathbf{G}_{A,\disk^+}h )\leq C D_{{A}}(h),$ as desired.
  
\end{proof}

\begin{theorem}  \label{th:bounce_of_holomorphic_dense}
 Let $\Omega$ be a Jordan domain in $\sphere$ bounded by $\Gamma$ and let $A$ be a collar neighbourhood of $\Gamma$ in $\Omega$.  The set $\mathbf{G}_{A,\Omega}(\mathcal{D}(A))$ is dense in $\mathcal{D}_{\mathrm{harm}}(\Omega)$ with respect to the Dirichlet semi-norm.
\end{theorem}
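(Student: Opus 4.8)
The plan is to reduce to the model situation by conformal invariance and then to exhibit a known dense subset of $\mathcal{D}_{\mathrm{harm}}(\Omega)$ lying inside the image. First, using conformal invariance of the Dirichlet semi-norm, of \emph{CNT} boundary values, of the holomorphic Dirichlet space, and of the bounce operator (equation \eqref{eq:bounce_conformally_invariant}), it suffices to prove the claim for $\Gamma = \mathbb{S}^1$, $\Omega = \disk^+$, and $A = \mathbb{A}_r = \{ z : r < |z| < 1\}$ for some fixed $r \in (0,1)$; indeed, if $f:\disk^+ \to \Omega$ is a biholomorphism taking $\mathbb{A}_r$ onto $A$, then $h \mapsto h \circ f$ carries $\mathcal{D}(A)$ isometrically onto $\mathcal{D}(\mathbb{A}_r)$ and intertwines the two bounce operators.

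Next I would compute the bounce of Laurent monomials on $\mathbb{A}_r$. For $n \geq 0$, the restriction $z^n|_{\mathbb{A}_r}$ is holomorphic with finite Dirichlet energy, hence lies in $\mathcal{D}(\mathbb{A}_r)$, and it extends continuously to $\mathbb{S}^1$ with values $\zeta \mapsto \zeta^n$; therefore its \emph{CNT} boundary values with respect to $\disk^+$ exist everywhere and agree with those of $z^n \in \mathcal{D}_{\mathrm{harm}}(\disk^+)$, so the uniqueness clause in the definition of the bounce operator gives $\mathbf{G}_{\mathbb{A}_r,\disk^+}(z^n|_{\mathbb{A}_r}) = z^n$. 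For $n \geq 1$, the function $z^{-n}$ is holomorphic on $\mathbb{A}_r$ (the origin has been removed) and has finite Dirichlet energy there (the annulus is bounded away from $0$), so $z^{-n}|_{\mathbb{A}_r} \in \mathcal{D}(\mathbb{A}_r)$; its continuous boundary values on $\mathbb{S}^1$ are $\zeta \mapsto \zeta^{-n} = \bar{\zeta}^{\,n}$, which coincide with the \emph{CNT} boundary values of the anti-holomorphic function $\bar{z}^{\,n} \in \mathcal{D}_{\mathrm{harm}}(\disk^+)$, whence $\mathbf{G}_{\mathbb{A}_r,\disk^+}(z^{-n}|_{\mathbb{A}_r}) = \bar{z}^{\,n}$.

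Finally I would invoke density of polynomials. By linearity of $\mathbf{G}_{\mathbb{A}_r,\disk^+}$, its image on $\mathcal{D}(\mathbb{A}_r)$ contains the linear span of $\{ z^n : n \geq 0\} \cup \{ \bar{z}^{\,n} : n \geq 1 \}$, i.e. all polynomials in $z$ and $\bar z$. Writing an arbitrary $h \in \mathcal{D}_{\mathrm{harm}}(\disk^+)$ as $h_1 + \overline{h_2}$ with $h_1, h_2 \in \mathcal{D}(\disk^+)$ (possible since $\disk^+$ is simply connected) and using the identity $D_{\disk^+}(f) = \sum_n n|a_n|^2$ for $f = \sum_n a_n z^n$, the Taylor partial sums of $h_1$ and of $h_2$ converge in the Dirichlet semi-norm, so $h$ is approximated in the semi-norm by polynomials in $z$ and $\bar z$. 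Hence $\mathbf{G}_{\mathbb{A}_r,\disk^+}(\mathcal{D}(\mathbb{A}_r))$ is dense in $\mathcal{D}_{\mathrm{harm}}(\disk^+)$, and by the reduction above $\mathbf{G}_{A,\Omega}(\mathcal{D}(A))$ is dense in $\mathcal{D}_{\mathrm{harm}}(\Omega)$.

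I do not expect a serious obstacle. The one point I would take care with is the computation of the bounce of the negative-power monomials: one must check that $z^{-n}|_{\mathbb{A}_r}$ and $\bar{z}^{\,n}|_{\disk^+}$ carry the same boundary data on $\mathbb{S}^1$ — which holds because $\zeta^{-n} = \bar{\zeta}^{\,n}$ for $|\zeta| = 1$ — so that uniqueness of the harmonic extension forces the stated identity. Note that boundedness of $\mathbf{G}$ (Theorem \ref{th:bounce_bounded}) is not actually needed for this argument, although it is what guarantees $\mathbf{G}$ is well defined on all of $\mathcal{D}_{\mathrm{harm}}(A)$ in the first place.
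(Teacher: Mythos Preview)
Your proof is correct and follows essentially the same route as the paper: reduce by conformal invariance to $\Omega=\disk^+$, $A=\mathbb{A}_r$, compute $\mathbf{G}_{\mathbb{A}_r,\disk^+}(z^{\pm n})$ directly from the boundary identity $\zeta^{-n}=\bar\zeta^{\,n}$ on $\mathbb{S}^1$, and conclude by density of harmonic polynomials in $\mathcal{D}_{\mathrm{harm}}(\disk^+)$. The only cosmetic point is that the span of $\{z^n:n\ge 0\}\cup\{\bar z^{\,n}:n\ge 1\}$ is the space of \emph{harmonic} polynomials rather than literally all of $\mathbb{C}[z,\bar z]$, but this is exactly what you use and what the paper intends.
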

\begin{proof}
 By conformal invariance of the Dirichlet semi-norm and (\ref{eq:bounce_conformally_invariant}), we may assume that $A = \mathbb{A}_r$ and $\Omega = \disk^+$ as above.  
 
 First, observe that the polynomials $\mathbb{C}[z,z^{-1}]$ are contained in $\mathcal{D}(\mathbb{A}_r)$.  But for any integer $n>0$
 \[  \mathbf{G}_{\mathbb{A}_r,\disk^+} z^n = z^n \ \ \text{and}  \ \ 
   \mathbf{G}_{\mathbb{A}_r,\disk^+} z^{-n} = \bar{z}^n   \]
   so $\mathbf{G}_{\mathbb{A}_r,\disk^+} \mathbb{C}[z,z^{-1}]= \mathbb{C}[z,\bar{z}]$.  Since $\mathbb{C}[z,\bar{z}]$ is a dense subset of $\mathcal{D}_{\text{harm}}(\disk^+)$ this proves the claim.
\end{proof}

\begin{theorem} \label{th:density_polynomials_doubly_connected}
 Let $A$ be any domain in $\sphere$ bounded by two non-intersecting Jordan curves, such that $0$ and $\infty$ are in distinct components of the complement of the closure of $A$.  Then Laurent polynomials $\mathbb{C}[z,z^{-1}]$ are dense in $\mathcal{D}(A)$.
\end{theorem}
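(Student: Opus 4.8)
The plan is to reduce the statement to the classical density of \emph{ordinary} polynomials in the holomorphic Dirichlet space of a bounded Jordan domain, by splitting an arbitrary $f\in\mathcal{D}(A)$ into a piece living on the ``inner'' complementary region and a piece living on the ``outer'' one, then approximating the first by polynomials in $z$ and the second by polynomials in $z^{-1}$. First I would fix the geometry: write $\partial A=\gamma_0\sqcup\gamma_\infty$ for the two disjoint bounding Jordan curves, let $O_1,O_2$ be the two connected components of $\sphere\setminus\overline{A}$, labelled so that $0\in O_1$ and $\infty\in O_2$ (so $\gamma_0=\partial O_1$, $\gamma_\infty=\partial O_2$), and set
\[ U_1=\sphere\setminus\overline{O_2},\qquad U_2=\sphere\setminus\overline{O_1}. \]
Then $U_1$ and $U_2$ are Jordan domains, $U_1\cap U_2=A$, and $A\subseteq U_i$ for $i=1,2$. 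The hypothesis on the locations of $0$ and $\infty$ is exactly what makes $U_1$ a \emph{bounded} Jordan domain in $\mathbb{C}$ and $U_2$ a Jordan domain with $\infty\in U_2$ and $0\notin\overline{U_2}$; this is the only place the hypothesis is used.

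Next, given $f\in\mathcal{D}(A)$, I would produce a decomposition $f=f_1+f_2$ with $f_i$ holomorphic on $U_i$, by the Cauchy-integral construction in the proof of Lemma \ref{le:harmonic_function_decomposition}: exhaust $A$ by annular subdomains and split the Cauchy formula over the two boundary components. Since $f$ is single-valued on $A$ the period $\int_\gamma f'\,dz$ over a core curve vanishes, so (unlike the general harmonic case) no logarithmic term appears. The same estimates as in that lemma --- on a sub-collar of $A$ one has $f_1=f-f_2$ and $f_2=f-f_1$, each summand being holomorphic on a neighbourhood of the closure of the ``far'' complementary region, with $f_2'(z)=O(z^{-2})$ near $\infty$ --- show $f_1\in\mathcal{D}(U_1)$ and $f_2\in\mathcal{D}(U_2)$.

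Now I would invoke polynomial approximation on each piece. Because $U_1$ is a bounded Jordan domain, $\sphere\setminus\overline{U_1}$ is connected, so polynomials are dense in the Bergman space $A^2(U_1)$ (the Farrell--Markushevich theorem on Carath\'eodory domains); taking antiderivatives, polynomials in $z$ are dense in $\mathcal{D}(U_1)$ for the Dirichlet semi-norm, and there is $P_1\in\mathbb{C}[z]$ with $D_{U_1}(f_1-P_1)$ as small as desired. For $f_2$ the inversion $w=1/z$ carries $U_2$ (since $0\notin\overline{U_2}$) onto a bounded Jordan domain $\widetilde{U}_2\ni 0$ and carries $f_2$ to $g\in\mathcal{D}(\widetilde{U}_2)$ isometrically for the Dirichlet semi-norm, the point being that this semi-norm is the conformally invariant $L^2$-norm of the holomorphic one-form $df_2$; approximating $g$ by $Q\in\mathbb{C}[w]$ and pulling back gives $P_2(z):=Q(1/z)\in\mathbb{C}[z^{-1}]$ with $D_{U_2}(f_2-P_2)$ small. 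Then $P:=P_1+P_2\in\mathbb{C}[z,z^{-1}]$, and since $A\subseteq U_1\cap U_2$,
\[ D_A(f-P)^{1/2}\leq D_{U_1}(f_1-P_1)^{1/2}+D_{U_2}(f_2-P_2)^{1/2}, \]
which can be made arbitrarily small, proving density.

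The routine parts are the bookkeeping establishing $f_i\in\mathcal{D}(U_i)$ (essentially contained in Lemma \ref{le:harmonic_function_decomposition}) and the conformal-invariance computation for the inversion. The one genuinely non-elementary ingredient, and the step I would flag as the crux, is the density of polynomials in $A^2$ of a Jordan (more generally Carath\'eodory) domain; if a fully self-contained treatment were wanted one could instead combine Walsh's polynomial approximation theorem with Farrell's exhaustion/dilation argument, but quoting the Bergman-space density theorem is cleanest.
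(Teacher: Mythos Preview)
Your proposal is correct and follows essentially the same approach as the paper's proof: a Cauchy-integral splitting $f=f_1+f_2$ with $f_i\in\mathcal{D}(U_i)$ for the two simply-connected ``halves'' $U_1\ni 0$ (bounded) and $U_2\ni\infty$, followed by polynomial density in the Dirichlet space of a Jordan (Carath\'eodory) domain for $f_1$ and the inversion trick for $f_2$. The paper cites Markushevich directly for density of $\mathbb{C}[z]$ in $\mathcal{D}$ of a Carath\'eodory domain rather than going through $A^2$ and antiderivatives, and writes the splitting as $h=h_1-h_2$ via the limiting Cauchy integrals along level curves of a Riemann map (rather than invoking Lemma~\ref{le:harmonic_function_decomposition}), but these are cosmetic differences.
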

\begin{proof}
 Without loss of generality assume that the component of the complement of $\Gamma_2$ containing $A$ also contains $\infty$, and let $B_2$ denote this component.  Let $B_1$ then denote the component of the complement of $\Gamma_1$  containing $A$; it must also contain $0$. We have that $A = B_1 \cap B_2$.   
 
 Now let $f_i:\disk^+ \rightarrow B_i$ be biholomorphisms for $i=1,2$.  Let $\gamma^r_i = f_i(|z|=r)$ for $r \in (0,1)$, endowed with positive orientations with respect to $0$.  For any $h \in \mathcal{D}(A)$, setting 
 \[  h_i(z) = \lim_{r \nearrow 1} \frac{1}{2\pi i} \int_{\gamma^r_i} 
 \frac{h(\zeta)}{{\zeta - z}} \,d\zeta \ \ \ z \in B_i,  \ i=1,2   \]
 we have that $h_i$ are holomorphic on $B_i$, and $h=h_1-h_2$.   
 
 We will show that $h_i$ are in $\mathcal{D}(B_i)$ for $i=1,2$.  Let $C_1$ denote the open domain in $B_1$ bounded by $\Gamma_1$ and $f_1(\gamma^s_2)$ for $s$ chosen close enough to $1$ that it is entirely in $A$.  This can be done, because the function $z \mapsto |f^{-1}_1(z)|$ is continuous on $B_1$, and is strictly less than one on $B_1$.  This function has a maximum $R <1$ on $\Gamma_2$ since $\Gamma_2$ is compact, so we can choose $s \in (R,1)$. To show that $h_1 \in \mathcal{D}(B_1)$ it suffices to show that $h_1 \in \mathcal{D}(C_1)$, since $h_1$ is holomorphic on an open neighbourhood of  $f( |z|\leq s)$.  Now $h \in \mathcal{D}(C_1)$ since $C_1 \subseteq A$, and $h_2 \in \mathcal{D}(C_1)$, since the closure of $C_1$ is contained in $B_2$.  Since $h_1 = h + h_2$, this proves the claim.  A similar argument shows that $h_2 \in \mathcal{D}(B_2)$. 
 
 Now $B_1$ is a Jordan domain and hence a Carath\'eodory domain, so polynomials $\mathbb{C}[z]$ are dense in $\mathcal{D}(B_1)$ \cite[v.3, Section 15]{Markushevich}. Similarly $\mathbb{C}[z]$ is dense in $\mathcal{D}(1/B_2)$, so $\mathbb{C}[1/z]$ is dense in $\mathcal{D}(B_2)$.  So given any $\epsilon >0$ there exists $p_1 \in \mathbb{C}[z]$ and $p_1 \in \mathbb{C}[1/z]$ such that 
 \[  \| h_i - p_i \|_{\mathcal{D}(A)} \leq \| h_i - p_i \|_{\mathcal{D}(B_i)} < \epsilon/2. \]
 Thus since $h= h_1 - h_2$ we see that 
 \[ \| h - p_1 + p_2 \|_{\mathcal{D}(A)} < \epsilon.  \]
 This proves the claim.
 \end{proof}

\begin{corollary}  \label{co:density_smaller_collar} Let $\Gamma$ be a Jordan curve in $\sphere$ and let $\Omega_1$ 
 and $\Omega_2$ be the connected components of the complement.  Let $A_1$ and $A_2$ be collar neighbourhoods of $\Gamma$ in $\Omega_1$ and $\Omega_2$ respectively, and let $U= A_1 \cup A_2 \cup \Gamma$. Let $\mathbf{R}_i:\mathcal{D}(U) \rightarrow \mathcal{D}(A_i)$ denote restriction from $U$ to $A_i$ for $i=1,2$.  Then 
 $\mathbf{R}_i (\mathcal{D}(U))$ is dense in $\mathcal{D}(A_i)$ for $i=1,2$.
\end{corollary}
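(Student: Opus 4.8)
The plan is to reduce the claim to the density of Laurent polynomials, i.e.\ to Theorem~\ref{th:density_polynomials_doubly_connected}. By the symmetry between $\Omega_1$ and $\Omega_2$ it suffices to treat $i=1$. First I would fix the data: write the collars as $A_1 = f_1(\mathbb{A}_{r_1})$ and $A_2 = f_2(\mathbb{A}_{r_2})$ with $f_j:\disk^+\to\Omega_j$ biholomorphic and extended to homeomorphisms of the closures by Carath\'eodory's theorem, and set $\gamma_j = f_j(\{|z|=r_j\})$, a Jordan curve in $\Omega_j$ disjoint from $\Gamma$, and $D_j = f_j(\{|z|<r_j\})$, the Jordan domain it cuts off inside $\Omega_j$. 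Using the decomposition $\Omega_j = D_j\sqcup\gamma_j\sqcup A_j$ together with $\sphere = \Omega_1\sqcup\Gamma\sqcup\Omega_2$, one reads off that $\sphere\setminus(\gamma_1\cup\gamma_2) = D_1\sqcup U\sqcup D_2$, where $U = A_1\cup\Gamma\cup A_2$ is open and connected (openness being an easy consequence of the fact that a collar contains a one-sided neighbourhood of every point of $\Gamma$); hence $U$ is a doubly connected domain with $\partial U = \gamma_1\cup\gamma_2$, and moreover $\sphere\setminus\overline{U} = D_1\sqcup D_2$, $\sphere\setminus\overline{A_1} = D_1\sqcup\Omega_2$, and $\sphere\setminus\overline{A_2} = \Omega_1\sqcup D_2$. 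In particular $D_1$ and $D_2$ are disjoint, nonempty, and open.

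Next I would normalize. Choosing points $p_1\in D_1$, $p_2\in D_2$ and a M\"obius transformation $T$ with $T(p_1)=0$ and $T(p_2)=\infty$, and using that the Dirichlet semi-norm is conformally invariant, that $h\mapsto h\circ T^{-1}$ is a semi-norm isometry which commutes with restriction, and that $T$ carries collar neighbourhoods to collar neighbourhoods, one sees that the statement for $(\Gamma,\Omega_1,\Omega_2,A_1,A_2)$ is equivalent to the one for its image under $T$; so we may assume $0\in D_1$ and $\infty\in D_2$. With this normalization each of $A_1$ (bordered by $\Gamma$ and $\gamma_1$), $A_2$ (bordered by $\Gamma$ and $\gamma_2$), and $U$ (bordered by $\gamma_1$ and $\gamma_2$) is a domain bounded by two non-intersecting Jordan curves, and by the description of complements above the points $0$ and $\infty$ lie in distinct connected components of the complement of the closure of each ($D_1$ and $\Omega_2$ for $A_1$; $\Omega_1$ and $D_2$ for $A_2$; $D_1$ and $D_2$ for $U$). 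Theorem~\ref{th:density_polynomials_doubly_connected} then applies to all three, so $\mathbb{C}[z,z^{-1}]$ is dense in each of $\mathcal{D}(A_1)$, $\mathcal{D}(A_2)$, and $\mathcal{D}(U)$.

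Finally I would observe that, since $0,\infty\notin\overline{U}$, every Laurent polynomial is holomorphic with bounded derivative on the compact set $\overline{U}$ and so belongs to $\mathcal{D}(U)$, and that its restriction to $A_1$ is the same Laurent polynomial viewed as an element of $\mathcal{D}(A_1)$; hence $\mathbf{R}_1(\mathcal{D}(U))\supseteq\mathbf{R}_1(\mathbb{C}[z,z^{-1}]) = \mathbb{C}[z,z^{-1}]$, which is dense in $\mathcal{D}(A_1)$, so $\mathbf{R}_1(\mathcal{D}(U))$ is dense in $\mathcal{D}(A_1)$. The case $i=2$ is identical with $A_1$ replaced by $A_2$. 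I expect no serious analytic obstacle here, since Theorem~\ref{th:density_polynomials_doubly_connected} does the real work; the only thing demanding care is the topological bookkeeping — checking that $U$ is precisely the doubly connected domain lying between $\gamma_1$ and $\gamma_2$, and that a single M\"obius normalization simultaneously puts $0$ and $\infty$ into the correct complementary components of all three of $A_1$, $A_2$, and $U$.
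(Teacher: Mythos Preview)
Your proof is correct and follows essentially the same route as the paper's: reduce by a M\"obius transformation so that $0$ and $\infty$ lie in distinct components of the complement of $\overline{U}$ (hence also of $\overline{A_1}$ and $\overline{A_2}$), apply Theorem~\ref{th:density_polynomials_doubly_connected} to get that $\mathbb{C}[z,z^{-1}]$ is dense in $\mathcal{D}(A_i)$, and observe that $\mathbb{C}[z,z^{-1}]\subseteq\mathcal{D}(U)$. Your write-up is more careful than the paper's about the topological bookkeeping; the only superfluous step is invoking Theorem~\ref{th:density_polynomials_doubly_connected} for $U$ itself, since mere containment $\mathbb{C}[z,z^{-1}]\subseteq\mathcal{D}(U)$ (which you also verify directly) is all that is needed.
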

\begin{proof}
 Observe that $U$ is open, so the statement of the theorem makes sense.

 Now $A_1$ and $A_2$ are each bounded by two non-intersecting Jordan curves in $\sphere$.
 By applying a M\"obius transformation and conformal invariance of the Dirichlet spaces and Dirichlet semi-norm, we can assume that $\infty$ and $0$ are each contained in the interior of one of the connected components of the complement of $U$, and not both in the same one.  In that case, the same holds for $A_1$ and $A_2$.  Thus $\mathbb{C}[z,z^{-1}]$ is dense in $\mathcal{D}(A_1)$ and $\mathcal{D}(A_2)$ by Theorem \ref{th:density_polynomials_doubly_connected}.  Since $\mathbb{C}[z,z^{-1}] \subseteq \mathcal{D}(U)$, the theorem is proven.
\end{proof}

 It is an indispensable fact that the limiting integral of harmonic functions against $L^2$ one-forms is unaffected by application of the bounce operator.  
 
 \begin{lemma}[Anchor Lemma]  \label{le:anchor_lemma}
  Let $\Gamma$ be a Jordan curve in $\sphere$ bounding a Jordan domain $\Omega$.  Let $A$ be a collar neighbouhood of $\Gamma$ in $\Omega$ and let $\Gamma_\epsilon = f(|z|=e^{-\epsilon})$ for a biholomorphism $f:\disk^+ \rightarrow \Omega$ and $\epsilon >0$.  For any $h \in \mathcal{D}_{\mathrm{harm}}(A)$ and $\alpha \in\mathcal{A}(A)$ 
  \begin{equation} \label{eq:anchor_lemma_main}
     \lim_{\epsilon \searrow 0} \int_{\Gamma_\epsilon} \alpha(w) h(w) = \lim_{\epsilon \searrow 0} \int_{\Gamma_\epsilon} \alpha(w) \mathbf{G}_{A,\Omega} h (w). 
  \end{equation}
  In particular, if $h$
  has \emph{CNT} boundary values equal to zero except possibly on a null set, then for any $\alpha \in\mathcal{A}(A)$   
  \[   \lim_{\epsilon \searrow 0} \int_{\Gamma_\epsilon} \alpha(w) h(w) = 0.      \]
 \end{lemma}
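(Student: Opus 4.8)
The plan is to reduce to a model annulus, where $\mathbf{G}_{A,\Omega}$ and the relevant decomposition are explicit, and then to control the resulting Cauchy-type integrals using only the $L^2$-smallness of $\alpha$ near $\Gamma$. First I would invoke the conformal invariance of $\mathcal{D}_{\mathrm{harm}}$, of the Bergman space $\mathcal{A}$, of CNT boundary values, and of the bounce operator (equation \eqref{eq:bounce_conformally_invariant}), together with the change-of-variables formula $\int_{f(\gamma)}\beta=\int_\gamma f^*\beta$ for line integrals of one-forms, to reduce to $\Omega=\disk^+$, $A=\mathbb{A}_r=\{r<|z|<1\}$, $\Gamma_\epsilon=\{|z|=\rho\}$ with $\rho=e^{-\epsilon}\nearrow 1$, and $\alpha=a(z)\,dz$ with $a$ holomorphic on $\mathbb{A}_r$ and $\iint_{\mathbb{A}_r}|a|^2\,dA<\infty$; that the conclusion is insensitive to the particular biholomorphism used to define the curves $\Gamma_\epsilon$ will be a byproduct of the Stokes argument below.

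Second, I would establish that both limits in \eqref{eq:anchor_lemma_main} exist. Since $\alpha$ is holomorphic, $d\alpha=0$, so $d(h\alpha)=dh\wedge\alpha=2i\,a\,h_{\bar z}\,dA$ for any $h\in\mathcal{D}_{\mathrm{harm}}(\mathbb{A}_r)$, and Stokes' theorem on $\{\rho_1\le|z|\le\rho_2\}$ gives
\[
\int_{|z|=\rho_2}h\alpha-\int_{|z|=\rho_1}h\alpha=2i\iint_{\rho_1<|z|<\rho_2}a\,h_{\bar z}\,dA,
\]
whose modulus is at most $2\,\|a\|_{L^2(\{\rho_1<|z|<\rho_2\})}\,\|h_{\bar z}\|_{L^2(\{\rho_1<|z|<\rho_2\})}\to 0$ as $\rho_1,\rho_2\nearrow 1$. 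Hence $\rho\mapsto\int_{|z|=\rho}h\alpha$ satisfies the Cauchy criterion and the limit exists; applying this to $h$ and to the restriction to $\mathbb{A}_r$ of $\mathbf{G}_{A,\Omega}h\in\mathcal{D}_{\mathrm{harm}}(\disk^+)$ handles both sides of \eqref{eq:anchor_lemma_main}.

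Next I would compute the difference. By Lemma \ref{le:harmonic_function_decomposition} (with $R=1$), write $h=h_1+h_2+c\log|z|$ on $\mathbb{A}_r$ with $h_1\in\mathcal{D}_{\mathrm{harm}}(\disk^+)$, $h_2\in\mathcal{D}_{\mathrm{harm}}(\{|z|>r\}\cup\{\infty\})$, and $c\in\mathbb{C}$; and recall from the proof of Theorem \ref{th:bounce_exists} that $\mathbf{G}_{A,\Omega}h=h_1+u$ with $u(z)=h_2(1/\bar z)$. Since $1/\bar z=z/\rho^2$ on $\{|z|=\rho\}$,
\[
\int_{|z|=\rho}\alpha\,(h-\mathbf{G}_{A,\Omega}h)=\int_{|z|=\rho}a(z)\bigl(h_2(z)-h_2(z/\rho^2)\bigr)\,dz+c\log\rho\int_{|z|=\rho}a(z)\,dz .
\]
The second summand tends to $0$ because $\int_{|z|=\rho}a\,dz$ is independent of $\rho$ by Cauchy's theorem and $\log\rho\to 0$. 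For the first summand, $h_2$ is harmonic, hence $C^1$, on a fixed compact annulus $K$ with $\mathbb{S}^1\subset\operatorname{int}K\subset K\subset\{|z|>r\}$, so for $\rho$ near $1$ one has $|h_2(z)-h_2(z/\rho^2)|\le C(1-\rho^2)/\rho$ on $\{|z|=\rho\}$, whence the first summand is bounded by $2\pi C(1-\rho^2)\sup_{|z|=\rho}|a(z)|$. The crux — and the step I expect to be the main obstacle — is the estimate $(1-\rho)\sup_{|z|=\rho}|a(z)|\to 0$: because $\alpha\in\mathcal{A}(\mathbb{A}_r)$ may genuinely blow up at $\mathbb{S}^1$, one cannot merely pass the limit inside, but the sub-mean-value property of $|a|^2$ is exactly strong enough. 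Indeed, for $|z|=\rho$ with $\rho$ near $1$ the disk $D\!\left(z,(1-\rho)/2\right)$ lies in $\mathbb{A}_r$, so the mean value property and Cauchy--Schwarz give $|a(z)|^2\le\frac{4}{\pi(1-\rho)^2}\iint_{D(z,(1-\rho)/2)}|a|^2\,dA$, and $D\!\left(z,(1-\rho)/2\right)\subset\{(3\rho-1)/2<|w|<1\}$, whose $|a|^2$-mass is a tail of the convergent integral $\iint_{\mathbb{A}_r}|a|^2$; hence $(1-\rho)^2\sup_{|z|=\rho}|a(z)|^2\to 0$. Combining the two summands, $\lim_{\rho\nearrow 1}\int_{|z|=\rho}\alpha(h-\mathbf{G}_{A,\Omega}h)=0$, which together with the existence of both limits proves \eqref{eq:anchor_lemma_main}.

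Finally, for the ``in particular'' assertion: if the CNT boundary values of $h$ vanish off a null set, then by Theorem \ref{th:bounce_exists} (and the remark following it) $\mathbf{G}_{A,\Omega}h\in\mathcal{D}_{\mathrm{harm}}(\Omega)$ has CNT boundary values vanishing wherever they exist, hence $\mathbf{G}_{A,\Omega}h\equiv 0$ by uniqueness of a harmonic function with prescribed CNT boundary values up to null sets (Theorem \ref{th:extension_unique} together with the conformal invariance of CNT limits). Therefore $\int_{\Gamma_\epsilon}\alpha\,\mathbf{G}_{A,\Omega}h=0$ for every $\epsilon>0$, and \eqref{eq:anchor_lemma_main} gives $\lim_{\epsilon\searrow 0}\int_{\Gamma_\epsilon}\alpha h=0$, as claimed.
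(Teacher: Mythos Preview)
Your argument is correct. The overall scaffolding matches the paper's: reduce by conformal invariance to the model annulus $\mathbb{A}_r\subset\disk^+$, establish existence of the limits via Stokes and Cauchy--Schwarz, invoke the decomposition $h=h_1+h_2+c\log|z|$ from Lemma~\ref{le:harmonic_function_decomposition} and the identification $\mathbf{G}_{A,\Omega}h=h_1+h_2(1/\bar z)$ from the proof of Theorem~\ref{th:bounce_exists}, and deduce the ``in particular'' from $\mathbf{G}_{A,\Omega}h=0$.

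The genuine difference is in how you dispose of a general $\alpha\in\mathcal{A}(\mathbb{A}_r)$. The paper first treats the monomials $\alpha=w^n\,dw$, for which the three pieces of $h-\mathbf{G}_{A,\Omega}h$ are handled by continuity at $\mathbb{S}^1$, and then upgrades to arbitrary $\alpha$ by showing that $\alpha\mapsto\lim_{\epsilon\searrow 0}\int_{C_\epsilon}\alpha h$ is a bounded linear functional on $\mathcal{A}(\mathbb{A}_r)$ and invoking density of Laurent polynomials (Theorem~\ref{th:density_polynomials_doubly_connected}). You instead attack the difference directly for all $\alpha$ at once, computing $h-\mathbf{G}_{A,\Omega}h=h_2(z)-h_2(z/\rho^2)+c\log\rho$ on $|z|=\rho$ and then using the sub-mean-value inequality to prove $(1-\rho)\sup_{|z|=\rho}|a(z)|\to 0$. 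This is a cleaner and more self-contained endgame: it avoids the density theorem and the separate continuity-of-the-functional estimate, at the cost of one pointwise Bergman-space bound. The paper's route, on the other hand, isolates a reusable continuity statement and makes the role of polynomial approximation explicit; both are perfectly legitimate.
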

 \begin{proof}
  We assume that $\Gamma_\epsilon$ are positively oriented with respect to $0$.  
  The fact that the left integral in (\ref{eq:anchor_lemma_main}) is finite follows from the fact that $\alpha$ and $dh$ are $L^2$ on $A$, since fixing $\epsilon_0$ such that $\Gamma_{\epsilon_0}$ is in $A$, we have by Stokes' theorem that
  \[  \lim_{\epsilon \searrow 0} \int_{\Gamma_\epsilon} \alpha(w) h(w)  = 
       \int_{\Gamma_{\epsilon_0}} \alpha(w) h(w) + \iint_{A'} \alpha \wedge dh   \]
  where $A' \subset A$ is the region bounded by $\Gamma_{\epsilon_0}$ and $\Gamma$.
  
  Setting $\tilde{\alpha}(w) = \alpha(f(w)) f'(w)$ and $\tilde{h}(w) = h(f(w))$, and denoting the circle $|z|=e^{-\epsilon}$ traced counterclockwise by $C_\epsilon$, we have
  \[    \int_{\Gamma_\epsilon} \alpha(w) h(w) = \int_{C_\epsilon} \tilde{\alpha}(w) \tilde{h}(w)  \]
  so it suffices to prove the claim for $A = \mathbb{A} = \{ z: e^{-\epsilon_0}<|z|<1 \}$, $\Omega = \disk^+$, and $\Gamma_\epsilon = C_\epsilon$.

  We first show (\ref{eq:anchor_lemma_main}) for $\alpha(w) = w^n dw$ for some integer $n$.  
  By Lemma \ref{le:harmonic_function_decomposition} we can write $h= h_1 + h_2 + c \log{|z|}$ where $h_1 \in \mathcal{D}_{\text{harm}}(\disk^+)$ and $h_2 \in \mathcal{D}_{\text{harm}}(B)$ where $B = \{ z: |z| > e^{-\epsilon_0} \} \cup \{\infty \}$.  Now $\alpha$ and $h_2$ extend continuously to $\mathbb{S}^1$; thus so does $\mathbf{G}_{\mathbb{A},\disk^+} h_2$ and so trivially 
  \[   \lim_{\epsilon \searrow 0} \int_{C_\epsilon} \alpha(w) h_2(w) = \lim_{\epsilon \searrow 0} \int_{C_\epsilon} \alpha(w) \mathbf{G}_{\mathbb{A},\disk^+} h_2(w).  \]
  Similarly 
  \[   \lim_{\epsilon \searrow 0} \int_{C_\epsilon} \alpha(w) \log{|w|} = \lim_{\epsilon \searrow 0} \int_{C_\epsilon} \alpha(w) \mathbf{G}_{\mathbb{A},\disk^+} \log{|w|};  \]
  in fact, both sides are zero.  Finally, since $\mathbf{G}_{\mathbb{A},\disk^+} h_1= h_1$, the claim follows.  
  
  Thus the claim holds for any $\alpha(w) = p(w) \,dw$ for $p(w) \in \mathbb{C}[z,1/z]$.  Now the set of such $\alpha$ is dense in $\mathcal{A}(\mathbb{A})$. This follows from the density of $\mathbb{C}[z,1/z]$ in $\mathcal{D}(\mathbb{A})$ (which is a special case of Theorem \ref{th:density_polynomials_doubly_connected}), and the fact  that for some constant $k$,
  $\alpha - k/{z}$ is exact.  So the proof of the claim will be complete if it can be shown that for $h$ fixed,
  \[  \alpha \mapsto \lim_{\epsilon \searrow 0} \int_{C_\epsilon} \alpha(w) h(w)  \]
  is a continuous functional on $\mathcal{A}(\mathbb{A})$.  
  
  With $\epsilon_0$ and $A'$ the region bounded by $\Gamma_{\epsilon_0}$ and $\mathbb{S}^1$, let $M = \sup_{w \in \Gamma_{\epsilon_0}} |h(w)|$.  Since $\Gamma_{\epsilon_0}$ is a compact subset of $\mathbb{A}$, by a standard result for Bergman spaces there is a constant $C$ independent of $\alpha(w) = a(w) dw$ such that 
  \[ \sup_{w \in \Gamma_{\epsilon_0}}  |a(w)| \leq C \| \alpha \|_{A(\mathbb{A})}.  \]
  Therefore Stokes' theorem and Cauchy-Schwarz's inequality yield  that
  \begin{align*}
   \left| \lim_{\epsilon \searrow 0} \int_{\Gamma_\epsilon} \alpha(w) h(w) \right|& = 
      \left|  \int_{\Gamma_{\epsilon_0}} \alpha(w) h(w) + \iint_{A'} \alpha \wedge dh \right|  \\
      & \leq 2 \pi e^{-\epsilon_0} M \sup_{w \in \Gamma_{\epsilon_0}} |\alpha(w) | +  \| \alpha \|_{A(A')}\| h \|_{\mathcal{D}_{\text{harm}}(A')}  \\
      & \leq \left( 2 \pi e^{-\epsilon_0} M + \| h \|_{\mathcal{D}_{\text{harm}}(A')} \right) \| \alpha \|_{A(\mathbb{A})}
  \end{align*}
  which completes the proof of (\ref{eq:anchor_lemma_main}).  
  
  The proof of the second claim follows immediately from the observation that if $h$ has CNT boundary values zero except possibly on a null set, then $\mathbf{G}_{{A},\disk^+} h = 0$. 
 \end{proof} 
\end{subsection}
\end{section}
\begin{section}{Schiffer and Cauchy operator} \label{se:Schiffer_Cauchy}
\begin{subsection}{Schiffer operators} \label{se:Schiffer}
  
  We will define certain operators on the Bergman space of anti-holomorphic one-forms which we call ``Schiffer operators''.  We require an identity to facilitate the definition. 
  
    Given a Jordan domain $\Omega \subset \sphere$, let $g_\Omega(z,w)$ denote Green's function of $\Omega$, that is, the harmonic function in $z$ on $\Omega \backslash \{w\}$ such that $g_\Omega(z,w) + \log{|z-w|}$ is harmonic near $w$ and whose limit as $z \rightarrow z_0$ is zero for any point $z_0$ on the boundary of $\Omega$.  
  Schiffer considered the following kernel function
  \[ L_\Omega(z,w) \, dz \, dw = \frac{1}{\pi i}\frac{\partial^2 g_\Omega}{\partial z \partial w}(z,w) \,dz \,dw.   \]  
  Note that $L_\Omega$ is a meromorphic function in $z$ on $\Omega$ with a pole of order two at $z=w$ and no other poles.   In fact, it is symmetric, so it is also holomorphic in $w$ except for a pole at $w=z$.   
  \begin{theorem} \label{th:Schiffer_identity}
   Let $\Gamma$ be a Jordan curve, and let $\Omega$ be one of the connected components of the complement of $\Gamma$ in $\sphere$.  Let $g_\Omega(z,w)$ denote Green's function of $\Omega$.  Then for any one-form $\bar{\alpha} = \overline{h(z)}\, d\bar{z} \in {\overline{\mathcal{A}(\Omega)}}$
   
   \begin{equation} \label{eq:Schiffer_operator_domain}
     \Big(\iint_{\Omega}  L_\Omega(z,w) \, \overline{h(w)}\, {d\bar{w} \wedge dw }\Big) \cdot dz =0
   \end{equation}
   as a principal value integral. 
  \end{theorem}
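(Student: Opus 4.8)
The plan is to rewrite the integrand as an exact $w$-form off the diagonal, apply Stokes' theorem to the region between a level curve of Green's function and a small circle about $z$, and show that both boundary integrals disappear. Fix $z\in\Omega$ (we may take $z$ finite, the case $z=\infty$ following by holomorphy of the output one-form). Near $w=z$ write $g_\Omega(z,w) = -\tfrac12\log(z-w)-\tfrac12\log(\bar z-\bar w)+\gamma(z,w)$ with $\gamma$ harmonic in each variable; then $L_\Omega(z,w) = -\tfrac1{2\pi i(z-w)^2}+\tfrac1{\pi i}\partial_z\partial_w\gamma(z,w)$ and the second term is holomorphic in $w$ on all of $\Omega$, so $L_\Omega(z,\cdot)$ is holomorphic on $\Omega\setminus\{z\}$ with a double pole at $z$. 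Since $h$ is holomorphic, $\partial_w\overline{h(w)}=\overline{\partial_{\bar w}h(w)}=0$, and therefore on $\Omega\setminus\{z\}$
\[
L_\Omega(z,w)\,\overline{h(w)}\,d\bar w\wedge dw \;=\; -\,d_w\!\Bigl(\tfrac1{\pi i}\,\partial_z g_\Omega(z,w)\,\overline{h(w)}\,d\bar w\Bigr),
\]
using $\partial_w\bigl(\partial_z g_\Omega(z,w)\,\overline{h(w)}\bigr)=\partial_z\partial_w g_\Omega(z,w)\,\overline{h(w)}$.

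Fix a biholomorphism $f:\disk^+\to\Omega$ with $f(0)=z$ and set $\Gamma_\delta=f(\{|\zeta|=e^{-\delta}\})$, the level curve $\{w:g_\Omega(w,z)=\delta\}$; for $\epsilon$ small let $\Omega_{\delta,\epsilon}$ be the region with outer boundary $\Gamma_\delta$ and inner boundary $\partial D(z,\epsilon)$. Since the form above is smooth on $\overline{\Omega_{\delta,\epsilon}}$, Stokes' theorem gives
\[
\iint_{\Omega_{\delta,\epsilon}} L_\Omega(z,w)\overline{h(w)}\,d\bar w\wedge dw \;=\; -\!\int_{\Gamma_\delta}\!\tfrac1{\pi i}\partial_z g_\Omega(z,w)\overline{h(w)}\,d\bar w \;+\;\int_{\partial D(z,\epsilon)}\!\tfrac1{\pi i}\partial_z g_\Omega(z,w)\overline{h(w)}\,d\bar w.
\]
By conformal invariance $g_\Omega(z,w)=g_{\disk^+}(f^{-1}(z),f^{-1}(w))$ with $f$ held fixed, and $g_{\disk^+}(a,\zeta)=\log\bigl|\tfrac{1-\bar a\zeta}{a-\zeta}\bigr|$; a Wirtinger computation at $a=f^{-1}(z)=0$ gives $\partial_z g_\Omega(z,w)=\tfrac1{f'(0)}\bigl(\tfrac1{2\zeta}-\tfrac{\bar\zeta}2\bigr)$ where $\zeta=f^{-1}(w)$, which on $|\zeta|=\rho:=e^{-\delta}$ equals $\tfrac{1-\rho^2}{2f'(0)\zeta}$. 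Pulling the $\Gamma_\delta$-integral back to the circle $|\zeta|=\rho$, using $\tfrac1\zeta=\tfrac{\bar\zeta}{\rho^2}$ there and writing $F:=(h\circ f)\cdot f'$ (holomorphic on $\disk^+$), it equals $\tfrac{1-\rho^2}{2f'(0)\rho^2}\,\overline{\oint_{|\zeta|=\rho}\zeta F(\zeta)\,d\zeta}$, which is $0$ by Cauchy's theorem; thus the $\Gamma_\delta$-term vanishes identically for every $\delta$.

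For the $\partial D(z,\epsilon)$-term, $\partial_z g_\Omega(z,w)=-\tfrac1{2(z-w)}+O(1)$ near $w=z$; parametrizing $w=z+\epsilon e^{i\theta}$, the bounded part contributes $O(\epsilon)$ and the singular part a constant multiple of $\int_0^{2\pi}e^{-2i\theta}\,\overline{h(z+\epsilon e^{i\theta})}\,d\theta$, which tends to $\overline{h(z)}\int_0^{2\pi}e^{-2i\theta}\,d\theta=0$. Hence letting $\delta\to0$ and then $\epsilon\to0$ in the Stokes identity shows the principal value integral is $0$, proving the theorem.

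The one delicate point is the meaning of ``principal value'': there are two singular loci, the double pole at $w=z$ and the possibly highly irregular curve $\Gamma$, and the assertion is that the iterated limit over $\Omega_{\delta,\epsilon}$ exists and vanishes. This works cleanly precisely because the $\Gamma_\delta$-contribution is \emph{exactly} zero for every $\delta$ (so $\iint_{\Omega_{\delta,\epsilon}}$ is independent of $\delta$), meaning no control on the — possibly infinite — length of $\Gamma_\delta$ or on the behaviour of $L_\Omega(z,\cdot)$ near $\Gamma$ is needed, only the elementary estimate at the pole. This also explains the asymmetry with the genuine Schiffer operators: had $z$ instead ranged over the \emph{complementary} component of $\sphere\setminus\Gamma$, the $\Gamma_\delta$-integral would not vanish, and one would recover the nondegenerate Schiffer kernel; it is the holomorphy of $\zeta\mapsto\zeta F(\zeta)$ on $\disk^+$ that annihilates the same-domain operator.
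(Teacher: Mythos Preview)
Your argument is correct, and the overall skeleton --- Stokes' theorem on $\Omega_{\delta,\epsilon}$, then showing the inner and outer boundary contributions both vanish --- is the same as the paper's. The treatment of the small circle $\partial D(z,\epsilon)$ is essentially identical.

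The genuine difference is in the outer boundary term. The paper pulls back to the disk, obtains the same formula $\partial_z g_\Omega(z,f(\eta))=\tfrac{1}{2f'(0)}\bigl(\tfrac{1}{\eta}-\bar\eta\bigr)$, observes that this function (as a function of $w$) lies in the harmonic Dirichlet space of a collar, and then invokes the anchor lemma (Lemma \ref{le:anchor_lemma}) --- since the CNT boundary values of this function on $\mathbb{S}^1$ are zero, the limiting integral $\lim_{\delta\to 0}\int_{\Gamma_\delta}(\ldots)$ vanishes. You instead observe the stronger algebraic fact that on each level curve $|\zeta|=\rho$ one has $\tfrac{1}{\eta}-\bar\eta=\tfrac{1-\rho^2}{\zeta}=\tfrac{(1-\rho^2)\bar\zeta}{\rho^2}$, and the resulting contour integral is $\overline{\oint_{|\zeta|=\rho}\zeta\,(h\circ f)(\zeta)f'(\zeta)\,d\zeta}$ up to a constant, which is \emph{exactly} zero by Cauchy's theorem for every $\rho<1$. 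This is more elementary --- it needs nothing about CNT limits, null sets, or the bounce operator --- and, as you note, immediately disposes of any convergence issue near $\Gamma$, since the area integral is literally independent of $\delta$. The paper's route, by contrast, is the instance of a general mechanism (limiting integrals are anchored by boundary values) that it develops for later use on quasicircles and Riemann surfaces; your direct computation is specific to this kernel but cleaner here. Your closing remark about why the same-domain operator collapses while the cross-domain one survives is a nice conceptual bonus not made explicit in the paper.
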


  \begin{proof}
  Let $f:\disk^+ \rightarrow \Omega$ be a biholomorphism, chosen so that $f(0)=z$ and $\eta$ be such that $f(\eta)=w$. Let $\Gamma_\epsilon$ be the image of the circle with center at the origin and radius $e^{-\epsilon}$ under the biholomorphic map $f$, with positive orientation with respect to $w$. By Stokes' theorem, if we denote by $C_r$ the  circle of radius $r$ centred at $w$ with winding number one with respect to $w$, then 
   \begin{align*} \iint_\Omega L_\Omega(z,w) \overline{h(w)}\, {d\bar{w} \wedge dw} \cdot dz
      & = \lim_{r \searrow 0} \iint_{\Omega \backslash B(w;r)} L_\Omega(z,w) \overline{h(w)}\, {d\bar{w} \wedge dw} \cdot dz \\
      &  = \lim_{\epsilon \searrow 0} \frac{1}{\pi i} \int_{\Gamma_\epsilon} 
       \frac{\partial g_\Omega}{\partial z}(z,w) \overline{h(w)}\, d\bar{w}\, dz \\
       & \ \ \ \ \ \ \ -
         \lim_{r \searrow 0} \frac{1}{\pi i} \int_{C_r} 
       \frac{\partial g_\Omega}{\partial z}(z,w) \overline{h(w)}\, d\bar{w}  \,dz .  
   \end{align*}
   Note that all integrals take place over the $w$ variable while $z$ is fixed. To say that the output of the integral on the left hand side is zero as a form, is equivalent to demanding that for fixed $z$ the coefficient of $dz$ of the output is zero.  Therefore it is enough to show that 
   \begin{equation} \label{eq:Schiffer_identity_temp1}
     \lim_{\epsilon \searrow 0} \frac{1}{\pi i} \int_{\Gamma_\epsilon} 
       \frac{\partial g_\Omega}{\partial z}(z,w) \overline{h(w)}\, d\bar{w}  =0,
   \end{equation}
   and 
   \begin{equation} \label{eq:Schiffer_identity_temp2}
    \lim_{r \searrow 0} \frac{1}{\pi i} \int_{C_r} 
       \frac{\partial g_\Omega}{\partial z}(z,w) \overline{h(w)}\, d\bar{w} =0.    
   \end{equation}
   
   Now Green's function of the disk is given by 
   \[  g_{\disk^+}(\zeta,\eta) = -\log{\left| \frac{\zeta-\eta}{1-\bar{\eta}\zeta} \right|}    \]
   so by conformal invariance of Green's function $g_{\Omega}(f(\zeta),f(\eta)) = 
   g_{\disk^+}(\zeta,\eta)$ we have that
   \begin{equation} \label{eq:Schiffer_identity_temp3}
    \frac{\partial g_\Omega}{\partial z}(z,f(\eta)) = \frac{1}{f'(0)}  
    \left. \frac{\partial}{\partial \zeta} \right|_{\zeta =0} g_{\disk^+}(\zeta,\eta)
     = \frac{1}{2 f'(0)} \left( \frac{1}{\eta} - \bar{\eta} \right). 
   \end{equation}
   Now let $\mathbb{A}_r = \{z: r< |z| <1\}$ for any $r \in (0,1)$ and set $A = f(\mathbb{A}_r)$.  One can see explicitly from (\ref{eq:Schiffer_identity_temp3}) that 
   for fixed $z$, the function 
   \[  K(\eta) =  \frac{\partial g_\Omega}{\partial z}(z,f(\eta))      \]
   is in $\mathcal{D}_{\text{harm}}(\mathbb{A}_r)$, so by conformal invariance of the Dirichlet space 
   \[  k(w) = K(f^{-1}(w)) = \frac{\partial g_\Omega}{\partial z}(z,w)  \]
   is in $\mathcal{D}_{\text{harm}}(A)$. Thus we can apply the anchor lemma Lemma \ref{le:anchor_lemma} to $\bar{k}$ and $\alpha(w) = h(w) dw$ to conclude that the integral in (\ref{eq:Schiffer_identity_temp1}) is zero.  On the other hand, for $w$ in an open neighbourhood of $z$, by (\ref{eq:Schiffer_identity_temp3}) (or directly from the definition of Green's function) we can write 
   \[   \frac{\partial g_\Omega}{\partial z}(z,w) = \frac{1}{2(w-z)} + H(w)    \]
   where $H(w)$ is harmonic in $w$.  Inserting this into the left side of  (\ref{eq:Schiffer_identity_temp2}) we obtain that the integral is indeed zero.  
  \end{proof}
  
  We may now define the Schiffer operator.
 \begin{definition}\label{defn:Schiffer operators}
  Let $\Gamma$ be a Jordan curve in $\sphere$, and let $\Omega_1$ and $\Omega_2$ denote the connected components of the complement of $\Gamma$.  For $\overline{h(w)} d\bar{w} \in{\overline{\mathcal{A}(\Omega_1)}}$ we define for $j=1,2$
  \begin{equation} \label{eq:Schiffer_operator_definition}
    \mathbf{T}_{\Omega_1,\Omega_j} \overline{h(w)} d\bar{w}  = \frac{1}{\pi} \iint_{\Omega_1} \frac{\overline{h(w)}}{(w-z)^2} \frac{d\bar{w} \wedge dw}{2i} \cdot dz \ \ \ z \in \Omega_j.    
  \end{equation}
  Note that the output is a one-form on $\Omega_j$.  In the case that $j=1$, we interpret (\ref{eq:Schiffer_operator_definition}) as a principal value integral.  
  We will see that this is in general a bounded map into $A(\Omega_j)$, 
  and in that role we refer to $\mathbf{T}_{\Omega_1,\Omega_j}$ as Schiffer operators.
 \end{definition}

  First we establish the existence of this integral.  Assume for the moment that 
  $\Omega_1$ is bounded, that is, $\infty \in \Omega_2$.  For fixed $z \in \Omega_j$, the integrand $1/(w-z)^2$ is obviously in $L^2(\Omega_1)$, so this is immediate. 
  If $z \in \Omega_1$, for a biholomorphism $f:\disk^+ \rightarrow \Omega_1$ let $\Gamma_\epsilon$ be the image of the curve $|z|=e^{-\epsilon}$ under $f$ with the same orientation, and let $C_r$ be the circle centred on $z$ traced counterclockwise.  Then 
  \begin{equation} \label{eq:Schiffer_integral_converge_temp}
   \frac{1}{\pi}\iint_{\Omega_1} \frac{\overline{h(w)}}{(w-z)^2} \frac{d\bar{w} \wedge dw}{2i} \cdot dz  = \lim_{\epsilon \searrow 0} \frac{1}{2\pi i}  \int_{\Gamma_\epsilon} \frac{\overline{h(w)}\, d \bar{w}}{(w-z)} \,dz - 
   \lim_{r \searrow 0} \frac{1}{2\pi i}  \int_{C_r} \frac{\overline{h(w)}\, d\bar{w}}{(w-z)} \,dz.
  \end{equation}
  Let $\mathbb{A}_s = \{ z: s < |z| <1 \}$ where $s$ is fixed so that $z$ is not in the closure of $B_s = f(\mathbb{A}_s)$.  
  The first limit exists, by the fact that $h(w)\, dw$ and $dw/(w-z)^2$ are in  $\mathcal{A}(B_s)$ and  
  \[  \lim_{\epsilon \searrow 0} \frac{1}{2\pi i}  \int_{\Gamma_\epsilon} \frac{\overline{h(w)}\, d \bar{w}}{(w-z)} \,dz  = 
     \frac{1}{2\pi i}  \int_{\Gamma_{-\log{s}}} \frac{\overline{h(w)}\, d \bar{w}}{(w-z)} \,dz  +  \frac{1}{\pi}\iint_{B_s} \frac{\overline{h(w)}}{(w-z)^2} \frac{d\bar{w} \wedge dw}{2i} \cdot dz.   \]
   The limit of the second term in (\ref{eq:Schiffer_integral_converge_temp}) can be 
   shown to be zero by an explicit computation.  
   
  Theorem \ref{th:Schiffer_identity} can now be applied to de-singularize the kernel function.  We have for $\alpha(w) = h(w)\, dw \in {{\mathcal{A}(\Omega_1)}}$ 
  \begin{equation} \label{eq:Schiffer_desingularized} 
    \mathbf{T}_{\Omega_1,{\Omega_1}} \bar{\alpha} (z) =  \iint_{\Omega_1} \left( \frac{1}{2 \pi i} \frac{1}{(w-z)^2} - L_{\Omega_1}(z,w) \right) \overline{h(w)} \,  {d\bar{w} \wedge dw}  \cdot dz      
  \end{equation}
  since this new term does not have an effect on the existence or value of the integral. 
  
  We deal with the general case that $\Omega_1$ might be unbounded by establishing the invariance of the integrals under M\"obius transformations, which is interesting on its own. To this end define the pull-back of $\bar{\alpha}$ under $w=M(z)$ by 
  \[  M^* \bar{\alpha} (z) = \overline{h(M(z))} \overline{M'(z)} \,d \bar{z} \] 
  and similarly define the pull-back of $\beta(w) = g(w) \,dw$ by
  \[  M^* \beta(z) = g(M(z)) M'(z)\, dz.  \]
  \begin{theorem} \label{th:Schiffer_Mobius_invariance}
   If $M:\sphere \rightarrow \sphere$ is a M\"obius transformation taking $\Omega_j$ bijectively to $\tilde{\Omega}_j$ for $j=1,2$, then for all $\bar{\alpha} =\overline{h(w)}\,d\bar{w} \in \overline{\mathcal{A}(\tilde{\Omega}_1)}$
   we have 
   \begin{equation} \label{eq:Schiffer_Mobius_invariant}
    \left[\mathbf{T}_{\Omega_1,\Omega_j} \, M^* \bar{\alpha} \right] = M^* \left[\mathbf{T}_{\tilde{\Omega}_1,\tilde{\Omega}_j} \, \bar{\alpha} \right].   
   \end{equation}
  \end{theorem}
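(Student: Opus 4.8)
The plan is to verify the Möbius invariance \eqref{eq:Schiffer_Mobius_invariant} by a change of variables in the defining integral \eqref{eq:Schiffer_operator_definition}, handling the principal value case $j=1$ separately via the desingularized form. First I would treat the case $j=2$ (or more generally any $z$ in $\Omega_j$ with $z$ not a pole), where no principal value is needed. Set $w = M(\zeta)$, so $\bar w = \overline{M(\zeta)}$ and the area form transforms by $d\bar w \wedge dw = |M'(\zeta)|^2\, d\bar\zeta \wedge d\zeta$. Then, writing $z = M(z')$ for $z' \in \tilde\Omega_j$, the key computation is the identity
\[
 \frac{1}{(M(\zeta)-M(z'))^2}\, M'(z') = \frac{1}{(\zeta - z')^2} \cdot \frac{1}{M'(\zeta)} + (\text{holomorphic in } \zeta),
\]
which follows from the fact that for a Möbius transformation $M(w) = (aw+b)/(cw+d)$ one has $M(\zeta) - M(z') = (ad-bc)(\zeta-z')/[(c\zeta+d)(cz'+d)]$, hence $M'(z') = (ad-bc)/(cz'+d)^2$ and $1/M'(\zeta) = (c\zeta+d)^2/(ad-bc)$; multiplying out gives exactly $1/(\zeta-z')^2$ plus a term with no singularity in $\zeta$. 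Combined with $\overline{h(M(\zeta))}\,\overline{M'(\zeta)} \cdot |M'(\zeta)|^2 = \overline{h(M(\zeta))}\,\overline{M'(\zeta)} \cdot M'(\zeta)\overline{M'(\zeta)}$, the $|M'(\zeta)|^2$ cancels the $1/M'(\zeta)$ against $\overline{M'(\zeta)}\,\overline{M'(\zeta)}$ appropriately so that the integrand reassembles into $\overline{(M^*)^{-1}\text{-image}}$ — the bookkeeping here is the routine part. The extra "holomorphic in $\zeta$" term integrates against $\overline{h(\zeta)}\,d\bar\zeta \wedge d\zeta$ to zero because it pairs an antiholomorphic $L^2$ one-form with a holomorphic one over $\tilde\Omega_1$ (orthogonality of $\mathcal{A}$ and $\overline{\mathcal{A}}$, after noting the relevant term is an exact holomorphic one-form paired against $\bar\alpha$, or directly by Stokes).

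For the case $j=1$, where \eqref{eq:Schiffer_operator_definition} is a principal value, I would instead use the desingularized representation \eqref{eq:Schiffer_desingularized}, which expresses $\mathbf{T}_{\Omega_1,\Omega_1}\bar\alpha$ as an honest (non-principal-value) integral of $\bar\alpha$ against $\frac{1}{2\pi i}(w-z)^{-2} - L_{\Omega_1}(z,w)$. The point is that this combined kernel is a genuine $L^2$-form-valued object: $L_{\Omega_1}(z,w)$ has a pole of order two at $w=z$ that exactly cancels the $(w-z)^{-2}$ singularity, by the local expansion of Green's function used in the proof of Theorem \ref{th:Schiffer_identity}. Then one uses the conformal invariance of Green's function, $g_{\tilde\Omega_1}(M(z),M(w)) = g_{\Omega_1}(z,w)$ after renaming (more precisely $g_\Omega(M(\zeta),M(\eta)) = g_{\tilde\Omega}(\zeta,\eta)$ with the roles as in the statement), which yields $L_{\tilde\Omega_1}(M(z),M(w))\,M'(z)M'(w) = L_{\Omega_1}(z,w)$ — i.e. $L$ transforms as a $(1,0)\otimes(1,0)$ bidifferential — and couple this with the Möbius transformation rule for $(w-z)^{-2}$ established above. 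The difference kernel thus transforms in the same covariant way, and the change of variables goes through exactly as in the $j=2$ case.

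The main obstacle I anticipate is bookkeeping the $M'$ factors and the conjugates correctly so that both sides land on the same one-form after the substitution — in particular checking that the pull-back conventions $M^*\bar\alpha(z) = \overline{h(M(z))}\,\overline{M'(z)}\,d\bar z$ and $M^*\beta(z) = g(M(z))M'(z)\,dz$ are compatible with the way the output one-form $(\cdots)\cdot dz$ picks up its $M'$ factor, and that the area element $d\bar w \wedge dw / 2i$ transforms with $|M'|^2$ and not some other power. A secondary subtlety is justifying that the "holomorphic remainder" term contributes nothing: I would argue that it is of the form $q(\zeta)\,d\zeta$ with $q$ holomorphic on $\tilde\Omega_1$, and that $\iint_{\tilde\Omega_1} q(\zeta)\,\overline{h(\zeta)}\, d\bar\zeta \wedge d\zeta \cdot dz'$ vanishes because $q\,d\zeta$ is (after possibly absorbing the $dz'$ dependence, which is parametric) orthogonal to $\bar\alpha$ in the Bergman pairing — or, more robustly, I would avoid splitting off the remainder at all and instead verify that the full Möbius-transformed kernel equals the desingularized kernel for $\tilde\Omega_1$ on the nose, using only the covariance of $L$ and the covariance of the Cauchy kernel modulo the holomorphic piece, together with the uniqueness of the form output (its $dz$-coefficient being determined pointwise). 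Once the $j=1$ case is settled, the statement for $j=2$ and the bounded-vs-unbounded reduction claimed in Definition \ref{defn:Schiffer operators} follow, since Theorem \ref{th:Schiffer_Mobius_invariance} lets one transport the already-established convergence from the case $\infty \in \Omega_2$ to the general case.
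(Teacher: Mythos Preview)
Your plan matches the paper's proof: change of variables for $j=2$ using the M\"obius identity for the Cauchy kernel, and the desingularized form \eqref{eq:Schiffer_desingularized} together with the conformal covariance of $L_\Omega$ for $j=1$. However, you have miscomputed the key identity. For a M\"obius transformation $M$, the relation
\[
  \frac{M'(\zeta)\,M'(z')}{(M(\zeta)-M(z'))^2} \;=\; \frac{1}{(\zeta-z')^2}
\]
holds \emph{exactly}, with no holomorphic remainder. You can read this off from the very formulas you wrote down: $(M(\zeta)-M(z'))^2 = (ad-bc)^2(\zeta-z')^2/[(c\zeta+d)^2(cz'+d)^2]$ and $M'(\zeta)M'(z') = (ad-bc)^2/[(c\zeta+d)^2(cz'+d)^2]$, so the ratio is $1/(\zeta-z')^2$ on the nose. (A nonzero holomorphic remainder appears only for general conformal maps; that is precisely the Grunsky/Schiffer kernel in Theorem~\ref{th:Grunsky_integral_formula}.)

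Once you use the exact identity, the ``secondary subtlety'' you anticipate---arguing that a holomorphic piece integrates to zero against $\bar\alpha$---disappears entirely, and the $j=2$ case is a three-line change of variables as in the paper. The same exactness, combined with $L_{\tilde\Omega_1}(M(\zeta),M(z))\,M'(\zeta)M'(z) = L_{\Omega_1}(\zeta,z)$, makes the desingularized kernel transform covariantly without remainder, so the $j=1$ case goes through equally cleanly. In short: your strategy is right, but you have made it harder than it is by inventing a remainder term that is not there.
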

  \begin{proof}
   Assume first that $j=2$. Setting $w=M(z)$, $\eta = M(\zeta)$,  $\overline{\alpha(\eta)} = \overline{h(\eta)}\, d\bar{\eta}$ and using the identity 
   \begin{equation} \label{eq:Mobius_identity}
     \frac{M'(\zeta) M'(z)}{(M(\zeta)-M(z))^2} = \frac{1}{(\zeta - z)^2},  
   \end{equation}
  which holds for arbitrary M\"obius transformations, yield that
  \begin{align*}
   M^* \left[\mathbf{T}_{\tilde{\Omega}_1,\tilde{\Omega}_2} \bar{\alpha} \right](z) & 
   = \frac{1}{\pi} \iint_{\tilde{\Omega}_1} \frac{\overline{h(\eta)}}{(\eta-M(z))^2}
   \frac{d\bar{\eta} \wedge d \eta}{2i} \cdot M'(z) dz \\
   & = \frac{1}{\pi} \iint_{{\Omega}_1} \frac{\overline{h(M(\zeta))}}{(M(\zeta)-M(z))^2}
   \overline{ M'(\zeta)} {M'(\zeta)} \frac{d\bar{\zeta} \wedge d \zeta}{2i} \cdot M'(z) dz \\
   & = \frac{1}{\pi} \iint_{{\Omega}_1} \frac{\overline{h(M(\zeta))} \overline{M'(\zeta)}}{(\zeta-z)^2}
   \frac{d\bar{\zeta} \wedge d \zeta}{2i} \cdot dz \\
   & = \left[ \mathbf{T}_{{\Omega}_1,{\Omega}_2} M^* \bar{\alpha} \right] (z).
  \end{align*}
  
  In the case that $j=1$, we use the identity 
  \[    L_{M(\Omega)}(M(\zeta),M(z)) M'(\zeta) M'(z) = L_\Omega(\zeta,z),   \]
  which follows from $g_{M(\Omega)}(M(\zeta),M(z))=g_\Omega(\zeta,z)$.  When combined with (\ref{eq:Mobius_identity}), the argument above may be repeated using the expression (\ref{eq:Schiffer_desingularized}). 
  \end{proof}
  Note that the de-singularization of the integral allowed the application of change of variables in the proof above.  As a consequence, we see that the M\"obius transformation preserves the original principal value integral. This can also be shown directly.
  \begin{remark}
   If one views the Schiffer operators as acting on a Bergman space of functions $\overline{h(z)}$ rather than on the $L^2$ space of one-forms $\overline{h(z)}\, d\bar{z}$, their M\"obius invariance is obscured. 
  \end{remark}
  
  As promised, Theorem \ref{th:Schiffer_Mobius_invariance} implies the existence of the integrals defining the Schiffer operator, since we may apply a M\"obius transformation to reduce the general case to the case that $\Omega_1$ is bounded, which we dealt with above. 
 
  \begin{remark}  
   If $z \in \Omega_j$, the meaning of this one-form at $z = \infty$ is obtained by 
   applying the change of coordinates $z= 1/\zeta$, $dz = -d\zeta/\zeta^2$ to express it in coordinates at $\infty$:
   \[ \frac{1}{\pi} \iint_{\Omega_1} \frac{\overline{h(w)}}{(1-w\zeta)^2} \frac{d\bar{w} \wedge dw}{2i} \cdot d\zeta \ \ \ \zeta \in 1/\Omega_j.  \]
   Alternatively one may transform both the input and output simultaneously using Theorem \ref{th:Schiffer_Mobius_invariance} with $M(z) = 1/z$.  
  \end{remark}
  
  Finally, we have the following. 
  \begin{theorem} \label{th:Schiffer_bounded}
   Let $\Gamma$ be a Jordan curve in $\sphere$, and let $\Omega_1$ and $\Omega_2$ be the components of the complement of $\Gamma$ in $\sphere$. The Schiffer operators $\mathbf{T}_{\Omega_1,\Omega_j}$ are bounded from $\overline{\mathcal{A}(\Omega_1)}$ to $\mathcal{A}(\Omega_j)$ for $j=1,2$.  
  \end{theorem}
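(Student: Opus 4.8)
The plan is to recognise the Schiffer operators, after a Möbius reduction, as the restriction to $\Omega_j$ of the (planar) Ahlfors--Beurling transform, and then to invoke its $L^2$-boundedness. By Theorem \ref{th:Schiffer_Mobius_invariance} we may assume $\Omega_1$ is bounded, so that $\infty\in\Omega_2$. Given $\bar{\alpha}=\overline{h(w)}\,d\bar{w}\in\overline{\mathcal{A}(\Omega_1)}$, extend the coefficient by zero and set $\mu=\overline{h}\,\chi_{\Omega_1}$; since $\Omega_1$ is bounded, $\mu\in L^2(\mathbb{C})\cap L^1(\mathbb{C})$ has compact support and $\|\mu\|_{L^2(\mathbb{C})}^2=\pi\,\|\bar{\alpha}\|_{\Omega_1}^2$. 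Using $d\bar{w}\wedge dw=2i\,dA$, Definition \ref{defn:Schiffer operators} says that the coefficient of $\mathbf{T}_{\Omega_1,\Omega_j}\bar{\alpha}$ in the $z$-coordinate is
\[
(\mathbf{S}\mu)(z):=\frac{1}{\pi}\,\mathrm{p.v.}\,\iint_{\mathbb{C}}\frac{\mu(w)}{(w-z)^{2}}\,dA(w),
\]
where the principal value is needed only for $z\in\Omega_1$ (for $z\in\Omega_2$ the integral is absolutely convergent, since $\mathrm{dist}(z,\overline{\Omega_1})>0$ and $\mu\in L^1$). The operator $\mathbf{S}$ is, up to sign, the classical Ahlfors--Beurling transform, and by the Calderón--Zygmund theorem it is a bounded (in fact isometric) operator on $L^{2}(\mathbb{C})$: $\|\mathbf{S}\mu\|_{L^{2}(\mathbb{C})}=\|\mu\|_{L^{2}(\mathbb{C})}$.

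Boundedness now follows at once. Since $\Omega_j\subseteq\mathbb{C}$,
\[
\|\mathbf{T}_{\Omega_1,\Omega_j}\bar{\alpha}\|_{\Omega_j}^{2}=\frac{1}{\pi}\iint_{\Omega_j}|\mathbf{S}\mu|^{2}\,dA\le\frac{1}{\pi}\iint_{\mathbb{C}}|\mathbf{S}\mu|^{2}\,dA=\frac{1}{\pi}\|\mu\|_{L^{2}(\mathbb{C})}^{2}=\|\bar{\alpha}\|_{\Omega_1}^{2},
\]
so each $\mathbf{T}_{\Omega_1,\Omega_j}$ has norm at most one into $L^2$. It remains to check that $(\mathbf{S}\mu)(z)\,dz$ actually lies in the \emph{holomorphic} Bergman space $\mathcal{A}(\Omega_j)$. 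Let $u(z)=\frac{1}{\pi}\iint_{\mathbb{C}}\mu(w)/(z-w)\,dA(w)$ be the Cauchy (Pompeiu) transform of $\mu$; it is continuous, and the standard identities $\bar{\partial}u=\mu$ and $\partial u=-\mathbf{S}\mu$ hold in the sense of distributions. Hence $\bar{\partial}(\mathbf{S}\mu)=-\bar{\partial}\partial u=-\partial\bar{\partial}u=-\partial\mu$ distributionally on $\mathbb{C}$. On $\Omega_1$ we have $\mu=\overline{h}$, which is anti-holomorphic, so $\partial\mu=0$ there; on $\Omega_2$ we have $\mu\equiv0$, so again $\partial\mu=0$. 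By Weyl's lemma $\mathbf{S}\mu$ coincides a.e.\ on $\Omega_1\cup\Omega_2$ with a holomorphic function, which together with the $L^2$-bound just proved places $(\mathbf{S}\mu)(z)\,dz$ in $\mathcal{A}(\Omega_j)$; for $j=2$ one further notes $1/(w-z)^2=z^{-2}+O(z^{-3})$ and $h\in L^1(\Omega_1)$, so $\mathbf{S}\mu(z)=O(z^{-2})$ as $z\to\infty$, showing the one-form extends holomorphically across $\infty$ and is square-integrable there. Finally, transporting back by the inverse Möbius transformation via Theorem \ref{th:Schiffer_Mobius_invariance} yields the claim for the original $\Omega_1,\Omega_2$.

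I expect the only real work to be bookkeeping: justifying the identification of the principal-value integral in Definition \ref{defn:Schiffer operators} with $\partial u=-\mathbf{S}\mu$ for $z\in\Omega_1$ (this is precisely the Calderón--Zygmund differentiation formula for the Cauchy transform of an $L^2$ density), and tracking the point at infinity after the Möbius reduction. Neither presents a genuine difficulty, and — as in the rest of this section — no assumption that $\Gamma$ is a quasicircle is needed; boundedness of the Schiffer operators holds for every Jordan curve, the argument in fact giving operator norm $\le 1$.
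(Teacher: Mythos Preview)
Your proof is correct and follows essentially the same route as the paper: reduce via Theorem \ref{th:Schiffer_Mobius_invariance} to the case where $\Omega_1$ is bounded, identify the Schiffer operators with restrictions of the planar Beurling transform, and invoke Calder\'on--Zygmund boundedness on $L^2(\mathbb{C})$.

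The one place where you differ from the paper is in establishing holomorphicity for $j=1$. The paper uses the de-singularised expression (\ref{eq:Schiffer_desingularized}) obtained from Schiffer's identity (Theorem \ref{th:Schiffer_identity}), so that the kernel is smooth and one can simply bring $\overline{\partial}_z$ under the integral. You instead use the distributional identities $\overline{\partial} u=\mu$, $\partial u=-\mathbf{S}\mu$ for the Cauchy transform and apply Weyl's lemma. Your route is a bit more self-contained in that it handles $j=1$ and $j=2$ uniformly without appealing to the de-singularisation, and it yields the explicit norm bound $\|\mathbf{T}_{\Omega_1,\Omega_j}\|\le 1$, which the paper does not state. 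On the other hand the paper's argument, having already proved (\ref{eq:Schiffer_desingularized}), dispenses with the distribution-theoretic bookkeeping you flag in your final paragraph.
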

    
  \begin{proof}
   By Theorem \ref{th:Schiffer_Mobius_invariance}, we may assume that $\infty \in \Omega_2$, so that $\Omega_1$ is bounded.
   The integrands in the definitions of $\mathbf{T}_{\Omega_1,\Omega_2}$ and $\mathbf{T}_{\Omega_1,\Omega_1}$ given in \eqref{eq:Schiffer_operator_definition} and \eqref{eq:Schiffer_desingularized} respecrively are  non-singular and holomorphic in $z$ for each $w\in \Omega_2$ and $w\in \Omega_1$ (in each case), and  
 {furthermore both integrals are locally bounded in $z$.
 Therefore the holomorphicity of $\mathbf{T}_{\Omega_1,\Omega_j}$ follows by moving the $\overline{\partial}$ inside \eqref{eq:Schiffer_operator_definition} and \eqref{eq:Schiffer_desingularized}, and using the holomorphicity of the integrands in each case.
 
The $L^p$-boundedness of these operators for $1<p<\infty$, considered as singular integral operators, is a consequence of the boundedness of singular integral operators of Calder\'on-Zygmund type, see e.g. \cite{Lehto} page 26.}  
  \end{proof}
  
  As in the case of the overfare operator $\mathbf{O}$, we will use the notation $\mathbf{T}_{j,k}$ in place of $\mathbf{T}_{\Omega_k,\Omega_k}$ wherever possible. 
  
\end{subsection}
\begin{subsection}{Cauchy operator} \ \label{se:Cauchy}
 As usual, consider a Jordan curve $\Gamma$ in $\sphere$.  For now we assume that $\infty$ is not in $\Gamma$.  Let $\Omega_1$ and $\Omega_2$ denote the components of the complement.  
 
 \begin{definition}\label{defn:Cauchy operator}
  For $h \in \mathcal{D}(\Omega_1)$ we will consider a kind of Cauchy integral obtained as follows.  Let $f:\disk \rightarrow \Omega_1$ be a biholomorphism. If we let $\Gamma_\epsilon$ be the image of the closed curve $|z|=e^{-\epsilon}$ under $f$, with the same orientation, and $q \notin \Gamma$, then we define  
 \begin{align}  \label{eq:Cauchy_operator_definition}
   \mathbf{J}^q_{\Omega_1} h(z) & = \frac{1}{2 \pi i} \lim_{\epsilon \searrow 0} 
      \int_{\Gamma_\epsilon} h(w) \left(\frac{1}{w-z} - \frac{1}{w-q} \right) \,dw \ \ \ z \in \sphere \backslash \Gamma    
        \\ 
      & = \frac{1}{2 \pi i} \lim_{\epsilon \searrow 0} 
      \int_{\Gamma_\epsilon} h(w) \frac{z-q}{(w-z)(w-q)} \,dw  \ \ \ \ \ \ \ z \in \sphere \backslash \Gamma. \nonumber
 \end{align}
 The term involving $q$ amounts to an arbitrary choice of normalization. In the case that $q = \infty$, this reduces to 
 \[
     \mathbf{J}^q_{\Omega_1} h(z)  = \frac{1}{2 \pi i} \lim_{\epsilon \searrow 0} 
      \int_{\Gamma_\epsilon} \frac{h(w)}{w-z} \,dw \ \ \ z \in \sphere \backslash \Gamma.
 \]
 \end{definition}

 This is almost a Cauchy integral, of course.  We will motivate the definition of $\mathbf{J}^q_{\Omega_1}$ after first establishing some of its properties. 

 First, we observe that $\mathbf{J}^q$ is M\"obius invariant in a certain sense. 
 The invariance follows from the identity
 \begin{equation} \label{eq:Cauchy_Mobius_invariance}
  \frac{M'(w)(M(z)-M(q))}{(M(w)-M(z))(M(w)-M(q))} = \frac{z-q}{(w-z)(w-q)}
 \end{equation}
 which holds for any M\"obius transformation $M$.  
 Observe that the usual normalization $q=\infty$ obscures the M\"obius invariance of the Cauchy kernel.  Using this identity, together with a change of variables and conformal invariance of the Dirichlet space, we obtain the following. 
 \begin{theorem} \label{th:J_Mobius_invariant}
  Let $\Gamma$ be a curve in $\sphere$ and $\Omega_1$, $\Omega_2$ be the connected components of the complement. Let $M$ be a M\"obius transformation. Then for any $h \in \mathcal{D}(M(\Omega_1))$, we have
  \[    \left[ \mathbf{J}^{M(q)}_{M(\Omega_1)} h \right] \circ M =  \mathbf{J}^q_{\Omega_1}(h \circ M).   \]
 \end{theorem}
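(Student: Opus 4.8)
The plan is to reduce the claimed identity to a single change of variables inside the approximating integrals, with the Möbius identity \eqref{eq:Cauchy_Mobius_invariance} doing the work of matching the two integrands; the limit is then dealt with trivially, since the two pre-limit integrals will be literally equal.

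First I would fix a biholomorphism $f:\disk \rightarrow \Omega_1$ and use it to compute $\mathbf{J}^q_{\Omega_1}(h\circ M)$, with approximating curves $\Gamma_\epsilon = f(|z|=e^{-\epsilon})$; note $h\circ M \in \mathcal{D}(\Omega_1)$ by conformal invariance of the Dirichlet energy, so the right-hand side is well-defined. Since $M$ is a biholomorphism of $\sphere$, the composition $M\circ f:\disk \rightarrow M(\Omega_1)$ is a biholomorphism, and it may be used to compute $\mathbf{J}^{M(q)}_{M(\Omega_1)}h$; its approximating curves are exactly $M(\Gamma_\epsilon)$, carrying the orientation pushed forward from $\Gamma_\epsilon$, which is the orientation required by Definition \ref{defn:Cauchy operator} because $M$ is orientation-preserving.

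Next, for fixed $\epsilon>0$ and $z\in\sphere\setminus\Gamma$, I would write out, before passing to the limit,
\[
\int_{M(\Gamma_\epsilon)} h(\eta)\,\frac{M(z)-M(q)}{(\eta-M(z))(\eta-M(q))}\,d\eta,
\]
substitute $\eta = M(w)$, $d\eta = M'(w)\,dw$, so the curve $M(\Gamma_\epsilon)$ becomes $\Gamma_\epsilon$, and invoke \eqref{eq:Cauchy_Mobius_invariance} to rewrite the integrand as $h(M(w))\,\dfrac{z-q}{(w-z)(w-q)}\,dw$. This exhibits the pre-limit integral as exactly the one defining $\mathbf{J}^q_{\Omega_1}(h\circ M)$ at $z$. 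As these agree for every $\epsilon$, and both limits exist (by Definition \ref{defn:Cauchy operator} applied to $h\in\mathcal{D}(M(\Omega_1))$ and to $h\circ M\in\mathcal{D}(\Omega_1)$ respectively), the limits coincide, which is the assertion $\bigl[\mathbf{J}^{M(q)}_{M(\Omega_1)}h\bigr]\circ M = \mathbf{J}^q_{\Omega_1}(h\circ M)$.

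Finally I would dispose of the degenerate cases $q=\infty$, $M(q)=\infty$, or $M(z)=\infty$: in each, one either reads \eqref{eq:Cauchy_Mobius_invariance} in the evident limiting sense (the reduced kernels in Definition \ref{defn:Cauchy operator} are the corresponding limits), or applies the non-degenerate case to a perturbed parameter and lets it tend to the exceptional value, using continuous dependence of the integrand on $q$ (and on $z$) away from $\Gamma$. I expect no analytic difficulty here: the entire argument lives at the level of one integral over the fixed curve $\Gamma_\epsilon$, with the limit taken afterwards. The only thing requiring care — and the mild ``main obstacle'' — is making the compatibility of the two biholomorphisms $f$ and $M\circ f$, together with the induced orientations and the treatment of the point at infinity, precise enough that the change of variables is unambiguously legitimate.
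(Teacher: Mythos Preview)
Your proposal is correct and follows the same approach as the paper: the paper's proof is precisely ``using this identity \eqref{eq:Cauchy_Mobius_invariance}, together with a change of variables and conformal invariance of the Dirichlet space,'' which is exactly what you carry out by taking $M\circ f$ as the biholomorphism for $M(\Omega_1)$ and substituting $\eta=M(w)$ in the pre-limit integral. One small remark: at this point in the paper the existence of the limit has not yet been established (the paper explicitly says the theorem ``says only that if the limit exists on one side, then it exists on both, and the two sides are equal''), so your parenthetical appeal to Definition~\ref{defn:Cauchy operator} for existence is premature --- but this is harmless, since your own observation that the pre-limit integrals agree for every $\epsilon$ already yields simultaneous existence and equality of the limits, which is all that is needed.
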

 Observe that Theorem \ref{th:J_Mobius_invariant} extends the definition of the integral to the case that $\infty \in \Gamma$. 
 For the moment, this says only that if the limit exists on one side, then it exists on both, and the two sides are equal.  We will show that the limit exists whenever $h \in \mathcal{D}_{\text{harm}}(\Omega_1)$.  
 
 In the remainder of the section we will:  (1) provide identities relating $\mathbf{J}^q$ to the Schiffer operators; (2) show that the output is in $\mathcal{D}_{\text{harm}}(\Omega_1 \sqcup \Omega_2)$; and (3) show that for quasicircles, the limiting integral is in a certain sense independent of which side of $\Gamma$ you choose to take the limit in.
 
   Let $\partial$ and $\overline{\partial}$ denote the Wirtinger operators on the Riemann sphere.  
 \begin{theorem} \label{th:J_Schiffer_identities}
  Let $\Gamma$ be a Jordan curve separating $\sphere$ into connected components $\Omega_1$ and $\Omega_2$.  Assume that $q \notin \Gamma$.   Then 
  \begin{align}
      \partial \mathbf{J}^q_{\Omega_1} h (z) & = -\mathbf{T}_{\Omega_1,\Omega_2} \overline{\partial} h(z)  \hspace{3.2cm}  z \in \Omega_2 \\
      \partial \mathbf{J}^q_{\Omega_1} h (z) & = \partial h(z) - \mathbf{T}_{\Omega_1,\Omega_1} 
      \overline{\partial} h (z) \hspace{2cm}  z \in \Omega_1 \\
      \overline{\partial} \mathbf{J}^q_{\Omega_1} h(z) & = 0 \hspace{5cm}
      z \in \Omega_1 \cup \Omega_2.  
  \end{align}
 \end{theorem}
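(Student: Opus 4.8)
The plan is to establish the three differentiation identities by reducing everything to two ingredients: the definition of $\mathbf{J}^q_{\Omega_1}$ as a limit of contour integrals over $\Gamma_\epsilon$, and the Schiffer identity (Theorem \ref{th:Schiffer_identity}) together with the de-singularized expression \eqref{eq:Schiffer_desingularized}. By the M\"obius invariance of $\mathbf{J}^q$ (Theorem \ref{th:J_Mobius_invariant}) and of the Schiffer operators (Theorem \ref{th:Schiffer_Mobius_invariance}), I may assume without loss of generality that $\infty \in \Omega_2$, so that $\Omega_1$ is bounded; and by a further M\"obius transformation I may also take $q = \infty$, which simplifies the Cauchy kernel to $h(w)/(w-z)$. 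It is also harmless to verify the identities first for $h$ in a dense subspace and then pass to the limit, but in fact the arguments below work directly once the relevant integrals are known to converge, which follows from the arguments already given in Section \ref{se:Schiffer} and the anchor lemma.

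First I would treat the statement $\overline{\partial}\mathbf{J}^q_{\Omega_1}h = 0$ on $\Omega_1 \cup \Omega_2$. For $z \notin \Gamma_\epsilon$ the integrand $h(w)(1/(w-z)-1/(w-q))$ is holomorphic in $z$ on a neighbourhood of $z$, and the integral over the fixed contour $\Gamma_\epsilon$ is therefore holomorphic in $z$ off $\Gamma_\epsilon$; differentiating under the integral sign (justified since $\Gamma_\epsilon$ is a fixed compact curve away from $z$) gives $\overline{\partial}$ of the integral equal to zero. Taking $\epsilon \searrow 0$ and using that $\mathbf{J}^q_{\Omega_1}h$ is the locally uniform limit of these holomorphic functions on $\sphere \setminus \Gamma$ (local uniformity coming from the Stokes'-theorem representation used in Section \ref{se:Schiffer}, which controls the tail), the limit is holomorphic, hence $\overline{\partial}\mathbf{J}^q_{\Omega_1}h = 0$ there.

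Next, for the first identity ($z \in \Omega_2$): differentiate the contour-integral representation with respect to $z$, bringing $\partial_z$ inside to get $\frac{1}{2\pi i}\int_{\Gamma_\epsilon} h(w)\,\partial_z\!\big(\tfrac{1}{w-z}\big)\,dw = \frac{1}{2\pi i}\int_{\Gamma_\epsilon} \frac{h(w)}{(w-z)^2}\,dw$. Now one writes $h = $ (holomorphic part) $+$ (anti-holomorphic part); the holomorphic part of $h$ paired against $dw/(w-z)^2$ over $\Gamma_\epsilon$ with $z\in\Omega_2$ tends to $0$ (by Stokes, since $1/(w-z)^2\,dw$ is exact and holomorphic on $\Omega_1$, or directly since the holomorphic part times $1/(w-z)^2$ is holomorphic on $\Omega_1$), leaving only the contribution of $\overline{\partial}h$. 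Applying the anchor lemma (Lemma \ref{le:anchor_lemma}) to pass from $\Gamma_\epsilon$ to the interior and Stokes' theorem in the form already used in \eqref{eq:Schiffer_integral_converge_temp}, the limiting integral becomes the area integral $-\frac{1}{\pi}\iint_{\Omega_1}\frac{\overline{h_{\bar w}(w)}}{(w-z)^2}\frac{d\bar w\wedge dw}{2i}$, i.e. $-\mathbf{T}_{\Omega_1,\Omega_2}\overline{\partial}h(z)$; the sign is bookkeeping from orientation conventions of $\Gamma_\epsilon$ versus the area element. The second identity ($z\in\Omega_1$) is the same computation, except now $z$ lies inside $\Gamma_\epsilon$ for small $\epsilon$, so the residue at $w=z$ contributes the extra term $\partial h(z)$ (from the holomorphic part of $h$, via the Cauchy integral formula applied to $h(w)/(w-z)^2$), and the anti-holomorphic part produces the principal-value integral $\mathbf{T}_{\Omega_1,\Omega_1}\overline{\partial}h(z)$, using the de-singularized form \eqref{eq:Schiffer_desingularized} and Theorem \ref{th:Schiffer_identity} to make sense of it.

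The main obstacle is the careful justification of interchanging $\partial_z$, the limit $\epsilon\searrow 0$, and the contour integral, and of converting the limiting contour integrals into the area integrals defining the Schiffer operators when $z\in\Omega_1$ (the principal-value case). The decomposition $h = h_1 + h_2 + c\log|z|$ of Lemma \ref{le:harmonic_function_decomposition} on a collar, together with the anchor lemma, handles the convergence and the reduction to Schiffer kernels cleanly; the $\log$ term and the harmonic-on-the-outside piece $h_2$ contribute nothing in the limit, exactly as in the proof of Theorem \ref{th:Schiffer_identity}, so the entire content reduces to the holomorphic Bergman piece, where the Cauchy integral formula and de-singularization do the work. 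I expect the remaining details to be routine once this structure is in place.
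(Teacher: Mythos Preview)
Your approach is correct and leads to the same identities, but it is organised differently from the paper's proof. The paper does \emph{not} differentiate the contour integral directly. Instead, for each configuration of $q$ and $z$ it adds a Green's-function correction term $\tfrac{1}{\pi}\tfrac{\partial g_{\Omega_1}}{\partial w}(w,\cdot)$ to the Cauchy kernel so as to cancel the pole at $q$ (or at $z$), and then applies Stokes once to obtain a closed-form \emph{area integral} representation of $\mathbf{J}^q_{\Omega_1}h(z)$ itself (equations \eqref{eq:Schiffer_Cauchy_identities_temp1} and \eqref{eq:Schiffer_Cauchy_identities_temp2}). Only after this de-singularized area integral is in hand does the paper apply $\partial_z$ and $\overline{\partial}_z$; the Schiffer identity (Theorem \ref{th:Schiffer_identity}) and the de-singularized form \eqref{eq:Schiffer_desingularized} then read off the result. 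The cases $q\in\Omega_1$ and $q\in\Omega_2$ are handled separately rather than reduced by M\"obius invariance.

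What each approach buys: your route via M\"obius reduction to $q=\infty$ and the holomorphic/anti-holomorphic splitting of $h$ on the simply connected domain $\Omega_1$ is more elementary and avoids Green's function entirely, but the step you flag as ``the main obstacle'' (interchanging $\partial_z$ with the limit $\epsilon\searrow 0$) is exactly what the paper's reorganisation is designed to bypass. By first producing a non-singular area integral for $\mathbf{J}^q_{\Omega_1}h$, the paper differentiates a fixed integral with a smooth kernel, so no limit interchange is needed. Your argument can be completed---the locally uniform convergence you invoke does hold, since on any compact set away from $\Gamma$ the tail $\iint_{\Omega_1\setminus D_\epsilon}$ is controlled uniformly in $z$---but the paper's ordering makes this justification unnecessary. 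Note also that the collar decomposition of Lemma \ref{le:harmonic_function_decomposition} is more than you need here: since $\Omega_1$ is simply connected, the global splitting $h=h_1+\overline{h_2}$ with $h_1,h_2$ holomorphic is available directly.
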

 \begin{proof}
  If $q \in \riem_2$, then the first claim follows by applying Stokes theorem and bringing $\partial$ under the integral sign, as does the third in the case that $z \in \Omega_2$. Denote 
  the circle $|z-q|=r$ traced counter-clockwise by $C_r$. 
  Using the fact that 
  \[  \lim_{r \searrow 0} \frac{1}{\pi}  \int_{C_r} \frac{\partial g_{\Omega_1}}{\partial w} (w;z) h(w) = h(q)  \]
  by Stokes' theorem we have that for $q \in \Omega_1$ and $z \in \Omega_2$,
  \begin{align} \label{eq:Schiffer_Cauchy_identities_temp1}
   \mathbf{J}^q_{\Omega_1} h(z) & = \lim_{\epsilon \searrow 0}  
   \int_{\Gamma_\epsilon} \left[ \frac{1}{2\pi i} \left( \frac{1}{w-z} - 
   \frac{1}{w-q} \right) - \frac{1}{\pi} \frac{\partial g_{\Omega_1}}{\partial w}(w,q)  \right] h(w) \,  dw - h(q) \nonumber \\
   & =  
   \iint_{\Omega_1} \left[ \frac{1}{2\pi i} \left( \frac{1}{w-z} - 
   \frac{1}{w-q} \right) - \frac{1}{\pi} \frac{\partial g_{\Omega_1}}{\partial w}(w,q)  \right] h(w) \, d\bar{w} \wedge dw - h(q) 
  \end{align}
  Noting that the integrand is non-singular, applying $\partial_z$ to both sides using Theorem \ref{th:Schiffer_identity}) proves the first claim.  Applying $\overline{\partial}_z$ proves the third claim in the case that $q \in \Omega_1$
  and $z \in \Omega_2$.  
  
  If $z \in \Omega_1$ and $q \in \Omega_2$, we have similarly that 
  \begin{equation} \label{eq:Schiffer_Cauchy_identities_temp2}
   \mathbf{J}^q_{\Omega_1} h(z) =  
   \iint_{\Omega_1} \left[ \frac{1}{2\pi i} \left( \frac{1}{w-z} - 
   \frac{1}{w-q} \right) + \frac{1}{\pi} \frac{\partial g_{\Omega_1}}{\partial w}(w,z)  \right] h(w) \, d\bar{w} \wedge dw + h(z).  
  \end{equation}
  Applying $\overline{\partial}_z$ completes the proof of the third claim, and applying $\partial_z$ using (\ref{eq:Schiffer_desingularized}) proves the second claim in the case that $q \in \Omega_2$.  To prove the second claim in the case that $q \in \Omega_1$, we add a further term to (\ref{eq:Schiffer_Cauchy_identities_temp2}) which removes the singularity at $q$ as in (\ref{eq:Schiffer_Cauchy_identities_temp1}) and apply $\partial_z$.
  
 \end{proof}
 For $j=1,2$ we denote
 \[  \mathbf{J}^q_{\Omega_1,\Omega_j} h  = \left. \mathbf{J}^q_{\Omega_1} h \right|_{\Omega_j}.     \]
 Then Theorem \ref{th:J_Schiffer_identities} immediately implies that these are bounded with respect to the Dirichlet energy.
 \begin{corollary}  \label{co:J_bounded} 
  Let $\Gamma$ be a Jordan curve in $\sphere$ and choose $q \notin \Gamma$.  Then 
  \[  \mathbf{J}^q_{\Omega_1}:\mathcal{D}_{\mathrm{harm}}(\Omega_1) \rightarrow \mathcal{D}_{\mathrm{harm}}(\Omega_1 \cup \Omega_2)_q   \]
  and 
  \[   \mathbf{J}^q_{\Omega_1,\Omega_j}:\overline{\mathcal{D}(\Omega_1)} \rightarrow \mathcal{D}(\Omega_j) \ \ \ j=1,2. 
     \]
  If $q \in \Omega_j$, then the image of $\mathbf{J}^q_{\Omega_1,\Omega_j}$ is $\mathcal{D}_q(\Omega_j)$.
  Furthermore, each of the operators above are bounded with respect to Dirichlet energy.  
 \end{corollary}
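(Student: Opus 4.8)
The plan is to read off every assertion from the three identities in Theorem~\ref{th:J_Schiffer_identities} together with the boundedness of the Schiffer operators in Theorem~\ref{th:Schiffer_bounded}; the finiteness of the limiting integrals is already part of what Theorem~\ref{th:J_Schiffer_identities} delivers, so the only genuinely new input needed is a surjectivity statement for one of the Schiffer operators, and that is where I expect the real work to be.

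\emph{Target spaces and normalization.} The identity $\overline{\partial}\,\mathbf{J}^q_{\Omega_1}h=0$ on $\Omega_1\cup\Omega_2$ says that $\mathbf{J}^q_{\Omega_1}h$ is holomorphic on each of $\Omega_1$ and $\Omega_2$; in particular it is harmonic there, and its restriction to $\Omega_j$ is holomorphic. That it vanishes at $q$ is immediate from Definition~\ref{defn:Cauchy operator}: the kernel carries the factor $\frac{1}{w-z}-\frac{1}{w-q}$, which is zero at $z=q$, and in the normalization $q=\infty$ the kernel $\frac{1}{w-z}$ decays as $z\to\infty$ with the contour $\Gamma_\epsilon$ fixed. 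So, once finiteness of the energy is known, $\mathbf{J}^q_{\Omega_1}h$ lies in $\mathcal{D}_{\mathrm{harm}}(\Omega_1\cup\Omega_2)_q$ and each restriction $\mathbf{J}^q_{\Omega_1,\Omega_j}h$ lies in $\mathcal{D}(\Omega_j)$; note that holomorphicity of the output holds for \emph{every} $h\in\mathcal{D}_{\mathrm{harm}}(\Omega_1)$, so restricting the domain to $\overline{\mathcal{D}(\Omega_1)}$ merely records a subcase.

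\emph{Energy estimate.} Since $\mathbf{J}^q_{\Omega_1}h$ is holomorphic on $\Omega_j$ we have $D_{\Omega_j}(\mathbf{J}^q_{\Omega_1}h)=\|(\partial\mathbf{J}^q_{\Omega_1}h)\,dz\|^2_{\Omega_j}$. On $\Omega_2$, Theorem~\ref{th:J_Schiffer_identities} gives $(\partial\mathbf{J}^q_{\Omega_1}h)\,dz=-\mathbf{T}_{\Omega_1,\Omega_2}(\overline{\partial}h\,d\overline z)$, and $\overline{\partial}h\,d\overline z\in\overline{\mathcal{A}(\Omega_1)}$ with $\|\overline{\partial}h\,d\overline z\|^2_{\Omega_1}\le D_{\Omega_1}(h)$, so Theorem~\ref{th:Schiffer_bounded} yields $D_{\Omega_2}(\mathbf{J}^q_{\Omega_1}h)\le C^2 D_{\Omega_1}(h)$. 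On $\Omega_1$, $(\partial\mathbf{J}^q_{\Omega_1}h)\,dz=(\partial h)\,dz-\mathbf{T}_{\Omega_1,\Omega_1}(\overline{\partial}h\,d\overline z)$, and since $\|(\partial h)\,dz\|^2_{\Omega_1}\le D_{\Omega_1}(h)$, the triangle inequality together with Theorem~\ref{th:Schiffer_bounded} gives $D_{\Omega_1}(\mathbf{J}^q_{\Omega_1}h)\le(1+C)^2 D_{\Omega_1}(h)$. Summing the two bounds proves boundedness with respect to Dirichlet energy of $\mathbf{J}^q_{\Omega_1}$, and hence of each $\mathbf{J}^q_{\Omega_1,\Omega_j}$. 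This settles everything except the description of the image.

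\emph{The image.} The inclusion ``image $\subseteq\mathcal{D}_q(\Omega_j)$'' is contained in the above (holomorphic, vanishing at $q$, finite energy). For the reverse inclusion, use Theorem~\ref{th:J_Mobius_invariant} to assume $\Omega_1$ bounded. If $q$ and the component we restrict to coincide, say $q\in\Omega_1$ and $j=1$, then for holomorphic $h\in\mathcal{D}(\Omega_1)$ the identities of Theorem~\ref{th:J_Schiffer_identities} give $d\mathbf{J}^q_{\Omega_1}h=dh$ on $\Omega_1$, hence $\mathbf{J}^q_{\Omega_1}h=h-h(q)$ there, and these exhaust $\mathcal{D}_q(\Omega_1)$. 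If instead $q$ lies in the other component, say $q\in\Omega_2$ and $j=2$, then $\mathbf{J}^q_{\Omega_1,\Omega_2}$ vanishes on holomorphic inputs, while on an anti-holomorphic input $h=\overline{H}$ one has, after applying $d$, that $\mathbf{J}^q_{\Omega_1,\Omega_2}$ surjects onto $\mathcal{D}_q(\Omega_2)$ precisely when $\mathbf{T}_{\Omega_1,\Omega_2}:\overline{\mathcal{A}(\Omega_1)}\to\mathcal{A}(\Omega_2)$ is onto (here one uses that $d$ and $h\mapsto \overline{\partial}h\,d\overline z$ are bijections off the constants on the simply connected domains $\Omega_2$ and $\Omega_1$). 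Establishing this last fact --- equivalently, solvability of the corresponding jump problem for an arbitrary Jordan curve --- is the main obstacle. I would attack it by producing, for a given $\beta\in\mathcal{A}(\Omega_2)$, an explicit preimage built from a Cauchy-type integral of $\beta$ over curves approaching $\Gamma$ from inside $\Omega_2$, and then checking via the de-singularization identity of Theorem~\ref{th:Schiffer_identity} and the anchor lemma (Lemma~\ref{le:anchor_lemma}) that this candidate lies in $\overline{\mathcal{A}(\Omega_1)}$ and is carried to $\beta$; with the bounded case already arranged, this should come down to bookkeeping with the singular integrals.
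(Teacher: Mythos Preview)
Your treatment of the target spaces, the normalization at $q$, and the energy estimates is correct and is exactly the paper's proof: everything is read off from Theorem~\ref{th:J_Schiffer_identities} together with Theorem~\ref{th:Schiffer_bounded}.

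The gap is in your reading of the ``image'' clause, not in your mathematics. The sentence ``If $q\in\Omega_j$, then the image of $\mathbf{J}^q_{\Omega_1,\Omega_j}$ is $\mathcal{D}_q(\Omega_j)$'' is meant as a refinement of the codomain: when $q$ lies in $\Omega_j$, the output additionally vanishes at $q$, so the map lands in $\mathcal{D}_q(\Omega_j)$ rather than merely $\mathcal{D}(\Omega_j)$. You already verified this in your first paragraph. It is \emph{not} a surjectivity claim. In fact, surjectivity of $\mathbf{J}^q_{\Omega_1,\Omega_2}$ (equivalently of $\mathbf{T}_{\Omega_1,\Omega_2}$) fails for general Jordan curves: by Theorem~\ref{th:surjective_then_quasicircle} it forces $\Gamma$ to be a quasicircle, and conversely Theorem~\ref{th:one-sided_isomorphisms} shows it holds precisely then. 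So the ``main obstacle'' you identify is genuinely insurmountable in the generality of the corollary, and the attack you sketch could not succeed. Drop the entire final paragraph and you have the paper's proof.
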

 \begin{proof}
  This follows immediately from Theorems \ref{th:Schiffer_bounded} and \ref{th:J_Schiffer_identities}.
 \end{proof}
 
 As in the case of the overfare and Schiffer operators, we will use the notation $\mathbf{J}^q_{k}$ in place of $\mathbf{J}^q_{\Omega_k}$ and $\mathbf{J}^q_{j,k}$ in place of $\mathbf{J}^q_{\Omega_j,\Omega_k}$ wherever possible.

 The operator $\mathbf{J}^q_{1}$ is motivated as follows. Setting aside the normalization at $q$, we would like to define the Cauchy integral 
 \[  \frac{1}{2\pi i} \int_{\Gamma} \frac{h(w)}{w-z} \,dw    \]
 of a function $h \in \mathcal{H}(\Gamma,\Omega_1)$, but there are two obstacles: the curve $\Gamma$ is not rectifiable, and functions in $h$ are not particularly regular.  This problem is solved by considering instead $\mathbf{J}^q_{\Omega_1} \mathbf{e}_{\Gamma,\Omega_1} h$.  
 
 The question immediately arises: if one considers instead $\mathbf{J}^q_{\Omega_2} \mathbf{e}_{\Gamma,\Omega_2} h$, is the result the same?  Of course, this  requires that $\mathcal{H}(\Gamma,\Omega_1) \subseteq \mathcal{H}(\Gamma,\Omega_2)$ at least, which we know is true for quasicircles.  In fact, it is indeed sufficient that $\Gamma$ is a quasicircle.
 \begin{theorem} \label{th:J_same_both_sides}
   Let $\Gamma$ be a quasicircle in $\sphere$, and let $\Omega_1$ and $\Omega_2$ be the connected components of the complement.  Fix $q \notin \Gamma$.  For any $h \in \mathcal{D}_{\mathrm{harm}}(\Omega_1)$    
   \[   \mathbf{J}^q_{1} h = - \mathbf{J}^q_{2} \mathbf{O}_{1,2} h.     \]
   The same result holds switching the roles of $\Omega_1$ and $\Omega_2$.  
 \end{theorem}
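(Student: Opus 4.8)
The plan is to reduce to the case of a bounded quasicircle, establish the identity for a dense family of approximants whose boundary data extend holomorphically across $\Gamma$ (so Cauchy's theorem can move the contour from one side of $\Gamma$ to the other), and then pass to the limit. First I would use the M\"obius invariance of $\mathbf{J}^q$ (Theorem \ref{th:J_Mobius_invariant}) together with the conformal invariance of the overfare $\mathbf{O}$ (which is built from the conformally invariant CNT boundary values) to reduce to the case $\infty\in\Omega_2$; after a translation I may also assume $0\in\Omega_1$, so that $\Omega_1=\Omega^+$ is the bounded and $\Omega_2=\Omega^-$ the unbounded complementary component. Fix a biholomorphism $f_1:\disk^+\to\Omega_1$ with $f_1(0)=0$, a collar $A_1=f_1(\mathbb{A}_r)$ of $\Gamma$ in $\Omega_1$, a collar $A_2$ of $\Gamma$ in $\Omega_2$, and set $U=A_1\cup\Gamma\cup A_2$, an annular neighbourhood of $\Gamma$. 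Since $\overline{\partial}\mathbf{J}^q_1 h=0=\overline{\partial}\mathbf{J}^q_2\mathbf{O}_{1,2}h$ on $\Omega_1\cup\Omega_2$ (Theorem \ref{th:J_Schiffer_identities}), both sides of the claimed identity are holomorphic there, so by the identity theorem on each component it suffices to verify equality for $z$ in $\Omega_2$ or in the core $f_1(\{|w|<r\})$ of $\Omega_1$; fix such a $z$ and, shrinking $A_1$ if necessary, arrange $z,q\notin\overline{A_1}$, so that $K_z(w):=\tfrac1{w-z}-\tfrac1{w-q}$ has $K_z\,dw\in\mathcal{A}(A_1)\cap\mathcal{A}(A_2)$.

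Next I would produce the approximants. By Theorem \ref{th:bounce_of_holomorphic_dense}, the density of Laurent polynomials in $\mathcal{D}(A_1)$ (Theorem \ref{th:density_polynomials_doubly_connected}, valid since $0$ and $\infty$ are separated by $A_1$), and boundedness of the bounce operator (Theorem \ref{th:bounce_bounded}), there are $\tilde g_n\in\mathbb{C}[z,z^{-1}]$ with $h_n:=\mathbf{G}_{A_1,\Omega_1}(\tilde g_n|_{A_1})\to h$ in $\mathcal{D}_{\mathrm{harm}}(\Omega_1)$; adding suitable constants to the $\tilde g_n$ (which affects neither the Laurent-polynomial property nor the approximation) I may also arrange $h_n(0)=h(0)$. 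Each $\tilde g_n$ is holomorphic on $\mathbb{C}\setminus\{0\}\supseteq U$, hence continuous across $\Gamma$; consequently $h_n$ and $u_n:=\mathbf{G}_{A_2,\Omega_2}(\tilde g_n|_{A_2})$ have the same CNT boundary values $\tilde g_n|_\Gamma$, and since $\Gamma$ is a quasicircle ($\mathcal{H}(\Gamma,\Omega_1)=\mathcal{H}(\Gamma,\Omega_2)$, Theorem \ref{th:transmission}) this identifies $u_n=\mathbf{O}_{1,2}h_n$. Applying the anchor lemma (Lemma \ref{le:anchor_lemma}) on the $\Omega_1$-side to $\tilde g_n|_{A_1}$ and on the $\Omega_2$-side to $\tilde g_n|_{A_2}$, with the one-form $K_z\,dw$, gives
\[
\mathbf{J}^q_1 h_n(z)=\frac1{2\pi i}\lim_{\epsilon\searrow0}\int_{\Gamma_\epsilon}\tilde g_n\,K_z\,dw,\qquad
\mathbf{J}^q_2 u_n(z)=\frac1{2\pi i}\lim_{\delta\searrow0}\int_{\Gamma'_\delta}\tilde g_n\,K_z\,dw,
\]
where $\Gamma_\epsilon\subset\Omega_1$ and $\Gamma'_\delta\subset\Omega_2$ are the level curves from Definition \ref{defn:Cauchy operator}. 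For $\epsilon,\delta$ small these two curves cobound an annular region straddling $\Gamma$ and contained in $U$, on which $\tilde g_n\,K_z\,dw$ is holomorphic (since $0,z,q$ all lie outside it); as $\Gamma_\epsilon$ and $\Gamma'_\delta$ carry opposite orientations relative to that region, Cauchy's theorem yields $\int_{\Gamma_\epsilon}\tilde g_n K_z\,dw+\int_{\Gamma'_\delta}\tilde g_n K_z\,dw=0$, i.e.\ $\mathbf{J}^q_1 h_n(z)=-\mathbf{J}^q_2\mathbf{O}_{1,2}h_n(z)$.

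Finally I would let $n\to\infty$. Since $\mathbf{J}^q_1$ (Corollary \ref{co:J_bounded}) and $\mathbf{O}_{1,2}$ (Theorem \ref{th:transmission}) are bounded with respect to Dirichlet energy and $h_n\to h$ in $\mathcal{D}_{\mathrm{harm}}(\Omega_1)$, the functions $\mathbf{J}^q_1 h_n$ and $\mathbf{J}^q_2 u_n$ converge in Dirichlet energy to $\mathbf{J}^q_1 h$ and $\mathbf{J}^q_2\mathbf{O}_{1,2}h$; hence $\mathbf{J}^q_1 h+\mathbf{J}^q_2\mathbf{O}_{1,2}h$ has zero Dirichlet energy, so it is constant on $\Omega_1$ and constant on $\Omega_2$. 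It vanishes at $q$ (the kernel $K_q$ vanishes), hence vanishes on the component containing $q$; that the constant on the other component is also $0$ I would obtain by evaluating at a convenient point there (e.g.\ $\infty$) and using the explicit estimate behind the anchor lemma together with the normalization $h_n(0)=h(0)$ — at which stage the quasicircle hypothesis re-enters, via the $H^{1/2}$-boundedness of composition by the welding quasisymmetry (Theorem \ref{th:quasisymmetry_CNT_charac}), to control the transmitted approximants $u_n$. Thus $\mathbf{J}^q_1 h=-\mathbf{J}^q_2\mathbf{O}_{1,2}h$ on all of $\sphere\setminus\Gamma$, and the version with $\Omega_1,\Omega_2$ interchanged follows by applying this to $h':=\mathbf{O}_{1,2}h$ and using $\mathbf{O}_{2,1}\mathbf{O}_{1,2}=\mathrm{Id}$.

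The hard part is conceptual, not computational: one must deform the Cauchy contour \emph{across} the non-rectifiable curve $\Gamma$, on which $h$ is barely regular. The resolution — replace $h$ near $\Gamma$ by the approximants $\tilde g_n$, whose boundary data \emph{do} extend holomorphically across $\Gamma$ so that Cauchy's theorem applies, use the anchor lemma to see this substitution changes none of the limiting integrals, and use the bounce construction so that $\mathbf{O}_{1,2}h_n$ is computed by the very same contour integral from the $\Omega_2$-side — is the crux. The subsidiary delicate point is the limit $n\to\infty$, where Dirichlet-energy boundedness only controls the functions modulo constants and the constant on the component away from $q$ must be pinned down by hand; this is the one place the quasicircle property is used beyond guaranteeing that $\mathbf{O}_{1,2}$ exists. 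The orientation bookkeeping in the Stokes step is the main routine pitfall.
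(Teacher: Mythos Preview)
Your argument is essentially the paper's: approximate $h$ by the bounce of functions holomorphic across $\Gamma$ (you take Laurent polynomials, the paper takes restrictions of all of $\mathcal{D}(U)$ for $U=A_1\cup\Gamma\cup A_2$, which is the same by Theorem~\ref{th:density_polynomials_doubly_connected}), apply the anchor lemma on each side so that both limiting integrals become honest contour integrals of the same holomorphic one-form, deform one contour onto the other across $\Gamma$ (the paper phrases this as a homotopy of $\Gamma^1_\epsilon$ and $\Gamma^2_\epsilon$), and extend by density using boundedness of $\mathbf{J}^q$ and of $\mathbf{O}_{1,2}$.

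The one place you go beyond the paper is in the final limit: the paper writes only ``$\mathbf{O}_{1,2}$ is bounded, so it is enough to prove the theorem for such functions'' and does not isolate the constant on the component not containing $q$, whereas you correctly observe that Dirichlet-energy convergence alone does not pin that constant down. Your proposed repair --- controlling the transmitted $u_n$ pointwise through the welding quasisymmetry --- is in the right direction but not quite nailed: Theorem~\ref{th:quasisymmetry_CNT_charac}(2)(c) is a homogeneous $\dot H^{1/2}$ bound and so says nothing about constants; you need the inhomogeneous $H^{1/2}(\mathbb{S}^1)$ boundedness of $\mathbf{C}_\phi$ recorded in the paragraph just before that theorem, which does give pointwise control of the harmonic extensions and hence of $u_n(q)$. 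With that adjustment your sketch is complete and, on this point, more careful than the paper's own proof.
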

 \begin{proof}
  Let $B_1$ and $B_2$ be collar neighbourhoods of $\Gamma$ in $\Omega_1$ and $\Omega_2$ respectively.  Let $U = B_1 \cup B_2 \cup \Gamma$.  This is an open set bordered by two analytic curves. By Corollary \ref{co:density_smaller_collar}, the class $\mathbf{R}_1 \mathcal{D}(U)$ of elements of $\mathcal{D}(U)$ to $\mathcal{D}(U)$ is dense in $\mathcal{D}(B_1)$. Furthermore, by Theorem \ref{th:bounce_of_holomorphic_dense}, $\mathbf{G}_{A_1,\Omega_1} \mathcal{D}(B_1)$ is dense in $\mathcal{D}_{\text{harm}}(\Omega_1)$.  Thus since $\mathbf{G}_{B_1,\Omega_1}$ is bounded by Theorem \ref{th:bounce_bounded}, $\mathbf{G}_{B_1,\Omega_1} \mathbf{R}_1 \mathcal{D}(U)$ is dense in $\mathcal{D}_{\text{harm}}(\Omega_1)$.  By Theorem \ref{th:transmission} $\mathbf{O}_{1,2}$ is bounded, so it is enough to prove the theorem for such functions.  
  
  Let $h \in \mathcal{D}(U)$. Since $h$ extends continuously to $\Gamma$, the CNT boundary values of $\mathbf{R}_1 h$ and $\mathbf{R}_2 h$ with respect to $\Omega_1$ and $\Omega_2$ both equal the continuous extension and hence each other. Thus 
  \begin{equation}  \label{eq:equality_limiting_temp_one}
   \mathbf{O}_{1,2} \mathbf{G}_{B_1,\Omega_1} \mathbf{R}_1 h = 
   \mathbf{G}_{B_2,\Omega_2} \mathbf{R}_2 h.
  \end{equation} 
  
  Fix $z \in \sphere \backslash \Gamma$.  Since $B_i$ are collar domains, by definition there are biholomorphisms $f_i:\disk^+ \rightarrow \Omega_i$ so that $f_i(\mathbb{A}_{r_i}) = B_i$ for annuli $\mathbb{A}_{r_i} = \{ z : r_i <|z|<1 \}$ for $i=1,2$. Let $\Gamma^i_\epsilon$ denote the limiting curves $f_i(|z|=e^{-\epsilon})$ with orientations induced by $f_i$.  By Carath\'eodory's theorem, the maps $f_i$ extend homeomorphically to maps from $\mathbb{S}^1$ to $\Gamma$, so for any fixed $\epsilon$, the curves $\Gamma^i_{\epsilon}$ are each homotopic to $\Gamma$ and hence to each other.  Thus, since $z$ and $q$ are eventually not in the domain bounded by $\Gamma^1_\epsilon$ and $\Gamma^2_\epsilon$,
  \begin{equation}  \label{eq:equality_limiting_temp_two}
    \lim_{\epsilon \searrow 0} \frac{1}{2 \pi i} \int_{\Gamma^1_\epsilon} 
    h(w) \left( \frac{1}{w-z} - \frac{1}{w-q} \right) \,dw = 
    - \lim_{\epsilon \searrow 0} \frac{1}{2 \pi i} \int_{\Gamma^2_\epsilon} 
    h(w) \left( \frac{1}{w-z} - \frac{1}{w-q} \right) \,dw   
  \end{equation}
  where the negative sign arises from the change of orientation between the integrals.
  
  Finally, applying  the anchor lemma \ref{le:anchor_lemma} for fixed $z$ with
  \[  \alpha(w) = \left( \frac{1}{w-z} - \frac{1}{w-q} \right) \,dw,  \]
  we have that for $i=1,2$
  \begin{equation} \label{eq:equality_limiting_temp_three}
    \mathbf{J}^q_{i} \mathbf{G}_{B_i,\Omega_i} \mathbf{R}_i h (z) = \lim_{\epsilon \searrow 0} \frac{1}{2 \pi i} \int_{\Gamma^i_\epsilon} 
    h(w) \left( \frac{1}{w-z} - \frac{1}{w-q} \right) \,dw.  
  \end{equation}
  Here, we may have to shrink the domain $B_j$ so that neither $z$ nor $q$ are in the closure, to ensure that $\alpha \in L^2(B_j)$. This does not affect the validity of the argument, since given nested collar neighbourhoods $B_j' \subset B_j$, by definition 
  \[  \mathbf{G}_{B_j',\Omega_j} \left.  h \right|_{B_j'} = \mathbf{G}_{B_j,\Omega_j}
   \left. h \right|_{B_j}. \]
  Thus combining (\ref{eq:equality_limiting_temp_one}), (\ref{eq:equality_limiting_temp_two}), and (\ref{eq:equality_limiting_temp_three}) we have 
  \[   \mathbf{J}^q_{1} \mathbf{G}_{B_1,\Omega_1} \mathbf{R}_1 h = 
     - \mathbf{J}^q_{2} \mathbf{O}_{1,2} \mathbf{G}_{B_1,\Omega_1} \mathbf{R}_1 h \]
  which completes the proof.
 \end{proof}
 \begin{remark}
  The negative sign is an artifact of the change of orientation induced by the switch from the domain $\Omega_1$ to $\Omega_2$.  In previous publications \cite{Schippers_Staubach_JMAA,Schippers_Staubach_Plemelj} we chose the orientations in such a way that the sign did not change.
 \end{remark}

 Finally, we record the following obvious fact.
 \begin{theorem} \label{th:jump_on_holomorphic} 
  Let $\Gamma$ be a Jordan curve in $\sphere$ and let $\Omega_1$ and $\Omega_2$ be the components of the complement.  Fix $q \notin \Gamma$, and let $h \in \mathcal{D}(\Omega_j)$.  
  
  If $q \in \Omega_j$, then 
  \[  \mathbf{J}^q_{j} h (z) = \left\{ \begin{array}{lr} h(z)- h(q) & z \in \Omega_j 
  \\  -h(q) & z \notin \Omega_j \cup \Gamma  \end{array} \right.       \]
  whereas if $q \notin \Omega_j$ then 
   \[  \mathbf{J}^q_{j} h (z) = \left\{ \begin{array}{lr} h(z)  & z \in \Omega_j 
  \\   0  & z \notin \Omega_j \cup \Gamma.  \end{array} \right.       \]
 \end{theorem}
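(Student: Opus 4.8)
The plan is to reduce to the case where $\Omega_j$ is bounded and then apply the classical Cauchy integral formula and Cauchy's theorem directly; the Dirichlet condition plays no role, only holomorphicity of $h$ on $\Omega_j$ is used.

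First I would invoke the M\"obius invariance of $\mathbf{J}^q$ (Theorem \ref{th:J_Mobius_invariant}) together with the conformal invariance of the holomorphic Dirichlet space to assume without loss of generality that $\Omega_j$ is bounded, i.e. $\infty$ lies in the other component $\Omega_k$. Indeed, if $M$ is a M\"obius transformation with $M(\Omega_j)$ bounded, then $h\circ M^{-1}\in\mathcal{D}(M(\Omega_j))$, and Theorem \ref{th:J_Mobius_invariant} gives $\mathbf{J}^q_{\Omega_j}h=\left[\mathbf{J}^{M(q)}_{M(\Omega_j)}(h\circ M^{-1})\right]\circ M$; since $M$ carries $\Omega_j$ onto $M(\Omega_j)$ and $q$ to $M(q)$, the claimed formula for the triple $(\Omega_j,q,h)$ is equivalent to the one for $(M(\Omega_j),M(q),h\circ M^{-1})$, so it suffices to treat the bounded case.

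With $\Omega_j$ bounded, fix $z,q\notin\Gamma$ and a biholomorphism $f:\disk^+\to\Omega_j$, and set $\Gamma_\epsilon=f(\{|w|=e^{-\epsilon}\})$ with the induced orientation and $R_\epsilon=f(\{|w|<e^{-\epsilon}\})$. Since $f$ is orientation preserving and $\{|w|=e^{-\epsilon}\}$ is the positively oriented boundary of $\{|w|<e^{-\epsilon}\}$, the curve $\Gamma_\epsilon$ is the positively oriented boundary of $R_\epsilon$, and $\overline{R_\epsilon}$ is a compact subset of $\Omega_j\subset\mathbb{C}$, so $h$ is holomorphic on a neighbourhood of $\overline{R_\epsilon}$. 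Moreover $R_\epsilon$ increases to $\Omega_j$ as $\epsilon\searrow 0$. Hence, if $z\in\Omega_j$ then $z\in R_\epsilon$ for all small $\epsilon$ and the Cauchy integral formula gives $\frac{1}{2\pi i}\int_{\Gamma_\epsilon}\frac{h(w)}{w-z}\,dw=h(z)$, while if $z\notin\Omega_j\cup\Gamma$ then $z\notin\overline{R_\epsilon}$ for every $\epsilon$ and Cauchy's theorem gives the integral $0$. The identical dichotomy applies to the normalization term $\frac{1}{w-q}$ when $q\neq\infty$ (producing $h(q)$ if $q\in\Omega_j$ and $0$ otherwise), and when $q=\infty$ this term is simply absent, which is consistent since then $q\notin\Omega_j$ (as $\Omega_j$ is bounded).

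Combining these, for all sufficiently small $\epsilon$ we get
\[
 \frac{1}{2\pi i}\int_{\Gamma_\epsilon} h(w)\Bigl(\frac{1}{w-z}-\frac{1}{w-q}\Bigr)\,dw
 = h(z)\,\mathbf{1}_{\{z\in\Omega_j\}}-h(q)\,\mathbf{1}_{\{q\in\Omega_j\}},
\]
so the defining limit in \eqref{eq:Cauchy_operator_definition} exists (indeed the integrand is eventually constant in $\epsilon$) and equals the right-hand side; separating the four cases according to whether $q\in\Omega_j$ and whether $z\in\Omega_j$ gives precisely the two displayed formulas. There is essentially no obstacle here: the only points needing care are verifying that $\Gamma_\epsilon$ is the positively oriented boundary of $R_\epsilon$ and keeping the M\"obius bookkeeping straight. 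Alternatively one may skip the reduction and argue directly for unbounded $\Omega_j$, at the modest cost of accounting, via the residue theorem on $\sphere$, for the residue $-h(\infty)$ of $\tfrac{h(w)}{w-z}\,dw$ at $w=\infty$ when $\infty\in\Omega_j$, which reproduces exactly the same answer.
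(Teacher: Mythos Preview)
Your proof is correct and is exactly the approach the paper has in mind: the paper's proof consists of the single sentence ``This follows from the ordinary Cauchy integral formula,'' and what you have written is a careful unpacking of that sentence (the reduction to bounded $\Omega_j$ via Theorem~\ref{th:J_Mobius_invariant}, then the Cauchy integral formula and Cauchy's theorem on the compactly contained regions $R_\epsilon$).
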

 \begin{proof}
  This follows from the ordinary Cauchy integral formula.  
 \end{proof}
 
\end{subsection} 
\begin{subsection}{The Schiffer isomorphisms and the Plemelj-Sokhotski isomorphisms}
 \label{se:jump_isomorphisms}
 In this section we show that the Schiffer operator $\mathbf{T}_{\Omega_1,\Omega_2}$ and the jump decomposition induced by the Cauchy operator $\mathbf{J}^q_{1}$ are isomorphisms precisely for quasicircles. 
 
 We refer to the isomorphism induced by the jump decomposition as the Plemelj-Sokhotski isomorphism.  
  The classical Plemelj-Sokhotski jump decomposition says the following, in the smooth case.  Let $\Gamma$ be a smooth Jordan curve $\Gamma$ separating $\mathbb{C}$ into components $\Omega_1$ and $\Omega_2$; assume that $\Omega_2$ is the unbounded component.  For a smooth function $u$ on $\Gamma$, define the functions
 \[   h_k(z) = \frac{1}{2 \pi i} \int_{\Gamma} \frac{h(\zeta)}{\zeta - z} d\zeta \ \ \ \  z \in \Omega_k, \ \ k =1,2. \]
 For any point $w \in \Gamma$, it is easily proven that
 \[  \lim_{z \rightarrow w} h_2(z) -  \lim_{z \rightarrow w} h_1(z) = u(w). \]
 In fact, this formula can be written in a stronger form involving the principal value integral of $u$ on the boundary; see Section \ref{se:Schiffer_Cauchy_notes}.   
 
 The map taking $u$ to $(h_1,h_2)$ is what we call the Plemelj-Sokhotski isomorphism.  Using the limiting integral in place of the Cauchy integral, we will prove that for $u \in \mathcal{H}(\Gamma)$, this is an isomorphism if and only if $\Gamma$ is a quasicircle. We will also show the closely related result of Napalkov and Yulmukhametov that $T_{1,2}$ is an isomorphism if and only if $\Gamma$ is a quasicircle.  
 
 To do this, we first require a lemma. 
 \begin{lemma} \label{le:pre-transmitted_jump}
  Let $\Gamma$ be a Jordan curve in $\sphere$ and let $\Omega_1$ and $\Omega_2$ be the connected components of the complement.  Let $B$  be a collar neighbourhood of $\Gamma$ in $\Omega_1$.  Assume that $q$ is not in the closure of $B$. If $H \in \mathcal{D}(B)$ then 
  $\mathbf{J}^q_{1,2} \mathbf{G}_{B,\Omega_1} H$ extends to a holomorphic function $H_2$ on $\Omega_2 \cup \Gamma \cup B$ which satisfies
  \[   H_2(z) = \mathbf{J}^q_{1,1} \mathbf{G}_{B,\Omega_1} H(z) - H(z).  
   \ \ \ z \in B. \]  
  
  Furthermore, $\mathbf{J}^q_{1,2} \mathbf{G}_{B,\Omega_1} H$ has a transmission in $\mathcal{D}_{\mathrm{harm}}(\Omega_1)$, given explicitly by 
  \begin{align*}
    \hat{\mathbf{O}}_{2,1}  \mathbf{J}^q_{1,2} \mathbf{G}_{B,\Omega_1} H & = \mathbf{G}_{B,\Omega_1} H_2 \\ 
    & = \mathbf{J}^q_{1,1} \mathbf{G}_{B,\Omega_1} H - \mathbf{G}_{B,\Omega_1} H.
  \end{align*} 
 \end{lemma}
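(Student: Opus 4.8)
The plan is to express $\mathbf{J}^q$ through its defining limit of contour integrals and to split the holomorphic function $H$ across the two boundary curves of the collar $B$. Since $\mathbf{J}^q$ (Theorem \ref{th:J_Mobius_invariant}), the Schiffer operators (Theorem \ref{th:Schiffer_Mobius_invariance}), and the conformally natural operators $\mathbf{G}_{B,\Omega_1}$ and $\hat{\mathbf{O}}_{2,1}$ transform covariantly under M\"obius maps, I would first normalize, by a M\"obius transformation, so that $q\neq\infty$, $\infty\in\Omega_2$, and $0\in\Omega_1\setminus\overline{B}$; this places $0$ and $\infty$ in the two distinct components of $\sphere\setminus\overline{B}$, so that Theorem \ref{th:density_polynomials_doubly_connected} applies to $B$, viewed as a doubly connected domain bounded by $\Gamma$ and by the inner analytic curve of $B$. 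I would also use that replacing $q$ by another admissible base point alters $\mathbf{J}^q$ only by a $z$-independent constant, transported consistently by $\mathbf{G}_{B,\Omega_1}$ and $\hat{\mathbf{O}}_{2,1}$, to keep the bookkeeping light.

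By the decomposition in the proof of Theorem \ref{th:density_polynomials_doubly_connected}, write $H=H^{\mathrm{in}}-H^{\mathrm{out}}$ on $B$ with $H^{\mathrm{in}}\in\mathcal{D}(\Omega_1)$ and $H^{\mathrm{out}}\in\mathcal{D}(B\cup\Gamma\cup\Omega_2)$, $H^{\mathrm{out}}(\infty)=0$. The key point — which is what makes the proof short — is that $H^{\mathrm{out}}$ is \emph{already} holomorphic on an open set containing $\mathrm{cl}\,\Omega_2$, so it is the natural candidate for the sought extension. Since $\mathbf{G}_{B,\Omega_1}$ is linear and $\mathbf{G}_{B,\Omega_1}(H^{\mathrm{in}}|_B)=H^{\mathrm{in}}$ (as $H^{\mathrm{in}}\in\mathcal{D}_{\mathrm{harm}}(\Omega_1)$ already), one gets $g:=\mathbf{G}_{B,\Omega_1}H=H^{\mathrm{in}}-\mathbf{G}_{B,\Omega_1}(H^{\mathrm{out}}|_B)$, hence $\mathbf{J}^q_{\Omega_1}g=\mathbf{J}^q_{\Omega_1}H^{\mathrm{in}}-\mathbf{J}^q_{\Omega_1}\mathbf{G}_{B,\Omega_1}(H^{\mathrm{out}}|_B)$. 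By Theorem \ref{th:jump_on_holomorphic} the first term equals, up to the constant fixed by $q$, $H^{\mathrm{in}}$ on $\Omega_1$ and $0$ on $\Omega_2$.

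For the second term, Definition \ref{defn:Cauchy operator} together with the anchor lemma (Lemma \ref{le:anchor_lemma}) gives $\mathbf{J}^q_{\Omega_1}\mathbf{G}_{B,\Omega_1}(H^{\mathrm{out}}|_B)(z)=\frac{1}{2\pi i}\lim_{\epsilon\searrow 0}\int_{\Gamma_\epsilon}H^{\mathrm{out}}(w)\bigl(\tfrac{1}{w-z}-\tfrac{1}{w-q}\bigr)\,dw$; the hypothesis that $\bigl(\tfrac{1}{w-z}-\tfrac{1}{w-q}\bigr)dw$ lies in $\mathcal{A}(B')$ can be arranged by shrinking to a thinner collar $B'\subseteq B$ avoiding $z$ and $q$, since $\mathbf{G}_{B',\Omega_1}$ and $\mathbf{G}_{B,\Omega_1}$ agree on restrictions. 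As $H^{\mathrm{out}}$ is holomorphic on a neighbourhood of the closure of the region $E_\epsilon$ cut off by $\Gamma_\epsilon$ towards $\Gamma$ and $\Omega_2$, and the integrand has no residue at $\infty$ (since $H^{\mathrm{out}}(\infty)=0$), the residue theorem in $E_\epsilon$ gives, up to a constant, $-H^{\mathrm{out}}(z)$ when $z\in\Omega_2$ and $0$ when $z\in B$ (for small $\epsilon$, $z$ then lies on the $\Omega_1$-side of $\Gamma_\epsilon$). Combining with the first term yields $\mathbf{J}^q_{1,2}g=H^{\mathrm{out}}-c$ on $\Omega_2$ and $\mathbf{J}^q_{1,1}g=H^{\mathrm{in}}-c$ on $B$ with the \emph{same} constant $c$; then $H_2:=H^{\mathrm{out}}-c$ is the holomorphic extension of $\mathbf{J}^q_{1,2}g$ to $\Omega_2\cup\Gamma\cup B$, and on $B$, $H_2=(H^{\mathrm{in}}-H)-c=\mathbf{J}^q_{1,1}g-H$.

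For the transmission, $\mathbf{J}^q_{1,2}g=H_2|_{\Omega_2}\in\mathcal{C}_{\mathrm{harm}}(\Omega_2)$, so $\hat{\mathbf{O}}_{2,1}$ applies; moreover $\mathbf{G}_{B,\Omega_1}(H_2|_B)$ is continuous on $\mathrm{cl}\,\Omega_1$, since (following the proof of Theorem \ref{th:bounce_exists}) in an annular model $H_2|_B$ splits into a summand holomorphic past $\Gamma$ and one continuous past $\Gamma$, whence so is its bounce. Thus $\mathbf{G}_{B,\Omega_1}(H_2|_B)$ and $\hat{\mathbf{O}}_{2,1}(\mathbf{J}^q_{1,2}g)$ are both harmonic on $\Omega_1$, continuous on $\mathrm{cl}\,\Omega_1$, with boundary values $H_2|_\Gamma$, so they coincide by uniqueness for the Dirichlet problem; in particular $\hat{\mathbf{O}}_{2,1}(\mathbf{J}^q_{1,2}\mathbf{G}_{B,\Omega_1}H)\in\mathcal{D}_{\mathrm{harm}}(\Omega_1)$. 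Finally, by linearity of $\mathbf{G}_{B,\Omega_1}$, which sends $H$ to $g$ and fixes $\mathbf{J}^q_{1,1}g$ (in $\mathcal{D}_{\mathrm{harm}}(\Omega_1)$ by Corollary \ref{co:J_bounded}), one gets $\mathbf{G}_{B,\Omega_1}(H_2|_B)=\mathbf{G}_{B,\Omega_1}\bigl((\mathbf{J}^q_{1,1}g-H)|_B\bigr)=\mathbf{J}^q_{1,1}g-g=\mathbf{J}^q_{1,1}\mathbf{G}_{B,\Omega_1}H-\mathbf{G}_{B,\Omega_1}H$, as claimed. The main obstacle I anticipate is bookkeeping: tracking orientations and which of $z,q,\infty$ lies inside which $\Gamma_\epsilon$ in the residue computation, and the routine collar-shrinking for the anchor lemma; the conceptual content is the observation that the outer summand $H^{\mathrm{out}}$ of $H$ already supplies the holomorphic extension across $\Gamma$.
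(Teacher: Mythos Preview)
Your proposal is correct and is essentially the same argument as the paper's, just organized differently. The paper works more directly: it defines $H_2$ explicitly as the Cauchy integral of $H$ over the inner (analytic) boundary $\gamma$ of $B$, then uses the anchor lemma to deform the limiting contour $\Gamma_\epsilon$ to $\gamma$ (giving the identity on $\Omega_2$) and the ordinary Cauchy integral formula on the annular region between $\gamma$ and $\Gamma_\epsilon$ (giving the identity on $B$). Your decomposition $H=H^{\mathrm{in}}-H^{\mathrm{out}}$ from the proof of Theorem~\ref{th:density_polynomials_doubly_connected} is exactly this Cauchy splitting --- your $H^{\mathrm{out}}$ is the paper's $H_2$ up to the normalization constant --- so your computation of $\mathbf{J}^q$ on the two summands via Theorem~\ref{th:jump_on_holomorphic} and a residue argument amounts to the same contour deformation, only with the pieces separated in advance. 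The paper's packaging is slightly shorter because it never names $H^{\mathrm{in}}$ and handles both cases $z\in\Omega_2$ and $z\in B$ with a single function $H_2$; your version makes the role of the decomposition more explicit and handles the case $q\in\Omega_1$ on the same footing as $q\in\Omega_2$, which the paper sidesteps by normalizing to $q=\infty$.
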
 
 Recall that $\hat{\mathbf{O}}_{2,1}$ is the solution of the
 Dirichlet problem on $\Omega_1$ with continuous boundary values $\left. H_2 \right|_{\Gamma}$.
 \begin{proof}
  By Theorem \ref{th:J_Mobius_invariant}, it is enough to prove this in the case that $q=\infty$ and $\Omega_2$ is the unbounded component of the complement of $\Gamma$.
  
  The first claim is just the ordinary Cauchy integral formula combined with the anchor lemma.   Let $f:\disk^+ \rightarrow \Omega_1$ be the biholomorphism such that $f(\mathbb{A}) = B$ for an annulus $\mathbb{A} = \{ z : r <|z|<1 \}$, and let $\Gamma_1^\epsilon$ be the corresponding images under $f$ of circles $|z|=e^{-\epsilon}$ as usual, with orientation induced by $f$.  Let $\gamma$ be the analytic curve which is the inner boundary of $B$; that is, the image of $|z|=r$ under $f$.  
  
  Define   
  \[   H_2(z) = \frac{1}{2\pi i} \int_{\gamma} \frac{H(w)}{w-z} \,dw  \]
  which is holomorphic in the open set $\Omega_2 \cup \Gamma \cup B$.   
  By the anchor lemma \ref{le:anchor_lemma} and the fact that $H$ is holomorphic, for all $z \in \Omega_2$ 
  \begin{equation} \label{eq:transmitted_jump_temp_one}
   \mathbf{J}^q_{1,2} \mathbf{G}_{B,\Omega_1} H(z) = \lim_{\epsilon \searrow 0} 
   \frac{1}{2 \pi i} \int_{\Gamma^1_\epsilon} \frac{H(w)}{w-z} \,dw =  H_2(z).
  \end{equation}
  By the ordinary Cauchy integral formula,  for all $z \in B$ 
  \[  H(z) = \lim_{\epsilon \searrow 0} \frac{1}{2 \pi i} \int_{\Gamma^1_\epsilon} 
     \frac{H(w)}{w-z} \,dw - H_2(z).      \]
  Applying the anchor lemma \ref{le:anchor_lemma} again we see
  \begin{equation} \label{eq:transmitted_jump_temp_two}
   H(z) = \mathbf{J}^q_{1,1} \mathbf{G}_{B,\Omega_1} H (z) - H_2(z) 
  \end{equation}
  for all $z \in B$.  
  
   We now prove the second claim.  Since $H_2$ extends continuously to $\Gamma$, its CNT boundary values with respect to $\Omega_1$ equal its CNT boundary values with respect to $\Omega_2$, which are equal to those of $\mathbf{J}^q_{1,2} \mathbf{G}_{B,\Omega_1} H$ by (\ref{eq:transmitted_jump_temp_two}).  Of course the CNT boundary values are all continuous extensions.  Thus 
  \begin{equation} \label{eq:transmitted_jump_temp_three}
   \mathbf{G}_{B,\Omega_1} H_2 = \hat{\mathbf{O}}_{1,2} \mathbf{J}^q_{1,2} \mathbf{G}_{B,\Omega_1} H.  
  \end{equation}
  
  To see that $\mathbf{G}_{B,\Omega_1} H_2 \in \mathcal{D}_{\text{harm}}(\Omega_2)$, 
  let $B_1 =f(\mathbb{A}')$ be a collar neighbourhood of $\Gamma$ in $\Omega_1$ where $\mathbb{A}'$ is chosen so that its inner boundary is contained in $\mathbb{A}$. 
  Since $H_2$ is holomorphic on an open neighbourhood of the closure of $B_1$, its restriction to $B_1$ is in $\mathcal{D}(B_1)$.  Since
  $\mathbf{G}_{B,\Omega_1}$ is bounded by Theorem \ref{th:bounce_bounded}, the transmission $\mathbf{G}_{B,\Omega_1}H_2 = \mathbf{G}_{B_1,\Omega_1}H_2$ (where $H_2$ is restricted to $B_1$) is in $\mathcal{D}_{\text{harm}}(\Omega_1)$ as claimed.
  
  Finally, applying $\mathbf{G}_{B,\Omega_1}$ to both sides of (\ref{eq:transmitted_jump_temp_two}), which leaves the first term of the right hand side unchanged, and using (\ref{eq:transmitted_jump_temp_three}) we obtain
  \[  \mathbf{G}_{B,\Omega_1} H = \mathbf{J}^q_{1,1} \mathbf{G}_{B,\Omega_1} H(z) - \hat{\mathbf{O}}_{2,1} \mathbf{J}^q_{1,2} \mathbf{G}(B,\Omega_1)H.       \]
  on $\Omega_1$.  This completes the proof.
 \end{proof}
 
 \begin{theorem} \label{th:transmitted_jump}
  Let $\Gamma$ be a quasicircle in $\sphere$ and let $\Omega_1$ and $\Omega_2$ be the connected components of the complement. For all $h \in \mathcal{D}_{\mathrm{harm}}(\Omega_1)$ 
  \[  h = \mathbf{J}^q_{1,1} h - \mathbf{O}_{2,1} \mathbf{J}^q_{1,2} h.      \]
 \end{theorem}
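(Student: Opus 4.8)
The plan is to reduce the identity, by density, to the special functions already handled in Lemma~\ref{le:pre-transmitted_jump}. First I would fix a collar neighbourhood $B$ of $\Gamma$ in $\Omega_1$ taken thin enough that $q$ does not lie in its closure (possible since $q\notin\Gamma$ is fixed and collars may be chosen arbitrarily close to $\Gamma$). For $H\in\mathcal{D}(B)$, Lemma~\ref{le:pre-transmitted_jump} gives the exact identity
\[
 \mathbf{G}_{B,\Omega_1}H=\mathbf{J}^q_{1,1}\mathbf{G}_{B,\Omega_1}H-\hat{\mathbf{O}}_{2,1}\,\mathbf{J}^q_{1,2}\mathbf{G}_{B,\Omega_1}H
\]
on $\Omega_1$. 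Because $\Gamma$ is a quasicircle, $\hat{\mathbf{O}}_{2,1}$ may be replaced by $\mathbf{O}_{2,1}$ here: the same lemma shows $\mathbf{J}^q_{1,2}\mathbf{G}_{B,\Omega_1}H$ extends holomorphically, in particular continuously, across $\Gamma$, so its \emph{CNT} boundary values from $\Omega_2$ coincide with that continuous extension, which is exactly the boundary data defining $\mathbf{O}_{2,1}$, and the lemma already certifies that the resulting function lies in $\mathcal{D}_{\mathrm{harm}}(\Omega_1)$. Hence $h=\mathbf{J}^q_{1,1}h-\mathbf{O}_{2,1}\mathbf{J}^q_{1,2}h$ holds for every $h$ in the class $\mathbf{G}_{B,\Omega_1}(\mathcal{D}(B))$.

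Next I would run the density argument, as in the proof of Theorem~\ref{th:J_same_both_sides}. By Theorem~\ref{th:bounce_of_holomorphic_dense}, $\mathbf{G}_{B,\Omega_1}(\mathcal{D}(B))$ is dense in $\mathcal{D}_{\mathrm{harm}}(\Omega_1)$ with respect to the Dirichlet semi-norm. Since $\Gamma$ is a quasicircle, $\mathbf{O}_{2,1}$ is bounded with respect to Dirichlet energy by Theorem~\ref{th:transmission}, and $\mathbf{J}^q_{1,1}$, $\mathbf{J}^q_{1,2}$ are bounded with respect to Dirichlet energy by Corollary~\ref{co:J_bounded}; so $h\mapsto\mathbf{J}^q_{1,1}h-\mathbf{O}_{2,1}\mathbf{J}^q_{1,2}h$ and the identity map are both operators on $\mathcal{D}_{\mathrm{harm}}(\Omega_1)$ bounded with respect to the Dirichlet energy. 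A standard limiting argument then shows that their difference applied to any $h\in\mathcal{D}_{\mathrm{harm}}(\Omega_1)$ has vanishing Dirichlet energy, i.e.\ equals a constant depending linearly on $h$.

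It remains to show that this additive constant is zero, and this is the only point requiring care, since evaluation of a Dirichlet function at a point is not controlled by the Dirichlet semi-norm. Using Theorem~\ref{th:J_Mobius_invariant} I may assume $\Omega_1$ is bounded, so that the Sobolev norm $\|\cdot\|_{H^1(\Omega_1)}$ is available. All three operators are in fact bounded for this stronger norm: $\mathbf{O}_{2,1}$ factors through the trace on $\Gamma$ and the solution of the Dirichlet problem, exactly as in the proof of Theorem~\ref{th:bounce_bounded}, while $\mathbf{J}^q_{1,1},\ \mathbf{J}^q_{1,2}$ are Dirichlet-bounded and land in spaces normalized at $q$ by Corollary~\ref{co:J_bounded}, which by the Poincar\'e--Wirtinger inequality together with an interior estimate for harmonic functions upgrades their Dirichlet bounds to $H^1$ bounds on bounded subregions. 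The dense class $\mathbf{G}_{B,\Omega_1}(\mathcal{D}(B))$ contains the constants and is Dirichlet-dense, and the same Poincar\'e--Wirtinger plus interior-estimate argument makes $\|\cdot\|_{H^1}$ comparable, on harmonic functions, to the Dirichlet semi-norm plus the value at a fixed point, so this class is $H^1$-dense. Since the desired identity also holds verbatim, constant included, for constant functions --- by Theorem~\ref{th:jump_on_holomorphic} and the fact that the transmission of a constant is the same constant --- the additive-constant functional is $H^1$-continuous and vanishes on an $H^1$-dense subspace containing the constants, hence is identically zero. I expect this bookkeeping with the additive constant to be the main obstacle; the steps preceding it are a mechanical combination of Lemma~\ref{le:pre-transmitted_jump}, the density theorems of Section~\ref{se:bounce_anchor_density}, and the boundedness statements already established.
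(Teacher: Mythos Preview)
Your approach is exactly the paper's: invoke Lemma~\ref{le:pre-transmitted_jump} on the class $\mathbf{G}_{B,\Omega_1}(\mathcal{D}(B))$, then extend by density (Theorem~\ref{th:bounce_of_holomorphic_dense}) using the Dirichlet-energy boundedness of $\mathbf{J}^q$ (Corollary~\ref{co:J_bounded}) and of $\mathbf{O}_{2,1}$ (Theorem~\ref{th:transmission}). The paper's proof is very terse and stops there; it does not explicitly address the additive-constant issue you raise, so you are right to flag it.

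Your resolution via the $H^1$ norm is correct in spirit but somewhat heavier than necessary, and a couple of steps are delicate. In particular, asserting that $\mathbf{O}_{2,1}$ ``factors through the trace on $\Gamma$'' and is $H^1$-bounded requires care: $\Gamma$ is only a quasicircle, so the standard Sobolev trace and extension theorems on $\Gamma$ itself are not directly available --- you would need to conformally map to the disk first, but then the $H^1$ norm is not preserved. A lighter way to close the gap is to observe that the identity holds outright for every holomorphic $h\in\mathcal{D}(\Omega_1)$ (this is a direct computation from Theorem~\ref{th:jump_on_holomorphic}, and includes all constants), so by linearity it suffices to treat $\overline{h}\in\overline{\mathcal{D}_p(\Omega_1)}$ for a fixed $p\in\Omega_1$. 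On this genuine Hilbert space the dense class $\mathbf{C}_{f^{-1}}\mathbb{C}_0[\bar z]\subset\mathbf{G}_{B,\Omega_1}(\mathcal{D}(B))\cap\overline{\mathcal{D}_p(\Omega_1)}$ is available, and one can then argue pointwise via the de-singularized integral representations~(\ref{eq:Schiffer_Cauchy_identities_temp1})--(\ref{eq:Schiffer_Cauchy_identities_temp2}), which for fixed $z$ exhibit $\mathbf{J}^q_{1,j}h(z)$ as an $L^2$ pairing with $\overline{\partial}h$ plus a point evaluation of $h$; this makes the needed pointwise continuity transparent without importing the full $H^1$ machinery. Either way, the substance of your argument is sound and matches the paper's intent.
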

 \begin{proof}  
  By Lemma \ref{le:pre-transmitted_jump} the claim holds for all $h$ of the form $\mathbf{G}_{B,\Omega_1} H$ for $H \in \mathcal{D}(B)$.  
  By Theorem \ref{th:bounce_of_holomorphic_dense}, $\mathbf{G}_{B,\Omega_1} \mathcal{D}(B)$ is dense in $\mathcal{D}_{\text{harm}}(\Omega_1)$.  Thus the theorem follows from the fact that $\mathbf{J}^q$ is bounded by Corollary \ref{co:J_bounded}.  
 \end{proof}
 
 \begin{remark}
 This can be thought of as the classical jump formula expressed in terms of the transmission.  
 \end{remark}

  Lemma \ref{le:pre-transmitted_jump} generates a large class of functions in 
  the Dirichlet space with continuous transmission. Namely, the bounce of any holomorphic Dirichlet-bounded function in the collar has a continuous transmission. We show this now, as well as the corresponding fact for Bergman space. Recall that the overfare operator for one-forms $\mathbf{O}'$ used below was defined by equation (\ref{eq:Oprime_definition}).  
 \begin{lemma} \label{le:dense_transmission}  
  Let $\Gamma$ be a Jordan curve separating $\sphere$ into components $\Omega_1$ and $\Omega_2$.
  \begin{enumerate}
    \item   For all $\overline{h} \in \mathbf{G}_{B,\Omega_1} \mathcal{D}(B) \cap \overline{\mathcal{D}(\Omega_1)}$, $\mathbf{J}^q_{1,2} \overline{h}$ has a continuous transmission in $\mathcal{D}_{\mathrm{harm}}(\Omega_1)$ given by 
    \[  \hat{\mathbf{O}}_{2,1}  \mathbf{J}^q_{1,2}  \overline{h} = \mathbf{J}^q_{1,1} \overline{h} - \overline{h}.   \]
    \item For all $\overline{\alpha} \in \overline{\partial} [ \mathbf{G}_{B,\Omega_1} \mathcal{D}(B) \cap \overline{\mathcal{D}(\Omega_1)}]$, $\mathbf{T}_{1,2} \overline{h}$ has a continuous transmission in $\mathcal{A}_{\mathrm{harm}}(\Omega_1)$ given by 
    \[ \hat{\mathbf{O}}'_{2,1} \mathbf{T}_{1,2} \overline{\alpha} 
     = \overline{\alpha} + \mathbf{T}_{1,1} \overline{\alpha}.  \]
  \end{enumerate}
 \end{lemma}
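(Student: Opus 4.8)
The plan is to get part (1) essentially for free from Lemma~\ref{le:pre-transmitted_jump}, and then to obtain part (2) by applying the exterior derivative to the identity in part (1) and reading off both sides with the Schiffer--Cauchy identities of Theorem~\ref{th:J_Schiffer_identities}.

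For part (1): given $\overline h \in \mathbf{G}_{B,\Omega_1}\mathcal{D}(B) \cap \overline{\mathcal{D}(\Omega_1)}$, write $\overline h = \mathbf{G}_{B,\Omega_1}H$ with $H \in \mathcal{D}(B)$. Restricting $H$ to a thinner collar changes neither $\overline h$ nor $\mathbf{J}^q_{1,2}\overline h$ (the bounce of a restriction is the bounce), so we may assume $q$ lies outside the closure of $B$ and Lemma~\ref{le:pre-transmitted_jump} applies verbatim. It gives that $\mathbf{J}^q_{1,2}\overline h$ extends holomorphically across $\Gamma$ — in particular it is continuous up to $\Gamma$, so its continuous transmission is defined — and that $\hat{\mathbf{O}}_{2,1}\mathbf{J}^q_{1,2}\overline h = \mathbf{J}^q_{1,1}\mathbf{G}_{B,\Omega_1}H - \mathbf{G}_{B,\Omega_1}H = \mathbf{J}^q_{1,1}\overline h - \overline h$, which is the asserted formula. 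The hypothesis $\overline h \in \overline{\mathcal{D}(\Omega_1)}$ is not needed here; it enters only in part (2), to guarantee that $\overline\alpha := d\overline h = (\overline{\partial}\,\overline h)\,d\bar z$ is a genuine element of $\overline{\mathcal{A}(\Omega_1)}$, on which $\mathbf{T}_{1,1}$ and $\mathbf{T}_{1,2}$ act.

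For part (2): with $\overline h$ as above and $\overline\alpha = d\overline h$, differentiate the identity of part (1). On $\Omega_2$ the function $\mathbf{J}^q_{1,2}\overline h$ is holomorphic (third identity of Theorem~\ref{th:J_Schiffer_identities}) and $\partial \mathbf{J}^q_{1,2}\overline h = -\mathbf{T}_{1,2}\overline{\partial}\,\overline h$ (first identity), so $d\mathbf{J}^q_{1,2}\overline h = -\mathbf{T}_{1,2}\overline\alpha$; since $\mathbf{J}^q_{1,2}\overline h$ is continuous up to $\Gamma$, the defining relation $\hat{\mathbf{O}}'_{2,1}\,d = d\,\hat{\mathbf{O}}_{2,1}$ of the one-form transmission yields $d\bigl(\hat{\mathbf{O}}_{2,1}\mathbf{J}^q_{1,2}\overline h\bigr) = -\hat{\mathbf{O}}'_{2,1}\mathbf{T}_{1,2}\overline\alpha$. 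On the other side of the part (1) identity, $\mathbf{J}^q_{1,1}\overline h$ is holomorphic on $\Omega_1$ and, by the second identity of Theorem~\ref{th:J_Schiffer_identities} together with $\partial\overline h = 0$, $d\mathbf{J}^q_{1,1}\overline h = -\mathbf{T}_{1,1}\overline\alpha$; since $d\overline h = \overline\alpha$ this gives $d\bigl(\mathbf{J}^q_{1,1}\overline h - \overline h\bigr) = -\mathbf{T}_{1,1}\overline\alpha - \overline\alpha$. Equating the two expressions produces $\hat{\mathbf{O}}'_{2,1}\mathbf{T}_{1,2}\overline\alpha = \overline\alpha + \mathbf{T}_{1,1}\overline\alpha$. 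One should also record that $\mathbf{T}_{1,2}\overline\alpha = -d\mathbf{J}^q_{1,2}\overline h$ is holomorphic across $\Gamma$ — hence continuous up to $\Gamma$, and $L^2$ by Theorem~\ref{th:Schiffer_bounded} — so the continuous one-form transmission $\hat{\mathbf{O}}'_{2,1}$ is legitimately applied, and that $\hat{\mathbf{O}}'_{2,1}\mathbf{T}_{1,2}\overline\alpha = d(\mathbf{J}^q_{1,1}\overline h - \overline h)$ lies in $\mathcal{A}_{\mathrm{harm}}(\Omega_1)$ because $\mathbf{J}^q_{1,1}\overline h \in \mathcal{D}(\Omega_1)$ and $\overline h \in \overline{\mathcal{D}(\Omega_1)}$ by Corollary~\ref{co:J_bounded}.

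The only real work is bookkeeping: one must respect the standing abuse of notation in Theorem~\ref{th:J_Schiffer_identities}, where $\overline{\partial} h$ denotes the coefficient of $d\bar z$ while the Schiffer operators of Definition~\ref{defn:Schiffer operators} act on the one-form $\overline{\partial} h\,d\bar z$, and one must verify at each stage that the functions being transmitted are continuous up to $\Gamma$ so that $\hat{\mathbf{O}}_{2,1}$ and $\hat{\mathbf{O}}'_{2,1}$ are defined on them. There is no analytic obstacle beyond what is already contained in Lemma~\ref{le:pre-transmitted_jump} and Theorem~\ref{th:J_Schiffer_identities}; the main pitfall is a stray sign coming from the $d\bar z$ factors, which the computation above is arranged to pin down.
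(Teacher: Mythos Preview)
Your proof is correct and follows essentially the same approach as the paper: part (1) is a direct consequence of Lemma~\ref{le:pre-transmitted_jump}, and part (2) is obtained by applying $d$ to both sides of the identity in part (1) and invoking the Schiffer--Cauchy identities of Theorem~\ref{th:J_Schiffer_identities}. Your version is, if anything, more careful about the sign bookkeeping and about verifying that the objects to which $\hat{\mathbf{O}}'_{2,1}$ is applied are actually in its domain.
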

 \begin{proof}
  The first claim follows directly from Lemma \ref{le:pre-transmitted_jump}. 
  Now let $\overline{\alpha} = \overline{\partial} \overline{h}$.  Applying now Theorem \ref{th:J_Schiffer_identities} to the right hand side of (1), we see that 
  \[  \partial \hat{\mathbf{O}}_{2,1} \mathbf{T}_{1,2} \overline{\alpha}  = - \mathbf{T}_{1,1} \overline{\alpha}       \]
  and 
  \[  \overline{\partial} \hat{\mathbf{O}}_{2,1} \mathbf{T}_{1,2} 
   \overline{\alpha} = - \overline{\alpha}.       \]
  Applying $\partial$ to the left hand side of (1) and using Theorem \ref{th:J_Schiffer_identities} again proves the claim.
 \end{proof}
 
 We can now prove that $\mathbf{T}_{1,2}$ is one-to-one.
 \begin{theorem} \label{th:T_injective} Let $\Gamma$ be a Jordan curve separating $\sphere$ into components $\Omega_1$ and $\Omega_2$. 
  \begin{enumerate}  
      \item  $\mathbf{T}_{1,2}$ is injective. 
      \item  For any collar neighbourhood $B$ of $\Gamma$ in $\Omega_1$, $T(1,2)$ restricted to $\overline{\partial} [ \mathbf{G}_{B,\Omega_1} \mathcal{D}(B) \cap \overline{\mathcal{D}(\Omega_1)}]$ 
 has left inverse $\overline{\mathbf{P}(\Omega_1)} \hat{\mathbf{O}}'_{2,1}$, where $\overline{\mathbf{P}(\Omega_1)}$ is the projection defined in \eqref{defn:projections in bergman}.  
  \end{enumerate}
 \end{theorem}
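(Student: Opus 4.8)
The plan is to get part (2) directly from Lemma~\ref{le:dense_transmission}(2), and then to bootstrap it to the unrestricted injectivity of part (1) by a density argument followed by a limiting argument — the latter being forced on us because the transmission is not bounded for a general Jordan curve. For part (2): fix the collar $B$ of $\Gamma$ in $\Omega_1$ and write $V=\overline{\partial}[\mathbf{G}_{B,\Omega_1}\mathcal{D}(B)\cap\overline{\mathcal{D}(\Omega_1)}]$. For $\overline{\alpha}\in V$, Lemma~\ref{le:dense_transmission}(2) gives $\hat{\mathbf{O}}'_{2,1}\mathbf{T}_{1,2}\overline{\alpha}=\overline{\alpha}+\mathbf{T}_{1,1}\overline{\alpha}$. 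Since $\overline{\alpha}\in\overline{\mathcal{A}(\Omega_1)}$ while $\mathbf{T}_{1,1}\overline{\alpha}\in\mathcal{A}(\Omega_1)$, and these two spaces are orthogonal, applying the Bergman projection $\overline{\mathbf{P}(\Omega_1)}$ annihilates the second term and fixes the first, so $\overline{\mathbf{P}(\Omega_1)}\hat{\mathbf{O}}'_{2,1}\mathbf{T}_{1,2}\overline{\alpha}=\overline{\alpha}$. This is the asserted left inverse, and in particular $\mathbf{T}_{1,2}$ is injective on $V$.

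For part (1) the first step is to show $V$ is dense in $\overline{\mathcal{A}(\Omega_1)}$. Let $f:\disk^+\to\Omega_1$ be the biholomorphism with $B=f(\mathbb{A}_r)$ and put $g=f^{-1}$. For $n\ge 1$ the function $1/g^{\,n}$ is holomorphic on $B$ and lies in $\mathcal{D}(B)$, since $D_B(1/g^{\,n})=D_{\mathbb{A}_r}(z^{-n})<\infty$; because $|g|=1$ on $\Gamma$, its CNT boundary values on $\Gamma$ coincide with those of $\overline{g^{\,n}}$; and $\overline{g^{\,n}}\in\overline{\mathcal{D}(\Omega_1)}$ because $D_{\Omega_1}(g^{\,n})=D_{\disk^+}(z^{n})<\infty$. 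Hence $\mathbf{G}_{B,\Omega_1}(1/g^{\,n})=\overline{g^{\,n}}$, so $\overline{\partial}\,\overline{g^{\,n}}=\overline{d(g^{\,n})}\in V$. The forms $d(g^{\,n})$ pull back under $f$ to $n z^{\,n-1}\,dz$, which span a dense subspace of $\mathcal{A}(\disk^+)$, and $f^{*}$ is an isometry $\mathcal{A}(\Omega_1)\to\mathcal{A}(\disk^+)$; therefore $\{\overline{d(g^{\,n})}\}_{n\ge1}$ is dense in $\overline{\mathcal{A}(\Omega_1)}$, whence $V$ is dense.

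Now assume $\mathbf{T}_{1,2}\overline{\alpha}=0$ and choose $\overline{\alpha}_n\in V$ with $\overline{\alpha}_n\to\overline{\alpha}$. By Theorem~\ref{th:Schiffer_bounded}, $\mathbf{T}_{1,2}\overline{\alpha}_n\to 0$ in $\mathcal{A}(\Omega_2)$ and $\mathbf{T}_{1,1}\overline{\alpha}_n\to\mathbf{T}_{1,1}\overline{\alpha}$ in $\mathcal{A}(\Omega_1)$, while Lemma~\ref{le:dense_transmission}(2) says the transmission of $\mathbf{T}_{1,2}\overline{\alpha}_n$ equals $\overline{\alpha}_n+\mathbf{T}_{1,1}\overline{\alpha}_n$, which converges in $\mathcal{A}_{\mathrm{harm}}(\Omega_1)$ to $\overline{\alpha}+\mathbf{T}_{1,1}\overline{\alpha}$. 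If one can show that the limit of these transmissions is the transmission of $\lim_n\mathbf{T}_{1,2}\overline{\alpha}_n=0$, namely $0$, then $\overline{\alpha}+\mathbf{T}_{1,1}\overline{\alpha}=0$, and orthogonality of $\overline{\mathcal{A}(\Omega_1)}$ and $\mathcal{A}(\Omega_1)$ forces $\overline{\alpha}=0$. (Equivalently: writing $\overline{\alpha}=d\overline{h}$ with $\overline{h}$ anti-holomorphic on $\Omega_1$ and using Theorem~\ref{th:J_Schiffer_identities}, $\mathbf{T}_{1,2}\overline{\alpha}=0$ says $\mathbf{J}^{q}_{1,2}\overline{h}$ is constant, and one wants the jump identity of Lemma~\ref{le:pre-transmitted_jump} — valid on the bounce class $\mathbf{G}_{B,\Omega_1}\mathcal{D}(B)$, which is dense by Theorem~\ref{th:bounce_of_holomorphic_dense} and on which $\mathbf{J}^{q}$ is bounded by Corollary~\ref{co:J_bounded} — to survive the limit, forcing $\overline{h}$ to be simultaneously holomorphic and anti-holomorphic, hence constant.)

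I expect this last step to be the main obstacle, because for a general Jordan curve the transmission $\hat{\mathbf{O}}_{2,1}$ (equivalently $\hat{\mathbf{O}}'_{2,1}$) is bounded with respect to Dirichlet energy \emph{precisely} when $\Gamma$ is a quasicircle, by Theorem~\ref{th:transmission}, so one cannot simply pass the limit through it. The way to overcome this is to pair the transmitted forms against a fixed test form $\omega\in\mathcal{A}(\Omega_1)$, rewrite the pairing through Stokes' theorem as a limiting integral over curves approaching $\Gamma$, and invoke the anchor lemma (Lemma~\ref{le:anchor_lemma}) to see that its value depends only on the CNT boundary values of $\mathbf{T}_{1,2}\overline{\alpha}_n$, which converge, thereby pinning the limiting transmission down as $0$. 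The delicate point, where the null-set analysis of Section~\ref{se:null_sets} enters and where one should pass to subsequences converging quasi-everywhere, is that when $\Gamma$ is not a quasicircle the CNT boundary data on the two sides of $\Gamma$ are governed by different classes of null sets, so the limiting boundary function must be identified coherently from both sides; it may be convenient to reduce first to the case $\Omega_1$ bounded using the M\"obius invariance of Theorem~\ref{th:Schiffer_Mobius_invariance}.
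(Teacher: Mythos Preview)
Your argument for part (2) is correct and coincides with the paper's: it is an immediate consequence of Lemma~\ref{le:dense_transmission}(2) together with the orthogonal decomposition $\mathcal{A}_{\mathrm{harm}}(\Omega_1)=\mathcal{A}(\Omega_1)\oplus\overline{\mathcal{A}(\Omega_1)}$. Your density argument for $V$ is also correct, and is essentially the paper's observation that $\mathbf{G}_{B,\Omega_1}\mathbf{C}_{f^{-1}}w^{-n}=\mathbf{C}_{f^{-1}}\bar{w}^n$, so that $\overline{\partial}\,\mathbf{C}_{f^{-1}}\mathbb{C}[\bar{z}]\subset V$.

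For part (1), however, there is a genuine gap. You correctly diagnose the obstacle: injectivity on a dense subspace does not propagate to the whole space, and you cannot pass the limit through $\hat{\mathbf{O}}'_{2,1}$ because transmission is bounded precisely when $\Gamma$ is a quasicircle (Theorem~\ref{th:transmission}). Your proposed workaround --- pairing against test forms and invoking the anchor lemma to control boundary behaviour --- is not carried through, and I do not see how to make it work as stated. The anchor lemma anchors limiting integrals to CNT boundary values \emph{with respect to $\Omega_1$}, but the forms $\mathbf{T}_{1,2}\overline{\alpha}_n$ live on $\Omega_2$; matching the two notions of boundary value across $\Gamma$ is exactly the content of bounded transmission, which you are trying to avoid. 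The ``delicate point'' you flag is not a technicality but the heart of the matter, and no amount of passing to quasi-everywhere convergent subsequences will circumvent it.

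The paper sidesteps this entirely with a much simpler device. For fixed $z\in\Omega_2$ one has $\beta_z:=dw/(w-z)^2\in\mathcal{A}(\Omega_1)$, and the very definition of $\mathbf{T}_{1,2}$ gives (up to conjugation)
\[
\mathbf{T}_{1,2}\overline{\alpha}(z)=\Big(\alpha,\;\frac{dw}{(w-z)^2}\Big)_{\mathcal{A}(\Omega_1)}.
\]
Thus $\mathbf{T}_{1,2}\overline{\alpha}=0$ means $\alpha$ is orthogonal to every $\beta_z$. After reducing by M\"obius invariance to the case $\infty\in\Omega_2$, the closed span of $\{\beta_z:z\in\Omega_2\}$ contains every monomial $w^k\,dw$ (expand $(w-z)^{-2}=z^{-2}\sum_{k\ge 0}(k+1)(w/z)^k$ for $|z|$ large, or differentiate in $z$), and polynomials are dense in $\mathcal{A}(\Omega_1)$ since $\Omega_1$ is a Carath\'eodory domain. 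Hence $\alpha=0$. No transmission, no limits --- just the reproducing-kernel-like structure of the Schiffer integral together with the density of polynomials you had already established.
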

 \begin{proof}
  The second claim follows immediately from Lemma \ref{le:dense_transmission} part (2). 
  
  Let $B = f(\mathbb{A})$ be a collar neighbourhood of $\Gamma$ in $\Omega_1$ induced by some biholomorphism $f:\disk^+ \rightarrow \Omega_1$ and annulus $\mathbb{A}$. Now for any $n >0$, by conformal invariance of the bounce operator  (\ref{eq:bounce_conformally_invariant}),  
  \[  \mathbf{G}_{B,\Omega_1} \mathbf{C}_{f^{-1}} w^{-n} = \mathbf{C}_{f^{-1}}
    \mathbf{G}_{\mathbb{A},\disk^+} w^{-n} = \mathbf{C}_{f^{-1}} \bar{w}^n      \]
  so 
  \[  \overline{\partial} \mathbf{C}_{f^{-1}} \mathbb{C}[\bar{z}] \subseteq 
  \overline{\partial} [ \mathbf{G}_{B,\Omega_1} \mathcal{D}(B) \cap \overline{\mathcal{D}(\Omega_1)}].  \]
  Furthermore, $\overline{\partial} \mathcal{C}_{f^{-1}} \mathbb{C}[\bar{z}]$ is dense in  $\overline{A(\Omega_1)}$.  
  
  By the second claim, for any $\overline{\alpha} \in \overline{\partial} \mathbf{C}_{f^{-1}} \mathbb{C}[\bar{z}]$ if $\mathbf{T}_{\Omega_1,\Omega_2} \overline{\alpha} =0$, 
  then $\overline{\alpha} =0$.  
  On the other hand, for $z \in \Omega_2$ fixed, $dw/(w-z)^2 \in \mathcal{A}(\Omega_1)$, and for any $\overline{\alpha} \in \overline{\mathcal{A}(\Omega_1)}$
  \[  \mathbf{T}_{1,2} \overline{\alpha} = \left( \alpha(w), \frac{dw}{(w-z)^2} \right).  \]
  This proves the first claim.
 \end{proof}
 
 This implies that the Cauchy-type operator $\mathbf{J}^q_{1,2}$  is injective. 
 It is convenient to record the two cases $q \in \Omega_{1}, \Omega_2$.  
 {In the following, see (\ref{eq:holomorphic_normalization_projection}) and (\ref{eq:antiholomorphic_normalization_projection}) for the definitions of the projections }. 
 \begin{corollary} \label{co:J_injective}
  Let $\Gamma$ be a Jordan curve $\Gamma$ in $\sphere$.  
  \begin{enumerate}
   \item 
    Fix $q \in \Omega_2$. 
    \begin{enumerate} 
     \item For any $p \in \Omega_1$, $\mathbf{J}^q_{1,2}$ is   injective from $\overline{\mathcal{D}_p(\Omega_1)}$ to $\mathcal{D}_q(\Omega_2)$.  
     \item  For any collar neighbourhood $B$ of $\Gamma$ in $\Omega_1$, on $\overline{\mathcal{D}_p(\Omega_1)} \cap \mathbf{G}_{B,\Omega_1} \mathcal{D}(B)$, the left inverse is given by $- { \overline{\mathbf{P}^{a}_p(\Omega_1)} } \hat{\mathbf{O}}_{2,1}$. 
    \end{enumerate}
   \item Fix $q \in \Omega_1$.  
   \begin{enumerate}
       \item $\mathbf{J}^q_{1,2}$ is injective from $\overline{\mathcal{D}(\Omega_1)}$ 
        to $\mathcal{D}(\Omega_2)$.
        \item  For any collar neighbourhood $B$ of $\Gamma$ in $\Omega_1$, on $\overline{\mathcal{D}(\Omega_1)} \cap \mathbf{G}_{B,\Omega_1} \mathcal{D}(B)$, the left inverse is given by $-{ \overline{\mathbf{P}^{h}_q(\Omega_1)}} \hat{\mathbf{O}}_{2,1}$.
   \end{enumerate}
  \end{enumerate}
 \end{corollary}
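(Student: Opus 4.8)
The plan is to derive the whole corollary from Theorems \ref{th:J_Schiffer_identities} and \ref{th:T_injective}, Lemma \ref{le:dense_transmission}, and Corollary \ref{co:J_bounded}, so that it becomes careful bookkeeping with the normalized subspaces rather than fresh analysis. That $\mathbf{J}^q_{1,2}$ actually maps into the stated targets ($\mathcal{D}_q(\Omega_2)$ when $q\in\Omega_2$, $\mathcal{D}(\Omega_2)$ when $q\in\Omega_1$) is recorded in Corollary \ref{co:J_bounded}.

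For the injectivity assertions 1(a) and 2(a), I would take $\overline h$ in $\overline{\mathcal{D}_p(\Omega_1)}$ (case 1) or $\overline{\mathcal{D}(\Omega_1)}$ (case 2) with $\mathbf{J}^q_{1,2}\overline h = 0$. Since $\overline h$ is anti-holomorphic, $\partial\overline h = 0$ and $\overline\partial\overline h = d\overline h \in \overline{\mathcal{A}(\Omega_1)}$, so the first identity of Theorem \ref{th:J_Schiffer_identities} gives
\[  0 = \partial\mathbf{J}^q_{1,2}\overline h = -\mathbf{T}_{1,2}\,\overline\partial\overline h \qquad \text{on } \Omega_2 . \]
By injectivity of $\mathbf{T}_{1,2}$ (Theorem \ref{th:T_injective}(1)), $\overline\partial\overline h = 0$, hence $\overline h$ is a constant $c$. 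If $q\in\Omega_2$ then $\overline h(p)=0$ forces $c=0$; if $q\in\Omega_1$, Theorem \ref{th:jump_on_holomorphic} applied to the constant function gives $\mathbf{J}^q_{1,2}c = -c$, so again $c=0$. This settles 1(a) and 2(a).

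For the left inverses 1(b) and 2(b), fix a collar neighbourhood $B$ of $\Gamma$ in $\Omega_1$; after shrinking $B$ if necessary — which does not change $\mathbf{G}_{B,\Omega_1}$ on the functions at hand, exactly as in the proof of Theorem \ref{th:J_same_both_sides} — I may assume $q\notin\mathrm{cl}\,B$, so Lemma \ref{le:dense_transmission}(1) applies and yields, for anti-holomorphic $\overline h \in \mathbf{G}_{B,\Omega_1}\mathcal{D}(B)$,
\[  \hat{\mathbf{O}}_{2,1}\mathbf{J}^q_{1,2}\overline h = \mathbf{J}^q_{1,1}\overline h - \overline h . \]
By Corollary \ref{co:J_bounded}, $\mathbf{J}^q_{1,1}\overline h \in \mathcal{D}(\Omega_1)$, and in $\mathcal{D}_q(\Omega_1)$ when $q\in\Omega_1$. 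Now apply the appropriate projection from \eqref{eq:holomorphic_normalization_projection} and \eqref{eq:antiholomorphic_normalization_projection}. If $q\in\Omega_2$ and $\overline h\in\overline{\mathcal{D}_p(\Omega_1)}$, then in the decomposition $\mathcal{D}_{\mathrm{harm}}(\Omega_1)=\overline{\mathcal{D}_p(\Omega_1)}\oplus\mathcal{D}(\Omega_1)$ the right side $\mathbf{J}^q_{1,1}\overline h - \overline h$ has holomorphic part $\mathbf{J}^q_{1,1}\overline h$ and anti-holomorphic part $-\overline h$, so $\overline{\mathbf{P}^{a}_p(\Omega_1)}\hat{\mathbf{O}}_{2,1}\mathbf{J}^q_{1,2}\overline h = -\overline h$, identifying the left inverse as $-\overline{\mathbf{P}^{a}_p(\Omega_1)}\hat{\mathbf{O}}_{2,1}$. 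If instead $q\in\Omega_1$ and $\overline h\in\overline{\mathcal{D}(\Omega_1)}$, the same computation in the decomposition $\mathcal{D}_{\mathrm{harm}}(\Omega_1)=\overline{\mathcal{D}(\Omega_1)}\oplus\mathcal{D}_q(\Omega_1)$, now using that $\mathbf{J}^q_{1,1}\overline h \in\mathcal{D}_q(\Omega_1)$ is annihilated by $\overline{\mathbf{P}^{h}_q(\Omega_1)}$ while $\overline h$ is fixed, gives the left inverse $-\overline{\mathbf{P}^{h}_q(\Omega_1)}\hat{\mathbf{O}}_{2,1}$.

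I do not anticipate a genuine obstacle: all analytic content is carried by the cited results. The only things to watch are the matching of the correct normalized subspace with the correct projection ($\overline{\mathcal{D}_p}$ with $\overline{\mathbf{P}^{a}_p}$ in the $q\in\Omega_2$ case, $\overline{\mathcal{D}}$ with $\overline{\mathbf{P}^{h}_q}$ in the $q\in\Omega_1$ case), the verification via Corollary \ref{co:J_bounded} that $\mathbf{J}^q_{1,1}\overline h$ lands in the summand killed by that projection — in particular that it vanishes at $q$ when $q\in\Omega_1$ — and the harmless requirement $q\notin\mathrm{cl}\,B$ needed to invoke Lemma \ref{le:dense_transmission}.
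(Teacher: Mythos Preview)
Your argument is correct and follows essentially the same route as the paper's own proof: injectivity via $\partial\mathbf{J}^q_{1,2}=-\mathbf{T}_{1,2}\overline{\partial}$ together with Theorem~\ref{th:T_injective}(1) to reduce to constants, then disposing of the constant via the normalization at $p$ (case 1) or Theorem~\ref{th:jump_on_holomorphic} (case 2); and the left-inverse identities via the transmission formula $\hat{\mathbf{O}}_{2,1}\mathbf{J}^q_{1,2}\overline h=\mathbf{J}^q_{1,1}\overline h-\overline h$ followed by the appropriate projection. Your write-up is in fact slightly more careful than the paper's in noting that $\mathbf{J}^q_{1,1}\overline h\in\mathcal{D}_q(\Omega_1)$ when $q\in\Omega_1$ and in flagging the harmless shrinking of $B$ so that $q\notin\mathrm{cl}\,B$.
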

 \begin{proof} We first prove the (b) claims.  
   Lemma \ref{le:pre-transmitted_jump} tells us that for any $\overline{h} \in \mathbf{G}_{B,\Omega_1} \overline{\mathcal{D}(\Omega_1)}$ 
  \[   - \hat{\mathbf{O}}_{2,1} \mathbf{J}^q_{1,2} \overline{h} = 
          -   \mathbf{J}^q_{1,1} \overline{h} +  \overline{h}.  \]
  (2) (b) follows by observing that the right hand side is the desired decomposition and applying ${ \overline{\mathbf{P}^{h}_q(\Omega_1)}}$ to both sides. (2) (a) follows similarly once one adds the assumption that $\overline{h}(p)=0$.  
          
  To prove the (a) claims, by Theorem \ref{th:T_injective} part (1) and Theorem \ref{th:J_Schiffer_identities}, if $\mathbf{J}^q_{1,2} \overline{h} =0$ then $\overline{h}$ is a constant $c$.  If $\overline{h} \in \overline{\mathcal{D}_p(\Omega)}$,
  then $c=h(p)=0$ so $h=0$. This proves (1)(a).  If $q \in \Omega_1$, then $c=h(q) =0$. This proves (2)(a). 
 \end{proof}
 
 \begin{theorem}  \label{th:surjective_then_quasicircle} Let $\Gamma$ be a Jordan curve separating $\sphere$ into components $\Omega_1$ and $\Omega_2$. If any of the following three conditions hold, then $\Gamma$ is a quasicircle.  
  \begin{enumerate}
      \item $\mathbf{T}_{1,2}$ is surjective.
      \item  The restriction of $\mathbf{J}^q_{1,2}$ to $\overline{\mathcal{D}_p(\Omega_1)}$ is surjective onto $\mathcal{D}_q(\Omega_2)$ for some $q \in \Omega_2$ and $p \in \Omega_1$. 
      \item The restriction of $\mathbf{J}^q_{1,2}$ to $\overline{\mathcal{D}(\Omega_1)}$ is surjective onto $\mathcal{D}(\Omega_2)$ for $q \in \Omega_1$. 
  \end{enumerate} 
    
 \end{theorem}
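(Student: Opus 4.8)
The plan is to extract from the surjectivity hypothesis a bounded continuous transmission $\hat{\mathbf{O}}_{2,1}$ on a dense subset of $\mathcal{D}_{\mathrm{harm}}(\Omega_2)$ consisting of functions continuous on $\mathrm{cl}\,\Omega_2$, and then invoke Theorem \ref{th:experimental_transmission}. In all three cases the operator under consideration is already known to be bounded and injective: $\mathbf{T}_{1,2}$ by Theorems \ref{th:Schiffer_bounded} and \ref{th:T_injective}, and the two restrictions of $\mathbf{J}^q_{1,2}$ by Corollaries \ref{co:J_bounded} and \ref{co:J_injective}. Adjoining surjectivity, each becomes a bounded linear bijection of Banach spaces, so by the open mapping theorem it has a bounded inverse; this ``boundedness below'' is what will convert the qualitative identities of Lemmas \ref{le:dense_transmission} and \ref{le:pre-transmitted_jump} into a genuine energy estimate.

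Fix a biholomorphism $f:\disk^+\to\Omega_1$ and a collar neighbourhood $B=f(\mathbb{A}_r)$ of $\Gamma$ in $\Omega_1$ with $q\notin\mathrm{cl}\,B$ (automatic when $q\in\Omega_2$). Since $\mathbf{G}_{B,\Omega_1}\mathbf{C}_{f^{-1}}w^{-n}=\mathbf{C}_{f^{-1}}\bar w^{\,n}$, the set $\mathscr{M}:=\mathbf{G}_{B,\Omega_1}\mathcal{D}(B)\cap\overline{\mathcal{D}(\Omega_1)}$ contains $\mathbf{C}_{f^{-1}}\overline{\mathbb{C}[z]}$, and hence is dense in $\overline{\mathcal{D}(\Omega_1)}$ by density of polynomials in $\mathcal{D}(\disk^+)$; subtracting constants gives a dense subset of $\overline{\mathcal{D}_p(\Omega_1)}$, as needed in case (2). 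On $\mathscr{M}$, Lemma \ref{le:dense_transmission} supplies explicit continuous transmissions,
\[
 \hat{\mathbf{O}}_{2,1}\,\mathbf{J}^q_{1,2}\,\overline h=\mathbf{J}^q_{1,1}\overline h-\overline h,
 \qquad
 \hat{\mathbf{O}}'_{2,1}\,\mathbf{T}_{1,2}\,\overline\alpha=\overline\alpha+\mathbf{T}_{1,1}\overline\alpha
 \quad(\overline\alpha=\overline{\partial}\,\overline h),
\]
and, by Lemma \ref{le:pre-transmitted_jump}, $\mathbf{J}^q_{1,2}\mathbf{G}_{B,\Omega_1}H$ extends holomorphically across $\Gamma$, so the elements of $\mathscr{L}':=\mathbf{J}^q_{1,2}\mathscr{M}$ are continuous on $\mathrm{cl}\,\Omega_2$ (in case (1) one uses $\mathbf{T}_{1,2}\overline\alpha=-d\,\mathbf{J}^q_{1,2}\mathbf{G}_{B,\Omega_1}H$ from Theorem \ref{th:J_Schiffer_identities} to exhibit $\mathbf{T}_{1,2}\overline\alpha$ as $dg$ for such a $g$). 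Because $\mathbf{J}^q_{1,1}$ and $\mathbf{T}_{1,1}$ are bounded and the operator has a bounded inverse, the displayed identities give $D_{\Omega_1}(\hat{\mathbf{O}}_{2,1}g)\le C\,D_{\Omega_2}(g)$ for all $g\in\mathscr{L}'$; and since the operator is a surjective isomorphism, $\mathscr{L}'$ is dense in $\mathcal{D}(\Omega_2)$ (respectively $\mathcal{D}_q(\Omega_2)$).

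Finally, since $\Omega_2$ is simply connected, $\mathcal{D}_{\mathrm{harm}}(\Omega_2)=\mathcal{D}(\Omega_2)+\overline{\mathcal{D}(\Omega_2)}$, so $\mathscr{L}'':=\mathscr{L}'+\overline{\mathscr{L}'}$ is dense in $\mathcal{D}_{\mathrm{harm}}(\Omega_2)$ and still consists of functions continuous on $\mathrm{cl}\,\Omega_2$; using $\hat{\mathbf{O}}_{2,1}\overline u=\overline{\hat{\mathbf{O}}_{2,1}u}$ and the orthogonality of $\mathcal{A}(\Omega_2)$ and $\overline{\mathcal{A}(\Omega_2)}$ in Dirichlet energy, the bound on $\mathscr{L}'$ propagates to $\mathscr{L}''$. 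Theorem \ref{th:experimental_transmission} then yields that $\Gamma$ is a quasicircle. I expect the main obstacle to be the middle paragraph: confirming that the polynomial families really lie in the domains where Lemmas \ref{le:dense_transmission} and \ref{le:pre-transmitted_jump} apply, that their images are dense (this is precisely where surjectivity together with the open mapping theorem enter), and that continuity up to $\Gamma$ is preserved — together with the passage from the holomorphic Dirichlet space to the full harmonic Dirichlet space.
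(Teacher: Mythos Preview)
Your proposal is correct and follows essentially the same approach as the paper: build a dense family of polynomial-type functions via $\mathbf{G}_{B,\Omega_1}\mathbf{C}_{f^{-1}}$, push them across with $\mathbf{J}^q_{1,2}$, use Lemmas \ref{le:pre-transmitted_jump} and \ref{le:dense_transmission} to obtain continuity on $\mathrm{cl}\,\Omega_2$ and an explicit formula for $\hat{\mathbf{O}}_{2,1}$, then exploit the bounded inverse from the open mapping theorem to bound the transmission and invoke Theorem \ref{th:experimental_transmission}. The only notable difference is that the paper reduces case (1) to (2) or (3) via $\partial\mathbf{J}^q_{1,2}=-\mathbf{T}_{1,2}\overline{\partial}$ rather than arguing directly with one-forms.
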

 \begin{proof} The first claim follows from the second or third, since by Theorem \ref{th:J_Schiffer_identities}, $\partial \mathbf{J}^q_{1,2} = - \mathbf{T}_{1,2} \overline{\partial} \overline{h}$ for any $\overline{h} \in \overline{\mathcal{D}(\Omega_1)}$.  
  
  Assume that the restriction of $\mathbf{J}^q_{1,2}$ to $\overline{\mathcal{D}(\Omega_1)}$ is an isomorphism onto $\mathcal{D}(\Omega_2)$. Let 
  \[  \mathbf{K}:\mathcal{D}(\Omega_2)\rightarrow \overline{\mathcal{D}(\Omega_1)} \] 
  be its inverse.  
    Choose a collar neighbourhood $B = f(\mathbb{A})$ of $\Gamma$ in $\Omega_1$, 
   where $f:\disk^+ \rightarrow \Omega_1$ is a biholomorphism and $\mathbb{A} = \{z : r<|z|< 1\}$ for some $r \in (0,1)$.  
   As in the proof of Theorem \ref{th:T_injective}, $\mathbf{G}_{B,\Omega_1} \mathbf{C}_{f^{-1}} \mathbb{C}[1/z]$ 
   is dense in $\overline{\mathcal{D}(\Omega_1)}$, since 
   \[   \mathbf{G}_{B,\Omega_1} \mathbf{C}_{f^{-1}} \mathbb{C}[1/z]   
        = \mathbf{C}_{f^{-1}} \mathbf{G}_{\mathbb{A},\disk^+} \mathbb{C}[1/z]
          = \mathbf{C}_{f^{-1}} \mathbb{C}[\overline{z}] \]
   and polynomials are dense in $\mathcal{D}(\disk^+)$.  
   Since $\mathbf{J}^q_{1,2}$ is bounded and surjective, the set 
   \[  \mathscr{L} = \mathbf{J}^q_{1,2} \mathbf{G}_{B,\Omega_1} \mathbf{C}_{f^{-1}} \mathbb{C}[1/z] \]
   is dense in $\overline{\mathcal{D}(\Omega_1)}$.  Furthermore, Lemma \ref{le:pre-transmitted_jump} guarantees that $\mathscr{L} \subset \mathcal{C}(\text{cl} \Omega_2)$, and 
   by Lemma \ref{le:dense_transmission} for every element $\overline{h} \in \mathscr{L}$ we have 
   \[   \hat{\mathbf{O}}_{2,1} \overline{h} =  \hat{\mathbf{O}}_{2,1} \mathbf{J}^q_{1,2} \mathbf{K} \overline{h} = (\mathbf{J}^q_{1,1} \mathbf{K} - \mathbf{K}) \overline{h}.   \]
   We can also conjugate to get transmission of elements $h \in \overline{\mathscr{L}} \subset \mathcal{D}(\Omega_1)$, that is 
   \[   \hat{\mathbf{O}}_{2,1} h  = \overline{(\mathbf{J}^q_{1,1} \mathbf{K} - \mathbf{K}) {h}}.   \]
   Since $\mathbf{J}^q_{1,1} \mathbf{K} - \mathbf{K}$ is bounded, 
   Theorem \ref{th:experimental_transmission} applies, and we can conclude that $\Gamma$ is a quasicircle.  This proves (3).  
   
   If we assume that $q \in \Omega_2$, then the argument above shows that we have bounded transmission on $\mathcal{D}_p(\Omega_2)$ and $\overline{\mathcal{D}_p(\Omega_2)}$. Since constants are transmittable this proves (2).  
 \end{proof}

 \begin{theorem} \label{th:one-sided_isomorphisms} Let $\Gamma$ be a Jordan curve separating $\sphere$ into components $\Omega_1$ and $\Omega_2$.  The following are equivalent.
  \begin{enumerate}
     \item $\Gamma$ is a quasicircle.
      \item $\mathbf{T}_{1,2}$ is a bounded isomorphism.
      \item For $q \in \Omega_2$ and $p \in \Omega_1$, $\mathbf{J}^q_{1,2}$ is a bounded isomorphism from $\overline{\mathcal{D}_p(\Omega_1)}$ into $\mathcal{D}_q(\Omega_2)$.
      \item For $q \in \Omega_1$, $\mathbf{J}^q_{1,2}$ is a bounded isomorphism from $\overline{\mathcal{D}(\Omega_1)}$ into $\mathcal{D}(\Omega_2)$. 
  \end{enumerate}
  In case (2), the inverse is $-\overline{\mathbf{P}(\Omega_1)} \mathbf{O}'_{2,1}$; in case (3) the inverse is $-{ \overline{\mathbf{P}^{a}_p(\Omega_1)}} \mathbf{O}_{2,1}$; and in case (4), the inverse is $-{ \overline{\mathbf{P}^{h}_q(\Omega_1)}} \mathbf{O}_{2,1}$.  
 \end{theorem}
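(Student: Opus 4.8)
The plan is to prove the cycle of implications among (1)--(4), assembling the pieces already established. The forward implications (1) $\Rightarrow$ (2), (1) $\Rightarrow$ (3), (1) $\Rightarrow$ (4) are essentially done: assuming $\Gamma$ is a quasicircle, Theorem \ref{th:Schiffer_bounded} gives boundedness of $\mathbf{T}_{1,2}$, Corollary \ref{co:J_bounded} gives boundedness of $\mathbf{J}^q_{1,2}$ on the relevant subspaces, and Theorem \ref{th:T_injective}(1) together with Corollary \ref{co:J_injective} gives injectivity. For surjectivity in this direction, I would use Theorem \ref{th:transmitted_jump}: for $h \in \mathcal{D}_{\mathrm{harm}}(\Omega_1)$ one has $h = \mathbf{J}^q_{1,1} h - \mathbf{O}_{2,1} \mathbf{J}^q_{1,2} h$, and applying $\overline{\partial}$ (using Theorem \ref{th:J_Schiffer_identities} and equation \eqref{eq:Oprime_definition}) yields, for $\overline{\alpha} = \overline{\partial}\,\overline{h} \in \overline{\mathcal{A}(\Omega_1)}$, the identity $\overline{\alpha} = \mathbf{T}_{1,1}\overline{\alpha} - \mathbf{O}'_{2,1}\mathbf{T}_{1,2}\overline{\alpha} + (\text{term landing in } \mathcal{A}(\Omega_1))$; applying the Bergman projections $\overline{\mathbf{P}(\Omega_1)}$ (resp. the normalized projections) then shows $-\overline{\mathbf{P}(\Omega_1)}\mathbf{O}'_{2,1}$ is a left inverse of $\mathbf{T}_{1,2}$, and by a symmetric argument (or by noting that a bounded injective operator with a bounded left inverse whose range is all of the target) it is a two-sided inverse. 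The reverse implications (2) $\Rightarrow$ (1), (3) $\Rightarrow$ (1), (4) $\Rightarrow$ (1) are precisely Theorem \ref{th:surjective_then_quasicircle}, since an isomorphism is in particular surjective.

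In more detail, the argument I would write runs as follows. First, assume (1). Boundedness of all three operators on the stated domains is immediate from Theorems \ref{th:Schiffer_bounded} and Corollary \ref{co:J_bounded}. For the inverse formula in case (2): from Lemma \ref{le:dense_transmission}(2), for $\overline{\alpha}$ in the dense set $\overline{\partial}[\mathbf{G}_{B,\Omega_1}\mathcal{D}(B) \cap \overline{\mathcal{D}(\Omega_1)}]$ we have $\mathbf{O}'_{2,1}\mathbf{T}_{1,2}\overline{\alpha} = \overline{\alpha} + \mathbf{T}_{1,1}\overline{\alpha}$, where $\mathbf{T}_{1,1}\overline{\alpha} \in \mathcal{A}(\Omega_1)$ and $\overline{\alpha} \in \overline{\mathcal{A}(\Omega_1)}$; applying $\overline{\mathbf{P}(\Omega_1)}$ gives $\overline{\mathbf{P}(\Omega_1)}\mathbf{O}'_{2,1}\mathbf{T}_{1,2}\overline{\alpha} = \overline{\alpha}$. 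Since $\Gamma$ is a quasicircle, $\mathbf{O}'_{2,1}$ is bounded (Theorem \ref{th:transmission} plus \eqref{eq:Oprime_definition}), and $\mathbf{T}_{1,2}$ is bounded, so by density $-\overline{\mathbf{P}(\Omega_1)}\mathbf{O}'_{2,1}$ is a bounded left inverse of $-\mathbf{T}_{1,2}$ on all of $\overline{\mathcal{A}(\Omega_1)}$; wait, sign bookkeeping: I should track the sign carefully, the inverse being $-\overline{\mathbf{P}(\Omega_1)}\mathbf{O}'_{2,1}$ because of the minus sign in the jump formula of Theorem \ref{th:transmitted_jump}. For surjectivity of $\mathbf{T}_{1,2}$, given $\beta \in \mathcal{A}(\Omega_2)$ I would set $\overline{h} = -\overline{\mathbf{P}(\Omega_1)}\mathbf{O}'_{2,1}\beta$ (interpreting $\beta$ as the $\overline{\partial}$ of its antiderivative appropriately) and verify via the same identity that $\mathbf{T}_{1,2}$ maps it back to $\beta$; alternatively and more cleanly, observe that the right inverse can be produced by running Lemma \ref{le:dense_transmission} in the reverse direction with the roles of $\Omega_1$ and $\Omega_2$ swapped. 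Cases (3) and (4) follow the identical template using the normalized projections $\overline{\mathbf{P}^{a}_p(\Omega_1)}$ and $\overline{\mathbf{P}^{h}_q(\Omega_1)}$ from \eqref{eq:antiholomorphic_normalization_projection} and \eqref{eq:holomorphic_normalization_projection}, with Corollary \ref{co:J_injective} supplying the relevant left-inverse identities.

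For the converse, I would simply invoke Theorem \ref{th:surjective_then_quasicircle}: if $\mathbf{T}_{1,2}$ is a bounded isomorphism it is in particular surjective, so condition (1) of that theorem holds and $\Gamma$ is a quasicircle; similarly an isomorphism in case (3) or (4) gives surjectivity of the corresponding restriction of $\mathbf{J}^q_{1,2}$, matching conditions (2) and (3) of Theorem \ref{th:surjective_then_quasicircle}. This closes the loop (1) $\Leftrightarrow$ (2) $\Leftrightarrow$ (3) $\Leftrightarrow$ (4).

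The main obstacle I anticipate is not any deep new idea but the careful bookkeeping of signs, normalizations, and which projection is used where: the jump formula carries a minus sign (Remark following Theorem \ref{th:transmitted_jump}), the Schiffer identities in Theorem \ref{th:J_Schiffer_identities} distribute $\partial$ and $\overline{\partial}$ asymmetrically between $\Omega_1$ and $\Omega_2$, and the three cases require three different normalizing projections. The other point needing care is establishing surjectivity cleanly in the forward direction: rather than inverting formulas by hand, the cleanest route is to observe that $-\overline{\mathbf{P}(\Omega_1)}\mathbf{O}'_{2,1}$ (and its analogues) is simultaneously a left inverse — by Theorem \ref{th:T_injective} and Corollary \ref{co:J_injective} — and, by the symmetric version of Lemma \ref{le:dense_transmission} obtained by exchanging $\Omega_1 \leftrightarrow \Omega_2$, a right inverse on a dense set, hence a genuine two-sided bounded inverse after extending by continuity.
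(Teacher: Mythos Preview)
Your approach to the converse direction and to injectivity in the forward direction is exactly what the paper does: Theorem \ref{th:surjective_then_quasicircle} handles (2), (3), (4) $\Rightarrow$ (1), and Theorem \ref{th:T_injective} together with Corollary \ref{co:J_injective} handle injectivity and the left-inverse formulas once $\Gamma$ is a quasicircle.

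The gap is in your surjectivity argument for (1) $\Rightarrow$ (2), (3), (4). Swapping $\Omega_1 \leftrightarrow \Omega_2$ in Lemma \ref{le:dense_transmission} gives you that $\overline{\mathbf{P}(\Omega_2)}\mathbf{O}'_{1,2}$ is a left inverse of $\mathbf{T}_{2,1}$, which is a statement about a \emph{different} operator; it does not show that $-\overline{\mathbf{P}(\Omega_1)}\mathbf{O}'_{2,1}$ is a right inverse of $\mathbf{T}_{1,2}$. Nor does the existence of a bounded left inverse alone give surjectivity: it gives closed range, but you still need density of the range, and nothing you cite supplies that directly. Your alternative suggestion of ``setting $\overline{h}=-\overline{\mathbf{P}(\Omega_1)}\mathbf{O}'_{2,1}\beta$ and verifying via the same identity'' runs into the same problem, since the identity from Lemma \ref{le:dense_transmission} only composes in one order.

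The paper's route is different and uses a tool you did not mention: Theorem \ref{th:J_same_both_sides}. Given $h\in\mathcal{D}_q(\Omega_2)$, one transmits $H=-\mathbf{O}_{2,1}h$ to $\Omega_1$, decomposes $H=H_1+\overline{H_2}$ with $H_1$ holomorphic and $\overline{H_2}\in\overline{\mathcal{D}_p(\Omega_1)}$, and then computes
\[
\mathbf{J}^q_{1,2}\overline{H_2}=\mathbf{J}^q_{1,2}H=-\mathbf{J}^q_{2,2}\mathbf{O}_{1,2}H=\mathbf{J}^q_{2,2}h=h,
\]
using Theorem \ref{th:jump_on_holomorphic} for the first and last equalities and Theorem \ref{th:J_same_both_sides} for the middle one. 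This directly exhibits a preimage. Surjectivity of $\mathbf{T}_{1,2}$ then follows from (3) or (4) via $\partial\mathbf{J}^q_{1,2}=-\mathbf{T}_{1,2}\overline{\partial}$. So the missing ingredient is precisely the equality of the limiting Cauchy integrals from both sides, which is where the transmission theorem does real work in the forward direction.
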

 \begin{proof}
  If (2), (3), or (4) holds, then by Theorem \ref{th:surjective_then_quasicircle} $\Gamma$ is a quasicircle.  
 
  Conversely, assume that $\Gamma$ is a quasicircle. By Theorem  \ref{th:T_injective} and Corollary \ref{co:J_injective}, we have that the maps in (2), (3), and (4) are injective. 
  
  By the inverse mapping theorem it is enough to show that the maps in (2), (3), and (4)
  are surjective.
  Assume that $q \in \Omega_2$.  To see that $\mathbf{J}^q_{1,2}$ is surjective from $\overline{\mathcal{D}_p(\Omega_1)}$ to $\mathcal{D}_q(\Omega_2)$, let $h \in \mathcal{D}(\Omega_2)$. Let $H = - \mathbf{O}_{2,1} h$, where the bounded transmission $\mathbf{O}_{2,1}$ exists by Theorem \ref{th:transmission}. 
  Now $H = H_1 + \overline{H_2}$ where $H_1 \in \mathcal{D}(\Omega_1)$ and $\overline{H_2} \in \overline{\mathcal{D}_p(\Omega_1)}$.  For all $z \in \Omega_2$
  \[  \mathbf{J}^q_{1,2} \overline{H_2} (z)= \mathbf{J}^q_{1,2} H (z) = 
  \mathbf{J}^q_{2,2} h (z) = h(z)-h(q)=h(z) \]
   where the first equality is by part one of Theorem \ref{th:jump_on_holomorphic} with $j=2$, the second equality is by Theorem \ref{th:J_same_both_sides}, and the third equality is by Theorem \ref{th:jump_on_holomorphic} part two with $j=2$.  Thus (3) holds.
   
   A similar argument, after adjustment of the constants and decompositions, proves surjectivity in case (4). Finally, (2) follows from (3) or (4) and the fact that $\partial \mathbf{J}^q_{1,2} \overline{h} 
   = \mathbf{T}_{1,2} \overline{\partial} \overline{h}$. Thus (1) implies (2), (3), and (4), completing the proof.
 \end{proof}
 
 We now prove that the Plemelj-Sokhotski jump decomposition is an isomorphism precisely for quasicircles.  For $q \in \Omega_2$, define
 \begin{align*}
  \mathbf{M}^q(\Omega_1) : \mathcal{D}_{\text{harm}}(\Omega_1) &\rightarrow \mathcal{D}(\Omega_1) \oplus \mathcal{D}_q(\Omega_2) \\
  h & \mapsto \left( \mathbf{J}^q_{1,1} h, \mathbf{J}^q_{1,2} h\right).
 \end{align*} 
 and for $q \in \Omega_1$, define
 \[  \mathbf{M}^q (\Omega_1):\mathcal{D}_{\text{harm}}(\Omega_1) \rightarrow \mathcal{D}_q(\Omega_1) \oplus \mathcal{D}(\Omega_2).  \]  
 Similarly we have the following operator on harmonic Bergman space: 
 \begin{align} \label{eq:jump_isomorphism_definition}
  \mathbf{M}'(\Omega_1) : \mathcal{A}_{\text{harm}}(\Omega_1) & \rightarrow \mathcal{A}(\Omega_1) \oplus \mathcal{A}(\Omega_2)    \\
  \alpha + \overline{\beta} & \mapsto \left( \alpha - \mathbf{T}_{1,1} \overline{\beta}, -\mathbf{T}_{1,2} \overline{\beta} \right)  \nonumber
 \end{align}
 where $\alpha \in A(\Omega_1)$ and $\overline{\beta} \in \overline{A(\Omega_1)}$.  
 
 With this notation, we have the following theorem. 
 \begin{theorem} \label{th:two-sided_isomorphism}
  Let $\Gamma$ be a Jordan curve separating $\sphere$ into components $\Omega_1$ and $\Omega_2$.  The following are equivalent.
  \begin{enumerate}
      \item $\Gamma$ is a quasicircle.
      \item For any $q \in \sphere \backslash \Gamma$, $\mathbf{M}^q$ is an isomorphism.  
      \item $\mathbf{M}'(\Omega_1)$ is an isomorphism.
  \end{enumerate}
  It is enough that (2) holds for a single $q$.  
 \end{theorem}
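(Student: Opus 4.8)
The plan is to prove the four implications $(1)\Rightarrow(2)$, $(2)\Rightarrow(1)$, $(1)\Rightarrow(3)$, $(3)\Rightarrow(1)$, carrying the two cases $q\in\Omega_1$ and $q\in\Omega_2$ along in parallel throughout. In every implication boundedness is free: $\mathbf{J}^q_{1,1}$ and $\mathbf{J}^q_{1,2}$ are bounded by Corollary \ref{co:J_bounded} and $\mathbf{T}_{1,1},\mathbf{T}_{1,2}$ by Theorem \ref{th:Schiffer_bounded}, so $\mathbf{M}^q$ and $\mathbf{M}'(\Omega_1)$ are bounded and, by the open mapping theorem, it suffices to prove bijectivity in each case.

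For $(1)\Rightarrow(2)$, assume $\Gamma$ is a quasicircle. Injectivity of $\mathbf{M}^q$ is immediate from Theorem \ref{th:transmitted_jump}: if $\mathbf{J}^q_{1,1}h=0$ and $\mathbf{J}^q_{1,2}h=0$, then $h=\mathbf{J}^q_{1,1}h-\mathbf{O}_{2,1}\mathbf{J}^q_{1,2}h=0$. For surjectivity I would take, for a prescribed target $(u,v)$, the explicit candidate $h=u-\mathbf{O}_{2,1}v$, which lies in $\mathcal{D}_{\mathrm{harm}}(\Omega_1)$ since $\mathbf{O}_{2,1}$ is bounded (Theorem \ref{th:transmission}). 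One then evaluates $\mathbf{J}^q_1 h$ by linearity: on the holomorphic piece $u$, Theorem \ref{th:jump_on_holomorphic} gives $\mathbf{J}^q_1 u=u$ on $\Omega_1$ and $0$ on $\Omega_2$; for the transmitted piece, Theorem \ref{th:J_same_both_sides} together with $\mathbf{O}_{1,2}\mathbf{O}_{2,1}=\mathrm{Id}$ gives $\mathbf{J}^q_1(\mathbf{O}_{2,1}v)=-\mathbf{J}^q_2 v$, and $\mathbf{J}^q_2 v$ is read off from Theorem \ref{th:jump_on_holomorphic} again. The point evaluations that occur cancel against the normalizations built into the domains of $\mathbf{M}^q$ — $u(q)=0$ precisely when $q\in\Omega_1$, and $v(q)=0$ precisely when $q\in\Omega_2$ — leaving $\mathbf{J}^q_{1,1}h=u$ and $\mathbf{J}^q_{1,2}h=v$; thus $\mathbf{M}^q$ is an isomorphism, with inverse $(u,v)\mapsto u-\mathbf{O}_{2,1}v$. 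For $(1)\Rightarrow(3)$ the argument is the formal derivative of this one: by Theorem \ref{th:one-sided_isomorphisms}, $\mathbf{T}_{1,2}:\overline{\mathcal{A}(\Omega_1)}\to\mathcal{A}(\Omega_2)$ is a bounded isomorphism, so given $(\gamma,\delta)$ one sets $\overline{\beta}=-\mathbf{T}_{1,2}^{-1}\delta$ and $\alpha=\gamma+\mathbf{T}_{1,1}\overline{\beta}$, whence $\mathbf{M}'(\Omega_1)(\alpha+\overline{\beta})=(\gamma,\delta)$; injectivity follows from injectivity of $\mathbf{T}_{1,2}$ (Theorem \ref{th:T_injective}).

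For the converses I would reduce to Theorem \ref{th:surjective_then_quasicircle}. If $\mathbf{M}^q$ is an isomorphism for a single $q$, its second component shows that $\mathbf{J}^q_{1,2}$, regarded as a map out of all of $\mathcal{D}_{\mathrm{harm}}(\Omega_1)$, is surjective onto $\mathcal{D}_q(\Omega_2)$ (case $q\in\Omega_2$) or onto $\mathcal{D}(\Omega_2)$ (case $q\in\Omega_1$). By Theorem \ref{th:jump_on_holomorphic}, on the subspace $\mathcal{D}(\Omega_1)$ the operator $\mathbf{J}^q_{1,2}$ takes values only among the constants, and those constants are already attained on $\overline{\mathcal{D}(\Omega_1)}$ (again by Theorem \ref{th:jump_on_holomorphic}); since $\mathcal{D}_{\mathrm{harm}}(\Omega_1)=\mathcal{D}(\Omega_1)+\overline{\mathcal{D}(\Omega_1)}$, surjectivity on the whole space forces surjectivity of the restriction to $\overline{\mathcal{D}(\Omega_1)}$, and when $q\in\Omega_2$ also to $\overline{\mathcal{D}_p(\Omega_1)}$, because adding a constant to an antiholomorphic potential does not change $\mathbf{J}^q_{1,2}$ of it. Theorem \ref{th:surjective_then_quasicircle}(2) or (3) then gives that $\Gamma$ is a quasicircle. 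Likewise, if $\mathbf{M}'(\Omega_1)$ is an isomorphism, its second component shows $\mathbf{T}_{1,2}$ surjects onto $\mathcal{A}(\Omega_2)$, so $\Gamma$ is a quasicircle by Theorem \ref{th:surjective_then_quasicircle}(1).

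I expect the only delicate part to be bookkeeping rather than analysis: one must check, in the surjectivity step of $(1)\Rightarrow(2)$, that the point evaluations of Theorem \ref{th:jump_on_holomorphic} cancel exactly against the normalizations of $\mathcal{D}_q(\Omega_1)$ or $\mathcal{D}_q(\Omega_2)$ in each placement of $q$, and in $(2)\Rightarrow(1)$ that the reduction from surjectivity of $\mathbf{J}^q_{1,2}$ on $\mathcal{D}_{\mathrm{harm}}(\Omega_1)$ to surjectivity on the normalized antiholomorphic subspace is valid in both placements of $q$. Everything else is a direct invocation of the identities of Theorems \ref{th:J_Schiffer_identities}, \ref{th:J_same_both_sides}, \ref{th:jump_on_holomorphic}, and \ref{th:transmitted_jump}.
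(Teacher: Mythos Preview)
Your proof is correct and follows essentially the same route as the paper. The equivalence $(1)\Leftrightarrow(3)$ is handled identically (invert $\mathbf{T}_{1,2}$ via Theorem~\ref{th:one-sided_isomorphisms}, build the preimage by hand, and for the converse read off surjectivity of $\mathbf{T}_{1,2}$ and invoke Theorem~\ref{th:surjective_then_quasicircle}). For $(1)\Leftrightarrow(2)$ the paper merely sketches ``a nearly identical argument using Theorem~\ref{th:one-sided_isomorphisms}, as well as Theorem~\ref{th:jump_on_holomorphic} to deal with the constants,'' meaning one inverts $\mathbf{J}^q_{1,2}$ on the antiholomorphic summand first and then corrects the holomorphic summand. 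Your construction of the inverse as $(u,v)\mapsto u-\mathbf{O}_{2,1}v$, verified via Theorems~\ref{th:J_same_both_sides} and~\ref{th:jump_on_holomorphic}, is a slightly more direct variant of the same idea---it bypasses the explicit invocation of Theorem~\ref{th:one-sided_isomorphisms} by appealing to the transmission identity of Theorem~\ref{th:transmitted_jump} instead, but the underlying mechanism is the same. Your bookkeeping with the constants in the $(2)\Rightarrow(1)$ step is correct and more explicit than the paper's.
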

 \begin{proof}
  We first prove that (1) implies (2).  Assuming that $\Gamma$ is a quasicircle, by Theorem \ref{th:one-sided_isomorphisms} $-\mathbf{T}_{2,2}$ is an isomorphism. 
  Given $\tau = \alpha + \overline{\beta} \in \mathcal{A}_{\text{harm}}(\Omega_1)$, assume that $\mathbf{M} \tau =0$.  Then $-\mathbf{T}_{1,2} \overline{\beta}=0$ so $\overline{\beta} =0$.  Since $\alpha = \alpha -\mathbf{T}_{1,1} \overline{\beta}=0$, we see that $\tau =0$ so $\mathbf{M}$ is injective.  Given any $(\tau,\sigma) \in \mathcal{A}(\Omega_1) \oplus \mathcal{A}(\Omega_2)$, choose $\overline{\beta}$ such that $- \mathbf{T}_{1,2} \overline{\beta} =\sigma$.  Setting $\alpha = \tau + \mathbf{T}_{1,1} \overline{\beta}$ we have $\mathbf{M}(\alpha+ \overline{\beta}) = (\tau,\sigma)$.
  The converse is just the reversal of these arguments. 
 
  A nearly identical argument using Theorem \ref{th:one-sided_isomorphisms}, as well as Theorem \ref{th:jump_on_holomorphic} to deal with the constants, shows that (1) holds if and only if (2) holds.  
 \end{proof}
 
 In the case that $\Gamma$ is a quasicircle, we call $\mathbf{M}^q$ the Plemelj-Sokhotski jump isomosphism.  This establishes that the jump decomposition holds on quasicircles, with data in $\mathcal{H}(\Gamma)$. 
 \begin{theorem}
  Let $\Gamma$ be a quasicircle separating $\sphere$ into components $\Omega_1$ and $\Omega_2$.  For any $u \in \mathcal{H}(\Gamma)$, there exist $h_j \in \Omega_j$ such that the \emph{CNT} boundary values $u_j$ of $h_j$ satisfy 
  \[  u = u_1 - u_2 \]
  except possibly on a null set.  Fixing $q$ in one of the components $\Omega_j$, $h_1$ and $h_2$ are uniquely determined by the normalization $h_j(q)=0$, and are given explicitly by
  \[  (h_1,h_2) =  \mathbf{M}^q \, \mathbf{e}_{\Gamma,\Omega_1} \, u.   \]
 \end{theorem}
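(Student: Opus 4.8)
The plan is to extract everything from the Plemelj–Sokhotski jump isomorphism $\mathbf{M}^q$ of Theorem \ref{th:two-sided_isomorphism}. First I would invoke that theorem: since $\Gamma$ is a quasicircle, $\mathbf{M}^q(\Omega_1):\mathcal{D}_{\mathrm{harm}}(\Omega_1) \to \mathcal{D}(\Omega_1)\oplus \mathcal{D}_q(\Omega_2)$ (or the analogous target when $q\in\Omega_1$) is a bounded isomorphism. Given $u \in \mathcal{H}(\Gamma) = \mathcal{H}(\Gamma,\Omega_1)$, apply the extension operator $\mathbf{e}_{\Gamma,\Omega_1}$ to obtain $h := \mathbf{e}_{\Gamma,\Omega_1}u \in \mathcal{D}_{\mathrm{harm}}(\Omega_1)$, whose CNT boundary values equal $u$ off a null set. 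Then set $(h_1,h_2) = \mathbf{M}^q(\Omega_1)\, h = (\mathbf{J}^q_{1,1}h,\ \mathbf{J}^q_{1,2}h)$; by Corollary \ref{co:J_bounded} these lie in $\mathcal{D}(\Omega_1)$ and $\mathcal{D}(\Omega_2)$ respectively (with the appropriate normalization at $q$), so in particular $h_j \in \mathcal{D}_{\mathrm{harm}}(\Omega_j)$ and hence each has CNT boundary values $u_j$ off a null set with respect to $\Omega_j$. By Theorem \ref{th:null_both_sides_quasicircle}, since $\Gamma$ is a quasicircle, these null sets are null with respect to both sides, so all the boundary-value equalities below make sense up to a single null set.

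The key identity to verify is $u = u_1 - u_2$ off a null set. This is exactly the boundary-value form of the jump relation $h = \mathbf{J}^q_{1,1}h - \mathbf{O}_{2,1}\mathbf{J}^q_{1,2}h$ from Theorem \ref{th:transmitted_jump}, rewritten with the transmission $\mathbf{O}_{2,1}$ made explicit. Indeed, $\mathbf{O}_{2,1}h_2 = \mathbf{O}_{2,1}\mathbf{J}^q_{1,2}h$ is by definition the element of $\mathcal{D}_{\mathrm{harm}}(\Omega_1)$ whose CNT boundary values (off a null set) agree with those of $h_2$, namely $u_2$. So taking CNT boundary values of both sides of $h = h_1 - \mathbf{O}_{2,1}h_2$, which is legitimate because the trace operator $\mathbf{b}_{\Gamma,\Omega_1}$ is well-defined on $\mathcal{D}_{\mathrm{harm}}(\Omega_1)$ modulo null sets, yields $u = u_1 - u_2$ off a null set.

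For uniqueness and the explicit formula, suppose $h_j' \in \mathcal{D}_{\mathrm{harm}}(\Omega_j)$ with boundary values $u_j'$ satisfy $u = u_1' - u_2'$ off a null set and $h_j'(q) = 0$ for the chosen component. Then $\tilde h := h_1' - \mathbf{O}_{2,1}h_2' \in \mathcal{D}_{\mathrm{harm}}(\Omega_1)$ has CNT boundary values $u$ off a null set, so by the uniqueness part of Beurling's theorem in the CNT formulation (the uniqueness clause of the theorem following Remark \ref{re:CNT_conformally_invariant}) we get $\tilde h = h$. Applying $\mathbf{M}^q(\Omega_1)$ and using that $\mathbf{M}^q$ is an isomorphism by Theorem \ref{th:two-sided_isomorphism}, together with $\mathbf{J}^q_{1,1}(h_1') = h_1' - h_1'(q)$ and $\mathbf{J}^q_{1,2}$ applied to the transmitted part reproduces $h_2'$ via Theorem \ref{th:jump_on_holomorphic} and Theorem \ref{th:J_same_both_sides} — exactly as in the surjectivity argument of Theorem \ref{th:one-sided_isomorphisms} — one recovers $h_j' = h_j$. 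Hence $(h_1,h_2) = \mathbf{M}^q\,\mathbf{e}_{\Gamma,\Omega_1}\,u$ as claimed.

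The main obstacle is purely bookkeeping: one must track the several possibly distinct null sets (the exceptional set of $u$, and the exceptional sets of $h_1$, $h_2$, and of $\mathbf{O}_{2,1}h_2$) and invoke Theorem \ref{th:null_both_sides_quasicircle} to merge them into one null set valid on either side of $\Gamma$, so that the equation $u = u_1 - u_2$ is an honest equality in $\mathcal{B}(\Gamma,\Omega_1) = \mathcal{B}(\Gamma,\Omega_2)$. Everything else is a direct translation of Theorems \ref{th:transmitted_jump}, \ref{th:one-sided_isomorphisms}, and \ref{th:two-sided_isomorphism} from the language of harmonic functions to their boundary values.
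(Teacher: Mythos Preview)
Your proposal is correct and follows essentially the same route as the paper. The existence half is identical: both you and the paper set $h=\mathbf{e}_{\Gamma,\Omega_1}u$, define $(h_1,h_2)=\mathbf{M}^q h$, and read off $u=u_1-u_2$ by taking CNT boundary values of the identity $h=\mathbf{J}^q_{1,1}h-\mathbf{O}_{2,1}\mathbf{J}^q_{1,2}h$ from Theorem~\ref{th:transmitted_jump}.

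For uniqueness the two arguments differ only cosmetically. The paper takes a competing pair $(H_1,H_2)$, writes $h_1-H_1+\mathbf{O}_{2,1}(H_2-h_2)=0$, and kills the terms by applying the projected left-inverses $\overline{\mathbf{P}^{h}_q(\Omega_1)}\mathbf{O}_{2,1}$ or $\overline{\mathbf{P}^{a}_p(\Omega_1)}\mathbf{O}_{2,1}$ from Theorem~\ref{th:one-sided_isomorphisms}. You instead verify directly (via Theorems~\ref{th:jump_on_holomorphic} and~\ref{th:J_same_both_sides}) that $\mathbf{M}^q(h_1'-\mathbf{O}_{2,1}h_2')=(h_1',h_2')$ and then invoke the bijectivity of $\mathbf{M}^q$ from Theorem~\ref{th:two-sided_isomorphism}. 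That computation is exactly the surjectivity calculation inside Theorem~\ref{th:one-sided_isomorphisms}, so the two uniqueness proofs are reorderings of the same ingredients.
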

 \begin{proof} Fix $q \in \sphere \backslash \Gamma$.
  Given $u \in \mathcal{H}(\Gamma)$, denote $h = \mathbf{e}_{\Gamma,\Omega_1} u$ so that $(h_1,h_2) = \mathbf{M}^q h$. To show that $u_1-u_2 = u$ it suffices to show that 
  \[ h = h_1 - \mathbf{O}_{2,1} h_2. \]
  But this is precisely Theorem \ref{th:transmitted_jump}.  
  
  To see that the decomposition is unique, let $H_j$ be another suitably normalized pair of functions such that $u=\mathbf{b}_{\Gamma,\Omega_1} H_1- \mathbf{b}_{\Gamma,\Omega_2} H_2$.  In that case
  $h = H_1 - \mathbf{O}_{2,1} H_2$ so 
  \begin{equation} \label{eq:Plemelj_temp}
   h_1 - H_1 + \mathbf{O}_{2,1}(H_2 - h_2) =0.
  \end{equation}
  In the case that $q \in \Omega_1$, by Theorem \ref{th:one-sided_isomorphisms} ${ \overline{\mathbf{P}^{h}_q(\Omega_1)}} \mathbf{O}_{1,1}$ is one-to-one on $\mathcal{D}(\Omega_2)$. Since $h_1(q) - H_1(q) =0$, applying this to (\ref{eq:Plemelj_temp}) we obtain that $H_2 - h_2 =0$. Inserting this back into (\ref{eq:Plemelj_temp}) yields $h_1-H_2 =0$.  In the case that $q \in \Omega_2$, $h_2 - H_2 \in \mathcal{D}_q(\Omega_2)$. Fixing $p \in \Omega_1$, by Theorem \ref{th:one-sided_isomorphisms} again, ${ \overline{\mathbf{P}^{a}_p(\Omega_1)}}\mathbf{O}_{2,1}$ is injective on $\mathcal{D}_q(\Omega_2)$.  Applying this to (\ref{eq:Plemelj_temp}) as above yields $H_2-h_2=0$ and $h_1-H_1=0.$
 \end{proof}
\end{subsection}
\end{section}
\begin{section}{Faber and Grunsky operator}  \label{se:Faber_Grunsky}
\begin{subsection}{The Faber operator and Faber series} \label{se:Faber}
 The Faber operator \cite{Suetin} arises in the theory of approximation by Faber series in domains in the plane or sphere, and has its origin in the work of G. Faber \cite{Faber}.  The Faber operator is typically defined as follows. Let $\Gamma$ be a rectifiable Jordan curve separating $\sphere$ into components $\Omega_1$ and $\Omega_2$.  Assume for the moment that $\Omega_1$ is bounded, that is $\infty \in \Omega_2$, and $0 \in \Omega_1$.  Let $f:\disk^+ \rightarrow \Omega_1$ be a conformal map, such that $f(0)=0$.  Let $h$ be a holomorphic function on $\disk^-$ and assume that $h \circ f^{-1}$ extends to an integrable function on $\Gamma$; that is, $h$ has boundary values in some sense (e.g. non-tangential) and $h \circ f^{-1} \in L^1(\Gamma)$. 
 Then the Faber operator is defined by 
 \[   \mathcal{F} h(z)= - \frac{1}{2 \pi i} \int_{\Gamma}  \frac{h \circ f^{-1}(w)}{w-z} \,dw.  \]

 For various choices of the regularity of $\Gamma$ and the space of holomorphic functions on $\disk^+$, this is called the Faber operator.   
 The Faber operator is closely related to the approximation by Faber polynomials of a holomorphic function on $\Omega_2$ in general. The $n$th Faber polynomial corresponding to the domain $\Omega_2$ {is defined by 
 \[  F_n(z) = \mathcal{F}((\cdot)^{-n})(z)=  - \frac{1}{2 \pi i} \int_{\Gamma} \frac{(f^{-1}(w))^{-n}}{w-z}\,dw.     \]}
 
 The Faber operator produces a Faber series as follows.  Let $h(z)$ be a holomorphic function in $\text{cl}( \disk^- )$ which vanishes at the origin, and assume that $\Gamma$ is an analytic Jordan curve, so that we may focus on the heuristic idea.  If $h(z) = h_1 z^{-1} + h_2 z^{-2} + \cdots$, 
 then it is easily verified that the function $H(z) = \mathcal{F} h(z)$ is a holomorphic on the closure of $\Omega_2$ and vanishes at $\infty$.  Furthermore, one has the polynomial series
 \[  H(z) = \mathcal{F} h (z)= \sum_{n=1}^\infty h_n \mathcal{F}((\cdot)^{-n}) 
    = \sum_{n=1}^\infty h_n F_n(z) \]
 called the Faber series of $H$.  
 This series converges uniformly on the closure of $\Omega_2$.  One of the advantages of Faber series over power series, is that sufficiently regular functions converge uniformly on compact subsets of the domain. That is, unlike power series, they are adapted to the geometry of the domain.  
 
 If one refines the analytic setting, as we do below, then one can investigate different kinds of convergence of the series.  Existence and uniqueness of a Faber series correspond to surjectivity and injectivity of the Faber operator respectively.

 
 \begin{remark}
 Since $h$ is defined on $\Omega_2$ and $f^{-1}$ on $\Omega_1$,  the composition $h \circ f^{-1}$ is not necessarily defined anywhere except on $\Gamma$. Thus the boundary behaviour of $h$ and $f^{-1}$ play a central role in the study of the Faber operator.
 \end{remark}
 
 The analytic properties of the Faber operator as they relate to the regularity of the curve and the function space, and approximability in various senses by series of Faber polynomials
 has been extensively studied; see Section \ref{se:Faber_Grunsky} for references. We will now choose specific conditions.  
 
 We define a Faber operator with domain $\mathcal{D}(\disk^-)$ for arbitrary Jordan curves using transmission on the circle.  Since the boundary behaviour of a holomorphic function $h(z)$ on $\disk^+$ is identical in every sense to that of $\mathbf{O}_{\disk^-,\disk^+} h(z) = h(1/\bar{z})$, we will replace the domain $\mathcal{D}(\disk^+)$ of the operator by $\overline{\mathcal{D}(\disk^+)}$. 
\begin{definition}\label{defn:Faberoperator}
For $q \in \Omega_2$, we define a Faber operator by setting
 \begin{equation} \label{eq:Faber_operator_defn}
  \mathbf{I}^q_f = - \mathbf{J}^q_{1,2} \mathbf{C}_{f^{-1}}: \overline{\mathcal{D}_0(\disk^+)} \rightarrow \mathcal{D}_q(\Omega_2).
 \end{equation}
\end{definition} 
 
  It follows immediately from Corollary \ref{co:J_bounded} and conformal invariance of Dirichlet space that $\mathbf{I}_f^q$ is a bounded operator.  The choice $q = \infty$ and $p=f(0)$ corresponds to the case described above. From here on, we refer to (\ref{eq:Faber_operator_defn}) as the Faber operator, and use the new notation to distinguish it from the heuristic discussion above.
  
  \begin{remark}
  It's also possible to retain the constants in $\overline{\mathcal{D}(\disk^+)}$ and $\mathcal{D}(\Omega_2)$ by placing $q \in \Omega_1$.  
 \end{remark}

 Denote the set of polynomials vanishing at $q$ by $\mathbb{C}_q[z]$.   Assume that $f(0)=p \in \sphere$, and let $\Gamma'$ be a fixed simple closed analytic curve in $\Omega_1$ with winding number zero with respect to $p$.   
 By Lemma \ref{le:anchor_lemma}, for any $\bar{h} \in \mathbb{C}_0[\bar{z}]$  we have
 \[ \mathbf{I}^q_f \bar{h} = \mathbf{I}^q_f \mathbf{O}_{\disk^-,\disk^+} u 
  = -\frac{1}{2\pi i} \int_{\Gamma'} {u \circ f^{-1}(w)} \left( \frac{1}{w-z} - \frac{1}{w-q} \right) \,dw   \]
  where $u(z) = \overline{h}(1/\bar{z})$.
 It is easily shown that the output is a polynomial in $(z-p)^{-1}$.  In particular, we define the Faber polynomials as follows.  
 \begin{definition} \label{de:Faber_polynomials}
  Let $\Gamma$ be a Jordan curve separating $\sphere$ into $\Omega_1$ and $\Omega_2$.
  Assume that $q \in \Omega_2$ and let $p=f(0)$. 
  Let $f:\disk^+ \rightarrow \Omega_1$ be a conformal map.
  The $n$th Faber polynomial with respect to $f$ is 
  \[  \Phi_n(z) = \mathbf{I}_f^q (z^{-n}) \in \mathbb{C}_q[1/(z-p)].   \]
 \end{definition}
 If $q= \infty$ and $p=0$ we have $\Phi_n(z) \in \mathbb{C}_*[1/z]$.  It is easily checked that $\Phi_n$ has degree $-n$ in $(z-p)$.
 \begin{remark} \label{re:image_Faber_operator_dense}
  For a bounded domain $D$ bounded by a Jordan curve, polynomials are dense in $\mathcal{A}(D)$  \cite{Markushevich}, so polynomials vanishing at a fixed point $q$ are dense in $\mathcal{D}_q(D)$.  So for $p \in \Omega_1$, setting $M(z)=1/(z-p)$ and $D=M(\Omega_1)$, 
  we see that $\mathbb{C}_q[1/(z-p)]$ is dense in $\mathcal{D}_q(\Omega_1)$.  Thus since $\Phi_n$ has degree $-n$ in $(z-p)$ for each $n$, we see that the image of $\mathbf{I}^q_f$ is dense in $\mathcal{D}_q(\Omega_2)$.  
 \end{remark}
 
 By a Faber series we mean a series of the form 
 \[  \sum_{n=1}^\infty \lambda_n \Phi_n(z), \]   
 whether or not it converges in any sense.
 We also define what we call the {\it sequential Faber operator}: with notation as in 
 Definition \ref{de:Faber_polynomials}, 
 \begin{align*}
   \mathbf{I}_f^{\text{seq}}: \ell^2 & \rightarrow \mathcal{D}(\Omega_2)_q \\
   (\lambda_1,\lambda_2,\ldots ) & \mapsto \sum_{k=1}^\infty \frac{\lambda_k}{\sqrt{k}}
    \Phi_k.  
 \end{align*}

 \begin{theorem} \label{th:Faber_operator_isomorphism}
  Let $\Gamma$ be a Jordan curve separating $\sphere$ into components $\Omega_1$ and $\Omega_2$.  Let $q \in \Omega_2$ and fix a conformal map $f:\disk^+ \rightarrow \Omega_1$. The following are equivalent.
  \begin{enumerate}
      \item $\Gamma$ is a quasicircle.
      \item The Faber operator $\mathbf{I}_f^q$ is a bounded isomorphism. 
      \item The sequential Faber operator is a bounded isomorphism.
      \item Every element of $\mathcal{D}_q(\Omega_2)$ is approximable in norm by a  unique Faber series $\sum_{n=1}^\infty h_n \Phi_n$
       satisfying $(h_1, h_2/\sqrt{2}, h_3/\sqrt{3},\ldots) \in \ell^2$. 
  \end{enumerate}
  If any of conditions \emph{(2)-(4)} hold for a single $q$ and single choice of $f:\disk^+ \rightarrow \Omega_1$, then they hold for every $q \in \Omega_2$ and every choice of $f$.
 \end{theorem}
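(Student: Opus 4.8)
The plan is to deduce all of (2), (3), (4) from Theorem~\ref{th:one-sided_isomorphisms}, which already contains all the analytic substance; the rest is a matter of identifications. The starting observation is that, since $f:\disk^+\to\Omega_1$ is a biholomorphism with $f(0)=p\in\Omega_1$, conformal invariance of the Dirichlet energy makes the composition operator $\mathbf{C}_{f^{-1}}$ a surjective linear isometry $\overline{\mathcal{D}_0(\disk^+)}\to\overline{\mathcal{D}_p(\Omega_1)}$, with inverse $\mathbf{C}_f$. Hence, from the definition $\mathbf{I}_f^q=-\mathbf{J}^q_{1,2}\mathbf{C}_{f^{-1}}$, the operator $\mathbf{I}_f^q$ is a bounded isomorphism onto $\mathcal{D}_q(\Omega_2)$ if and only if $\mathbf{J}^q_{1,2}:\overline{\mathcal{D}_p(\Omega_1)}\to\mathcal{D}_q(\Omega_2)$ is. Since $q\in\Omega_2$ and $p\in\Omega_1$, Theorem~\ref{th:one-sided_isomorphisms}(3) says the latter holds exactly when $\Gamma$ is a quasicircle, in which case the inverse of $\mathbf{I}_f^q$ is $\mathbf{C}_f\,\overline{\mathbf{P}^{a}_p(\Omega_1)}\,\mathbf{O}_{2,1}$. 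This gives (1)$\Leftrightarrow$(2).

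For (2)$\Leftrightarrow$(3) I would use that $\{\,\bar z^{\,n}/\sqrt n\,\}_{n\ge 1}$ is an orthonormal basis of $\overline{\mathcal{D}_0(\disk^+)}$: orthonormality is immediate from $D_{\disk^+}(z^n)=n$, and completeness follows from density of polynomials in $\mathcal{D}(\disk^+)$ (a Jordan, hence Carath\'eodory, domain). Thus $\mathbf{U}:\ell^2\to\overline{\mathcal{D}_0(\disk^+)}$, $(\lambda_n)\mapsto\sum_n\lambda_n\,\bar z^{\,n}/\sqrt n$, is a unitary isomorphism. Recalling $\Phi_n=\mathbf{I}_f^q(\bar z^{\,n})$ with $\bar z^{\,n}=\mathbf{O}_{\disk^-,\disk^+}(z^{-n})$, and using continuity of $\mathbf{I}_f^q$ (Corollary~\ref{co:J_bounded} and conformal invariance) to pass the sum through the operator, one obtains $\mathbf{I}_f^{\mathrm{seq}}=\mathbf{I}_f^q\circ\mathbf{U}$. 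In particular $\mathbf{I}_f^{\mathrm{seq}}$ is automatically well-defined and bounded, and it is a bounded isomorphism precisely when $\mathbf{I}_f^q$ is; this yields (2)$\Leftrightarrow$(3).

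For (3)$\Leftrightarrow$(4): by the definition of $\mathbf{I}_f^{\mathrm{seq}}$ together with its continuity, surjectivity of $\mathbf{I}_f^{\mathrm{seq}}$ is exactly the assertion that every $H\in\mathcal{D}_q(\Omega_2)$ is the norm limit of the partial sums of a Faber series $\sum_n h_n\Phi_n$ whose coefficient sequence satisfies the $\ell^2$-summability condition figuring in (4), while injectivity of $\mathbf{I}_f^{\mathrm{seq}}$ is exactly uniqueness of those coefficients. Since $\mathbf{I}_f^{\mathrm{seq}}$ is already known to be bounded, ``bounded bijection'' and ``bounded isomorphism'' coincide by the inverse mapping theorem, so (4)$\Leftrightarrow$(3). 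Finally, condition (1) refers to neither $q$ nor $f$, so once each of (2)--(4) has been shown equivalent to (1), the last sentence of the theorem follows at once.

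The only place where care is genuinely needed is the bookkeeping relating $\mathbf{I}_f^q$, the Faber polynomials $\Phi_n$, the unitary $\mathbf{U}$ and the sequential operator — in particular justifying the interchange of $\mathbf{I}_f^q$ with the infinite sum (which rests solely on boundedness of $\mathbf{I}_f^q$) and matching the norm-approximation statement in (4) with surjectivity of a continuous operator between Hilbert spaces. All the genuinely hard analysis — boundedness of the Schiffer and Cauchy operators, the anchor lemma, and above all the transmission characterization of quasicircles — has already been carried out and enters only through Theorem~\ref{th:one-sided_isomorphisms}.
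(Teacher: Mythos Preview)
Your proof is correct and follows essentially the same route as the paper: the equivalence (1)$\Leftrightarrow$(2) via Theorem~\ref{th:one-sided_isomorphisms} composed with the isometry $\mathbf{C}_{f^{-1}}$, the equivalence (2)$\Leftrightarrow$(3) via the unitary identification of $\ell^2$ with $\overline{\mathcal{D}_0(\disk^+)}$, and the equivalence with (4) by reading surjectivity and injectivity of the (bounded) sequential Faber operator as existence and uniqueness of the Faber expansion. The only cosmetic difference is that the paper links (4) directly with (2) and, for the implication (4)$\Rightarrow$(2), separately invokes injectivity of $\mathbf{J}^q_{1,2}$ from Corollary~\ref{co:J_injective}, whereas you extract injectivity directly from the uniqueness clause in (4); your bookkeeping is slightly cleaner here.
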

 \begin{proof}
  The equivalence of (1) and (2) follows immediately from Theorem \ref{th:one-sided_isomorphisms} together with the fact that $\mathbf{C}_{f^{-1}}:\overline{\mathcal{D}_0(\disk^+)} \rightarrow \overline{\mathcal{D}_p(\Omega_1)}$ is a bounded isomorphism, where $p=f(0)$.  The equivalence of (2) and (3) follows from the fact that 
  \begin{equation} \label{eq:sequential_Faber_disk}
   (\lambda_1,\lambda_2,\lambda_3,\ldots) \mapsto \sum_{k=1}^\infty \frac{\lambda_k}{\sqrt{k}} \bar{z}^k  
  \end{equation}
  is a bounded isomorphism from $\ell^2$ to $\overline{\mathcal{D}_0(\disk^+)}$.
  
 To show that (2) and (4) are equivalent, first observe that for any Jordan curve $\mathbf{I}^q_f$ is injective, since $\mathbf{C}_{f^{-1}}:\overline{\mathbf{D}_0(\disk^+)}
  \rightarrow \overline{\mathbf{D}_p(\Omega_1)}$ is an isomorphism and $\mathbf{J}^q_{1,2}$ is injective by Corollary \ref{co:J_injective} part (1).   
  Now assume that (2) holds.  Given any $H(z) \in \mathcal{D}_q(\Omega_2)$ let 
  $H = \mathbf{I}_f^q \overline{h}$. This function $h$ has a power series expression
  \[ \overline{h}(z) = h_1 \bar{z} + h_2 \bar{z}^2 + \cdots,  \] which converges in $\overline{\mathcal{D}_0(\disk^+)}$ to $\bar{h}$.  Since $\mathbf{I}^q_f$ is 
  bounded, applying it to both sides we see that 
  \[  H(z) = \sum_{n=1}^\infty h_n \Phi_n(z) \]
  is convergent in the norm.  Uniqueness follows from injectivity of $\mathbf{I}^q_f$.  
  
  To see that (4) implies (2), observe that (4) implies that the sequential Faber operator is surjective.  Since \eqref{eq:sequential_Faber_disk} is an isomorphism, $\mathbf{I}_f^q$ is surjective, and hence an isomorphism.
 \end{proof}
 The inverse can be given explicitly.    
 \begin{theorem} \label{th:Faber_isomorphism_inverse}
   Let $\Gamma$ be a quasicircle separating $\sphere$ into components $\Omega_1$ and $\Omega_2$.  Let $q \in \Omega_2$, and fix $f:\disk^+ \rightarrow \Omega_1$.  The inverse of $\mathbf{I}_f^q$ is ${ \overline{\mathbf{P}^{a}_0(\disk^+)}} \mathbf{C}_f \mathbf{O}_{2,1}$.   
 \end{theorem}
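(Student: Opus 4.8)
The plan is to obtain $(\mathbf{I}_f^q)^{-1}$ by inverting the two factors of $\mathbf{I}_f^q = -\mathbf{J}^q_{1,2}\,\mathbf{C}_{f^{-1}}$ separately, and then commuting an anti-holomorphic projection past the composition operator $\mathbf{C}_f$.

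First I would record, writing $p = f(0)\in\Omega_1$, that $\mathbf{C}_{f^{-1}}$ is a bounded isomorphism from $\overline{\mathcal{D}_0(\disk^+)}$ onto $\overline{\mathcal{D}_p(\Omega_1)}$ with inverse $\mathbf{C}_f$ (this was already used in the proof of Theorem \ref{th:Faber_operator_isomorphism} and is immediate from conformal invariance of the Dirichlet energy together with the fact that $\bar h(0)=0$ if and only if $(\bar h\circ f^{-1})(p)=0$). Since $\Gamma$ is a quasicircle, Theorem \ref{th:one-sided_isomorphisms}(3) gives that $\mathbf{J}^q_{1,2}$ is a bounded isomorphism from $\overline{\mathcal{D}_p(\Omega_1)}$ onto $\mathcal{D}_q(\Omega_2)$ with inverse $-\,\overline{\mathbf{P}^a_p(\Omega_1)}\,\mathbf{O}_{2,1}$. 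Composing the two inverses in the correct order, and noting that the sign from $-\mathbf{J}^q_{1,2}$ in the definition of $\mathbf{I}_f^q$ cancels the sign in $(\mathbf{J}^q_{1,2})^{-1}$, we get $(\mathbf{I}_f^q)^{-1} = \mathbf{C}_f\,\overline{\mathbf{P}^a_p(\Omega_1)}\,\mathbf{O}_{2,1}$.

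It then remains to show $\mathbf{C}_f\,\overline{\mathbf{P}^a_p(\Omega_1)} = \overline{\mathbf{P}^a_0(\disk^+)}\,\mathbf{C}_f$ as operators on $\mathcal{D}_{\mathrm{harm}}(\Omega_1)$. This holds because $\mathbf{C}_f$ respects the defining decompositions: if $h = \bar h_1 + h_2$ with $\bar h_1\in\overline{\mathcal{D}_p(\Omega_1)}$ and $h_2\in\mathcal{D}(\Omega_1)$, then $\bar h_1\circ f$ is anti-holomorphic and $h_2\circ f$ is holomorphic on $\disk^+$ (composition of an (anti-)holomorphic function with the conformal map $f$), with $(\bar h_1\circ f)(0)=\bar h_1(p)=0$; hence $\bar h_1\circ f\in\overline{\mathcal{D}_0(\disk^+)}$ and $h_2\circ f\in\mathcal{D}(\disk^+)$, so by uniqueness of the decomposition $\mathcal{D}_{\mathrm{harm}}(\disk^+)=\overline{\mathcal{D}_0(\disk^+)}\oplus\mathcal{D}(\disk^+)$ we obtain $\overline{\mathbf{P}^a_0(\disk^+)}(\mathbf{C}_f h)=\bar h_1\circ f=\mathbf{C}_f\big(\overline{\mathbf{P}^a_p(\Omega_1)}h\big)$. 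Substituting this identity into the formula for $(\mathbf{I}_f^q)^{-1}$, and using that $\mathbf{O}_{2,1}$ maps into $\mathcal{D}_{\mathrm{harm}}(\Omega_1)$, yields $(\mathbf{I}_f^q)^{-1}=\overline{\mathbf{P}^a_0(\disk^+)}\,\mathbf{C}_f\,\mathbf{O}_{2,1}$, as claimed.

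There is no real obstacle here; the proof is a formal composition of isomorphisms already established in the paper. The only points that require care are keeping track of the two cancelling signs (from $-\mathbf{J}^q_{1,2}$ in $\mathbf{I}_f^q$ and from the minus sign in $(\mathbf{J}^q_{1,2})^{-1}$ supplied by Theorem \ref{th:one-sided_isomorphisms}), and the verification that the anti-holomorphic projections on $\Omega_1$ and on $\disk^+$ are intertwined by $\mathbf{C}_f$, which reduces to the elementary facts that composition with a conformal map preserves holomorphicity, anti-holomorphicity and point values, and therefore respects the orthogonal direct sum underlying those projections.
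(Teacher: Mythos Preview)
Your proof is correct and follows essentially the same approach as the paper: both use the intertwining identity $\overline{\mathbf{P}^{a}_0(\disk^+)}\,\mathbf{C}_f=\mathbf{C}_f\,\overline{\mathbf{P}^{a}_p(\Omega_1)}$ together with the explicit inverse of $\mathbf{J}^q_{1,2}$ supplied by Theorem~\ref{th:one-sided_isomorphisms}. The only cosmetic difference is that the paper verifies the formula is a left inverse and then appeals to Theorem~\ref{th:Faber_operator_isomorphism} for the right inverse, whereas you compose the two known inverses directly; both are equally valid.
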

 \begin{proof}
  Let $p=f(0)$.  Observe that ${ \overline{\mathbf{P}^{a}_0(\disk^+)}} \mathbf{C}_f = \mathbf{C}_f { \overline{\mathbf{P}^{a}_p(\Omega_1)}}$.  Thus by Theorem \ref{th:one-sided_isomorphisms},  
  \begin{align*}
   -{\overline{\mathbf{P}^{a}_0(\disk^+)}} \mathbf{C}_f \mathbf{O}_{\Omega_2,\Omega_1} \mathbf{J}^q_{1,2} \mathbf{C}_{f^{-1}} \overline{h} &  = - \mathbf{C}_f {\overline{\mathbf{P}^{a}_p(\Omega_1)}} \mathbf{O}_{2,1} \mathbf{J}^q_{1,2}  \mathbf{C}_{f^{-1}} \overline{h} \\
   & = \overline{h}   
 \end{align*}
  for all $\overline{h} \in \overline{\mathcal{D}(\disk^+)_0}$.
  So this is a left inverse, which must also be the right inverse by Theorem \ref{th:Faber_operator_isomorphism}.  
 \end{proof}
\end{subsection}
\begin{subsection}{Grunsky inequalities} \label{se:Grunsky}
   The Grunsky inequalities originally stem from H. Grunsky's studies in the context of univalent function theory \cite{Grunsky}. The operators (or matrices) involved in those studies have grown to become a powerful tool in many areas of mathematics.\\
   
  We shall first define the Grunsky operators acting on polynomials, and later extend them by Theorem \ref{th:Grunsky_extension} to Dirichlet spaces. In Theorem \ref{th:Grunsky_expression_quasicircle} we will define the Grunsky operators in the more general setting of quasicircles.
  \begin{definition}
 Given a Jordan curve $\Gamma$ separating $\sphere$ into $\Omega_1$ and $\Omega_2$ as above, let $f:\disk^+ \rightarrow \Omega_1$ be a conformal map with $f(0)=p$ and fix $q \in \Omega_2$. The Grunsky operator on polynomials is defined by
     
  \begin{equation}   \label{eq:Grunsky_definition}
   {\bf{Gr}}_f = { \mathbf{P}^{h}_0(\disk^+)}\, \mathbf{C}_f\, \hat{\mathbf{O}}_{2,1}\, \mathbf{I}_f^q: 
  \mathbb{C}_0[\bar{z}]  \rightarrow \mathcal{D}_0(\disk^+).
   \end{equation}
  \end{definition}
  
  As we saw in the previous section, $\mathbf{I}_f^q$ takes polynomials to polynomials in $\mathbb{C}_q[1/(z-p)]$, which have continuous transmission.   It follows from Lemma \ref{le:pre-transmitted_jump} that the output of $\mathbf{Gr}_f$ on polynomials is in $\mathcal{D}_{0}(\disk^+)$.  
  Since for any $\overline{h}\in \overline{\mathcal{D}_0(\disk^+)}$
  \[     \mathbf{I}^q_f \overline{h} - \mathbf{I}^{q_1}_f \overline{h} =  
     [\mathbf{J}^{q_1}_{1,2} -  \mathbf{J}^{q}_{1,2} ] \mathbf{C}_{f^{-1}} \overline{h}    \]
  is constant, and the transmission and pull-back of constants are also constant, the Grunsky operator is independent of $q$.
  \begin{remark} \label{re:Grunsky_is_classical_Grunsky} By the anchor lemma \ref{le:anchor_lemma}, this agrees with the 
   classical definition of the Grunsky coefficients.  We choose $q = \infty \in \Omega_1$ and $p=0$ to be consistent with the usual conventions, though the reasoning works for arbitrary $q$ and $p$. The classical definition (in fact, one of several) is that the Grunsky coefficients $b_{nk}$ of a univalent map of the disk are given by
   \begin{equation} \label{eq:Faber_compose_map}
     \Phi_n(f(z)) = z^{-n} + \sum_{k=1}^\infty b_{nk} z^k    
   \end{equation} 
   where $\Phi_n$ is the nth Faber polynomial. Recalling that $\mathbf{I}^0_f (z^{-n}) = \Phi_n$, the fact that $\Phi_n(f(z))$ has this form follows from a simple contour integration argument (or from Corollary \ref{co:J_injective}).  By the Anchor Lemma \ref{le:anchor_lemma} applied to $\Phi_n$, together with the fact that 
   \[  { \mathbf{P}^{h}_0(\disk^+)}\, \mathbf{C}_f = \mathbf{C}_f { \overline{\mathbf{P}^{a}_0(\disk^+)}}  \]
   we have 
   \[  {\bf{Gr}}_f (\overline{z}^{n}) =  \sum_{k=1}^\infty b_{nk} z^k.    \]
   Thus we see that the Grunsky coefficients are just the coefficients of the matrix representation of $\mathbf{Gr}_f$.
  \end{remark}
  
  We now extend $\mathbf{Gr}_f$ to the full Dirichlet space.
  \begin{theorem} \label{th:Grunsky_extension} 
   Let $\Gamma$ be a Jordan curve separating $\sphere$ into components $\Omega_1$ and $\Omega_2$.  Fix $q \in \Omega_2$.  Let $f:\disk^+ \rightarrow \Omega_1$ be a conformal map.
   ${\bf{Gr}}_f$ extends to a bounded operator 
   \[ {\bf{Gr}}_f: 
   \overline{\mathcal{D}_0(\disk^+)}\rightarrow \mathcal{D}_0(\disk^+)   \]
   of norm less than or equal to one.
   For all $\overline{h} \in \overline{\mathcal{D}_0(\disk^+)}$ the extended operator 
   satisfies 
   \begin{equation} \label{eq:Grunsky_Faber_identity}
    \| \mathbf{I}^{q}_f \overline{h} \|^2_{\mathcal{D}_q(\Omega_1)} \leq \| \overline{h} \|^2_{\overline{\mathcal{D}_0(\disk^+)}} - \| {\bf{Gr}}_f \, \overline{h} \|^2_{\mathcal{D}_0(\disk^+)}.  
   \end{equation}
   If $\Gamma$ has measure zero, then equality holds.  
  \end{theorem}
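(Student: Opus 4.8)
The plan is to reduce the estimate \eqref{eq:Grunsky_Faber_identity} to the single Schiffer-operator inequality
\begin{equation} \label{eq:Schiffer_contraction_plan}
 \| \mathbf{T}_{1,1} \bar{\alpha} \|_{\Omega_1}^2 + \| \mathbf{T}_{1,2} \bar{\alpha} \|_{\Omega_2}^2 \leq \| \bar{\alpha} \|_{\Omega_1}^2 , \qquad \bar{\alpha} \in \overline{\mathcal{A}(\Omega_1)} ,
\end{equation}
with equality when $\Gamma$ has measure zero, and then to transport this to $\mathbf{I}^q_f$ and $\mathbf{Gr}_f$ using the Cauchy--Schiffer identities of Theorem~\ref{th:J_Schiffer_identities} together with a density argument. (Here $\mathbf{I}^q_f$ maps into $\mathcal{D}_q(\Omega_2)$, by Definition~\ref{defn:Faberoperator}.)

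First I would prove \eqref{eq:Schiffer_contraction_plan}. By the M\"obius invariance of the Schiffer operators (Theorem~\ref{th:Schiffer_Mobius_invariance}) and the conformal invariance of the $L^2$ norms involved, we may assume $\Omega_1$ bounded, $\infty \in \Omega_2$, and $\infty \notin \Gamma$; choosing the M\"obius transformation so that its pole avoids $\Gamma$ also preserves the property of having planar measure zero. Write $\bar{\alpha} = \overline{h(w)}\,d\bar{w}$ and set $\mu = \overline{h}$ on $\Omega_1$, $\mu = 0$ on $\mathbb{C}\setminus\Omega_1$, so that $\mu \in L^2(\mathbb{C})$ with $\iint_{\mathbb{C}}|\mu|^2\,dA = \pi \| \bar{\alpha} \|_{\Omega_1}^2$. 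Let $S$ be the Beurling--Ahlfors transform $S\mu(z) = -\frac{1}{\pi}\operatorname{p.v.}\iint_{\mathbb{C}}\mu(w)(w-z)^{-2}\,dA(w)$, which is unitary on $L^2(\mathbb{C})$ (see e.g. \cite{Ahlfors_quasiconformal}). Comparing with Definition~\ref{defn:Schiffer operators} and using $\tfrac{1}{2i}\,d\bar{w}\wedge dw = dA$, the coefficient of $\mathbf{T}_{1,2}\bar{\alpha}$ is $-S\mu|_{\Omega_2}$ (a non-singular integral), while that of $\mathbf{T}_{1,1}\bar{\alpha}$ is $-S\mu|_{\Omega_1}$, the principal value being finite and holomorphic thanks to the de-singularization \eqref{eq:Schiffer_desingularized}. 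Therefore
\begin{align*}
 \| \bar{\alpha} \|_{\Omega_1}^2 &= \tfrac{1}{\pi}\iint_{\mathbb{C}}|\mu|^2\,dA = \tfrac{1}{\pi}\iint_{\mathbb{C}}|S\mu|^2\,dA \\
 &= \| \mathbf{T}_{1,1}\bar{\alpha} \|_{\Omega_1}^2 + \| \mathbf{T}_{1,2}\bar{\alpha} \|_{\Omega_2}^2 + \tfrac{1}{\pi}\iint_{\Gamma}|S\mu|^2\,dA ,
\end{align*}
which gives \eqref{eq:Schiffer_contraction_plan}; the last term vanishes precisely when $\Gamma$ has measure zero.

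Next I would establish, for $\bar{h}$ in the dense subspace $\mathbb{C}_0[\bar{z}] \subseteq \overline{\mathcal{D}_0(\disk^+)}$, the identities linking $\mathbf{I}^q_f$, $\mathbf{Gr}_f$ and the Schiffer operators. Put $\bar{g} = \mathbf{C}_{f^{-1}}\bar{h} \in \overline{\mathcal{D}_p(\Omega_1)}$ with $p=f(0)$, and $\bar{\alpha} = \overline{\partial}\bar{g} \in \overline{\mathcal{A}(\Omega_1)}$; conformal invariance gives $\| \bar{h} \|_{\overline{\mathcal{D}_0(\disk^+)}}^2 = D_{\Omega_1}(\bar{g}) = \| \bar{\alpha} \|_{\Omega_1}^2$. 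Since $\mathbf{J}^q_{1,2}\bar{g}$ is holomorphic, Theorem~\ref{th:J_Schiffer_identities} yields $d\,\mathbf{I}^q_f\bar{h} = -d\,\mathbf{J}^q_{1,2}\bar{g} = \mathbf{T}_{1,2}\bar{\alpha}$, hence $\| \mathbf{I}^q_f\bar{h} \|_{\mathcal{D}_q(\Omega_2)}^2 = \| \mathbf{T}_{1,2}\bar{\alpha} \|_{\Omega_2}^2$. For the Grunsky operator one first checks that $\bar{g}$ lies in $\mathbf{G}_{B,\Omega_1}\mathcal{D}(B)\cap\overline{\mathcal{D}(\Omega_1)}$ for a suitable collar $B$ (by \eqref{eq:bounce_conformally_invariant}, $\bar{g}$ is the bounce of $\mathbf{C}_{f^{-1}}$ applied to a Laurent polynomial in $1/z$), so Lemma~\ref{le:dense_transmission}(1) applies and $\hat{\mathbf{O}}_{2,1}\mathbf{I}^q_f\bar{h} = \bar{g} - \mathbf{J}^q_{1,1}\bar{g}$; applying $\mathbf{C}_f$ and then $\mathbf{P}^h_0(\disk^+)$ annihilates the anti-holomorphic summand $\bar{h}$ and leaves $\mathbf{Gr}_f\bar{h} = -\mathbf{P}^h_0(\disk^+)\mathbf{C}_f\mathbf{J}^q_{1,1}\bar{g}$. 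Since $\mathbf{J}^q_{1,1}\bar{g}$ is holomorphic on $\Omega_1$ with $d\,\mathbf{J}^q_{1,1}\bar{g} = -\mathbf{T}_{1,1}\bar{\alpha}$ (Theorem~\ref{th:J_Schiffer_identities} again, with $\overline{\partial}\bar{g}=\bar{\alpha}$ and $\partial\bar{g}=0$), conformal invariance gives $\| \mathbf{Gr}_f\bar{h} \|_{\mathcal{D}_0(\disk^+)}^2 = D_{\Omega_1}(\mathbf{J}^q_{1,1}\bar{g}) = \| \mathbf{T}_{1,1}\bar{\alpha} \|_{\Omega_1}^2$. Combining the three identities with \eqref{eq:Schiffer_contraction_plan} proves \eqref{eq:Grunsky_Faber_identity} for all $\bar{h} \in \mathbb{C}_0[\bar{z}]$, with equality when $\Gamma$ has measure zero.

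Finally, the resulting bound $\| \mathbf{Gr}_f\bar{h} \|_{\mathcal{D}_0(\disk^+)} \leq \| \bar{h} \|_{\overline{\mathcal{D}_0(\disk^+)}}$ on $\mathbb{C}_0[\bar{z}]$, together with density of $\mathbb{C}_0[\bar{z}]$ in $\overline{\mathcal{D}_0(\disk^+)}$ and completeness of $\mathcal{D}_0(\disk^+)$, shows that $\mathbf{Gr}_f$ extends uniquely to a bounded operator of norm at most one; since $\mathbf{I}^q_f$ is already bounded (Corollary~\ref{co:J_bounded} with conformal invariance), both terms on each side of \eqref{eq:Grunsky_Faber_identity} are continuous in $\bar{h}$, so the inequality, and the equality when $\Gamma$ has measure zero, persist on all of $\overline{\mathcal{D}_0(\disk^+)}$. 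I expect the main difficulty to be the bookkeeping in the third paragraph --- tracking the projection $\mathbf{P}^h_0(\disk^+)$, the continuous transmission $\hat{\mathbf{O}}_{2,1}$ versus its Dirichlet-bounded version $\mathbf{O}_{2,1}$, and the three normalizations at $0$, $p$, $q$ --- and in particular the verification that Lemma~\ref{le:dense_transmission}(1) is available for $\bar{g} = \mathbf{C}_{f^{-1}}\bar{h}$, which is precisely what forces the argument through a dense subspace. The identification of the Schiffer operators with the Beurling transform in the second paragraph is the conceptual core but is technically brief.
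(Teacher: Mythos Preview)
Your argument is correct and takes a genuinely different route from the paper.  The paper proves \eqref{eq:Grunsky_Faber_identity} on polynomials by the classical contour--area computation: writing $H=\mathbf{I}^q_f\bar h=\sum_{n=1}^m h_n\Phi_n$ and using the expansion $\Phi_n(f(z))=z^{-n}+\sum_k b_{nk}z^k$, it evaluates $-\lim_{r\nearrow 1}\tfrac{1}{2\pi i}\int_{f(C_r)}\overline{H}\,H'\,dw$ by a residue calculation on $C_r$, obtaining $\|\bar h\|^2-\|\mathbf{Gr}_f\bar h\|^2$; the inequality (and the measure-zero equality case) comes from comparing this contour limit with $D_{\Omega_2}(H)$ via Stokes' theorem.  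In other words, the paper reproduces the standard proof of the strong Grunsky inequalities and then passes to the density closure.

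Your approach instead recognises the identity as a consequence of the unitarity of the Beurling--Ahlfors transform on $L^2(\mathbb{C})$, rewritten through Definition~\ref{defn:Schiffer operators} as the single Bergman-space inequality $\|\mathbf T_{1,1}\bar\alpha\|_{\Omega_1}^2+\|\mathbf T_{1,2}\bar\alpha\|_{\Omega_2}^2\le\|\bar\alpha\|_{\Omega_1}^2$, with the defect exactly $\tfrac1\pi\iint_\Gamma|S\mu|^2\,dA$.  You then read off $\|\mathbf I^q_f\bar h\|^2=\|\mathbf T_{1,2}\bar\alpha\|^2$ and $\|\mathbf{Gr}_f\bar h\|^2=\|\mathbf T_{1,1}\bar\alpha\|^2$ from Theorem~\ref{th:J_Schiffer_identities} and Lemma~\ref{le:dense_transmission}(1), and your verification that $\bar g=\mathbf{C}_{f^{-1}}\bar h$ lies in $\mathbf G_{B,\Omega_1}\mathcal D(B)\cap\overline{\mathcal D(\Omega_1)}$ (via \eqref{eq:bounce_conformally_invariant} and $\mathbf G_{\mathbb A_r,\disk^+}z^{-n}=\bar z^n$) is exactly what is needed to invoke that lemma.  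The projection bookkeeping you flagged is indeed the only delicate point, and you handle it correctly: $\mathbf P^h_0(\disk^+)$ kills $\bar h$ and shifts $\mathbf C_f\mathbf J^q_{1,1}\bar g$ by a constant, so the Dirichlet energy is unaffected.

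What each approach buys: the paper's computation is self-contained and close to the classical literature on Grunsky matrices, needing only the Faber--polynomial identity \eqref{eq:Faber_compose_map}.  Your argument is more structural: it exhibits \eqref{eq:Grunsky_Faber_identity} as the pull-back under $f$ of a universal $L^2$ identity for the Schiffer operators, makes the equality case transparent (the defect is literally the $L^2$ mass of $S\mu$ on $\Gamma$), and uses rather than bypasses the operator identities (Theorem~\ref{th:J_Schiffer_identities}, Lemma~\ref{le:dense_transmission}) that the paper has already set up.  It is also closer in spirit to Theorem~\ref{th:Grunsky_integral_formula}, which identifies $\widehat{\mathbf{Gr}}_f$ with $f^*\mathbf T_{1,1}(f^{-1})^*$ and would give your key identity $\|\mathbf{Gr}_f\bar h\|^2=\|\mathbf T_{1,1}\bar\alpha\|^2$ at once.
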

  \begin{proof}
   First observe that the Grunsky operator satisfies the following invariance property when restricted to polynomials.  For any M\"obius transformation $M$ 
   \begin{equation} \label{eq:Grunsky_Mobius_invariance_temp}
       {\bf{Gr}}_{M \circ f} = {\bf{Gr}}_{f}.  
   \end{equation}
   This follows from the facts that $\mathbf{C}_M\, \hat{\mathbf{O}}_{M(\Omega_2),M(\Omega_1)} = \hat{\mathbf{O}}_{2,1}\, \mathbf{C}_M$ and by Theorem \ref{th:J_Mobius_invariant} $$\mathbf{C}_M \mathbf{J}^{M(q)}_{M(\Omega_1),M(\Omega_2)} = \mathbf{J}^{q}_{1,2}\, \mathbf{C}_M.$$  
   
   We will first show the inequality (\ref{eq:Grunsky_Faber_identity}) for polynomials.  By the above observation, it is enough to prove it when $\Omega_2$ contains $\infty$, and $p=0 \in \Omega_1$.  We can also assume that $q =\infty$.
   
   For $r \in (0,1)$ let $C_r$ be the positively oriented curve $|z|=r$. 
   For $\bar{h} = h_1 \bar{z} + \cdots + h_m \bar{z}^m \in \mathbb{C}_0[\bar{z}]$ 
   we set
   \[  H(w) = \mathbf{I}^{q}_f \bar{h} = \sum_{n=1}^m h_n \Phi_n.  \] 
   Observe that since $H(w)$ vanishes at $\infty$, 
   \[  \lim_{R \nearrow \infty} \int_{|z|=R}  \overline{H(w)} H'(w) \, dw =0.    \]
   Thus by (\ref{eq:Faber_compose_map})
   we have, 
   using the fact that $\bar{z} = r^2/z$ on $C_r$, 
   \begin{align*}
    \| \mathbf{I}^{q}_{f} \bar{h} \|_{\mathcal{D}_q(\Omega_1)}^2  &  \leq  - \lim_{r \nearrow 1}  \frac{1}{2 \pi i} 
    \int_{f(C_r)} \overline{H}(w) H'(w) \,dw 
    = - \lim_{r \nearrow 1}  \frac{1}{2 \pi i} 
    \int_{C_r} \overline{H(f(z))} (H \circ f)'(z) \,dz \\
    & =  \lim_{r \nearrow 1}  \frac{1}{2 \pi i} \int_{C_r} \left( \sum_{n=1}^m \overline{h_n} \bar{z}^{-n} + \sum_{n=1}^m \sum_{k=1}^\infty \overline{b_{nk}} \bar{z}^k \right) 
    \left( \sum_{n=1}^m  n h_n z^{-n-1} - \sum_{n=1}^m \sum_{k=1}^\infty k {b_{nk}} z^{k-1} \right)\,dz  \\
    & =  \lim_{r \nearrow 1}  \frac{1}{2 \pi i} \int_{C_r} \left( \sum_{n=1}^m \overline{h_n} r^2 {z}^{n} + \sum_{n=1}^m \sum_{k=1}^\infty \overline{b_{nk}} r^2 z^{-k} \right) 
    \left( \sum_{n=1}^m  n h_n z^{-n-1} - \sum_{n=1}^m \sum_{k=1}^\infty k b_{nk} z^{k-1} \right)\,dz  \\
    & = \| h \|^2_{\overline{\mathcal{D}_0(\disk^+)}} - \left\| \sum_{k=1}^\infty 
    \sum_{n=1}^m b_{nk} h_n \right\|^2_{\mathcal{D}_0(\disk^+)}.
   \end{align*}
   If $\Gamma$ has measure zero, equality holds. The theorem now follow from density of $\mathbb{C}_0[\bar{z}]$ in $\overline{\mathcal{D}_0(\disk^+)}$. 
  \end{proof}
  From now on, ${\bf{Gr}}_f$ refers to this extended operator.  
  
  \begin{corollary}
   For any M\"obius transformation $M$, ${\bf{Gr}}_{M \circ f} = {\bf{Gr}}_f$.  
  \end{corollary}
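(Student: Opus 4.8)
The plan is to reduce the statement to what has already been established inside the proof of Theorem \ref{th:Grunsky_extension}. There, equation (\ref{eq:Grunsky_Mobius_invariance_temp}) records that ${\bf{Gr}}_{M \circ f} = {\bf{Gr}}_f$ when both sides are viewed as operators on the space of polynomials $\mathbb{C}_0[\bar z]$; this in turn came from the two conjugation identities $\mathbf{C}_M\, \hat{\mathbf{O}}_{M(\Omega_2),M(\Omega_1)} = \hat{\mathbf{O}}_{2,1}\, \mathbf{C}_M$ and $\mathbf{C}_M \mathbf{J}^{M(q)}_{M(\Omega_1),M(\Omega_2)} = \mathbf{J}^{q}_{1,2}\, \mathbf{C}_M$ (Theorem \ref{th:J_Mobius_invariant}), together with the way the projection intertwines with $\mathbf{C}_M$. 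So the only thing left is to pass from the polynomial level to the full space.

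First I would invoke Theorem \ref{th:Grunsky_extension} twice: once for the conformal map $f$ and once for the conformal map $M \circ f:\disk^+ \rightarrow M(\Omega_1)$ (composing with a M\"obius transformation preserves the hypotheses of that theorem, since $M(\Omega_1)$ and $M(\Omega_2)$ are again the two complementary components of the Jordan curve $M(\Gamma)$, and $M(q) \in M(\Omega_2)$). This yields two bounded operators ${\bf{Gr}}_f$ and ${\bf{Gr}}_{M\circ f}$ on $\overline{\mathcal{D}_0(\disk^+)}$, each of norm at most one, and each extending the respective operator on polynomials. Next I would recall that $\mathbb{C}_0[\bar z]$ is dense in $\overline{\mathcal{D}_0(\disk^+)}$ — this is exactly the density used at the end of the proof of Theorem \ref{th:Grunsky_extension}. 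Since two bounded linear operators that agree on a dense subspace agree everywhere, the identity ${\bf{Gr}}_{M\circ f}\,\overline h = {\bf{Gr}}_f\,\overline h$ for $\overline h \in \mathbb{C}_0[\bar z]$ extends by continuity to all $\overline h \in \overline{\mathcal{D}_0(\disk^+)}$, which is the claim.

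There is essentially no obstacle here; the argument is the standard ``bounded operators agreeing on a dense set'' principle. The only point requiring a moment's care is verifying that the data $(M\circ f, M(\Omega_1), M(\Omega_2), M(q))$ genuinely satisfies the standing hypotheses of Theorem \ref{th:Grunsky_extension}, so that its conclusion (boundedness of the extension, and that the extension is the unique continuous one) can legitimately be applied to ${\bf{Gr}}_{M\circ f}$; once that is in place the corollary is immediate.
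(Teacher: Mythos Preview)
Your proposal is correct and follows essentially the same approach as the paper: the paper's proof simply says that the identity (\ref{eq:Grunsky_Mobius_invariance_temp}) established on polynomials in the proof of Theorem \ref{th:Grunsky_extension} must persist for the extended operator, which is exactly the density-plus-boundedness argument you spell out in detail.
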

  \begin{proof}
  This follows from (\ref{eq:Grunsky_Mobius_invariance_temp}), since the extended operator must also satisfy this identity. 
  \end{proof}
  \begin{theorem} \label{th:Grunsky_expression_quasicircle}
   If $\Gamma$ is a quasicircle dividing $\sphere$ into $\Omega_+$ and $\Omega_-$, and $f:\disk^+ \rightarrow \Omega_+$ is a biholomorphism, then 
   \[ {\bf{Gr}}_f = {\mathbf{P}^{h}_0(\disk^+)}\, \mathbf{C}_f\, {\mathbf{O}}_{2,1}\, \mathbf{I}_f^q.  \]
  \end{theorem}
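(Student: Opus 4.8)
The plan is to prove the identity first on the dense subspace $\mathbb{C}_0[\bar{z}] \subset \overline{\mathcal{D}_0(\disk^+)}$, on which ${\bf{Gr}}_f$ was originally defined, and then extend it to all of $\overline{\mathcal{D}_0(\disk^+)}$ by continuity. Here I write $\Omega_1 = \Omega_+$ and $\Omega_2 = \Omega_-$ and fix $q \in \Omega_2$, noting as before that ${\bf{Gr}}_f$ does not depend on $q$.

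First I would note that, by Definition \ref{de:Faber_polynomials}, for $\bar{h} \in \mathbb{C}_0[\bar{z}]$ the image $\mathbf{I}_f^q \bar{h}$ lies in $\mathbb{C}_q[1/(z-p)]$ with $p = f(0) \in \Omega_1$; it is therefore a rational function whose only finite pole is at $p \notin \mathrm{cl}\,\Omega_2$, so it extends holomorphically, hence continuously, across $\Gamma$. For a function in $\mathcal{D}_{\mathrm{harm}}(\Omega_2)$ which extends continuously to $\mathrm{cl}\,\Omega_2$, the Dirichlet-bounded transmission $\mathbf{O}_{2,1} = \mathbf{e}_{\Gamma,\Omega_1}\mathbf{b}_{\Gamma,\Omega_2}$ coincides with the continuous transmission $\hat{\mathbf{O}}_{2,1}$: its CNT boundary values are exactly its continuous boundary trace, and $\mathbf{e}_{\Gamma,\Omega_1}$ of that trace is the solution of the Dirichlet problem on $\Omega_1$, which is $\hat{\mathbf{O}}_{2,1}$ applied to it. (This is precisely the observation preceding Theorem \ref{th:experimental_transmission} that motivates the definition of $\mathbf{O}$.) Consequently, using the definition \eqref{eq:Grunsky_definition} of ${\bf{Gr}}_f$ on polynomials,
\[ {\bf{Gr}}_f\,\bar{h} \;=\; {\mathbf{P}^{h}_0(\disk^+)}\,\mathbf{C}_f\,\hat{\mathbf{O}}_{2,1}\,\mathbf{I}_f^q\,\bar{h} \;=\; {\mathbf{P}^{h}_0(\disk^+)}\,\mathbf{C}_f\,{\mathbf{O}}_{2,1}\,\mathbf{I}_f^q\,\bar{h}, \qquad \bar{h}\in\mathbb{C}_0[\bar{z}]. \]

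Next I would check that both operators in the asserted identity are bounded from $\overline{\mathcal{D}_0(\disk^+)}$ to $\mathcal{D}_0(\disk^+)$: the left-hand side is bounded by Theorem \ref{th:Grunsky_extension}; the right-hand side is a composition of $\mathbf{I}_f^q$ (bounded by Corollary \ref{co:J_bounded} together with conformal invariance of the Dirichlet space), $\mathbf{O}_{2,1}$ (bounded because $\Gamma$ is a quasicircle, Theorem \ref{th:transmission}), $\mathbf{C}_f$ (which preserves the Dirichlet semi-norm), and the bounded projection ${\mathbf{P}^{h}_0(\disk^+)}$ onto $\mathcal{D}_0(\disk^+)$. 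Since $\mathbb{C}_0[\bar{z}]$ is dense in $\overline{\mathcal{D}_0(\disk^+)}$ and two bounded operators agreeing on a dense subspace coincide, the identity extends to all of $\overline{\mathcal{D}_0(\disk^+)}$. I do not expect any genuine obstacle here; the only point needing care is the identification $\hat{\mathbf{O}}_{2,1}\mathbf{I}_f^q\bar{h} = \mathbf{O}_{2,1}\mathbf{I}_f^q\bar{h}$ for polynomial $\bar{h}$, and this is immediate from the equality of CNT and continuous boundary values for continuously extendible functions, together with uniqueness of the Dirichlet-harmonic extension.
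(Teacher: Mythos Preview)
Your proof is correct and follows essentially the same approach as the paper: the paper's one-line proof states that the right-hand side is a bounded extension of the defining formula \eqref{eq:Grunsky_definition} by Theorem \ref{th:transmission}, which is exactly what you spell out in detail. Your explicit verification that $\hat{\mathbf{O}}_{2,1}$ and $\mathbf{O}_{2,1}$ agree on $\mathbf{I}_f^q\,\mathbb{C}_0[\bar z]$, together with the boundedness check and density argument, is precisely the content implicit in the paper's proof.
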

  \begin{proof}
   The expression is a bounded extension of (\ref{eq:Grunsky_definition}) by Theorem \ref{th:transmission}.
  \end{proof}
   
  \begin{remark} As is well-known, 
   using the identity in Remark \ref{re:Grunsky_is_classical_Grunsky} one can show that only injectivity of $f$ is necessary in order to define the bounded Grunsky operator on Dirichlet space.  This is usually formulated as an extension to sequences in $\ell^2$.   
  \end{remark}
  
 \begin{theorem}
  Let $\Gamma$ be a Jordan curve in $\sphere$.  The following are equivalent.
  \begin{enumerate} 
      \item $\Gamma$ is a quasicircle. 
      \item The Grunsky operator has norm strictly less than one. 
      \item The Grunsky operator has norm strictly less than one on polynomials.
      \item There is a $\kappa$ such that or all $\bar{h} \in \overline{\mathcal{D}_0(\disk^+)}$, 
         \begin{equation} \label{eq:weak_Grunsky}  \mathrm{Re} \left< \mathbf{O}_{\disk^+,\disk^-} \bar{h} , {\bf{Gr}}_f \, \bar{h} \right>   \leq \kappa \| h\|^2. 
         \end{equation}
      \item The inequality \emph{\eqref{eq:weak_Grunsky}} holds for polynomials.  
  \end{enumerate}
 \end{theorem}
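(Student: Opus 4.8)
The plan is to establish the cycle of implications $(1)\Rightarrow(2)\Rightarrow(3)\Rightarrow(5)\Rightarrow(4)$ and then $(4)\Rightarrow(1)$, although several of the intermediate arrows are essentially trivial once the two substantive ones—$(1)\Rightarrow(2)$ and $(4)\Rightarrow(1)$—are in hand. The implications $(2)\Rightarrow(3)$ and $(4)\Rightarrow(5)$ are immediate, since restricting an operator (respectively, an inequality) to the dense subspace $\mathbb{C}_0[\bar z]$ of $\overline{\mathcal{D}_0(\disk^+)}$ cannot increase the norm (respectively, cannot fail). Conversely $(3)\Rightarrow(2)$ and $(5)\Rightarrow(4)$ follow from the density of $\mathbb{C}_0[\bar z]$ together with the boundedness of $\mathbf{Gr}_f$ and of $\mathbf{O}_{\disk^+,\disk^-}$ established in Theorem \ref{th:Grunsky_extension}; continuity lets the norm bound and the bilinear inequality pass to the closure. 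The implication $(2)\Rightarrow(4)$ is a one-line Cauchy--Schwarz estimate: since $\mathbf{O}_{\disk^+,\disk^-}$ is an isometry, $\mathrm{Re}\langle \mathbf{O}_{\disk^+,\disk^-}\bar h, \mathbf{Gr}_f\,\bar h\rangle \le \|\mathbf{O}_{\disk^+,\disk^-}\bar h\|\,\|\mathbf{Gr}_f\,\bar h\| \le \|\mathbf{Gr}_f\|\,\|\bar h\|^2$, so $\kappa = \|\mathbf{Gr}_f\| < 1$ works. So the combinatorial skeleton reduces to the two genuine facts $(1)\Rightarrow(2)$ and $(4)\Rightarrow(1)$.

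For $(1)\Rightarrow(2)$ I would argue as follows. Assume $\Gamma$ is a quasicircle. By Theorem \ref{th:Grunsky_extension}, for every $\bar h \in \overline{\mathcal{D}_0(\disk^+)}$ we have the Grunsky--Faber identity
\[
 \|\mathbf{I}^q_f\,\bar h\|^2_{\mathcal{D}_q(\Omega_1)} \;\le\; \|\bar h\|^2_{\overline{\mathcal{D}_0(\disk^+)}} - \|\mathbf{Gr}_f\,\bar h\|^2_{\mathcal{D}_0(\disk^+)},
\]
so $\|\mathbf{Gr}_f\,\bar h\|^2 \le \|\bar h\|^2 - \|\mathbf{I}^q_f\,\bar h\|^2$. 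Since $\Gamma$ is a quasicircle, Theorem \ref{th:Faber_operator_isomorphism} tells us that $\mathbf{I}^q_f$ is a bounded isomorphism onto $\mathcal{D}_q(\Omega_2)$, hence in particular bounded below: there is $c>0$ with $\|\mathbf{I}^q_f\,\bar h\|^2 \ge c\|\bar h\|^2$ for all $\bar h$. Feeding this into the previous line gives $\|\mathbf{Gr}_f\,\bar h\|^2 \le (1-c)\|\bar h\|^2$, so $\|\mathbf{Gr}_f\| \le \sqrt{1-c} < 1$, which is (2). (One should check $c\le 1$, which is automatic from the identity itself since the right side is nonnegative.)

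For $(4)\Rightarrow(1)$—the step I expect to be the main obstacle—the idea is to run the Grunsky--Faber identity backwards to force $\mathbf{I}^q_f$ to be bounded below, and then invoke $(4)\Rightarrow$ quasicircle in the Faber picture. Concretely: starting from the equality case form of Theorem \ref{th:Grunsky_extension}, or more carefully from the contour-integral computation in its proof specialized to polynomials, one sees that $\|\mathbf{I}^q_f\,\bar h\|^2$ differs from $\|\bar h\|^2 - \|\mathbf{Gr}_f\,\bar h\|^2$ by a term controlled by $\mathrm{Re}\langle \mathbf{O}_{\disk^+,\disk^-}\bar h,\mathbf{Gr}_f\,\bar h\rangle$; the hypothesis $(5)$ (equivalently $(4)$) bounds that cross term by $\kappa\|h\|^2$ with $\kappa<1$, and chasing the algebra yields $\|\mathbf{I}^q_f\,\bar h\| \ge (1-\kappa')\|\bar h\|$ for polynomials $\bar h$ and some $\kappa'<1$, hence on all of $\overline{\mathcal{D}_0(\disk^+)}$ by density. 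Combined with the density of the image of $\mathbf{I}^q_f$ (Remark \ref{re:image_Faber_operator_dense}), being bounded below makes $\mathbf{I}^q_f$ a bounded isomorphism, so condition (2) of Theorem \ref{th:Faber_operator_isomorphism} holds, and therefore $\Gamma$ is a quasicircle. The delicate point to get right is the exact bookkeeping that converts the weak inequality $\eqref{eq:weak_Grunsky}$ with constant $\kappa<1$ into a lower bound with a strictly positive constant; I would model this on the classical Grunsky-inequality argument (the matrix $I - \mathbf{Gr}_f^*\mathbf{Gr}_f$ being bounded below is equivalent to the strict inequality, and the weak bilinear form $\eqref{eq:weak_Grunsky}$ is precisely the bilinearized version), and cite the equality case of Theorem \ref{th:Grunsky_extension} to handle the rectifiable sub-case cleanly before reducing the general case to it by approximation.
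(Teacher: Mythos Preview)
Your treatment of the easy implications and of $(1)\Rightarrow(2)$ matches the paper's proof exactly: density of polynomials for $(2)\Leftrightarrow(3)$ and $(4)\Leftrightarrow(5)$, Cauchy--Schwarz plus the isometry of $\mathbf{O}_{\disk^+,\disk^-}$ for $(2)\Rightarrow(4)$, and the Faber isomorphism (Theorem~\ref{th:Faber_operator_isomorphism}) fed into the Grunsky--Faber inequality (\ref{eq:Grunsky_Faber_identity}) for $(1)\Rightarrow(2)$.

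There is, however, a genuine gap in your argument for $(4)\Rightarrow(1)$. You assert that ``$\|\mathbf{I}^q_f\,\bar h\|^2$ differs from $\|\bar h\|^2 - \|\mathbf{Gr}_f\,\bar h\|^2$ by a term controlled by $\mathrm{Re}\langle \mathbf{O}_{\disk^+,\disk^-}\bar h,\mathbf{Gr}_f\,\bar h\rangle$'', but this is not what the contour computation in Theorem~\ref{th:Grunsky_extension} gives. That computation yields precisely $\|\mathbf{I}^q_f\,\bar h\|^2 \le \|\bar h\|^2 - \|\mathbf{Gr}_f\,\bar h\|^2$, with no cross term of the form $\langle \mathbf{O}_{\disk^+,\disk^-}\bar h,\mathbf{Gr}_f\,\bar h\rangle$ appearing anywhere; the weak bilinear quantity in (\ref{eq:weak_Grunsky}) is a different object and does not enter that identity. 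So hypothesis~(4) gives you no purchase on a lower bound for $\mathbf{I}^q_f$. Even under the stronger hypothesis~(2), the argument you sketch only works if (\ref{eq:Grunsky_Faber_identity}) is an \emph{equality}, which the paper guarantees only when $\Gamma$ has measure zero; indeed the paper explicitly remarks after the theorem that the functional-analytic route you outline requires this extra assumption.

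The paper does not attempt to close this gap internally. For $(5)\Rightarrow(1)$ it simply invokes Pommerenke's classical result \cite[Theorem~9.12]{Pommerenkebook}, which proves directly that the weak Grunsky inequality with a constant $\kappa<1$ forces quasiconformal extendibility of the map. That argument is external to the Faber--transmission machinery and uses a different mechanism (area estimates and the theory of univalent functions), so your proposed reduction to Theorem~\ref{th:Faber_operator_isomorphism} cannot be made to work as stated.
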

  Before giving the proof, we note the connection with the usual formulation of the Grunsky inequalities.  
   Setting $\bar{h}(z) = {\lambda_1} \overline{z} + \cdots \lambda_n \overline{z}^n$, (3) says that there is some $\kappa<1$ such that for all choices of parameters $\lambda_1,\ldots,\lambda_n \in \mathbb{C}$ 
   \begin{equation} \label{eq:strong_Grunsky}
         \sum_{k=1}^n k \left| \sum_{m=1}^n b_{mk} \lambda_k \right|^2 \leq \kappa \sum_{k=1}^n k |\lambda_k|^2.  
   \end{equation}
  Item (5) says that there is some $\kappa <1$ such that for all such choices of parameters
  \begin{equation} \label{eq:weak_Grunsky_polynomial}
   \left| \sum_{k=1}^n \sum_{m=1}^n b_{mk} \lambda_k \lambda_m \right|   \leq \kappa
  \sum_{k=1}^n k |\lambda_k|^2. 
  \end{equation}
 \begin{proof}
  By Theorem \ref{th:Grunsky_extension} and density of polynomials (2) and (3) are equivalent, and that (4) and (5) are equivalent. By the Cauchy-Schwarz inequality applied to 
  \[  \text{Re} \left< \bar{h},\mathbf{O}_{\disk^-,\disk^+} {\bf{Gr}}_f \bar{h} \right> =
    \text{Re} \left< \mathbf{O}_{\disk^+,\disk^-} \bar{h} , {\bf{Gr}}_f \, \bar{h} \right> \]
  (2) implies (4), using the fact that $\mathbf{O}_{\disk^-,\disk^+}$ is norm-preserving.  
  Thus it is enough to show (1) $\Rightarrow (2)$ and (5) $\Rightarrow (1)$.
 
  (1) $\Rightarrow$ (2). If $\Gamma$ is a quasicircle, then by Theorem \ref{th:Faber_operator_isomorphism} 
  $\mathbf{I}^{q}_{f}$ is an isomorphism, so there is a $c>0$ such that $\| \mathbf{I}^{q}_f \overline{h} \|_{\mathcal{D}_q(\Omega_1)} \geq c \| \overline{h} \|_{\overline{\mathcal{D}_0(\disk^+)}}$ for all $\overline{h}$.   Inserting this in \eqref{eq:Grunsky_Faber_identity}  we see that 
  $\| {\bf{Gr}}_f \overline{h} \|_{\mathcal{D}_0(\disk^+)} \leq \sqrt{1-c^2} \| \overline{h} \|_{\overline{\mathcal{D}_0(\disk^+)}}$.   
  
  (5) $\Rightarrow$ (1).  This is \cite[Theorem 9.12]{Pommerenkebook} applied to (\ref{eq:weak_Grunsky}), applied to $g(z)=1/f(1/z)$.  The different convention for
  the mapping function does not alter the result; see (\ref{eq:one_over_z_transform_Grunsky}) ahead.
 \end{proof}

 \begin{remark} A simple functional analytic proof that any of (2)-(5) implies (1) can be given, if we assume in addition that $\Gamma$ is a measure zero Jordan curve. Assuming that $\| {\bf{Gr}}_f \| \leq k <1$ say, and applying the equality case of (\ref{eq:Grunsky_Faber_identity}) we obtain $\| \mathbf{I}^{q}_{f} \overline{h}  \|_{\mathcal{D}_q(\Omega_1)} \geq \sqrt{1-k^2} \| \overline{h} \|_{\overline{\mathcal{D}_0(\disk^+)}}$.  So the image of $\mathbf{I}^q_f$ is closed, and by Remark \ref{re:image_Faber_operator_dense} it is $\mathcal{D}_q(\Omega_2)$.  Hence by the open mapping theorem $\mathbf{I}_f^q$ is a bounded isomorphism, and therefore Theorem \ref{th:Faber_operator_isomorphism} yields that $\Gamma$ is a quasicircle.  
 \end{remark} 
 
 \begin{remark}  In order to define the Faber polynomials and the Grunsky coefficients $b_{nk}$ in (\ref{eq:Grunsky_Faber_identity}), it is only required that $f$ is defined in a neighbourhood of $0$ and has non-vanishing derivative there.  It is classical that (\ref{eq:strong_Grunsky}) and (\ref{eq:weak_Grunsky}) each hold for $\kappa=1$ if and only if $f$ extends to a one-to-one holomorphic function on $\disk^+$ \cite{Duren_book}. 
  Equation (\ref{eq:strong_Grunsky}) (with $\kappa = 1$) is usually called the strong Grunsky inequalities, while (\ref{eq:weak_Grunsky}) is usually called the weak Grunsky inequalities \cite{Duren_book}.  The computation in the proof of Theorem \ref{th:Grunsky_extension}
  is the usual proof of the strong Grunsky inequalities.
 \end{remark} 
 \begin{remark}
  The proof of Theorem \ref{th:Grunsky_extension} is easily modified to show that for any one-to-one holomorphic function $f$ on $\disk$, the 
  Grunsky operator (expressed as a function of the parameters $\alpha_k = \lambda_k/\sqrt{k}$) extends to a bounded operator on $\ell^2$, see e.g. \cite{Duren_book,Pommerenkebook}.    
 \end{remark}
  
  Finally, we include an integral expression for the Grunsky operator, due to Bergman and Schiffer \cite{BergmanSchiffer}.  It is most conveniently expressed in terms of the Bergman space of one-forms.   
  
  If $\Omega$ is simply connected, then 
  \[  d: \mathcal{D}(\Omega) \rightarrow A(\Omega) \] 
  is norm-preserving, and in fact if $\Omega$ is simply connected it becomes an isometry when restricted to $\mathcal{D}(\Omega)_q$ for any $q \in \Omega$.    We then define 
  \[    \hat{\bf{Gr}}_f: \overline{\mathcal{A}(\disk^+)} \rightarrow \mathcal{A}(\disk^+)   \]
  by 
  \[ \partial \, {\bf{Gr}}_f = \hat{{\bf{Gr}}}_f \, \overline{\partial}.  \]
  With this definition, we have
  \begin{theorem} \label{th:Grunsky_integral_formula} For any Jordan curve $\Gamma$ and conformal map $f:\disk^+ \rightarrow \Omega_1$, 
  \begin{align*}
   \hat{\bf{Gr}}_f \overline{\alpha} & = f^* \mathbf{T}_{1,1} (f^{-1})^* \overline{\alpha} \\
   & = \iint_{\disk^+}  \frac{1}{2 \pi i} \left( \frac{f'(w) f'(z)}{(f(w)-f(z))^2} - \frac{1}{(w-z)^2} \right) \overline{\alpha(w)} \wedge dw \cdot dz.   
  \end{align*}
  \end{theorem}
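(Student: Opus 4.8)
The plan is to prove the operator identity $\hat{\bf{Gr}}_f = f^* \mathbf{T}_{1,1} (f^{-1})^*$ first, on a dense subspace, and then read off the explicit kernel by unpacking $\mathbf{T}_{1,1}$ and changing variables. The whole argument is essentially a sequence of manipulations of the operators defined above together with one change-of-variables computation; the only genuine inputs are Theorem \ref{th:J_Schiffer_identities} (the Cauchy--Schiffer identities), Lemma \ref{le:dense_transmission}, and the conformal invariance of Green's function.

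Concretely, I would work with the dense set $\mathbb{C}_0[\bar{z}] \subset \overline{\mathcal{D}_0(\disk^+)}$. Fix a collar neighbourhood $B$ of $\Gamma$ in $\Omega_1$. As in the proof of Theorem \ref{th:T_injective}, $\mathbf{C}_{f^{-1}}\bar{z}^{\,n} = \mathbf{G}_{B,\Omega_1}\mathbf{C}_{f^{-1}}w^{-n}$, so $\mathbf{C}_{f^{-1}}\mathbb{C}_0[\bar{z}] \subset \mathbf{G}_{B,\Omega_1}\mathcal{D}(B)\cap\overline{\mathcal{D}(\Omega_1)}$ and Lemma \ref{le:dense_transmission}(1) applies, giving $\hat{\mathbf{O}}_{2,1}\mathbf{J}^q_{1,2}\mathbf{C}_{f^{-1}}\bar{h} = \mathbf{J}^q_{1,1}\mathbf{C}_{f^{-1}}\bar{h} - \mathbf{C}_{f^{-1}}\bar{h}$ for $\bar{h} \in \mathbb{C}_0[\bar{z}]$. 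Inserting this and $\mathbf{I}^q_f = -\mathbf{J}^q_{1,2}\mathbf{C}_{f^{-1}}$ into the definition \eqref{eq:Grunsky_definition} of ${\bf{Gr}}_f$, and noting that $\mathbf{C}_f\mathbf{C}_{f^{-1}}\bar{h} = \bar{h}$ is anti-holomorphic and therefore lies in $\ker {\mathbf{P}^{h}_0(\disk^+)}$, one gets ${\bf{Gr}}_f\bar{h} = -{\mathbf{P}^{h}_0(\disk^+)}\mathbf{C}_f\mathbf{J}^q_{1,1}\mathbf{C}_{f^{-1}}\bar{h}$. Now apply $\partial$: since the decomposition \eqref{eq:holomorphic_normalization_projection} defining ${\mathbf{P}^{h}_0(\disk^+)}$ changes a harmonic function only by an anti-holomorphic summand and an additive constant, $\partial {\mathbf{P}^{h}_0(\disk^+)}u = \partial u$, and $\partial$ commutes with pull-back in the sense $\partial(v\circ f) = f^*\partial v$. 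Since $\mathbf{C}_{f^{-1}}\bar{h}$ is anti-holomorphic on $\Omega_1$, the interior identity in Theorem \ref{th:J_Schiffer_identities} degenerates to $\partial \mathbf{J}^q_{1,1}\mathbf{C}_{f^{-1}}\bar{h} = -\mathbf{T}_{1,1}\overline{\partial}\mathbf{C}_{f^{-1}}\bar{h} = -\mathbf{T}_{1,1}(f^{-1})^*\overline{\partial}\bar{h}$, whence $\partial {\bf{Gr}}_f\bar{h} = f^*\mathbf{T}_{1,1}(f^{-1})^*\overline{\partial}\bar{h}$. By the defining relation $\partial{\bf{Gr}}_f = \hat{\bf{Gr}}_f\overline{\partial}$ and the fact that $\overline{\partial}\mathbb{C}_0[\bar{z}]$ is dense in $\overline{\mathcal{A}(\disk^+)}$, the identity $\hat{\bf{Gr}}_f = f^*\mathbf{T}_{1,1}(f^{-1})^*$ then extends to all of $\overline{\mathcal{A}(\disk^+)}$: $\hat{\bf{Gr}}_f$ is bounded by Theorem \ref{th:Grunsky_extension}, $\mathbf{T}_{1,1}$ is bounded by Theorem \ref{th:Schiffer_bounded}, and $f^*$ and $(f^{-1})^*$ are isometries of the Bergman spaces of one-forms.

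For the explicit integral I would substitute the de-singularized expression \eqref{eq:Schiffer_desingularized} for $\mathbf{T}_{1,1}$, apply $(f^{-1})^*$ to the input and $f^*$ to the output, and change variables $w=f(\eta)$, $z=f(\zeta)$ in the area integral. Conformal invariance of Green's function, $g_{\Omega_1}(f(\zeta),f(\eta)) = g_{\disk^+}(\zeta,\eta)$, turns $L_{\Omega_1}(f(\zeta),f(\eta))f'(\zeta)f'(\eta)$ into $L_{\disk^+}(\zeta,\eta)$, while the Euclidean kernel $(w-z)^{-2}$ becomes $f'(\eta)f'(\zeta)(f(\eta)-f(\zeta))^{-2}$; the Jacobian of $d\bar{w}\wedge dw$ combines with the conjugate factor coming from $(f^{-1})^*\bar\alpha$ to leave a single $f'(\eta)$, which matches the asserted integrand once one inserts the value of $L_{\disk^+}$ computed from $g_{\disk^+}(\zeta,\eta) = -\log|(\zeta-\eta)/(1-\bar\eta\zeta)|$. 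The main obstacle is bookkeeping rather than conceptual: one must keep the normalization constants (the factors $\tfrac{1}{2\pi i}$ and the sign of $L$) and the several pull-back Jacobians consistent so that the disk kernel $\tfrac{1}{2\pi i}(w-z)^{-2} - L_{\disk^+}(z,w)$ reassembles into $\tfrac{1}{2\pi i}\bigl( f'(w)f'(z)(f(w)-f(z))^{-2} - (w-z)^{-2}\bigr)$, and in the operator computation one must verify that each step is an identity on the nose and not merely modulo constants — which is precisely where the normalization $\bar{h}(0)=0$ and the freedom to shrink the collar $B$ are used.
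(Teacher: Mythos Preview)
Your proof is correct and follows essentially the same route as the paper. The paper first treats the quasicircle case using the bounded overfare $\mathbf{O}_{2,1}$ and Theorem~\ref{th:transmitted_jump}, then remarks that for a general Jordan curve one carries out the identical computation on polynomials via Lemma~\ref{le:dense_transmission} and extends by density; you simply go directly to the polynomial-plus-density argument, which is slightly more economical but not a different idea.
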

  \begin{proof} Set $p = f(0)$.  
   Assume for the moment that $\Gamma$ is a quasicircle.  Then, fixing some $q \in \Omega_2$,  
   \begin{align*}
    \hat{{\bf{Gr}}}_f \overline{\alpha} & = - \partial {\mathbf{P}^{h}_0(\disk^+)} \mathbf{C}_f \mathbf{O}_{2,1} \mathbf{J}^q_{1,2} \mathbf{C}_{f^{-1}} {\overline{\partial}}^{-1} \overline{\alpha} \\
    & = - \partial \mathbf{C}_f {\mathbf{ P}^{h}_p(\Omega_1)} \mathbf{O}_{2,1} \mathbf{J}^q_{1,2} {\overline{\partial}}^{-1} (f^{-1})^* \overline{\alpha} \\
    & = - f^* \mathbf{P}(\Omega_1) \partial \mathbf{O}_{2,1} \mathbf{J}^q_{1,2} {\overline{\partial}}^{-1} (f^{-1})^* \overline{\alpha}.
   \end{align*}
   Here, it is understood that $\overline{\partial}^{-1}$ is a choice of inverse of $\overline{\partial}: \mathcal{D}_0(\disk^+) \rightarrow \mathcal{A}(\disk^+).$ 
   Now applying Theorems \ref{th:transmitted_jump} and \ref{th:jump_on_holomorphic} we see that
   \begin{align*}
       \hat{{\bf{Gr}}}_f \overline{\alpha} & =  - f^* \mathbf{P}(\Omega_1) \partial ( \mathbf{J}^q_{1,1} - \mathrm{Id}) {\overline{\partial}}^{-1} (f^{-1})^* \overline{\alpha} \\ 
       & = f^* \mathbf{P}(\Omega_1) \mathbf{T}_{1,1}(f^{-1})^* \overline{\alpha} 
   \end{align*}
   which proves the first claim.  The integral formula is obtained by  
   substituting (\ref{eq:Schiffer_desingularized}) into the right hand side and changing variables, using the fact that 
   \[   L_{\Omega_1}(w,z) = \frac{1}{2 \pi i} \frac{(f^{-1})'(w) (f^{-1})'(z)}{(f^{-1}(w)-f^{-1}(z))^2} dw \cdot dz.  \]
   
   If $\Gamma$ is a Jordan curve but not a quasicircle, we apply the same argument to polynomials $\overline{h} \in \mathbb{C}_0[\bar{z}]$, using Lemma \ref{le:dense_transmission} in place of Theorem \ref{th:transmitted_jump}.  The result is then extended to $\overline{\mathcal{D}_0(\disk^+)}$ using Theorem \ref{th:Grunsky_extension}.    
  \end{proof}
  Bergman and Schiffer directly defined an operator using this integral formula, and observed that it recovers the Grunsky operator when applied to polynomials, see   
  \cite[eq (9.7), (9.9)]{BergmanSchiffer} and nearby text (in fact, their formulation is for germs of maps in arbitrarily simply connected domain, and the above is a special case).  In particular, their integral formulation agrees with the unique operator extension of the Grunsky matrix to $\ell^2$, if we identify sequences with elements of $\mathcal{A}(\disk^+)$ as in Theorem \ref{th:Faber_operator_isomorphism}.
  
  \begin{remark}  
   It can be shown that the integral formula in Theorem \ref{th:Grunsky_integral_formula} is a bounded operator for arbitrary one-to-one $f:\disk \rightarrow \mathbb{C}$, and this agrees with the extension of the Grunsky operator of Theorem \ref{th:Grunsky_extension}.  Although Bergman and Schiffer \cite{BergmanSchiffer} assume that the boundaries are analytic Jordan curves, they were certainly aware of this fact.   
  \end{remark}
\end{subsection}
\end{section}

\begin{section}{Notes and literature} \label{se:notes}
\begin{subsection}{Notes on the Introduction}
  We begin with some further remarks on attributions and proofs.  
  Below, (n) refers to the claim that the statements (1) and (n) in the introduction are equivalent, unless the direction of the implication is specified.\\

 As mentioned in the introduction, the characterizations (5) and (6) of quasicircles are due to Shen \cite{ShenFaber}, where the implications (1) $\Rightarrow$ (5,6) were obtained earlier by \c{C}avu\c{s} \cite{Cavus}.   Note that \c{C}avu\c{s} and Shen work with the conformal map on the outside of the disk, but this is only a difference of convention. Similarly, they phrase their results in terms of Bergman spaces, but our formulation here is the same, after
 application of the isometry $h \mapsto h'$ between Dirichlet and Bergman spaces.
 
 Characterization (4) is due to the authors \cite{Schippers_Staubach_CAOT}.  The Faber operator typically involves an integral over the Jordan curve, which must therefore be rectifiable in order to make sense. Rectifiability is added as an assumption in Wei, Wang and Hu \cite{WeiWangHu}, who showed (6) for rectifiable Jordan curves.  By using the limiting integral, we were able to remove the assumption of rectifiability.  A key result is the equality of limiting integrals from either side, stated in this paper as Theorem \ref{th:J_same_both_sides}, which was originally proven in \cite{Schippers_Staubach_JMAA}.  
 
 It should be noted that although we have established the equivalences, the result of Shen is at face value stronger in the direction (5) $\Rightarrow$ (1) in comparison to (4) $\Rightarrow$ (1), and weaker than (1) $\Rightarrow$ (4) in the direction (1) $\Rightarrow$ (5).  Also, Shen's result is stronger in that it does not require assuming that the domain is a Jordan curve.  It is not immediately clear what the meaning of transmission would be when the complement of $f(\disk^+)$ is more general than the closure of a Jordan domain.  We did not pursue this issue, since we could not shed new light on his results. The issue seems to be of interest, in light of the fact that Faber polynomials have meaning for degenerate domains (a classic example being the Chebyshev polynomials for an interval), among other things.\\

 The characterization (7) is due to Napalkov and Yulmukhametov \cite{Nap_Yulm}. It was proven independently by the authors \cite{Schippers_Staubach_CAOT}, using our characterization (4).  Unfortunately none of the aforementioned authors, including us, were aware of the results of Napalkov and Yulmukhametov.  Both our proof and that of Wei, Wang and Hu use the result or approach of Shen.  In some sense, transmission provides a bridge between the result (7) of Napalkov and Yulmukhametov \cite{Nap_Yulm} and (5) of \c{C}avu\c{s} \cite{Cavus} and Shen \cite{ShenFaber}, by making it possible to replace the sequential Faber operator with the Faber operator for non-rectifiable curves.  Theorem \ref{th:J_same_both_sides} is an essential ingredient of our approach (ultimately relying on both the bounded transmission theorem and the anchor lemma).   
 
 The result (8) $\Rightarrow$ (1), that the strict Grunsky inequality implies that $\Gamma$ is a quasicircle, is used in every proof that (4), (5), (6), and (7) implies (1) given in the literature so far.  We give an alternate proof in this paper which uses transmission only.  Our present proofs in the reverse direction (that (1) implies (2) through (7)) also differ from previous ones given by the authors.  We first applied this alternate approach in \cite{Schippers_Staubach_Plemelj} in the case of Jordan curves on Riemann surfaces.  
 
\end{subsection}
\begin{subsection}{Notes on Section \ref{se:boundary_values}}

The fact that quasiconformal maps and quasisymmetries preserve compact sets of capacity zero is well-known   \cite{Ahlfors-Beurling}. N. Arcozzi and R. Rochberg \cite{Arcozziroch} gave a combinatorial proof that if $\phi:\mathbb{S}^1 \to \mathbb{S}^1 $ is a quasisymmetry and $I$ is
a closed subset of $\mathbb{S}^1$, then there is a constant $K > 0$ depending only on $\phi$ such that $\frac{1}{K}c(I) \leq c(\phi(I))\leq K c(I)$. This of course implies Corollary \ref{co:quasisymmetry_nullpreserve}. Also, E. Villamor \cite[Theorem 3]{Villamor} showed that if $g:\disk^- \rightarrow \mathbb{C}$ is a one-to-one holomorphic $\kappa$-quasiconformally extendible map satisfying $g(z)=z + \cdots$ near $\infty$, then there is a $\kappa$ depending only on the quasiconformal constant such that for any closed $I \subset \mathbb{S}^1$, $c(I)^{1+\kappa} \leq c(g(A)) \leq c(I)^{1-\kappa}$.   
This implies Remark \ref{re:quasicircle_capacity_zero} and hence Theorem \ref{th:null_both_sides_quasicircle}.

It is natural to ask whether the converse of Theorem \ref{th:null_both_sides_quasicircle} holds.  That is, let $\Gamma$ be a Jordan curve separating $\sphere$ into components $\Omega_1$ and $\Omega_2$. Assume that any set $I \subset \Gamma$ which is null with respect to $\Omega_1$ is null with respect to $\Omega_1$ is also null with respect to $\Omega_2$, and vice versa.  Must $\Gamma$ be a quasicircle?

In \cite{Schippers_Staubach_JMAA,Schippers_Staubach_CAOT} we used limits along hyperbolic geodesics (equivalently, orthogonal curves to level curves of Green's function) in place of CNT limits, following H. Osborn \cite{Osborn}.  If the CNT limit exists, then the radial limit exists.  Besides being a stronger property, the CNT limits we later defined \cite{Schippers_Staubach_transmission,Schippers_Staubach_Plemelj} seem to be more convenient.  We use the name ``Osborn space'' for the set of boundary values of Dirichlet bounded harmonic functions, in order to draw attention to the paper \cite{Osborn}.
\end{subsection}
\begin{subsection}{Notes on Section \ref{se:transmission}}

 It is obvious that some notion of null set is necessary to formulate transmission. As we saw, the fact that quasisymmetries or quasiconformal maps preserve capacity zero sets was central to defining a notion of null sets which allowed the formulation of transmission on quasicircles.
 
 On the other hand, quasisymmetries do not preserve sets of harmonic measure zero \cite{Ahlfors-Beurling}. In particular, even for a quasicircle $\Gamma$, sets of harmonic measure zero with respect to one component of $\sphere \backslash \Gamma$ need not be of harmonic measure with respect to the other component. This can be seen immediately by a proof by contradiction using the conformal welding theorem. Thus harmonic measure is inadequate for our purposes. 

 We have shown that a bounded transmission exists for quasicircles separating a compact Riemann surface into two components in  \cite{Schippers_Staubach_transmission}. The results of that paper develop a foundation for applying quasisymmetric sewing techniques to boundary value problems for general Riemann surfaces, and ultimately to a ``scattering theory'' viewpoint of Teichm\"uller theory \cite{Schippers_Staubach_monograph}.     
 
 Our proof of Theorem \ref{th:J_same_both_sides} given in \cite{Schippers_Staubach_JMAA} contains a gap, which is not hard to fill in a couple of ways.  Here it is filled by the proof of the anchor lemma, which we stated and proved for the first time in \cite{Schippers_Staubach_Plemelj}.  
\end{subsection}
\begin{subsection}{Notes on Section \ref{se:Schiffer_Cauchy}} \label{se:Schiffer_Cauchy_notes}
  
  The operators $\mathbf{T}_{j,k}$ were first defined by Schiffer \cite{Schiffer_first}.  Schiffer investigated these operators extensively with others; see e.g. Bergman and Schiffer \cite{BergmanSchiffer}, Schiffer and Spencer \cite{Schiffer_Spencer}, Schiffer \cite{Courant_Schiffer}.  The connection to the jump problem was explicit from the beginning; see e.g. Bergman and Schiffer \cite{BergmanSchiffer}, and especially the survey \cite{Schiffer_expository} which focusses on the real jump problem and its relation to boundary layer potentials.  The connection to the complex jump theorem which we give here is more direct.   The paper of   
  Royden \cite{Royden} connects the Schiffer kernel functions to the jump problem on Riemann surface. His results are phrased somewhat differently in terms of topological conditions for extensions of holomorphic and harmonic extensions on domains; indeed, the Plemelj-Sokhotski jump formula is not mentioned explicitly.  However it can be derived as a special case of his results, but with more restrictive analytic assumptions on the function on the curve; namely, that it extend holomorphically to a neighbourhood of the curve.  Our paper \cite{Schippers_Staubach_Plemelj} considers jump decompositions and Schiffer kernels on Riemann surface, in the setting of Dirichlet spaces and quasicircles, extending some of the results given here to higher genus. \\    
  
  The terminology surrounding the Schiffer operators is a bit confused.  As a Calder\'on-Zygmund singular integral operator acting on functions in the plane, the Schiffer operator is bounded on $L^2$ (more generally on $L^p$, $1<p<\infty$). The integral operator on general functions in $L^2(\mathbb{C})$ is called the Beurling transform.  It is also sometimes called the Hilbert transform, a term used more widely (in the harmonic analysis and integral equations context) for a principal value integral along the real line (the explicit formula is \eqref{eq:classical_Hilbert_transform} ahead, if one chooses there $\Gamma = \mathbb{R}$). Napalkov and Yulmukhametov use the term Hilbert transform to refer specifically to $\mathbf{T}_{1,2}$.  Of course these integral operators are all closely related.  We reserve 
  the term ``Schiffer operator'' for the restriction of the singular integral operator to anti-holomorphic functions on a subset of $\sphere$.\\
  
  This ``nesting'' - that the kernel function is derived from the Green's function of a larger domain than the domain of integration - is a central feature of the Schiffer operators, which he explored at length in \cite{Courant_Schiffer}.   
   On general domains and Riemann surfaces, there is also a related operator derived from integrating against the Bergman kernel obtained from a larger domain.  (This does not appear in the present paper, because the Bergman kernel of the sphere is zero). Adding to the terminological confusion described above, some authors refer to the Bergman kernel on Riemann surfaces as the Schiffer kernel, while at the same time there is indeed a distinct Schiffer kernel (related to the so-called fundamental bi-differential).  On the double, certain identities relate the Schiffer and Bergman kernels \cite{Schiffer_Spencer}.\\

   The authors proved that a jump decomposition holds for the special case of Weil-Petersson class (WP-class) quasidisks in \cite{RSS_WP_quasidisks}. 
  This class arises naturally in geometric function theory, Teichm\"uller theory and the theory of Schramm-Loewner evolution. As was demonstrated in \cite{RSS_WP_quasidisks} the rectifiability and Ahlfors-regularity of the WP-class quasicircles enables one to prove a Plemelj-Sokhotzki-type jump decomposition. However the proof of \cite[Theorem 2.8]{RSS_WP_quasidisks} (the chord-arc property of the WP-quasicircles) has a gap arising from our misinterpretation of the definition of a quasicircle given in the paper \cite{falconermarsh} by K. Falconer and D. Marsh, which in fact only applies to weak quasicircles, see e.g. J-F. Lafont, B. Schmidt, W. van Limbeek \cite{Lafont}. Therefore the theorem in \cite{RSS_WP_quasidisks} is not true as stated. 
  Our claim that WP-class quasicircles are chord-arc (and hence Ahlfors regular) was proven by C. Bishop \cite{Bishop}.
  As with the case of quasicircles, there are an extraordinary number of characterizations of WP-class quasicircles. Bishop  \cite{Bishop} has listed over twenty, many of which are new (answering among many others a question raised by Takhtajan-Teo \cite{TakhtajanTeo_memoirs}). His paper also contains other far-reaching generalisations of the concept (to higher dimensions).\\
  
  In the case of WP-class quasidisks, not only is the curve rectifiable, but the boundary values of Dirichlet bounded harmonic functions on such domains lie in a certain Besov space. So the contour integral could be defined directly.
  On the other hand, quasicircles are not in general rectifiable, which creates a hindrance to formulation of the jump decomposition in the setting of this paper. 
 B. Kats studied Riemann-Hilbert problems on non-rectifiable curves, see e.g. \cite{Kats_2017} for the case of H\"older continuous boundary values, and the survey article \cite{Kats_survey} and references therein.  The CNT boundary values of elements of $\mathcal{D}_{\text{harm}}(\Omega_k)$ are not continuous, so this technology was not available.  The jump decomposition was shown to hold for a range of Besov spaces of boundary values by the authors, for $d$-regular quasidisks \cite{SchippersStaubachBesov}, {which are not necessarily rectifiable}.  This result did not include the case of boundary values of the Dirichlet space, so it was also not available for use here.\\

   An interesting open question arises in association with the jump formula on quasidisks.  The classical Plemelj-Sokhotski jump formula can be expressed using a principle value integral on the curve.  That is, if $u$ is a smooth function and $\Gamma$ is a smooth Jordan curve in $\mathbb{C}$ define, for $z_0 \in \Gamma$, 
   \begin{equation} \label{eq:classical_Hilbert_transform}
     \mathscr{H} u(z_0) = \text{P.V.} \frac{1}{2 \pi i} \int_{\Gamma} 
   \frac{u(\zeta)}{\zeta-z_0} \,d\zeta.    
   \end{equation}
   Of course, one could weaken the analytic assumptions.  We have \cite{Bell_book}
   \begin{equation} \label{eq:improved_jump}
    \lim_{z \rightarrow z_0^\pm} \frac{1}{2 \pi i} \int_{\Gamma} \frac{u(\zeta)}{\zeta - z} \,d\zeta = \pm \frac{1}{2}  u(z_0) + \mathscr{H}u(z_0) 
   \end{equation}
   where $\lim_{z \rightarrow z_0^\pm}$ respectively denotes the limits taken in the bounded and unbounded components $\Omega_+$ and $\Omega_-$ of the complement of $\Gamma$.  This of course implies the jump formula.  The question is: can a meaningful principal value integral $\mathscr{H} u$ be defined when $\Gamma$ is a quasicircle and $u \in \mathcal{H}(\Gamma)$, and a corresponding formula \eqref{eq:improved_jump} found?  This would have many applications to the study of integral kernels.

   As mentioned above, Napalkov and Yulmukhametov were the first to recognize and prove that the Schiffer operator $\mathbf{T}_{1,2}$ is an isomorphism for quasicircles; as far as we know this was not known to Schiffer even for stronger assumptions on the curve.  We have generalized this and the jump isomorphism to various settings (taking into account topological obstacles); namely 
   to compact Riemann surfaces separated by a quasicircle \cite{Schippers_Staubach_Plemelj}; and with M. Shirazi, to compact Riemann surfaces with $n$ quasicircles enclosing simply connected domains in \cite{RSSS_detline}. The converse result to that of Napalkov and Yulmukhametov, that if $\mathbf{T}_{1,2}$ is an isomorphism then $\Gamma$ is a quasicircle, only exists in genus zero. It is an open question whether a suitably formulated converse holds in genus $g >0$, though it seems plausible once the topological differences are taken into account.
   
   The operator $\overline{\mathbf{P}^{a}(\Omega_1)} \mathbf{O}_{2,1}$ appears in conformal field theory (usually with stronger analytic assumptions).  Theorem \ref{th:Faber_isomorphism_inverse} generalizes to higher genus, that is this operator is  inverse to a kind of Faber operator.  This fact can be exploited to give an explicit description of the determinant line bundle of this operator; see \cite{RSSS_detline} for the case of genus $g$ surfaces with one boundary curve.  
   The general case of genus $g$ with $n$ boundary curves is work in progress with D. Radnell.     
\end{subsection}
\begin{subsection}{Notes on Section \ref{se:Faber_Grunsky}}
  There is a vast literature on the Faber operator, Faber series, and their approximation properties. As mentioned in the main text, they are defined with various regularities.  See the books of J. M. Anderson \cite{Anderson} and P. K. Suetin \cite{Suetin} (note that the 1998 English translation of the 1984 original has an extensively updated bibliography). Some more recent papers are Wei, Wang and Hu \cite{WeiWangHu}, D. Gaier \cite{Gaier}, Y.E. Y{\i}ld{\i}r{\i}r and R. \c{C}etinta\c{s} \cite{YildirirCetintas}. 
  
  The Grunsky operator has been explored by many authors, for example A. Baranov and H. Hedenmalm \cite{Baranov_Hedenmalm} and G. Jones \cite{GavinJones}.  L.A. Takhtajan and L.-P Teo showed that it provides an analogue of the classical period mapping of compact surfaces for the universal Teichm\"uller space \cite{TakhtajanTeo_memoirs}.  See also V. L. Vasyunin and N. K. Nikol'ski\u{\i} for an exposition of its appearance in de Branges' work on complementary spaces \cite{VasyuninNikolskii}. There are many interesting results relating the analytic properties of the Grunsky matrix $\mathbf{Gr}_f$ to the geometric or analytic properties of the conformal map $f$ and/or its image $f(\disk^+)$; see for example Jones \cite{GavinJones}, Shen \cite{ShenGrunsky}, or Takhtajan and Teo \cite{TakhtajanTeo_memoirs}. 
  
  The treatment as an integral operator goes at least as far back as Bergman and Schiffer's classic paper \cite{BergmanSchiffer}, as described in the explanation following Theorem \ref{th:Grunsky_integral_formula}. \\
  
  The Grunsky inequalities have been generalized in many ways.  J. A. Hummel \cite{Hummel} generalized the inequalities to pairs of non-overlapping maps.  The authors have extended this to arbitrary numbers of non-overlapping maps in genus zero \cite{RSS_Dirichlet_multiply}; Grunsky inequalities were proven for the case of $n$ non-overlapping maps into a compact surface of genus $g$ by M. Shirazi \cite{ShiraziThesis,ShiraziGrunsky}. This has applications to Teichm\"uller theory and is related to generalizations of the classical period mapping to the infinite-dimensional Teichm\"uller space of bordered surfaces of arbitrary genus and number of boundary curves  \cite{RSS_period_genus_zero}, \cite{Schippers_Staubach_monograph}.  The thesis of Shirazi \cite{ShiraziThesis} also contains a historical survey of the Faber and Grunsky operator.\\

 The Faber polynomials, Faber operator, Grunsky operator, and Grunsky inequalities are formulated with an array of differing conventions and approaches.  The existence of certain identities also complicates matters. We will not attempt to untangle the conventions here, but rather content ourselves with a few remarks.
  For an early overview with a tidy description of the algebraic identities and relation involved, see E. Jabotinsky \cite{Jabotinsky}. See also the historical outline in Shirazi \cite{ShiraziThesis}.
 
  Usually normalizations are imposed on the function classes, especially the derivative at the origin or at $\infty$.  These normalizations obscure the M\"obius invariance of various objects, such as the Grunsky operator and Cauchy integral operator, and furthermore limit the applicability of the stated theorems unnecessarily.  So we have removed them as much as possible throughout the paper.   
 
 The Grunsky inequalities and Faber polynomials are often formulated for conformal maps of the form $g:\disk^- \rightarrow \Omega_1$, of the form $g(z) = z + b_0 + b_1/z + \cdots$, where $\Omega_1$ contains the point at $\infty$.  Choosing $q = 0$, the Faber polynomials are then defined by $\Phi_n = \mathbf{I}_g^0(z^n)$, and the Grunsky coefficients by 
 \[  \Phi_n(g(z)) = z^{-n} + \sum_{n=-\infty}^{-1} b_{nk} z^k.   \]
 The convention that $g$ takes $\disk^-$ onto a domain containing $\infty$ seems to provide an advantage in some proofs \cite{Duren_book,Pommerenkebook}, in that the area of the complement of $g(\disk^-)$ is finite; this appears to be the motivation for the choice.  However, the advantage is illusory: the important fact is that the functions to which the Grunsky operator is applied have finite Dirichlet energy.  
 The following identity shows that either choice is as good as the other. Setting $f(z) = 1/g(1/z)$, then it is easily checked that for $n>0$ and $m>0$,
 \begin{equation} \label{eq:one_over_z_transform_Grunsky}
   \sqrt{n m } \, b_{-n,-m}(g) = \sqrt{n m} \, b_{nm}(f).    
 \end{equation}

  Another approach to the definition of the Grunsky coefficients is through generating functions, e.g.  
 \[  \log{\frac{g(z)-g(w)}{z-w}} - \log{g'(\infty)}  =  \sum_{n=-\infty,m=-\infty}^{-1}
      b_{n,m} z^{n} w^{m}     \]
  where $g'(\infty) = \lim_{z \rightarrow \infty} g(z)/z$.
  for suitably chosen branches of logarithm.  One can recognize immediately the 
  relation to the integral kernel in Theorem \ref{th:Grunsky_integral_formula}. 
  This also visibly demonstrates (\ref{eq:one_over_z_transform_Grunsky}).  
  The Faber polynomials can also be defined using generating functions related to the integral kernel of the Faber operator, see e.g. \cite{Duren_book,Jabotinsky,Suetin}. 
\end{subsection}
\end{section}

\end{document}